\numberwithin{equation}{section}
\theoremstyle{plain}
\newtheorem{theorem}{Theorem}[section]
\newtheorem{lemma}[theorem]{Lemma}
\newtheorem{corollary}[theorem]{Corollary}
\newtheorem{proposition}[theorem]{Proposition}
\theoremstyle{remark}
\newtheorem{definition}[theorem]{Definition}
\newtheorem{remark}[theorem]{Remark}
\newcommand{\E}{\mathbb{E}}
\newcommand{\1}{\mathbbm{1}} 
\newcommand{\mP}{\mathbb{P}}
\title{Genetic Composition of Supercritical Branching Populations under Power Law Mutation Rates}
\author{Vianney Brouard\thanks{ENS de Lyon, UMPA, CNRS UMR 5669, 46 Allée d’Italie, 69364 Lyon Cedex 07, France; E-mail: vianney.brouard@ens-lyon.fr}}
\begin{document}
\maketitle
\begin{abstract}
We aim to understand the evolution of the genetic composition of cancer cell populations. To achieve this, we consider an individual-based model representing a cell population where cells divide, die and mutate along the edges of a finite directed graph $(V,E)$. The process starts with only one cell of trait $0$. Following typical parameter values in cancer cell populations we study the model under \textit{power law mutation rates}, in the sense that the mutation probabilities are parametrized by negative powers of a scaling parameter $n$ and the typical sizes of the population of interest are positive powers of $n$. Under a \textit{non-increasing growth rate condition}, we describe the time evolution of the first-order asymptotics of the size of each subpopulation in the $\log(n)$ time scale, as well as in the random time scale at which the wild-type population, resp. the total population, reaches the size  $n^{t}$. In particular, such results allow for the perfect characterization of evolutionary pathways. Without imposing any conditions on the growth rates, we describe the time evolution of the order of magnitude of each subpopulation, whose asymptotic limits are positive non-decreasing piecewise linear continuous functions.
\end{abstract}
\textbf{MSC2020 subject classifications:} 60J80, 60J27, 60F99, 92D15, 92D25. \\
\textbf{Keywords:} Cancer evolution, multitype branching processes, finite graph, long time behavior, power law mutation rates, population genetics.

\section{Introduction and presentation of the model}
\label{Introduction}
Consider a population of cells characterized by a phenotypic trait, where the trait space $V$ is finite. For all $v \in V$ denote by $(Z_{v}(t))_{t \in \mathbb{R}^{+}}$ the number of cells of trait $v$ at time $t$ in the population, and $\left(\mathcal{Z}(t):=\left(Z_{v}(t)\right)_{v \in V}\right)_{t \in \mathbb{R}^{+}}$ the global process. Assume that $0 \in V$ and
\begin{align}
    \forall v \in V, Z_{v}(0)=\1_{\{v=0\}}, \text{ almost surely.}
\end{align}
Cells with trait $0$ are called \textit{wild-type cells}, and all cells with trait $v \in V\backslash\{0\}$ are called \textit{mutant cells}. The population dynamics will follow a continuous-time branching process on $\mathbb{N}_{0}^{V}$. More precisely, cells divide (giving birth to two daughter cells) and die with rates depending only on their phenotypic trait. The birth, death and growth rate functions are respectively
\begin{align}
    &\alpha : V \longrightarrow \mathbb{R}^{+}, \beta : V \longrightarrow \mathbb{R}^{+} \text{ and } \lambda:=\alpha-\beta.
\end{align}
We use the words "division" and "birth" synonymously. During a division event of a cell of trait $v \in V$, the two daughter cells may independently mutate. The mutation landscape across traits is encoded via a directed graph structure $(V,E)$ on the trait space, where $E\subset \{(v,u), \forall v,u \in V^{2}\}$ is a set of ordered pairs over $V$ such that for all $v \in V$, $(v,v) \cap E=\emptyset$, and there exists a path from $0$ to $v$ within $E$. In other words, $(V,E)$ represents a finite directed graph without self-loops, with each vertex belonging to the connected component of $0$. Mutation directly from trait $v$ to trait $u$ is possible if and only if $(v,u) \in E$. Let $\mu: E \longrightarrow [0,1]$ be a mutation kernel satisfying 
\begin{align}
\label{total mutation proba}
    \forall v \in V, \overline{\mu}(v):=\sum_{u\in V: (v,u) \in E}\mu(v,u)\leq 1.
\end{align}
A daughter cell mutates from its mother trait $v$ to trait $u$ with probability $\mu(v,u)$, meaning that $\overline{\mu}(v)$ is its total mutation probability. Notice that backward mutations are permitted in this model. 

Finally the exact transition rates from a state $z=(z_v)_{v \in V} \in \mathbb{N}_{0}^{V}$ of the process $\mathcal{Z}$ are
\begin{align}
\label{processus markov tableau des taux}
z \mapsto 
\left\{ 
\begin{array}{l l l} 
&z-\delta_{v}, \mbox{ at rate } z_{v} \beta(v), \\ 
&z-\delta_{v}+\delta_{u}+\delta_{w}, \mbox{ at rate } 2z_{v}\alpha(v)\mu(v,u)\mu(v,w)\1_{\{(v,u) \in E\}}\1_{\{(v,w) \in E\}}\1_{\{u \neq w\}}, \\
&z-\delta_{v}+2\delta_{u}, \mbox{ at rate } z_{v}\alpha(v)\mu(v,u)^{2}\1_{\{(v,u) \in E\}}, \\
&z+\delta_{v}, \mbox{ at rate } z_{v}\alpha(v)\left(1-\overline{\mu}(v)\right)^{2}+2\underset{u \in V: (u,v) \in E}{\sum} z_{u}\alpha(u)\mu(u,v)\left(1-\overline{\mu}(u)\right),
\end{array} 
\right.
\end{align}
where $\forall v \in V, \delta_{v}=\left(\1_{\{u=v\}}\right)_{u \in V}$. Throughout the paper, the growth rate of the wild-type subpopulation $\lambda(0)$ is assumed to be strictly positive, to ensure that the wild-type subpopulation survives with positive probability.  

The biological motivation for this model is to capture the time dynamics of the genetic composition of a cell population during carcinogenesis. tumors are typically detected when they reach a large size, around $10^9$ cells. The mutation rates per base pair per cell division are generally estimated to be of order $10^{-9}$, see \cite{jones2008comparative,bozic2014timing}. Thus, the framework of a \textit{power law mutation rates limit} naturally arises. A parameter $n \in \mathbb{N}$ is used to quantify both the decrease of the mutation probabilities, expressed as a negative power of $n$, and the typical population size, expressed as a positive power of $n$, at which we are interested in understanding the genetic composition. The aim is to obtain asymptotic results on the sizes of all the mutant subpopulations when $n$ goes to infinity. This is a classical stochastic regime studied in particular in  \cite{durrett2011traveling, smadi2017effect, cheek2018mutation, bovier2019crossing, cheek2020genetic, champagnat2021stochastic, coquille2021stochastic, esser2024general,blath2021stochastic, paul2023canonical, gamblin2023bottlenecks, esser2025effective}. Such a regime is referred to in \cite{cheek2018mutation, cheek2020genetic} as the \textit{large population rare mutations limit}. However, we have chosen the more precise term \textit{power law mutation rates} to distinguish this regime from the classical \textit{rare mutations limit}, which is generally used in the context of adaptive dynamics to separate evolutionary and ecological scales, where the mutation probabilities $\mu^{(n)}$ typically scale as $ e^{-Cn} \ll\mu^{(n)}\ll\frac{1}{n \log(n)}$. Indeed, under the power law mutation rates limit, the mutation probabilities are of a higher order compared to those under the rare mutations limit if for instance $\mu^{(n)} \propto n^{-\alpha}$ with $\alpha \in (0,1]$. 

To be more precise, let $L:=\{\ell(v,u)\in \mathbb{R}_{+}^{*}, \forall (v,u) \in E\}$ be a set of strictly positive labels on the edges of the graph, where $\mathbb{R}_+^*:=\{x\in \mathbb{R}, x>0\}$. Introduce a sequence of  models $\left(\mathcal{Z}^{(n)}\right)_{n \in \mathbb{N}}$, where for each $n \in \mathbb{N}$, $\mathcal{Z}^{(n)}$ corresponds to the process described above with the mutation kernel $\mu^{(n)}:E \longrightarrow [0,1]$ satisfying
\begin{equation}
\label{mutation label regime}
    \forall (v,u) \in E, n^{\ell(v,u)} \mu^{(n)}(v,u) \underset{n \to \infty}{\longrightarrow} \mu(v,u)\in \mathbb{R}^{+}.
\end{equation}
For all $t \in \mathbb{R}_{+}^{*}$, the stopping times corresponding to the first time that the wild-type subpopulation $Z_{0}^{(n)}$, respectively the total population $Z^{(n)}_{tot}:=\sum_{v \in V}Z_{v}^{(n)}$, reaches the level $n^t$, are defined as
\begin{align}
\label{Equation: Stopping times}
    &\eta_{t}^{(n)}:=\inf \left\{u \in \mathbb{R}^{+}: Z_{0}^{(n)}(u) \geq n^t \right\} \text{ and } \sigma_{t}^{(n)}:= \inf \left\{u \in \mathbb{R}^{+}: Z^{(n)}_{tot}(u)\geq n^t \right\}.
\end{align}
These are motivated by two different biological interpretations in different scenarios. For instance, when considering metastasis the wild-type subpopulation $Z_{0}^{(n)}$ may represent the primary tumor, and the mutant subpopulations $Z_{v}^{(n)},$ for all $v \in V \backslash \{0\}$, may correspond to secondary tumors. As clinicians typically have access to the size rather than the age of a tumor, it is biologically relevant to estimate the genetic composition of the secondary tumors when the primary one has reached a given size. This is mathematically encoded by examining the first-order asymptotics of $Z_{v}^{(n)}\big{(}\eta_{t}^{(n)}\big{)}$ for all $v \in V\backslash \{0\}$. Another biological scenario involves the total population $Z^{(n)}_{tot}$ representing a single tumor. It is appropriate to obtain theoretical results about the size of the mutant subpopulations $Z_{v}^{(n)}$ for all $v \in V \backslash \{0\}$ when the tumor has reached a given size. This corresponds exactly to looking at the first-order asymptotics of $Z_{v}^{(n)}\big{(}\sigma_{t}^{(n)}\big{)}$. Every time that results can be stated either with $\eta_{t}^{(n)}$ or $\sigma_{t}^{(n)}$, the following notation will be used
\begin{align}
\label{rho}
    \rho_{t}^{(n)}:=\eta_{t}^{(n)} \text{ or } \sigma_{t}^{(n)}.
\end{align}
In the present work the cell population will be studied on different time scales: the random time scale
\begin{align}
\label{Equation: random time scale}
    \Big{(}\rho_{t}^{(n)}+s\Big{)}_{(t,s)\in \mathbb{R}^{+}\times \mathbb{R}};
\end{align}
and the following deterministic approximation 
\begin{align}
\label{Equation: deterministic time scale}
     \Big{(}\mathfrak{t}^{(n)}_t+s\Big{)}_{(t,s) \in \mathbb{R}^{+}\times \mathbb{R}}, \text{ with } \mathfrak{t}^{(n)}_t:=t \frac{\log(n)}{\lambda(0)}.
\end{align}
Intuitively, the lineage of wild-type cells generated from the cancer-initiating cell constitutes the first subpopulation that will generate mutations. Understanding its growth, therefore, provides the natural time scale to consider for observing mutations. The birth and death rates of this lineage are $\alpha(0)\left(1-\overline{\mu}^{(n)}(0)\right)^2$ and $\beta(0)+\alpha(0)\left( \overline{\mu}^{(n)}(0)\right)^2$, respectively. Due to the power law mutation rates regime specified in Equation \eqref{mutation label regime}, these rates converge to $\alpha(0)$ and $\beta(0)$ when $n$ grows to $\infty$. Consequently, this lineage should therefore behave asymptotically as a birth and death process with rates $\alpha(0)$ and $\beta(0)$. Indeed, such a result emerges from the natural martingale associated to a birth and death process, see Lemma \ref{Lem:control primary pop}. In particular the growth rate of this lineage is close to $\lambda(0)$, thus this population reaches a size of order $n^{t}$ approximately at the deterministic time $\mathfrak{t}_{t}^{(n)}$, see Lemma \ref{convergence temps arret}. 

For any finite directed labeled graph $(V,E,L)$, under the following \textit{non-increasing growth rate condition} 
\begin{align}
\label{Assumption:non increasing growth rate condition}
    \forall v \in V, \lambda(v)\leq \lambda(0),
\end{align}
the first-order asymptotics of the mutant subpopulation sizes $Z_v^{(n)}$ are obtained on both random and deterministic time scales \eqref{Equation: random time scale} and \eqref{Equation: deterministic time scale}, see Theorem \ref{Theorem: non increasin growth rate graph}. Assumption \eqref{Assumption:non increasing growth rate condition} can be biologically motivated. Historically, tumor dynamics has been seen under the prism of clonal expansion of selective mutations, i.e. $\lambda(v)>\lambda(0)$. Nevertheless, the paradigm of neutral cancer evolution has recently been considered, see \cite{sottoriva2015big, ling2015extremely, williams2016identification, venkatesan2016tumor, davis2017tumor}. This means that all the selective mutations are already present in the cancer-initiating cell, and any mutations that occur subsequently are neutral (i.e. $\lambda(v)=\lambda(0)$). With Assumption \eqref{Assumption:non increasing growth rate condition}, deleterious mutations (i.e. $\lambda(v)<\lambda(0)$) are also permitted. This paradigm has been introduced because the genetic heterogeneity inside a tumor could be explained by considering neutral mutations only. Various statistical methods have been developed to infer the evolutionary history of tumors, including test of neutral evolution, see \cite{zeng2006statistical, achaz2009frequency, gunnarsson2021exact} for details.

Without any assumption on the growth rate function $\lambda$, we study the system on the deterministic time scale of Equation \eqref{Equation: deterministic time scale}. As in \cite{durrett2011traveling,bovier2019crossing,champagnat2021stochastic,coquille2021stochastic, esser2024general,blath2021stochastic, paul2023canonical, esser2025effective}, we obtain the asymptotic behavior of the \textit{stochastic exponent} processes
\begin{align}
\label{X}
     \forall v \in V, X_{v}^{(n)}(t):=\frac{\log^{+}\big{(}Z_{v}^{(n)}\big{(}\mathfrak{t}^{(n)}_t\big{)}\big{)}}{\log(n)/\lambda(0)}.
\end{align}
These results are presented in Theorem \ref{Theorem: general finite graph}. Here we are tracking the exponent of $n$ for each subpopulation, whereas Theorem \ref{Theorem: non increasin growth rate graph} is a more refined result that gives the size directly in terms of $n$. To our knowledge, it is the first model capturing this level of refinement on the asymptotic behaviors under the power law mutation rates regime \eqref{mutation label regime}. Two significant new conclusions emerge. 

First, Theorem \ref{Theorem: non increasin growth rate graph} shows the remarkable result that under Assumption \eqref{Assumption:non increasing growth rate condition} the randomness in the first-order asymptotics of the size of any mutant subpopulation is fully described by the stochasticity of only one random variable $W$, which encodes the long-time randomness of the lineage of wild-type cells issued from the cancer-initiating cell. More precisely, the stochasticity for any mutant subpopulation size is fully driven, at least to first order, by the randomness in the growth of the wild-type subpopulation and not by the dynamics of any lineage of a mutant cell nor by the stochasticity generating the mutations.

Second, Theorem \ref{Theorem: non increasin growth rate graph} characterizes the exact effective evolutionary pathways, in the sense of the pathways that asymptotically contribute to the growth of the mutant subpopulations. More precisely, if the length of a pathway is defined as the sum of the labels of its edges, asymptotic results on the stochastic exponent give that for any trait $v$, among the pathways from $0$ to $v$, only those of minimal length can asymptotically contribute to the growth of trait $v$. However, having results on the first-order asymptotics of the size of the mutant subpopulations allows us to see which of those minimal length pathways actually contribute to the dynamics of trait $v$. More specifically, among the minimal length pathways only those with the maximal number of neutral mutations on their edges asymptotically contribute to the growth of trait $v$. Indeed, for each neutral mutation in a pathway, an additional multiplicative factor of order $\log(n)$ appears in the first-order asymptotics. Such a theoretical result opens the door for developing new statistical methods to infer the underlying graph structure from data, i.e. to infer the evolutionary history of tumors, as well as for designing new statistical estimators for biologically relevant parameters, alongside new neutral (and deleterious) cancer evolution tests.

Moreover it is, to our knowledge, the first time that this power law mutation rates limit has been studied on the random time scale of Equation \eqref{Equation: random time scale}. From a biological point of view, it is more interesting to obtain results on such a random time scale rather than a deterministic one. We find that the randomness in the first-order asymptotics of any mutant subpopulation size is fully described by the stochasticity in the survival of the lineage of wild-type cells issued from the cancer-initiating cell. 

In \cite{cheek2018mutation, cheek2020genetic}, Cheek and Antal study a model that can be seen as an application of the model of the present work via a specific finite directed labeled graph $(V,E,L)$, the finite-dimensional hypercube. Among their results, they fully characterize, in distribution, the asymptotic sizes of all the mutant subpopulations around the random time at which the wild-type subpopulation reaches the typical size allowing mutations to occur. In their setting, it corresponds to $\big{(}\eta_{1}^{(n)}+s\big{)}_{s \in \mathbb{R}}$. In particular, they obtain that the asymptotic sizes of all the mutant subpopulations around this random time $\eta_{1}^{(n)}$ are finite almost surely, following generalized Luria-Delbrück distributions, see \cite[Theorem 5.1]{cheek2020genetic}. The original Luria and Delbrück model, introduced in \cite{luria1943mutations}, has generated many subsequent works, see in particular \cite{lea1949distribution,kendall1960birth,hamon2012statistics,keller2015mutant,kessler2015scaling, cheek2018mutation, cheek2020genetic}. Two major features explain the latter result. The first one is that asymptotically only a finite number of mutant cells are generated from the wild-type subpopulation until time $\eta_{1}^{(n)}$, following a Poisson distribution. The second one is that all the lineages of the mutant cells generated from mutational events of the wild-type subpopulation have, up to time $\eta_{1}^{(n)}$, only an asymptotically finite random time to grow, which is exponentially distributed. We extend their results to larger times, typically when the total mutation rate from the subpopulation of a trait $v$ to the subpopulation of a trait $u$ is growing as a positive power of $n$, instead of remaining finite.

In \cite{durrett2011traveling}, Durrett and Mayberry study the exponentially growing Moran model. They consider the same mutation regime; their total population size grows exponentially fast at a fixed rate, and new individuals in the population choose their trait via a selective frequency-dependent process. In Theorem \ref{Theorem: general finite graph}, a similar result is obtained for the case of a multitype branching population. In particular, for this setting, the exponential speed of the total population (and of the dominant subpopulations) growth evolves over time. More specifically, we show that the speed is a non-decreasing piecewise constant function going from $\lambda(0)$ to $\underset{v \in V}{\max}\lambda(v)$, and taking values only from the set $\left\{\lambda(v), \forall v \in V \right\}$, see Theorem \ref{Theorem: general finite graph}. 

In \cite{cheek2018mutation, smadi2017effect,bovier2019crossing,champagnat2021stochastic,coquille2021stochastic, esser2024general,blath2021stochastic, paul2023canonical, esser2025effective}, the authors consider the power law mutation rates limit of Equation \eqref{mutation label regime} in the special case where all different traits mutate with the same scaling of a fixed order of a negative power of $n$. In contrast, in the present work, the power law mutation rates are more general by allowing traits to mutate with different scalings, as in \cite{cheek2020genetic, gamblin2023bottlenecks}. 

As in \cite{cheek2018mutation,cheek2020genetic}, compared to the different models in \cite{durrett2011traveling, smadi2017effect,champagnat2021stochastic,coquille2021stochastic, esser2024general,blath2021stochastic,gamblin2023bottlenecks,esser2025effective}, the initial population $\mathcal{Z}^{(n)}(0)$ is not assumed to have a macroscopic size. This introduces an additional randomness in how the wild-type subpopulation stochastically grows to reach a macroscopic size. However, contrary to \cite{cheek2018mutation,cheek2020genetic}, we condition neither on the survival of the wild-type subpopulation nor on the finiteness of the stopping times of Equation \eqref{rho}. 

In \cite{nicholson2019competing}, Nicholson and Antal study a similar model under a slightly less general non-increasing growth rate condition. More precisely, in their case, all the growth rates of the mutant populations are strictly smaller than the growth rate of the wild-type population: $\forall v \in V \backslash \{0\}, \lambda(v)<\lambda(0)$. However, the main difference remains the mutation regime. In their case, only the last mutation is in the power law mutation rates regime, while all other mutations have a fixed probability independent of $n$. In Theorem \ref{Theorem: non increasin growth rate graph} the case where all mutations are in the power law mutation rates regime is analyzed. Additionally, Nicholson and Antal were interested in obtaining the distribution of the first time that a mutant subpopulation gets a mutant cell, whereas in the present work, the first-order asymptotics of the sizes of the mutant subpopulations are studied over time. 

In \cite{nicholson2023sequential}, Nicholson, Cheek and Antal study the case of a mono-directional graph where time tends to infinity with fixed mutation probabilities. In particular, they obtain the almost sure first-order asymptotics of the mutant subpopulation sizes. Under a non-increasing growth rate condition, they are able to characterize the distribution of the random variables they obtain in the limit. Without any condition on the growth rates, they study the distribution of the random limit under the small mutation probabilities limit, using the hypothesis of an approximating model with less stochasticity. Note that the mutation regime they study is not the power law mutation rates limit of Equation \eqref{mutation label regime} as considered in the present work. Under the latter regime, both the size of the population goes to infinity and the mutation probabilities to $0$, through the parameter $n$. 

In \cite{gunnarsson2025limit}, Gunnarsson, Leder and Zhang study a similar model to the one in the present work and are also interested in capturing the time-evolution of the genetic diversity of a cell population, using in their case the well-known summary statistic called the \textit{site frequency spectrum} (SFS for short). The main difference lies in the considered mutation regime which is not the power law mutation rates limit. In their case, the mutation probabilities are fixed. Additionally, they restrict the study to the neutral cancer evolution case. In particular, as in the present work, they capture the first-order asymptotics of the SFS at a fixed time and at the random time at which the population first reaches a certain size. Two noticeable similarities in the results are that the first-order asymptotics of the SFS converge to a random limit when evaluated at a fixed time and to a deterministic limit when evaluated at the previous stochastic time. One could argue that in the present work the correct convergence in the latter case is actually a stochastic limit. But the randomness is fully given by the survival of the wild-type lineage of the cancer-initiating cell, so conditioned on such an event, in the end, the limit is a deterministic one. In particular the results of Gunnarson, Leder and Zhang are all conditioned on the non extinction of the population. 

In \cite{gamblin2023bottlenecks}, Gamblin, Gandon, Blanquart and Lambert study a model of an exponentially growing asexual population that undergoes cyclic bottlenecks under the power law mutation rates limit. Their trait space is composed of 4 subpopulations $00, 10, 01$ and $11$, where two pathways of mutations are possible: $00 \mapsto 10 \mapsto 11$ and $00 \mapsto 01 \mapsto 11$. They study the special case where one mutation ($10$) has a high rate but is a weakly beneficial mutation whereas the other mutation ($01$) has a low rate but is a strongly beneficial mutation. In particular they show the notable result that due to cyclic bottlenecks only a unique evolutionary pathway unfolds, but modifying their intensity and period implies that all pathways can be explored. Their work relies on a deterministic approximation of the wild-type subpopulation $00$ and some parts of the analysis of the model's behavior are obtained only through heuristics. The present work, and more specifically Theorem \ref{Theorem: general finite graph}, because it considers selective mutations, can be used and adapted to consider the case of cyclic bottlenecks in order to prove rigorously their results, both in the specific trait space that they consider and in a general finite directed trait space. 

The rest of the paper is organized as follows. In Section \ref{Main results}, we give the results and their biological interpretations. Sections \ref{Section: proof mono directional graph} and \ref{Section: generalisation of the proof} are dedicated to proving Theorem \ref{Theorem: non increasin growth rate graph}, which assumes Equation \eqref{Assumption:non increasing growth rate condition}. In Section \ref{Section: proof mono directional graph}, we provide the mathematical construction of the model for an infinite mono-directional graph using Poisson point measures, as well as the proof in this particular case. The generalization of the proof from an infinite mono-directional graph to a general finite directed graph is given in Section \ref{Section: generalisation of the proof}.

\section{Main results and biological interpretation} 
\label{Main results}
In Subsection \ref{result growth rate condition} the first-order asymptotics of the size of each mutant subpopulation on the time scales \eqref{Equation: random time scale} and \eqref{Equation: deterministic time scale} are provided under the non-increasing growth rate condition \eqref{Assumption:non increasing growth rate condition}. In Subsection \ref{result general case}, the asymptotic result on the stochastic exponent of each mutant subpopulation is presented without any assumption on the growth rate function $\lambda$. In each subsection, biological interpretations of the results are provided.

\subsection{First-order asymptotics of the size of the mutant subpopulations under the non-increasing growth rate condition}
\label{result growth rate condition}
In this subsection, we assume that the graph $(V,E,L)$ satisfies the non-increasing growth rate condition given by Equation \eqref{Assumption:non increasing growth rate condition}. 

\subsubsection{Heuristics for a general finite graph under the non-increasing growth rate condition}
The next definitions, notations and results are initially motivated by heuristics for the simplest possible graph: a wild-type and a mutant population where only mutations from wild-type to mutant cells are considered. Specifically, we consider the graph $(V,E,L)=\left(\{0,1\}, \{(0,1)\},\{\ell(0,1)\}\right)$, as illustrated in Figure~\ref{Figure: 2 traits}. 
\begin{figure}
\centering
\includegraphics[width=0.7 \linewidth]{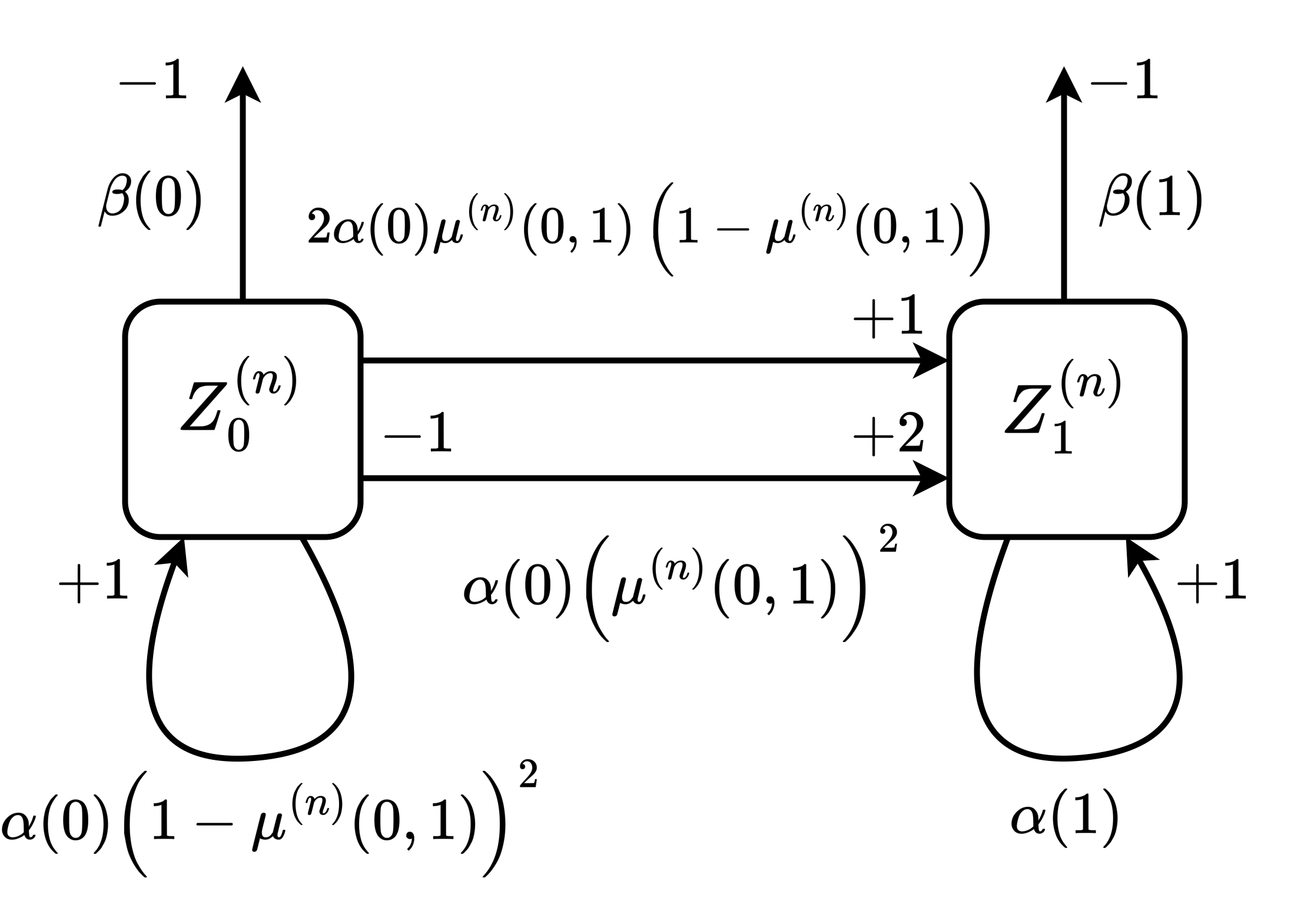}\caption{Graphical representation of the model with two traits and without backward mutation}
\label{Figure: 2 traits}
\end{figure} 

Under the power law mutation rates regime, the intrinsic birth and death rates of the wild-type subpopulation, $\alpha(0)\left(1-\mu^{(n)}(0,1)\right)^{2}$ and $\beta(0)+\alpha(0)\left(\mu^{(n)}(0,1)\right)^2$, respectively, are so close to $\alpha(0)$ and $\beta(0)$ that its natural martingale asymptotically behaves like that of a birth and death process with rates $\alpha(0)$ and $\beta(0)$ (see Lemma \ref{Lem:control primary pop}). This allows us to approximate the growth of the wild-type subpopulation as an exponential growth with parameter $\lambda(0)$. Then, if it survives, at time $\mathfrak{t}^{(n)}_{t}$ (see \eqref{Equation: deterministic time scale}), its size is of order $\Theta\left(n^{t}\right)$ (see Lemma \ref{convergence temps arret}), where we use the standard Landau notation for $\Theta$. Given this, we understand why it is necessary to wait until time $\mathfrak{t}^{(n)}_{\ell(0,1)}$ before observing any mutations. Indeed, with a mutation probability scaling as $n^{-\ell(0,1)}$, the total mutation probability up to time $\mathfrak{t}^{(n)}_{t}$ scales as $$\int_0^{t} n^{u}n^{-\ell(0,1)}d\big{(}u\frac{\log(n)}{\lambda(0)}\big{)}=\frac{n^{-\ell(0,1)}}{\lambda(0)}\left(n^{t}-1\right),$$ which starts to be of order $1$ for $t \geq \ell(0,1)$. This is formalized by D. Cheek and T. Antal in \cite{cheek2018mutation, cheek2020genetic}. An illustration is provided in Figure~\ref{Figure: Heuristics1}. 
\begin{figure}
\centering
\includegraphics[width=1 \linewidth]{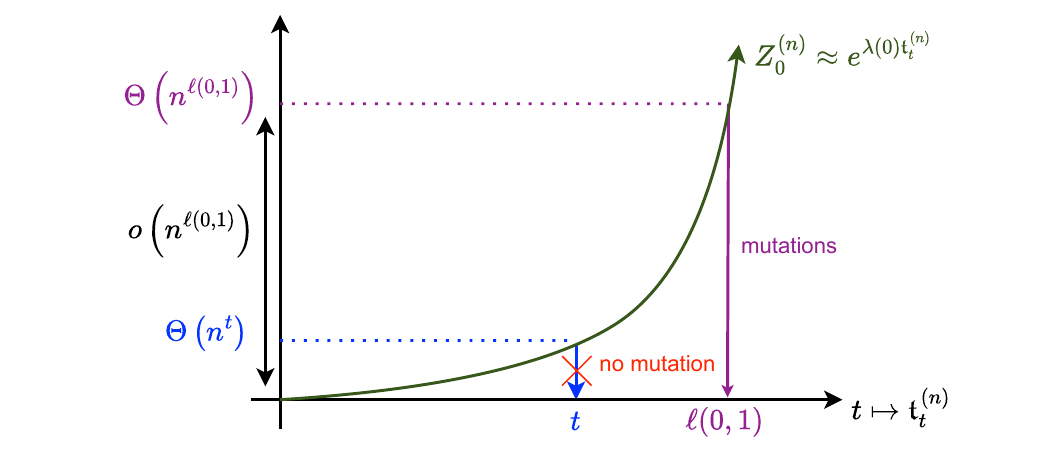}\caption{Heuristics for the first-occurrence time of mutant cells}
\label{Figure: Heuristics1}
\end{figure} 

Some heuristics for the size of the mutant subpopulation at time $\mathfrak{t}^{(n)}_{t},$ for $t \geq \ell(0,1)$, can also be derived. For $\ell(0,1) \leq u \leq t$, the number of new mutations generated at time $\mathfrak{t}_{u}^{(n)}$ scales as $\exp\big{(}\lambda(0)(u-\ell(0,1))\frac{\log(n)}{\lambda(0)}\big{)}$. The remaining time for these new mutant cells to grow exponentially at rate $\lambda(1)$ until time $\mathfrak{t}_{t}^{(n)}$ is $\mathfrak{t}_{t-u}^{(n)}$. This implies that their lineages reach a size at time $\mathfrak{t}_{t}^{(n)}$ of order 
\begin{align}
\label{size order mut pop}
    \Theta \Big{(}\exp \Big{(}\left[\lambda(1)t+(\lambda(0)-\lambda(1))u-\lambda(0)\ell(0,1)\right]\frac{\log(n)}{\lambda(0)}\Big{)}\Big{)}.
\end{align}
Two scenarios are then possible: 
\begin{itemize}
    \item If $\lambda(1)<\lambda(0)$: Equation \eqref{size order mut pop} is maximized for $u=t$ and equals $n^{t-\ell(0,1)}$. This means that the dynamics of the mutant subpopulation is driven by mutations from the wild-type subpopulation rather than by its intrinsic growth. More precisely, its size order at time $\mathfrak{t}_{t}^{(n)}$ is determined entirely by the mutations generated at that time -and so is of order $n^{t-\ell(0,1)}$-  and not by the lineages arising from mutations at earlier times. Biologically, these mutations are termed \textit{deleterious}.
    \item If $\lambda(1)=\lambda(0)$: Equation \eqref{size order mut pop} is independent of $u$ and equals $\Theta\left(n^{t-\ell(0,1)}\right)$ for any $\ell(0,1) \leq u \leq t$. This indicates that lineages of mutant cells generated from mutations at any time between $\mathfrak{t}_{\ell(0,1)}^{(n)}$ and $\mathfrak{t}_{t}^{(n)}$ have the same order of size at time $\mathfrak{t}_{t}^{(n)}$. In other words, there is a balance in the dynamics of the mutant subpopulation between the contributions of mutations and its intrinsic growth. This is a consequence of assuming $\lambda(1)=\lambda(0)$. These mutations are referred to as \textit{neutral mutation}, even though biologically speaking, this would more precisely mean the restrictive condition $\alpha(1)=\alpha(0)$ and $\beta(1)=\beta(0)$. Therefore, to capture the total size of the mutant subpopulation at time $\mathfrak{t}^{(n)}_{t}$, one must integrate all lineages resulting from mutational events over the time $\mathfrak{t}_{u}^{(n)},$ for $\ell(0,1) \leq u \leq t$. This gives exactly the order $\Theta\left((t-\ell(0,1)) \log(n) n^{t-\ell(0,1)}\right)$.
\end{itemize}
To summarize, for this simple graph, the size of the mutant subpopulation after time $\mathfrak{t}_{\ell(0,1)}^{(n)}$ scales as
\begin{align}
\label{Equation: heuristic size mutant pop}
    \Theta\Big{(}n^{t-\ell(0,1)}\left[\1_{\{\lambda(0)>\lambda(1)\}}+\1_{\{\lambda(0)=\lambda(1)\}}(t-\ell(0,1))\log(n)\right]\Big{)}.
\end{align}
Notice, in particular, that in any case, the mutant subpopulation exhibits exponential growth at rate $\lambda(0)$ after time $\mathfrak{t}^{(n)}_{\ell(0,1)}$, as indicated by the factor $n^{t-\ell(0,1)}$. An illustration of this heuristic can be found in Figure~\ref{Figure: schema taille pop}, which visually represents the growth dynamics of the mutant subpopulation over time.
\begin{figure}
\centering
\includegraphics[width=1 \linewidth]{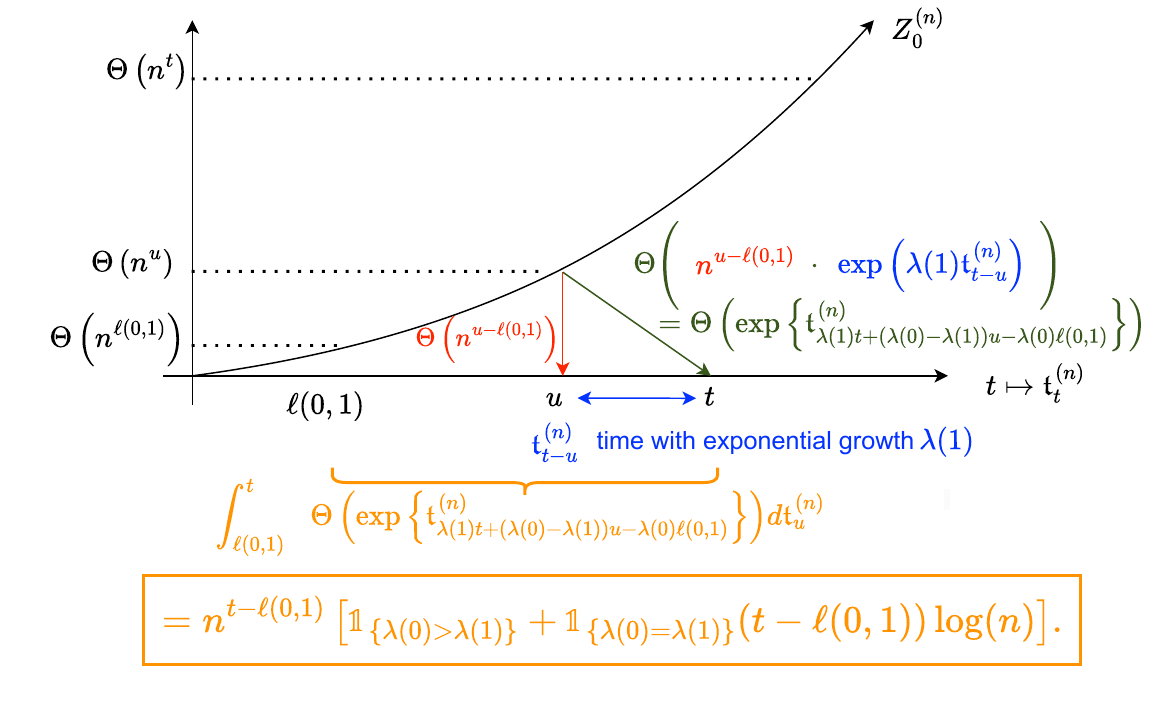}\caption{Heuristics for the size of the mutant subpopulation after time $\mathfrak{t}_{\ell(0,1)}^{(n)}$}
\label{Figure: schema taille pop}
\end{figure} 

These heuristics on this simple graph can be used as an elementary brick for developing heuristics on a general finite graph. Considering a vertex $v \in V\backslash \{0\}$, there may be multiple mutational pathways from the initial vertex $0$ to $v$. It is important to understand which pathways actually contribute to the size order of the mutant subpopulation of trait $v$. Using both the previous heuristics on the time required for mutations to occur and the fact that after this time, the mutant subpopulation grows exponentially at rate $\lambda(0)$, along with an additional $\log(n)$ factor if the mutation is neutral, it seems natural to iteratively apply this reasoning to a mutational pathway, encoded via a mono-directional graph. In the following, we will use the term \textit{'walk'} instead of \textit{'pathway'}, favoring the nomenclature of graph theory over the biological terminology. For any given walk from $0$ to $v$, the needed time $u$, in the time scale $\mathfrak{t}^{(n)}_{u}$, to observe a cell of trait $v$ generated via this specific walk is the sum of the labels of the edges along this walk, which is referred to as the \textit{length} of the walk. After this time, this subpopulation of cells of trait $v$ grows exponentially at rate $\lambda(0)$. Moreover, as observed in \eqref{Equation: heuristic size mutant pop}, for each neutral mutation along the walk, an additional multiplicative factor of order $\log(n)$ is included in the size order. This leads to three key observations about the total mutant subpopulation of trait $v$:
\begin{itemize}
    \item First occurrence of cells: Cells of trait $v$ first appear after a time equal to the minimum of the lengths of all walks from $0$ to $v$.
    \item Effective evolutionary pathways: After this time, only walks whose lengths equal this minimum might contribute to the size order of the mutant subpopulation of trait $v$. This is because any time delay creates an exponential delay in the size order. This fact is captured asymptotically in Theorem \ref{Theorem: general finite graph}.
    \item Neutral mutation factor: The additional multiplicative factor of $\log(n)$ due to neutral mutations implies that, among the walks from $0$ to $v$ with lengths equal to the aforementioned minimum, only those with the maximal number of neutral mutations actually contribute to the size order of the mutant subpopulation of trait $v$. Specifically, these walks contribute with a factor of $\log(n)$ raised to the power given by this maximal number of neutral mutations. This fact is asymptotically captured in Theorem \ref{Theorem: non increasin growth rate graph}. Additionally, for each of these admissible walks, an additional time integral is obtained at each neutral mutation, as observed in \eqref{Equation: heuristic size mutant pop}. 
\end{itemize}
An illustration of this reasoning is provided with an example in Figure~\ref{Figure: Heuristics2}.
\begin{figure}
\centering
\includegraphics[width=0.8 \linewidth]{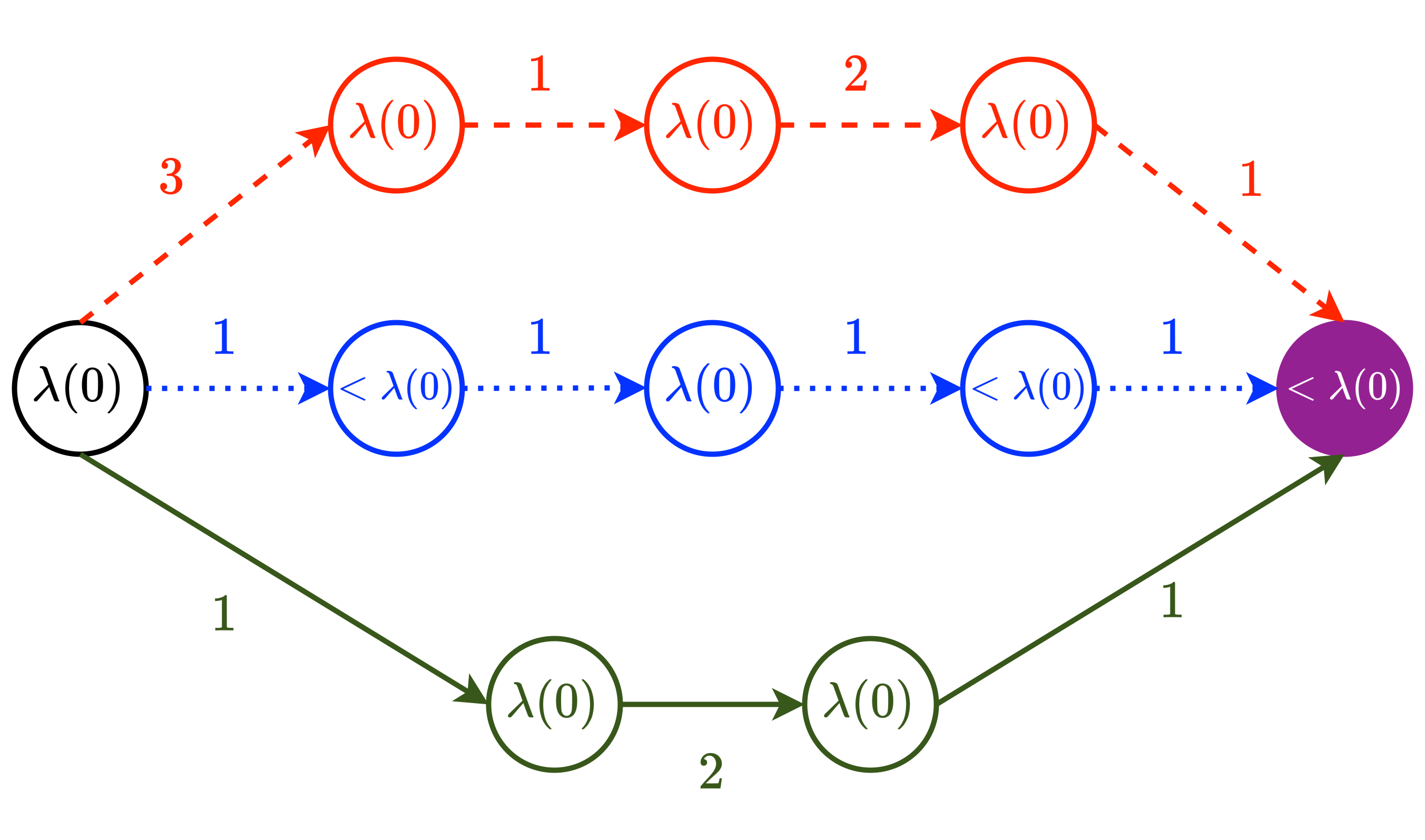}\caption{Heuristics for the contribution of walks to the size order of the plain purple mutant subpopulation: in this example, the dashed red walk has a length of $7$, while the dotted blue and plain green walks have a length of $4$. Therefore, only the two latter walks may contribute to the size order of the plain purple mutant subpopulation, making them sub-admissible walks. However, the dotted blue walk has only one neutral mutation, whereas the plain green walk has two neutral mutations. As a result, only the plain green walk will ultimately contribute to the size order of the purple mutant subpopulation. For $t \geq 4$, at time $\mathfrak{t}^{(n)}_{t}$, it will grow as $\log^{2}(n)e^{\lambda(0)\mathfrak{t}^{(n)}_{t-4}}$. Notice, in particular, that the dashed red walk has the maximal number of neutral mutations, which is $3$. However, since it is not a sub-admissible walk, the multiplicative factor of $\log(n)$ remains $2$ instead of $3$.} 
\label{Figure: Heuristics2}
\end{figure}

\subsubsection{Notations and definitions:} 

Now, the natural definitions derived from these heuristics are formally established before presenting the results.

\begin{definition}[Deleterious and neutral vertices] 
    A vertex $v \in V$ is called a neutral vertex if $\lambda(v)=\lambda(0)$, and a deleterious vertex if $\lambda(v)<\lambda(0)$. 
\end{definition}
\begin{remark}
    In the previous definition, the terms "neutral" or "deleterious" for a mutation are based on comparing its growth rate to that of the wild-type subpopulation. However, it is possible to have a mutation from a vertex $v$ to a vertex $u$ where $\lambda(v)<\lambda(u) \leq \lambda(0)$. Although such a mutation could theoretically be considered selective, since $\lambda(u)>\lambda(v)$, the previous definition categorizes it as either neutral or deleterious, depending on the value of $\lambda(u)$ relative to $\lambda(0)$. This nomenclature emerges from the fact that, under Assumption \eqref{Assumption:non increasing growth rate condition}, any mutant subpopulation grows exponentially at rate $\lambda(0)$, as developed in the earlier heuristics. Thus, this legitimates the previous definition, assuming \eqref{Assumption:non increasing growth rate condition} holds. 
\end{remark}

The following definition provides a structured framework to analyze the contribution of evolutionary pathways to the growth of mutant subpopulations. It does so by introducing the adapted vocabulary, for the neutral and deleterious evolutionary context of the model, associated with walks in labeled graphs. We use the term \textit{'walk'} here according to the standard nomenclature of graph theory.
\begin{definition} [Walk in the graph]
    A walk $\gamma=(v(0),\cdots,v(k))$ in the graph $(V,E)$ is defined as a sequence of vertices linking $v(0)$ to $v(k)$ such that for all $0 \leq i \leq k, v(i) \in V$, and for all $0 \leq i \leq k-1, (v(i),v(i+1)) \in E$. We will sometimes use the term \textit{'path'} to refer to a walk that visits only distinct vertices. Given a walk $\gamma=(v(0),v(1),\cdots v(k))$ in the labeled graph $(V,E,L)$, we define: 
\begin{itemize}
    \item The sum of the labels of the edges and the sum over the first $i$ edges of the walk $\gamma$, respectively:
    \begin{align}
        t(\gamma):= \sum_{i=0}^{k-1}\ell(v(i),v(i+1)) \text{ and for all } i\leq k, t_{\gamma}(i):=\sum_{j=0}^{i-1}\ell(v(j),v(j+1)).
    \end{align}
    \item The subset of neutral heads of the edges of the walk $\gamma$ and its cardinality:
    \begin{align}
        \gamma_{neut}=\{v(i), 1 \leq i \leq k: \lambda(v(i))=\lambda(0)\} \text{ and } \theta(\gamma):= \vert \gamma_{neut}\vert.
    \end{align}
    \item The weights $w_{neut}(\gamma)$ and $w_{del}(\gamma)$ associated with the neutral and deleterious vertices of the walk $\gamma$, respectively:
    \begin{align}
        &w_{neut}(\gamma):=\prod_{1 \leq i \leq k, \lambda(v(i))=\lambda(0)}\frac{2\alpha(v(i-1))\mu(v(i-1),v(i))}{\lambda(0)},\\
        &w_{del}(\gamma):=\prod_{1 \leq i \leq k, \lambda(v(i))<\lambda(0)}\frac{2\alpha(v(i-1))\mu(v(i-1),v(i))}{\lambda(0)-\lambda(v(i))}.
    \end{align}
    Along a walk, the constant of the asymptotic contribution of a vertex- depending on its parameters and those of the upstream vertex- takes a distinct form based on whether the vertex is neutral or deleterious. This distinction motivates the use of the separate weights $w_{neut}(\gamma)$ and $w_{del}(\gamma)$. 
    \item The time dependence associated with the neutral vertices: Let $\sigma$ be an increasing function from $\{1, \cdots,\theta(\gamma)\}$ to $\{1, \cdots, k \}$, such that $v(\sigma_i)$ is the i-th neutral vertex of the walk $\gamma$. For all $t>0$, define the multiple integral $I_{\gamma}(t)$ as
\begin{align}
    I_{\gamma}(t):=\int_{t_{\gamma}(\sigma_{\theta(\gamma)})}^{t \vee t_{\gamma}(\sigma_{\theta(\gamma)})}\int_{t_{\gamma}(\sigma_{\theta(\gamma)-1})}^{u_1} \cdots \int_{t_{\gamma}(\sigma_{\theta(\gamma)-k})}^{u_k}\cdots \int_{t_{\gamma}(\sigma_{1})}^{u_{\theta(\gamma)-1}}du_{\theta(\gamma)}\cdots du_1.
\end{align}
Along a walk, for each neutral vertex that is visited, an additional integral over the time parameter appears in the asymptotic limit, as described in the heuristics. This motivates the definition of $I_{\gamma}(t)$. 
\item The weight of the walk $\gamma$ at time $t$:
\begin{align} \label{weight}
    w_{\gamma}(t):=w_{del}(\gamma) w_{neut}(\gamma) I_{\gamma}(t).
\end{align}
This expression captures the total weight of a walk $\gamma$ at time $t$, accounting for both the deleterious and neutral visited vertices, and the integrals over the time parameters associated with these neutral vertices. 
\end{itemize}
\end{definition}
The next remark provides a recursive formula for computing the weight of a walk $\gamma$ at a given time $t$.
\begin{remark} \label{Remark:recursive structure weight path}
    The weight $w_{\gamma}(t)$ of the walk $\gamma=(v(0), \cdots, v(k))$ at time $t$ can be recursively expressed in terms of the weight $w_{\overset{\leftarrow}{\gamma}}(t)$ associated with the walk $\overset{\leftarrow}{\gamma}:=(v(0),\cdots,v(k-1))$, which is the same walk as $\gamma$ up to the second-to-last vertex (i.e. without the final vertex $v(k)$). The recursive equation, which considers whether the last vertex $v(k)$ is deleterious or neutral, is given by
    \begin{align}
        w_{\gamma}(t)= &2\alpha(v(k-1)) \mu(v(k-1),v(k)) \\
        &\cdot\Big{(}\1_{\{\lambda(k)<\lambda(0)\}} \frac{1}{\lambda(0)-\lambda(v(k))}w_{\overset{\leftarrow}{\gamma}}(t)+\1_{\{\lambda(k)=\lambda(0)\}} \frac{1}{\lambda(0)} \int_{t(\overset{\leftarrow}{\gamma})}^{t \vee t(\overset{\leftarrow}{\gamma})} w_{\overset{\leftarrow}{\gamma}}(s)ds\Big{)}.
    \end{align}
\end{remark}

\begin{definition}[Admissible walks] \label{Def: Admissible path} 
For all $v \in V$, let $P(v)$ denote the set of all walks $\gamma$ in the graph $(V,E)$ that link the vertex $0$ to the vertex $v$. We define the:
\begin{itemize}
    \item The minimum total label sum among all walks from vertex $0$ to vertex $v$: $$t(v):=\min_{\gamma \in P(v)} t(\gamma).$$
    \item The maximum number of neutral vertices among the shortest walks from vertex $0$ to vertex $v$: $$\theta(v):=\max_{\gamma \in P(v), t(\gamma)=t(v)} \theta(\gamma).$$
    \item The set of admissible walks from vertex $0$ to vertex $v$: $$A(v):= \{\gamma \in P(v): t(\gamma)=t(v) \text{ and } \theta(\gamma) =\theta(v)\}.$$
\end{itemize}
\end{definition}

\begin{remark}
    In the previous definition, the set $A(v)$ is referred to as the set of admissible walks because, as indicated by the heuristics, only walks belonging to $A(v)$ contribute to the growth dynamics of the mutant subpopulation of trait $v$. This is formally established in Theorem \ref{Theorem: non increasin growth rate graph}.
\end{remark}

\subsubsection{First-order asymptotic results} 

Under Assumption \eqref{Assumption:non increasing growth rate condition}, the more refined result can now be formally stated. The model is mathematically constructed in Section \ref{Section: generalisation of the proof} (see \eqref{Equ6}, \eqref{Equ7}, \eqref{ModelConstructionK}, \eqref{ModelConstuctionH} and \eqref{ModelConstructionZ}) using independent Poisson Point Measures. The following theorem provides the asymptotic results for this specific mathematical construction of the model. The convergences are, in particular, obtained in probability. For any mathematical construction of the model other than the one given in Section \ref{Section: generalisation of the proof}, the convergences hold at least in distribution in the appropriate Skorokhod space, see Remark \ref{Rq Thm 1}. A motivation for the normalizing term $d_{v}^{(n)}(t,s)$, introduced in the the following theorem, is provided below in Remark \ref{Rq Thm 1}.
\begin{theorem}
\label{Theorem: non increasin growth rate graph}
Assume that the general finite directed labeled graph $(V,E,L)$ satisfies both the power law mutation rates regime described in \eqref{mutation label regime} and the non-increasing growth rate graph condition given in \eqref{Assumption:non increasing growth rate condition}. Let $h_{n}=\frac{\log(n)}{\log(\log(n))\theta_{\max}+\varphi_n}$, where $\varphi_n \underset{n \to \infty}{\to} \infty$ such that $h_n \underset{n \to \infty}{\to}\infty$ and where $\theta_{max}:=\max_{v \in V \backslash \{0\}} \theta(v)$. Let also $\psi_{n}$ such that $\sqrt{\log(n)}=o(\psi_{n})$. Define for all $(t,s) \in \mathbb{R}^{+}\times \mathbb{R}$,
\begin{align}
\label{Equation: denominateur}
    d_{v}^{(n)}(t,s):=&\1_{\left\{t \in [0,t(v)-h_{n}^{-1})\right\}}+\1_{\left\{t \in [t(v)-h_{n}^{-1},t(v))\right\}}\psi_{n}\log^{\theta(v)-1}(n)\\
    &\hspace{4cm}+\1_{\left\{t \in [ t(v),\infty)\right\}}n^{t-t(v)}\log^{\theta(v)}(n)e^{\lambda(0) s}.
\end{align}
Let $(T,M) \in \left(\mathbb{R}_{+}^{*}\right)^{2}$ and $0<T_1<T_2$. Using the mathematical definition of the model given in Section \ref{Section: generalisation of the proof} (see \eqref{Equ6} and \eqref{Equ7}), there exists a random variable $W$, properly defined in \eqref{W def general finite graph}, satisfying 
\begin{align}
    W\overset{law}{:=}Ber \Big{(}\frac{\lambda(0)}{\alpha(0)}\Big{)} \otimes Exp \Big{(}\frac{\lambda(0)}{\alpha(0)}\Big{)},
\end{align}
such that for all $v \in V\backslash \{0\}$, we obtain the convergence results in probability in $L^{\infty}([0,T] \times [-M,M])$ for Equation \eqref{Equation: conv neutral case} and in $L^{\infty}\left([T_{1},T_{2}]\times [-M,M]\right)$ for Equations \eqref{Equation: convergence deleterious case}, \eqref{Equation: conv neutral case random scale} and \eqref{Equation: convergence deleterious case in random scale}:
\begin{itemize}
    \item \textbf{Deterministic time scale \eqref{Equation: deterministic time scale}:} \\
    If $\lambda(v)=\lambda(0)$, then
\begin{align}
\label{Equation: conv neutral case}
    & \frac{Z_{v}^{(n)}\Big{(} \mathfrak{t}^{(n)}_{t}+s\Big{)}}{d^{(n)}_{v}(t,s)}  \underset{n \to \infty}{\longrightarrow} W \sum_{\gamma \in A(v)} w_{\gamma}(t).
\end{align}
If $\lambda(v)<\lambda(0)$, then
\begin{align}
\label{Equation: convergence deleterious case}
    &\frac{Z_{v}^{(n)}\Big{(}\mathfrak{t}^{(n)}_{t(v)+t}+s\Big{)}}{n^{t}\log^{\theta(v)}(n)e^{\lambda(0) s}} \underset{n \to \infty}{\longrightarrow}W \sum_{\gamma \in A(v)} w_{\gamma}(t(v)+t).
\end{align}
\item \textbf{Random time scale \eqref{Equation: random time scale}:} Consider $\big{(}\rho_{t}^{(n)}\big{)}_{t \in \mathbb{R}^{+}}$ as defined in \eqref{rho}. \\
If $\lambda(v)=\lambda(0)$, then 
\begin{align}
\label{Equation: conv neutral case random scale}
    & \frac{Z_{v}^{(n)}\Big{(} \rho_{t}^{(n)}+s\Big{)}}{d_{v}^{(n)}(t,s)} \underset{n \to \infty}{\longrightarrow}\1_{\{W>0\}}\sum_{\gamma \in A(v)}w_{\gamma}(t).
\end{align}
If $\lambda(v)<\lambda(0)$, then 
\begin{align}
\label{Equation: convergence deleterious case in random scale}
    & \frac{Z_{v}^{(n)}\Big{(} \rho_{t(v)+t}^{(n)}+s\Big{)}}{n^{t}\log^{\theta(v)}(n)e^{\lambda(0) s}} \underset{n \to \infty}{\longrightarrow} \1_{\{W>0\}}\sum_{\gamma \in A(v)} w_{\gamma}(t(v)+t).
\end{align}
\end{itemize}  
\end{theorem}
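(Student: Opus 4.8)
The plan is to first prove the result for the infinite mono-directional graph (the setting of Section \ref{Section: proof mono directional graph}), where each vertex $k \in \mathbb{N}_0$ is reached by the unique walk $(0,1,\dots,k)$, and then to lift it to a general finite directed graph by a superposition/domination argument as indicated in Section \ref{Section: generalisation of the proof}. For the mono-directional case I would build the process from independent Poisson point measures, so that $Z_k^{(n)}$ is expressed as an explicit functional of the driving noise and of the subpopulations $Z_0^{(n)},\dots,Z_{k-1}^{(n)}$. The backbone is Lemma \ref{Lem:control primary pop}, which says the wild-type lineage behaves asymptotically like a birth–death process of rates $\alpha(0),\beta(0)$; in particular $e^{-\lambda(0)t}Z_0^{(n)}(t) \to W$ with $W \overset{law}{=} \mathrm{Ber}(\lambda(0)/\alpha(0)) \otimes \mathrm{Exp}(\lambda(0)/\alpha(0))$, and Lemma \ref{convergence temps arret} converts this into the statement that $\rho_t^{(n)} - \mathfrak{t}_t^{(n)}$ converges (on $\{W>0\}$) to the explicit random shift $-\tfrac{1}{\lambda(0)}\log W$ — this is exactly why the random and deterministic time-scale statements differ only by replacing $W$ with $\1_{\{W>0\}}$ and absorbing $e^{\lambda(0)s}$.

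The core is an induction on $k$ (equivalently on the graph distance, and within a fixed distance on the combinatorial structure). Assume $Z_{k-1}^{(n)}(\mathfrak{t}_t^{(n)}+s)/d_{k-1}^{(n)}(t,s) \to W\,w_{\overset{\leftarrow}{\gamma}}(t)$ uniformly on compacts. The number of mutations into trait $k$ produced in the time interval $[\mathfrak{t}_u^{(n)}, \mathfrak{t}_u^{(n)}+du]$ is, to first order, a Poisson number with intensity $2\alpha(k-1)\mu^{(n)}(k-1,k)Z_{k-1}^{(n)}$, i.e. of order $n^{-\ell(k-1,k)}\cdot d_{k-1}^{(n)}(u,0)$; each such mutant founds an independent clone that, by Lemma \ref{Lem:control primary pop} applied at trait $k$, grows like $e^{\lambda(k)(\cdot)}\times(\text{its own }W\text{-type variable})$. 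Summing (integrating) these clones over $u$ and over the Poisson mutation times and passing to the limit, the clone-specific variables average out by a law-of-large-numbers / second-moment argument — this is the mechanism behind the assertion that all randomness collapses onto the single $W$. One then has to check that the resulting integral is dominated by $u$ near $t$ when $\lambda(k)<\lambda(0)$ (giving the factor $1/(\lambda(0)-\lambda(v))$ in $w_{del}$, no extra $\log n$) versus flat in $u$ when $\lambda(k)=\lambda(0)$ (giving the extra $\int_{t(\overset{\leftarrow}{\gamma})}^t\cdot\,ds$ and one more power of $\log n$, matching Remark \ref{Remark:recursive structure weight path}). Iterating reproduces $w_\gamma(t) = w_{del}(\gamma)w_{neut}(\gamma)I_\gamma(t)$, and summing over the finitely many walks reaching $v$ shows only those of minimal length $t(v)$ survive (others are exponentially smaller, hence killed by $d_v^{(n)}$), and among those only the ones with $\theta(v)$ neutral vertices dominate (the rest lose powers of $\log n$) — giving the sum over $A(v)$. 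The three regimes of $d_v^{(n)}(t,s)$ correspond to: before the mutation window (subpopulation is $0$ whp), the boundary layer of width $h_n^{-1}$ where the clone count has just become macroscopic (handled by the crude a priori bound involving $\psi_n$ with $\sqrt{\log n} = o(\psi_n)$, a fluctuation estimate), and the main regime.

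The main obstacles I anticipate are two. First, the uniform (in $(t,s)$, in $L^\infty$ over compacts) control of the error terms through the induction: one must propagate not just convergence in probability at each fixed $(t,s)$ but uniform control, which requires maximal inequalities for the martingale parts of the driving Poisson measures and a careful choice of the thresholds $h_n, \psi_n$ so that the boundary layer contributes negligibly yet the a priori bounds are strong enough to start the induction; the fact that neutral vertices produce genuine powers of $\log n$ means the bookkeeping of logarithmic factors must be exact, not just up to constants. Second, the passage from the mono-directional graph to a general finite directed graph with backward mutations and multiple competing walks: here one must show that merging several clone systems into a single trait-$v$ subpopulation does not create new dominant contributions beyond $A(v)$ — in particular cycles and backward edges only produce walks of strictly larger length, hence negligible — and that the various $W$-type variables attached to different founding mutants along different admissible walks all still average out to the \emph{same} $W$ (the one from the wild-type lineage), because every admissible walk starts at $0$ and the founding mutation of its first edge is driven by $Z_0^{(n)}$; this self-averaging across walks, together with uniformity, is the technical heart of the argument.
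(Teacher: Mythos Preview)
Your overall architecture matches the paper exactly: establish the mono-directional case by induction on the trait index using the Poisson-measure construction, then lift to the general finite graph by decomposing $Z_v^{(n)}$ as a sum over walks and discarding non-admissible ones. The three regimes of $d_v^{(n)}$, the role of $h_n,\psi_n$, and the conversion between $\mathfrak t_t^{(n)}$ and $\rho_t^{(n)}$ via Lemma~\ref{convergence temps arret} are all as in the paper.

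The substantive difference is in the inductive engine. You propose to decompose $Z_i^{(n)}$ into independent clones founded at each mutation time, give each clone its own $W$-type martingale limit, and then argue that these clone variables self-average by a law of large numbers. The paper does \emph{not} track clones: it works directly with the aggregate by setting
\[
M_i^{(n)}(t)=Z_i^{(n)}(t)e^{-\lambda_i^{(n)}t}-\int_0^t 2\alpha_{i-1}\mu_{i-1}^{(n)}e^{-\lambda_i^{(n)}s}Z_{i-1}^{(n)}(s)\,ds,
\]
shows this is a martingale, computes its quadratic variation explicitly (Lemma~\ref{Lem martingale type 1 pop}), and then applies Doob's $L^2$ inequality (neutral case) or the maximal inequality for supermartingales (deleterious case) to show the martingale part is negligible relative to $d_i^{(n)}$. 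The finite-variation integral is then handled deterministically using the induction hypothesis on $Z_{i-1}^{(n)}$ (Lemma~\ref{convergence finite variation process K}). This is cleaner: the ``randomness collapses onto $W$'' not because many clone-$W$'s average out, but because the quadratic variation of $M_i^{(n)}$ is one order smaller (in $n$) than the square of the compensator, so the only randomness that survives is the one already sitting in $Z_{i-1}^{(n)}$, hence inductively in $W$. Your clone-LLN route should also work, but you would need to control the clones founded in the boundary window $[t(i)-h_n^{-1},t(i)]$ uniformly, and there the number of clones need not diverge, so a pure LLN is not available --- you would end up re-deriving a second-moment bound essentially equivalent to the paper's quadratic-variation estimate.

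For the general graph, you correctly flag cycles as the obstacle but stop short of the resolution. The paper's device is an explicit equivalence relation on walks (Definition~\ref{Def: equivalent relation}): two walks are equivalent if they agree up to inserted cycles. There are finitely many equivalence classes, each has a unique minimal-length representative (a path), and the paper shows (i) the representative alone carries the first-order mass of its class, and (ii) the infinitely many longer walks in a class are handled by noting that only a \emph{finite} subset $B_{\widetilde\gamma}(T)$ can possibly have been reached by time $\mathfrak t_T^{(n)}+M$, and conditionally on those being empty, all longer walks are mechanically empty. Your statement that ``cycles only produce walks of strictly larger length, hence negligible'' is the right idea but needs this finite-reduction step to become a proof.
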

The proof of Theorem \ref{Theorem: non increasin growth rate graph} relies on a martingale approach using Doob's and Maximal Inequalities. The initial step involves controlling the growth of the lineage of wild-type cells originated from the initial cell, for both the deterministic and random time scales \eqref{Equation: deterministic time scale} and \eqref{Equation: random time scale} (see Lemma \ref{Lem:control primary pop general finite graph} and \ref{Lem: deterministic approx tau general finite graph}). For any vertex $v \in V \backslash \{0\}$, there may be several mutational walks in the graph $(V,E)$ that start from $0$ and lead to $v$. Understanding the contribution of each of these walks to the first-order asymptotics of the size of the mutant subpopulation of trait $v$ is essential. The proof proceeds in 2 steps:
\begin{itemize}
    \item Consider an infinite mono-directional graph under Assumption \eqref{Assumption:non increasing growth rate condition} and establish the result for this specific graph, see Section \ref{Section: proof mono directional graph}. Performing this step for an infinite graph is particularly helpful in handling cycles (such as backward mutations) in a general finite directed graph.
    \item Identify and exclude walks from the initial vertex $0$ to $v$ that do not contribute to the first-order asymptotics of the size of the mutant subpopulation of trait $v$, see Section \ref{Section: generalisation of the proof}. 
\end{itemize}  

\begin{remark}\label{Rq Thm 1}
\begin{itemize}
\item[1.] \textbf{Mathematical construction:} For any mathematical construction other than the one given in Section \ref{Section: generalisation of the proof}, the convergences hold at least in distribution in $\mathbb{D}\left([0,T]\times[-M,M]\right)$ for Equation \eqref{Equation: conv neutral case} and in $\mathbb{D}\left([T_{1},T_{2}]\times [-M,M]\right)$ for Equations \eqref{Equation: convergence deleterious case}, \eqref{Equation: conv neutral case random scale} and \eqref{Equation: convergence deleterious case in random scale}.
\item[2.] \textbf{An additional $\log(n)$ factor:} Notice that a multiplicative factor of $\log^{\theta(v)}(n)$ is captured after time $\mathfrak{t}^{(n)}_{t(v)}$, see Equations \eqref{Equation: denominateur}, \eqref{Equation: conv neutral case}, \eqref{Equation: convergence deleterious case}, \eqref{Equation: conv neutral case random scale} and \eqref{Equation: convergence deleterious case in random scale}. Obtaining a result on the stochastic exponents (see \eqref{X}) does not capture such a factor. For instance, with the model of Figure \ref{Figure: 2 traits}, if $\lambda(1)=\lambda(0)$, Theorem \ref{Theorem: non increasin growth rate graph} gives that after time $\ell(0,1)$, $Z_{1}^{(n)}\big{(}\mathfrak{t}^{(n)}_{t}\big{)}$ behaves approximately as $\log(n) e^{\lambda(0) \mathfrak{t}^{(n)}_{t-\ell(0,1)}}$. However, what is captured with $X_{1}^{(n)}(t)$ after time $\ell(0,1)$ is asymptotically $\lambda(0)(t-\ell(0,1))$, see Theorem \ref{Theorem: general finite graph}. 
\item[3.] \textbf{Stochasticity of the limits:} The random variable $W$ is explicitly defined as the almost sure limit of the natural positive martingale associated to a specific birth and death branching process with rates $\alpha(0)$ and $\beta(0)$; see \eqref{W def general finite graph}. The martingale associated to the lineage of wild-type cells issued from the initial cell behaves similarly to the one associated to the aforementioned birth and death branching process (see Lemma \ref{Lem:control primary pop general finite graph}). Thus, $W$ quantifies the randomness of this lineage over the long time. Due to the power law mutation rates regime, mutations arise after a long time, so the stochasticity of this lineage is already captured by $W$. Notice that under Assumption \eqref{Assumption:non increasing growth rate condition}, the randomness in the first-order asymptotics of any mutant subpopulation size is described completely by $W$. This means that the stochasticity of these subpopulations is driven primarily by the randomness in the growth of the wild-type subpopulation rather than by the one of the mutational process or of any lineage of mutant cells. In particular, if the process starts with a large number of wild-type cells instead of just one, the first-order asymptotics of the size of the mutant subpopulations would be entirely deterministic.
\item[4.] \textbf{Selective cancer evolution:} It seems quite natural not to obtain such a result when considering selective mutation ($\lambda(v)>\lambda(0)$). Indeed, a selective mutation imply that any time advantage translates directly into a growth advantage. Thus, the stochasticity of the mutational process, as well as the randomness in the lineages of the mutant cells, cannot be ignored. Therefore, expecting to control the stochasticity of the mutant subpopulation solely by controlling the randomness in the wild-type subpopulation, without also accounting for the randomness in the mutational process and the mutant lineages, is vain. More precisely, using a martingale approach to derive the first-order asymptotics cannot be successful for a selective mutation. Technically, this is because the expected size of the selective mutant subpopulation is of a higher order than its typical asymptotic size. Indeed, the rare event of the initial cell mutating to the selective trait extremely quickly, an event that asymptotically vanishes, is responsible for this discrepancy between the expected value and the typical asymptotic size of the selective mutant subpopulation. Nevertheless, when examining the stochastic exponent \eqref{X}, the martingale approach allows us to obtain convergence results as given in Theorem \ref{Theorem: general finite graph}. This is because the aforementioned rare event contributes only a factor proportional to its probability to the expected value of the stochastic exponent, meaning it actually asymptotically neither contributes to the typical size nor to the expected value of the stochastic exponent of the selective mutant subpopulation. Generalization to derive the first-order asymptotics when considering selective mutations is a work in progress.
\item[5.] \textbf{Definition of neutral mutation:} In view of Theorem \ref{Theorem: non increasin growth rate graph}, the mathematical definition of neutral mutation, $\lambda(v)=\lambda(0)$, is well-understood, as opposed to the more restrictive but biologically meaningful condition of having both $\alpha(v)=\alpha(0)$ and $\beta(v)=\beta(0)$. Indeed, maintaining the same growth rate $\lambda(v)=\lambda(0),$ while changing the birth and death rates $\alpha(v)$ and $\beta(v)$ alters the distribution of any lineage of mutant cells. Consequently, one might naturally expect that this would alter the stochasticity of the mutant subpopulation size. However, this is not the case. The randomness in the first-order asymptotics is fully summed up by the random variable $W$. Thus, it is entirely consistent that, under the neutral assumption, the condition pertains only to the growth rate function rather than to the birth and death rate functions.
\item[6.] \textbf{Motivation of $d_{v}^{(n)}(t,s)$:} Considering the time scale $\mathfrak{t}^{(n)}_{t}$, the result slightly differs depending on whether the vertex is neutral or deleterious. Indeed, when looking at the asymptotic behavior for a deleterious vertex $v$, the result holds strictly after time $t(v)$, whereas, in the case of a neutral vertex, the entire trajectory from the initial time can be analyzed. Mathematically, this difference arises from the additional multiplicative factor of $\log(n)$ in the first-order asymptotics when considering a neutral mutation. This factor allows us to control the quadratic variation at time $t(v)$ for the martingale associated to the mutant subpopulation. Three distinct regimes are obtained, as indicated by \eqref{Equation: denominateur} and \eqref{Equation: conv neutral case} :
\begin{itemize}
    \item Up to time $t(v)-h_n^{-1}$: with high probability, no mutational pathway from 0 to $v$ has generated a mutant cell of trait $v$. Since $h_{n} \to \infty$ and satisfies $h_{n}=o(\log(n))$, $t(v)$ can be interpreted as the first time -when considering the time scale accelerated by $\log(n)$- at which it becomes asymptotically possible to observe the first occurrence of a mutant cell of trait $v$. This result is also true for deleterious mutations, see Lemma \ref{Lem: no mutant cell}.
    \item For $t \in \left[t(v)-h_n^{-1},t(v)\right)$: in this time interval, some mutant cells of trait $v$ are produced, but the interval's length is insufficient to achieve any power of $n$ for the size of the mutant subpopulation of trait $v$. We succeed to dominate its growth by $\psi_n \log^{\theta(v)-1}(n)$, with a well-chosen $\psi_n$. Heuristically, the total number of mutant cells of trait $v$ resulting from a mutational event up to time $t$ is of order $\Theta\big{(}\log^{\theta(v)-1}(n)\big{)}$. With the remaining time for these mutant cells' lineages to grow, we manage to control the size of the mutant subpopulation of trait $v$ by at most $\sqrt{\log(n)}\log^{\theta(v)-1}(n)$. Consequently, dividing by any function $\psi_n$ satisfying $\sqrt{\log(n)}=o(\psi_n)$ results in an asymptotic limits of $0$. This result also holds for deleterious mutations, see Lemma \ref{Lem: control before first mut}. The $\sqrt{\log(n)}$ factor in the growth control comes from a mathematical analysis using a martingale approach, particularly considering the time scale accelerated by $\log(n)$. With further refinement, we conjecture that the actual size of the mutant subpopulation at time $t(v)$ is of order $\Theta \big{(}\left(\1_{\{\lambda(0)=\lambda(v)\}}\log(\log(n))+\1_{\{\lambda(0)>\lambda(v)\}}\right)\log^{\theta(v)-1}(n)\big{)}$. 
    \item For $t \in  [t(v),\infty)$: with high probability, the number of mutant cells of trait $v$ grows exponentially at rate $\lambda(0)$. A supplementary multiplicative factor $\log^{\theta(v)}(n)$ is present due to the neutral mutations on the walks in $A(v)$. Thus, the growth scales globally as $n^{(t-t(v))}\log^{\theta(v)}(n)w_{v}(t)$.
\end{itemize}
\item[7.] \textbf{Differences between the time scales:} When comparing point (i) and (ii) of Theorem \ref{Theorem: non increasin growth rate graph}, notice that the result transitions from the deterministic time scale $\mathfrak{t}^{(n)}_{t}$ to the random time scale $\rho_{t}^{(n)}$ merely by switching $W$ to $\1_{\{W>0\}}$. This seemingly surprising fact can be explained by the essential role of $W$. As mentioned in Remark \ref{Rq Thm 1} 2., $W$ encodes the long-term stochasticity of the lineage of wild-type cells originating from the initial cell. By showing that the time scale $\mathfrak{t}^{(n)}_{t}$ serves as the correct deterministic approximation of $\rho_{t}^{(n)}$ (see Lemma \ref{Lem: deterministic approx tau general finite graph}), it follows that obtaining an asymptotic result on time scale $\mathfrak{t}^{(n)}_{t}$ also yields a result for the time scale $\rho_{t}^{(n)}$. This idea is formalized using a technique similar to that in \cite[Lemma 3]{foo2013dynamics}. The switch from $W$ to $\1_{\{W>0\}}$ in the result occurs because the time scale $\rho_{t}^{(n)}$ inherently carries the stochasticity of the random variable $W$. Consequently, the only remaining randomness that needs to be considered is the survival of the lineage from the initial cell, which is asymptotically given by $\1_{\{W>0\}}$.
\end{itemize}
\end{remark}

\subsection{Result for a general finite directed labeled graph}
\label{result general case}
This subsection does not require the non-increasing growth rate condition of Equation \eqref{Assumption:non increasing growth rate condition}. Without this assumption, a martingale approach fails to obtain the first-order asymptotics of the mutant subpopulation sizes. However, the stochastic exponents of the mutant subpopulations, as defined in \eqref{X}, can be uniformly tracked over time. In particular, we show that, under the event $\{W>0\}$, the limits are positive deterministic non-decreasing piecewise linear continuous functions. Such limits are defined via a recursive algorithm tracking their slopes over time. More precisely, we show that the slopes can only increase and take values from the growth rate function. 

In the tracking algorithm, two different kinds of updates can be made:
\begin{itemize}
    \item \textbf{Birth of a new trait:} The first update is the birth of a new trait which takes as its slope the maximum between its inner growth rate and the slope of the subpopulation that gave birth to it. In fact, it could also happen that many subpopulations give birth to it at the same time; in this case it is the maximum of their slopes that is compared to the inner growth rate of the born trait. Such a comparison on the growth rates indicates which mechanism is driving the subpopulation growth: either its inner growth if this subpopulation is selective compared to the subpopulation(s) that is/are giving birth to it, or conversely the mutational process if it is deleterious. The neutral case corresponds to a balance of these two mechanisms, as previously mentioned in Theorem \ref{Theorem: non increasin growth rate graph}.
    \item \textbf{Growth driven by another trait:} The second kind of update is when a live trait $v$ increases its slope because another live trait $u$ among its incoming neighbors, with a higher slope, has reached its typical size so that the mutational contribution from trait $u$ now drives the growth of trait $v$. Consequently trait $v$ now takes the slope of trait $u$. Again potentially many traits $u$ among the incoming neighbors of trait $v$ can reach at the same time the typical size for the mutational contribution to drive the growth of trait $v$; in this case the growth of trait $v$ is driven by the trait $u$ with the maximal slope. This kind of update encodes the possibility in the evolutionary process that the driving mechanism of a subpopulation can change over time, always triggering an increase in the actual growth of the subpopulation.
\end{itemize}
 How these two different kinds of updates happen in the tracking algorithm is made formal in the following theorem. Moreover, they can happen at the same time for different vertices. The complexity of such an algorithm comes mostly from the generality both on the growth rate function and on the trait structure. Under the non-increasing growth rate condition \eqref{Assumption:non increasing growth rate condition}, the limiting functions $(x_{v})_{x \in V}$ have an explicit form, see Corollary \ref{Cor: theorem 2.2 non-increasing growth rate condition}; this is also true when the graph structure is mono-directional, see Corollary \ref{Cor: theorem 2.2 mono directional graph}.

\begin{theorem}
\label{Theorem: general finite graph}
Let $0<T_{1}<T_{2}$. The stochastic exponents defined in \eqref{X} satisfy
\begin{align}
    \Big{(}\big{(}X^{(n)}_{v}(t)\big{)}_{v \in V}\Big{)}_{t\in [T_1,T_2]} \underset{n \to \infty}{\longrightarrow} \1_{\{W>0\}} \Big{(}\big{(}x_{v}(t)\big{)}_{v \in V}\Big{)}_{t \in [T_1,T_2]}
\end{align}
in probability in $L^{\infty}[T_1,T_2]$. For each $v \in V$, $x_v$ is a positive deterministic non-decreasing piecewise linear continuous function obtained via a recursive approach tracking its slope over time. In particular there exist $k^{*} \in \mathbb{N}$ and $0=\Delta_0<\Delta_1<\cdots<\Delta_{k^{*}}<\infty$ such that the slopes of $(x_v)_{v \in V}$ change only at the times $(\Delta_j)_{j \in \{0,\cdots,k^*\}}$. For $j \in \{0,\cdots,k^{*}\}$, at time $\Delta_{j}$ two kinds of updates in the slopes can occur: (i) either a new trait starts to grow or (ii) an already growing trait increases its slope due to a growth driven now by another more selective trait. The algorithm tracks the following quantities for all $j \in \{0,\cdots,k^{*}\}$ at time $\Delta_j$:
\begin{itemize}
    \item the set of alive traits, $A_{j}$,
    \item the set of not-yet-born traits, $U_{j}$, 
    \item the slope of $x_{v}$, $\lambda_{j}(v)$,
    \item and the set of traits whose growth is driven by trait $v$, $C_{j}(v)$. 
\end{itemize}
\textbf{Initialization:} Set $A_{0}=\{0\}$, $U_{0}=V\ \backslash \{0\}$ and for all $v \in V$ 
\begin{align}
    x_{v}(0)=0, \lambda_{0}(v)=\lambda(0)\1_{\{v=0\}}, \text{ and } C_{0}(v)=\emptyset.
\end{align}
\textbf{Induction:} Let $j \in \{0, \cdots, k^{*}-1\}$. Assume that there exist times $0=\Delta_0 < \Delta_{1}<\cdots <\Delta_{j}<\infty$ such that $\left(x_{v}\right)_{v \in V}$ are positive deterministic non-decreasing piecewise linear continuous functions defined on $[0,\Delta_j]$, where changes of slopes occur only on the discrete set $\{\Delta_1,\cdots,\Delta_{j}\}$. Also assume that there exist $\lambda_{j}(v)$, $A_j$, $U_{j}$, and $C_{j}(v)$, respectively the slope of $x_{v}$, the set of alive vertices and not-yet-born vertices, and the set of vertices whose growth is driven by $v$, everything at time $\Delta_{j}$. 

Then there exists $\Delta_{j+1} \in (\Delta_j,\infty)$ such that $\left(x_v\right)_{v \in V}$ are constructed during the time period $[\Delta_j, \Delta_{j+1}]$ according to the following. For all $v \in V$ and for all $t \geq \Delta_{j}$ let
\begin{align}
    y_{v}(t)=(t-\Delta_{j}) \lambda_{j}(v)+x_{v}(\Delta_j).
\end{align}
For all $v \in U_{j}$ define 
\begin{align}
    &\forall u\in A_{j} \text{ such that } (u,v) \in E, \delta_{u,v}:= \inf \{t \geq \Delta_j: y_{u}(t) \geq  \lambda(0) \ell(u,v)\}, \\
    &\delta_{v}:=\inf_{u \in A_{j}: (u,v)\in E}\delta_{u,v}, \\
    &\nu(v):= \{u \in A_j: (u,v) \in E \text { and } \delta_{u,v}=\delta_{v}\}.\\
\end{align}
For all $v \in A_{j}$ define 
\begin{align} 
     & B_{j}(v):= \{u \in A_j : (v,u) \in E \text{ and } \lambda_{j}(v)>\lambda_{j}(u) \}, \\ 
     &\forall u \in B_{j}(v), \delta_{v,u}:= \inf\{t \geq \Delta_j: y_{v}(t) \geq y_{u}(t)+\lambda(0)\ell(v,u)\}, \\
     &\delta_{v}:= \inf_{u \in B_{j}(v)} \delta_{v,u}, \\
     &\nu(v):= \{u \in B_{j}(v): \delta_{v,u}=\delta_{v}\}.
\end{align}
Then define $\Delta_{j+1}:=\inf_{v \in V} \delta_{v}$ and $\nu_{j+1}:=\{v \in V:\delta_{v}=\Delta_{j+1}\}$. Then proceed to the following updates: 
\begin{itemize}
    \item Let $A_{j+1}:=A_j \cup \left(\nu_{j+1}\cap U_{j}\right) \text{ and } U_{j+1}=U_{j} \backslash \left( \nu_{j+1} \cap U_{j}\right).$ Also let $\forall v \in U_{j+1},$ $\lambda_{j+1}(v)=\lambda_{j}(v)=0$, $C_{j+1}(v)=C_{j}(v)=\emptyset$.
    \item For all $v \in \nu_{j+1} \cap A_{j}$, introduce the set $\nu^{(-)}(v):=\{u \in \nu(v): \exists w \in \nu_{j+1} \cap A_{j}, \lambda_{j}(w)>\lambda_{j}(v), \text{ and } u \in \nu(w) \}$. 
    
    Then let $C_{j+1}(v):=C_{j}(v) \cup\bigcup_{u \in \nu(v) \backslash \nu^{(-)}(v)} \left(\{u\} \cup C_{j}(u)\right).$ For all $u \in \nu(v) \backslash \nu^{(-)}(v)$ and $w \in C_{j}(u)$, $\lambda_{j+1}(u)=\lambda_{j+1}(w)=\lambda_{j}(v)$.
    \item For all $v \in A_j$ whose slope has not been updated yet, let $\lambda_{j+1}(v)=\lambda_{j}(v)$. And for all $v \in A_j$ whose set $C_{j}(v)$ has not been updated yet, let $C_{j+1}(v):=C_{j}(v)$.
    \item For all $v \in \nu_{j+1} \cap U_j$, let $\lambda_{j+1}(v):=\max \left(\lambda(v), \max_{u \in \nu(v)}\lambda_{j+1}(u)\right)$, and \\
    $C_{j+1}(v)=C_{j}(v)=\emptyset$. If $\lambda_{j+1}(v)\geq \lambda(v)$, introduce the following set $\nu^{+}(v):=\{u \in \nu(v): \lambda_{j+1}(u)=\max_{w \in \nu(v)}\lambda_{j+1}(w)\}$, and for all $u \in \nu^{+}(v)$,  $C_{j+1}(u):=C_{j+1}(u) \cup \{v\}$.
\end{itemize}
For any mathematical construction other than the one given in Section \ref{Section: generalisation of the proof} (see \eqref{Equ6}, \eqref{Equ7}, \eqref{ModelConstructionK}, \eqref{ModelConstuctionH} and \eqref{ModelConstructionZ}), the convergences are at least in distribution in $\mathbb{D}\left([T_1,T_2]\right)$ . 
\end{theorem}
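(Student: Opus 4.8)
The plan is to establish Theorem~\ref{Theorem: general finite graph} by induction over the steps $j=0,1,\dots,k^{*}$ of the tracking algorithm, proving on each interval $[\Delta_j,\Delta_{j+1}]$ matching upper and lower bounds for every $X_v^{(n)}$ that agree, up to $o(1)$ in probability and uniformly in $t$, with the piecewise linear function the algorithm outputs. The base case $j=0$ is the control of the wild-type lineage issued from the initial cell: by Lemma~\ref{Lem:control primary pop} and Lemma~\ref{convergence temps arret}, on the event $\{W>0\}$ one has $Z_0^{(n)}(\mathfrak{t}^{(n)}_s)\approx n^{s}$, hence $X_0^{(n)}(s)\to\lambda(0)s$ on an initial time interval, while every other $X_v^{(n)}$ stays $0$ until a mutant of trait $v$ can appear; on $\{W=0\}$ the population is extinct in bounded time and all exponents vanish. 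This is where the prefactor $\1_{\{W>0\}}$ originates, and one works on $\{W>0\}$ thereafter.

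The engine of the induction is a single-population estimate in the spirit of \cite[Lemma 3]{foo2013dynamics}. Assume that on $[\Delta_j,\Delta_{j+1}]$ each incoming neighbour $u$ of a trait $v$ already satisfies $X_u^{(n)}(s)\to x_u(\Delta_j)+\lambda_j(u)(s-\Delta_j)$, so that $Z_u^{(n)}(\mathfrak{t}^{(n)}_s)\approx n^{X_u^{(n)}(s)/\lambda(0)}$, and that $Z_v^{(n)}$ behaves as a birth–death process with rates $(\alpha(v),\beta(v))$ receiving immigration at rate $2\alpha(u)\mu^{(n)}(u,v)Z_u^{(n)}$ from each such $u$. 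Since a founder arriving at $\mathfrak{t}^{(n)}_s$ has a lineage of size $\approx n^{\lambda(v)(t-s)/\lambda(0)}$ at time $\mathfrak{t}^{(n)}_t$, a martingale/Doob maximal-inequality argument on the natural martingale of the immigration-driven birth–death process, controlling first and second moments of $Z_v^{(n)}$ exactly as in the proof of Theorem~\ref{Theorem: non increasin growth rate graph}, yields
\begin{align}
X_v^{(n)}(t)\ \longrightarrow\ &\Big[\sup_{\Delta_j\le s\le t}\big(g_v(s)+\lambda(v)(t-s)\big)\Big]^{+},\\
g_v(s):=\ &\max\Big(\1_{\{v\in A_j\}}\big(x_v(\Delta_j)+\lambda_j(v)(s-\Delta_j)\big),\ \max_{u:(u,v)\in E}\big(x_u(s)-\lambda(0)\ell(u,v)\big)\Big),
\end{align}
uniformly in $t\in[\Delta_j,\Delta_{j+1}]$. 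The polynomial-in-$\log n$ corrections (the $\log^{\theta(v)}(n)$ factors of Theorem~\ref{Theorem: non increasin growth rate graph}) and the rare fast-mutation event making $\E[Z_v^{(n)}]$ exceed its typical size (Remark~\ref{Rq Thm 1}.4) are invisible after applying $\log^+$ and dividing by $\log(n)/\lambda(0)$, which is precisely why the moment method, useless for the sizes, survives at the exponent level. As $g_v$ is a maximum of affine functions on the interval, the supremum above is attained at $s=\Delta_j$ or $s=t$: in the first case $x_v$ grows with its intrinsic slope $\lambda(v)$, in the second with the maximal slope among the active source terms, which reproduces the two algorithmic updates (birth of $v$ with slope $\max(\lambda(v),\max_{u\in\nu(v)}\lambda_{j+1}(u))$, or $v$ inheriting the slope of a faster incoming neighbour); the times $\delta_{u,v},\delta_{v,u}$ defined in the algorithm are exactly the first times the identity of the maximizing endpoint or source changes, so $\Delta_{j+1}=\inf_v\delta_v$ is the first breakpoint of the whole collection $(g_v)_v$.

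Turning the per-step estimate into the theorem requires: (a) the lower bound, obtained by restricting attention to trait-$v$ cells descending from mutational founders arriving in a bounded window around the optimal time $s$, for which a first- and second-moment computation gives a count of order $n^{x_v(t)+o(1)}$ with high probability — the windowing removes the rare fast-mutation event that would otherwise inflate the first moment; (b) a union bound over the finitely many vertices and over an $\varepsilon$-net of $[T_1,T_2]$, promoted to uniform convergence by the monotonicity of $t\mapsto Z_v^{(n)}(t)$ between net points and the continuity of $x_v$; (c) verifying the algorithm is well posed, i.e. $\Delta_{j+1}>\Delta_j$, the slopes $\lambda_j(v)$ are non-decreasing in $j$, take values only in $\{\lambda(w):w\in V\}$ and are bounded by $\max_w\lambda(w)$, so each can be updated at most $|V|$ times and the recursion stops after finitely many steps $k^{*}$.

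The main obstacle is the combinatorial bookkeeping when several updates coincide at a time $\Delta_{j+1}$: one must check that the sets $C_j(v)$ of traits driven by $v$, together with the corrections $\nu^{(-)}(v)$ and $\nu^{+}(v)$, propagate a slope increase along an entire downstream chain of traits within a single step without double counting, and that no circular dependence arises through backward mutations — a trait is never permitted to drive a trait of weakly larger slope, which is what forbids cycles in the causal graph even when $(V,E)$ itself has cycles. Matching this discrete bookkeeping against the analytic "$\sup_s$" computation, uniformly in $t$ and across the breakpoints $\Delta_j$ where $g_v$ changes slope, is the delicate point; the remaining ingredients are repetitions of the martingale estimates already developed for Theorem~\ref{Theorem: non increasin growth rate graph}.
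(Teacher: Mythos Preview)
Your proposal is correct and aligns with the paper's approach: the paper does not give an explicit proof but defers to \cite[Propositions~2 and~4]{durrett2011traveling}, observing only that the non-macroscopic initial condition is handled via the dichotomy $\{W>0\}$ versus $\{W=0\}$ (Lemmas~\ref{Lem:control primary pop} and~\ref{convergence temps arret} for the former, extinction in finite time for the latter). Your sketch --- induction over the algorithm's breakpoints $\Delta_j$, conditional first/second-moment upper bound given typical parent behaviour, window-restricted lower bound, and finite termination of the recursion --- is precisely the content of those propositions adapted to a general finite graph. One minor point: the reference you invoke for the single-population estimate, \cite[Lemma~3]{foo2013dynamics}, is used in the paper only for the uniform-in-$s$ extension in Section~\ref{Proof deterministic time neutral mutation}; the template for the exponent convergence itself is Durrett--Mayberry.
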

The proof of Theorem \ref{Theorem: general finite graph} is a minor adaptation of the proofs found in \cite{durrett2011traveling}. Specifically, by adapting the arguments from \cite[Propositions 2 and 4]{durrett2011traveling} to the context of the present model, Theorem \ref{Theorem: general finite graph} follows. For this reason, and in the interest of brevity, we do not provide an explicit proof of Theorem \ref{Theorem: general finite graph}. 

The only notable difference is that the process does not start from a macroscopic state. However, it can be easily shown that, conditioned on $\{W=0\}$, no mutant cells are generated asymptotically, since with high probability the wild-type subpopulation can't survive in the $\log(n)$-accelerated time scale. Additionally, conditioned on $\{W>0\}$, the first phase, corresponding to the growth of the wild-type subpopulation leading to the macroscopic state that allows for the generation of the first mutant cell, is straightforward to capture.

When considering a(n) (infinite) mono-directional graph, the structure of such a graph is sufficiently simple to allow for an explicit form of the limiting functions $(x_v)_{v \in V}$, see the next corollary. In particular, there is only one possible slope change that can happen at a time. More specifically, when a not-yet-born trait becomes alive due to the previous trait reaching the typical size allowing for mutations. When this happens, the new born trait takes the slope the maximum between its inner growth rate or the current slope of the previous trait (as mentioned in point (i) of the heuristics preceding Theorem \ref{Theorem: general finite graph}). Any alive trait cannot update its slope because no backward mutation is permitted with this graph structure. Moreover, only a single trait becomes alive at each time, due to the scaling labels $\ell(i,i+1)$ being positive.

\begin{corollary}[Theorem \ref{Theorem: general finite graph} applied to a mono-directional graph]\label{Cor: theorem 2.2 mono directional graph} 
Assume the graph is infinite and mono-directional, i.e. $(V,E)=(\mathbb{N}_{0},\{(i,i+1), i \in \mathbb{N}_{0}\})$ and that $\ell^{*}:=\inf \{\ell(i,i+1), i \in \mathbb{N}_{0}\}>0$. Then the limiting functions $(x_i)_{i \in \mathbb{N}_0}$ of Theorem \ref{Theorem: general finite graph} have the following simplified form:
\begin{align}
    \forall t \in \mathbb{R}^{+}, x_{i}(t):=\lambda_{\max}(i) (t-\widetilde{t}(i))_{+},
\end{align}
where $\lambda_{\max}(i)=\max_{j \in \{0,\cdots,i\}}\lambda(j)$ and $\widetilde{t}(i):=\sum_{j=0}^{i-1}  \frac{\ell(j,j+1) \lambda(0)}{\lambda_{\max}(j)}$.
\end{corollary}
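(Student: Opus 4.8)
The plan is to specialize the recursive tracking algorithm of Theorem \ref{Theorem: general finite graph} to the infinite mono-directional graph $(\mathbb{N}_0,\{(i,i+1)\})$ and verify, by induction on the step index $j$, that its output coincides with the claimed closed form $x_i(t)=\lambda_{\max}(i)(t-\widetilde t(i))_+$. First I would observe that in this graph each vertex $v=i\ge 1$ has a single incoming neighbor $i-1$ and a single outgoing neighbor $i+1$, so backward mutations are impossible: consequently the set $B_j(v)$ in the algorithm is always empty (an already-alive trait $v=i$ would need an outgoing neighbor with strictly smaller slope, but its only outgoing neighbor $i+1$ is either not-yet-born, hence handled by the ``birth'' branch, or already alive with slope $\ge\lambda_{\max}(i)\ge\lambda_j(i)$). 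Hence the only update that ever fires is ``birth of a new trait'': at each step $\Delta_{j+1}$ exactly one not-yet-born vertex $i$ becomes alive, namely the smallest index still in $U_j$, because the scaling labels satisfy $\ell(j,j+1)\ge\ell^*>0$ and vertex $i$ cannot be born before vertex $i-1$ has reached the threshold size. This makes the algorithm a single linear chain of events $\Delta_1<\Delta_2<\cdots$ with $A_j=\{0,1,\dots,j\}$, $U_j=\{j+1,j+2,\dots\}$.

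Next I would carry out the induction proper. Suppose after step $j$ we have $x_i(t)=\lambda_{\max}(i)(t-\widetilde t(i))_+$ for all $i\le j$ with current slope $\lambda_j(i)=\lambda_{\max}(i)$, and all vertices $i>j$ are dormant with $\lambda_j(i)=0$. The next birth concerns vertex $j+1$, whose unique parent is $j$; the birth time is $\delta_{j,j+1}=\inf\{t\ge\Delta_j: y_j(t)\ge\lambda(0)\ell(j,j+1)\}$ where $y_j(t)=\lambda_{\max}(j)(t-\widetilde t(j))$. Solving gives $\Delta_{j+1}=\widetilde t(j)+\frac{\lambda(0)\ell(j,j+1)}{\lambda_{\max}(j)}=\widetilde t(j+1)$, matching the definition of $\widetilde t$. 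The birth rule then sets $\lambda_{j+1}(j+1)=\max(\lambda(j+1),\lambda_j(j))=\max(\lambda(j+1),\lambda_{\max}(j))=\lambda_{\max}(j+1)$, and since $x_{j+1}(\Delta_{j+1})=0$ we get $x_{j+1}(t)=\lambda_{\max}(j+1)(t-\widetilde t(j+1))_+$, while all already-alive slopes are unchanged (the $C_j$-updates here only record $j+1\in C_{j+1}(j)$ if $\lambda_{\max}(j)\ge\lambda(j+1)$, which does not alter any $x_i$). The base case $j=0$ is immediate from the initialization: $A_0=\{0\}$, $x_0(0)=0$, $\lambda_0(0)=\lambda(0)=\lambda_{\max}(0)$, $\widetilde t(0)=0$. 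This closes the induction and identifies the algorithm's output with the claimed formula; the convergence statement and the form of the limit on $[T_1,T_2]$ are then inherited directly from Theorem \ref{Theorem: general finite graph}.

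I would also address the one genuine subtlety: Theorem \ref{Theorem: general finite graph} is stated for a \emph{finite} graph, whereas here $V=\mathbb{N}_0$ is infinite. The resolution is a truncation/localization argument: fix the time horizon $[T_1,T_2]$; since $\ell^*>0$ and $\lambda_{\max}$ is bounded by $\max_v\lambda(v)<\infty$ (in the mono-directional regime the relevant growth rates are those actually appearing, and $\lambda(0)>0$), only finitely many births $\Delta_1<\cdots<\Delta_{k^*}$ with $\Delta_{k^*}\le T_2$ occur — indeed $\Delta_{j+1}-\Delta_j\ge \lambda(0)\ell^*/\max_i\lambda_{\max}(i)>0$ — so on $[T_1,T_2]$ only a finite set of traits $\{0,1,\dots,N\}$ is ever nonzero, and the dynamics restricted to this finite initial segment of the graph is exactly the finite-graph dynamics to which Theorem \ref{Theorem: general finite graph} applies; traits $i>N$ contribute $X_i^{(n)}\to 0$ on this interval. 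The main obstacle is precisely making this finite-horizon truncation rigorous — checking that no mutant cell of trait $i>N$ is asymptotically present on $[T_1,T_2]$, which follows from Lemma \ref{Lem: no mutant cell} style estimates (the first occurrence time of trait $i$ is $\widetilde t(i)\to\infty$) — but once that is in place everything reduces to the elementary induction above.
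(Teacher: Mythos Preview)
Your proof is correct and follows exactly the approach the paper sketches in the paragraph preceding the corollary (the paper does not give a formal proof): specialize the tracking algorithm to the mono-directional chain, observe that only the ``birth'' update ever fires because $B_j(i)=\emptyset$ (the sole outgoing neighbor $i+1$ is either not yet alive or, by the induction hypothesis $\lambda_j(i+1)=\lambda_{\max}(i+1)\ge\lambda_{\max}(i)=\lambda_j(i)$, has no smaller slope), and then compute the successive birth times $\Delta_{j+1}=\widetilde t(j+1)$ and slopes $\lambda_{j+1}(j+1)=\lambda_{\max}(j+1)$ by induction. Your handling of the finite-versus-infinite graph issue via truncation on $[T_1,T_2]$ is a point the paper leaves entirely implicit; the only caveat is that your lower bound $\Delta_{j+1}-\Delta_j\ge\lambda(0)\ell^*/\sup_i\lambda(i)$ tacitly assumes $\sup_i\lambda(i)<\infty$, which is not stated in the corollary but is a natural standing hypothesis for the model.
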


 \begin{remark}
     Using the previous corollary, the limits $(x_v)_{v \in V}$ defined in Theorem \ref{Theorem: general finite graph} can be rewritten by using the decomposition via walks. More specifically, let $v \in V$, then for any walk $\gamma \in P(v)$ define $x_{\gamma}$ as the limit obtained by applying the previous corollary to the mono-directional graph indexed by this walk $\gamma$. Then we have $x_{v}=\max_{\gamma \in P(v)} x_{\gamma}$. The maximum is well-defined because for all $t \in \mathbb{R}^{+}$ the set $\{\gamma \in P(v): x_{\gamma}(t)>0\}$ is finite. 
 \end{remark}

Theorem \ref{Theorem: general finite graph} is more general than Theorem \ref{Theorem: non increasin growth rate graph} in the sense that there is no assumption on the growth rate function, but it is a less refined result. In Remark \ref{Rq Thm 1} 1. we made explicit one contribution of Theorem \ref{Theorem: non increasin growth rate graph} about capturing a multiplicative factor of $\log(n)$ using the example of Figure \ref{Figure: 2 traits}. Next we are going to do a full comparison of Theorem \ref{Theorem: non increasin growth rate graph} and \ref{Theorem: general finite graph} on the example of Figure \ref{Figure: Heuristics2}. 

\textbf{Comparison between Theorems \ref{Theorem: non increasin growth rate graph} and \ref{Theorem: general finite graph}: }
The asymptotic function $x$ obtained through Theorem \ref{Theorem: general finite graph} for the plain purple trait is $x(t)=\1_{\{t \geq 4\}}\lambda(0)(t-4).$ In the caption of Figure \ref{Figure: Heuristics2}, it is already made explicit that only the plain green walk will contribute to the size order of the plain purple mutant subpopulation. If one denotes respectively by $1$, $2$ and $3$ the vertices on the plain green walk such that this walk is exactly $(0,1,2,3)$, where $3$ is the plain purple vertex, the asymptotic limits for vertex $3$, captured by Theorem \ref{Theorem: non increasin growth rate graph}, is for all $t \geq 4,$ 

\begin{align}
    &\frac{2\alpha(0) \mu(0,1)}{\lambda(0)}\cdot \frac{2 \alpha(1) \mu(1,2)}{\lambda(0)}\cdot \frac{2 \alpha(2) \mu(2,3)}{\lambda(0)-\lambda(3)}W \int_{3}^{t} \left(\int_{1}^{u} ds\right)du \cdot n^{t-4}\log^{2}(n)\\
    &\hspace{1.13cm}=\left(\frac{t^2}{2}-t-\frac{3}{2}\right)\frac{8\alpha(0) \alpha(1) \alpha(2) \mu(0,1)\mu(1,2)\mu(2,3)}{\lambda^{2}(0) \left(\lambda(0)-\lambda(3)\right)}W n^{t-4}\log^{2}(n).
\end{align}
In particular, Theorem \ref{Theorem: general finite graph} captures only the power of $n$ which is $t-4$ whereas Theorem \ref{Theorem: non increasin growth rate graph} captures the stochasticity $W$, a supplementary scaling factor $\log^{2}(n)$, a time polynomial $\frac{t^2}{2}-t-\frac{3}{2}$ and also a constant depending only on the parameters of the visited vertices $\frac{8\alpha(0) \alpha(1) \alpha(2)\mu(0,1)\mu(1,2)\mu(2,3)}{\lambda^{2}(0) \left(\lambda(0)-\lambda(3)\right)}$. To our knowledge, this is the first time that this level of refinement has been captured under the power law mutation rates limit.

Now we make explicit the form of the limiting functions $(x_{v})_{v \in V}$ in the special case where we assume the non-increasing growth rate condition. Under this condition, the limiting functions $(x_v)_{v \in V}$ take a very simple form. The only quantity one has to consider is the time $t(v)$ at which trait $v$ becomes alive, where $t(v)$ is defined in Definition \ref{Def: Admissible path}. Then after this time, trait $v$ grows at speed $\lambda(0)$ due to the non-increasing growth rate condition. This is made formal in the next corollary.
\begin{corollary}[Theorem \ref{Theorem: general finite graph} applied with the non-increasing growth rate condition of \eqref{Assumption:non increasing growth rate condition}]\label{Cor: theorem 2.2 non-increasing growth rate condition}  
    Assume the non-increasing growth rate condition of \eqref{Assumption:non increasing growth rate condition}. Then the limiting functions $(x_{v})_{v \in V}$ of Theorem \ref{Theorem: general finite graph} have the following simplified form: 
    \begin{align}
        \forall t \in \mathbb{R^{+}}, x_{v}(t)=\lambda(0) \left(t-t(v)\right)_{+},
    \end{align}
    where $\forall x \in \mathbb{R}, x_{+}:=x\1_{\{x \in \mathbb{R}^{+}\}}$.
\end{corollary}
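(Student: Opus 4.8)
The plan is to read the corollary off Theorem~\ref{Theorem: general finite graph} by exploiting the walk-decomposition identity $x_v = \max_{\gamma \in P(v)} x_\gamma$ recorded in the remark following Corollary~\ref{Cor: theorem 2.2 mono directional graph}, where $x_\gamma$ is the limiting function produced by the tracking algorithm when it is run on the mono-directional graph obtained by indexing a walk $\gamma = (v(0), \dots, v(k)) \in P(v)$, with $v(0) = 0$ and $v(k) = v$. Applying Corollary~\ref{Cor: theorem 2.2 mono directional graph} to this walk, the quantity $\lambda_{\max}(i)$ becomes $\max_{0 \le j \le i} \lambda(v(j))$; the non-increasing growth rate condition \eqref{Assumption:non increasing growth rate condition} forces $\lambda(v(j)) \le \lambda(0) = \lambda(v(0))$ for all $j$, so $\lambda_{\max}(i) = \lambda(0)$ for every $i \in \{0,\dots,k\}$. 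Substituting this into the formula of Corollary~\ref{Cor: theorem 2.2 mono directional graph} collapses $\widetilde{t}(v(k))$ to $\sum_{j=0}^{k-1} \ell(v(j), v(j+1)) \lambda(0)/\lambda(0) = t(\gamma)$, hence $x_\gamma(t) = \lambda(0)\,(t - t(\gamma))_+$.

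Taking the maximum over $\gamma \in P(v)$ then finishes the proof: since $\lambda(0) > 0$ and $x \mapsto x_+$ is non-decreasing, $x_v(t) = \max_{\gamma \in P(v)} \lambda(0)(t - t(\gamma))_+ = \lambda(0)\big(t - \min_{\gamma \in P(v)} t(\gamma)\big)_+ = \lambda(0)(t - t(v))_+$ with $t(v)$ as in Definition~\ref{Def: Admissible path}. This maximum is legitimately attained: the label set of the finite graph $(V,E,L)$ has a positive minimum $\ell^{*}$, so any walk with more than $t/\ell^{*}$ edges has $t(\gamma) \ge t$ and contributes nothing at time $t$, leaving only finitely many competing walks — exactly the finiteness invoked in the cited remark.

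If one prefers to argue directly from the recursion of Theorem~\ref{Theorem: general finite graph} rather than through the walk decomposition, the proof proceeds by induction on the update times $\Delta_j$ with the invariant that every alive trait carries slope exactly $\lambda(0)$. This holds at initialization and is preserved because a newly born trait $v \in \nu_{j+1} \cap U_j$ receives slope $\max\big(\lambda(v), \max_{u \in \nu(v)} \lambda_{j+1}(u)\big) = \max(\lambda(v), \lambda(0)) = \lambda(0)$ under \eqref{Assumption:non increasing growth rate condition}. Given the invariant, the set $B_j(v)$ is always empty, so the ``growth driven by another trait'' updates never fire, $\Delta_{j+1}$ is determined purely by births, and $\delta_{u,v}$ evaluates to $t(u) + \ell(u,v)$ since an alive trait $u$ born at $t(u)$ satisfies $y_u(t) = \lambda(0)(t - t(u))$; the birth time of $v$ is therefore $\min_{u : (u,v) \in E}(t(u) + \ell(u,v))$ (the minimum being over neighbours already alive, which includes the penultimate vertex of any shortest walk to $v$), which is precisely the dynamic-programming recursion for the minimal walk length $t(v)$. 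In either route the content is a bookkeeping verification that the general recursion degenerates; the only point deserving genuine attention is confirming that every live slope equals $\lambda(0)$ and is never re-updated, after which no real obstacle remains.
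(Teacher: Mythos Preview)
Your proposal is correct. The paper does not supply an explicit proof of this corollary, treating it as an immediate consequence of the surrounding results; your first route---specializing Corollary~\ref{Cor: theorem 2.2 mono directional graph} along each walk and then invoking the walk-decomposition identity $x_v = \max_{\gamma \in P(v)} x_\gamma$ from the subsequent remark---is precisely the intended derivation, and your alternative direct verification of the recursion is a sound independent check.
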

 
\section{First-order asymptotics of the sizes of the mutant subpopulations for an infinite mono-directional graph}
\label{Section: proof mono directional graph}
In this section, we consider the model described in Section \ref{Introduction} within a specific infinite mono-directional graph setting:
\begin{align}
\label{infinite modo-directional graph}
    (V,E)=\left(\mathbb{N}_{0}, \{(i,i+1), i \in \mathbb{N}_{0} \} \right).
\end{align}
Studying this special case will enable us to address cycles, particularly those generated by backward mutations, in the more general finite graph scenario. We assume the non-increasing growth rate condition given in \eqref{Assumption:non increasing growth rate condition}. For simplicity of notation, we introduce the following new notations for all $i \in \mathbb{N}_{0}$
\begin{align}
    \mu_{i}^{(n)}:=\mu^{(n)}(i,i+1) \text{ and } \ell(i):=\ell(i,i+1).
\end{align}
In other words, the mutation regime is
\begin{align}
\label{Mutation regime}
    \forall i \in \mathbb{N}_{0},n^{\ell(i)} \mu_{i}^{(n)} \underset{n \to \infty}{\longrightarrow}\mu_{i}.
\end{align}
Assume that $\ell^{*}:=\inf \{\ell(i): i \in \mathbb{N}_0\}>0.$ For all $i \in \mathbb{N}_{0}$, denote by $\alpha_{i}$, $\beta_{i}$ and $\lambda_{i}$ the division, death and growth rates associated to trait $i$ instead of $\alpha(i), \beta(i)$ and $\lambda(i)$. With this setting, three different scenarios can occur during a division event of a cell of trait $i \in \mathbb{N}_{0}$:
\begin{itemize}
    \item both daughter cells keep the trait $i$ of their mother cell, with probability $\big{(}1-\mu_{i}^{(n)}\big{)}^{2}$,
    \item exactly one daughter cell mutates to the next trait $i+1$ when the second daughter cell keeps the trait $i$ of its mother cell, with probability $2\mu_{i}^{(n)}\big{(}1-\mu_{i}^{(n)}\big{)}$,
    \item both daughter cells mutate to the next trait $i+1$, with probability $\big{(}\mu_{i}^{(n)}\big{)}^{2}$.
\end{itemize}
This model is graphically represented in Figure~\ref{Figure: model}.
\begin{figure}
\centering
\includegraphics[width=0.95 \linewidth]{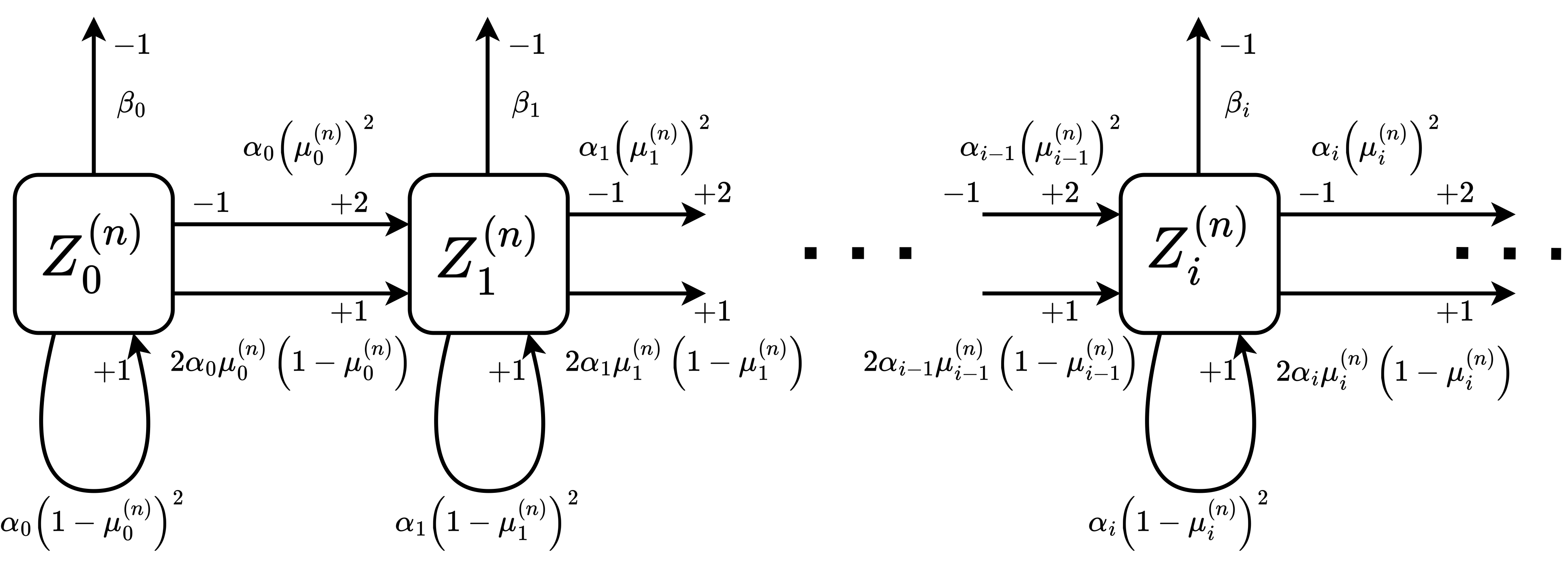}
\caption{Dynamical representation of the infinite mono-directional graph}
\label{Figure: model}
\end{figure}
In particular, any lineage of a cell of trait $i$ follows a birth-death branching process with birth rate $\alpha_{i}\big{(}1-\mu_{i}^{(n)} \big{)}^{2}$ and death rate $\beta_{i}+\alpha_{i}^{(n)}\big{(}\mu_{i}^{(n)}\big{)}^{2}$. Thus, we introduce the birth, death and growth rates of any lineage of a cell with trait $i$ as follows
\begin{align}
    \label{birth, death, growth rates}
    &\alpha_{i}^{(n)}:= \alpha_{i}\big{(}1-\mu_{i}^{(n)} \big{)}^{2}, \beta_{i}^{(n)}:=\beta_{i}+\alpha_{i}^{(n)}\big{(}\mu_{i}^{(n)}\big{)}^{2} \text{ and } \lambda_{i}^{(n)}:=\alpha_{i}^{(n)}-\beta_{i}^{(n)}=\lambda_{i}-2\alpha_{i}\mu_{i}^{(n)}.
\end{align}
Compared to the general finite graph, for this mono-directional graph, there is only one path from trait $0$ to any trait $i \in \mathbb{N}$, implying that 
\begin{align}
\label{t(i) and theta(i)}
    &t(i)=\sum_{i=0}^{i-1} \ell(i) \text{ and }\theta(i)=\vert\{j\in \{1, \cdots,i\}: \lambda_{j}=\lambda_{0} \}\vert.
\end{align}
Let $w_i:=w_{(0,1,\cdots,i)}$ denote the weight on the path $(0,\cdots,i)$. The sequence $\big{(}\big{(}Z^{(n)}_{i}\big{)}_{i \in \mathbb{N}_{0}}\big{)}_{n \in \mathbb{N}}$ is mathematically constructed using independent Poisson Point Measures (PPMs). Let $Q_{0}^{b}(ds,d\theta)$, $Q_{0}^{d}(ds,d\theta)$, $\left(Q_{i}(ds,d\theta) \right)_{i\in \mathbb{N}}$, $\left(N_{i}(ds,d\theta) \right)_{i \in \mathbb{N}_{0}}$, and $\left(Q_{i}^{m}(ds,d\theta) \right)_{i \in \mathbb{N}_{0}}$ be independent PPMs with intensity $dsd\theta$. The subpopulation of wild-type cells is
\begin{align}
\label{Equation: wild type pop}
    Z_{0}^{(n)}(t)&:=1+\int_{0}^{t}\int_{\mathbb{R}^{+}} \1_{\left\{\theta \leq \alpha_0^{(n)}Z_{0}^{(n)}(s^{-})\right\}}Q_{0}^{b}(ds,d\theta)\\
    &-\int_{0}^{t}\int_{\mathbb{R}^{+}}\1_{\left\{\theta \leq \beta_0Z_{0}^{(n)}(s^{-}) \right\}}Q_{0}^{d}(ds,d\theta)-H_{0}^{(n)}(t)
\end{align}
and for all $i\in \mathbb{N}$ the mutant subpopulation of trait $i$ is
\begin{align}
\label{Equation: def mutant pop}
    Z_{i}^{(n)}(t)&:=\int_{0}^{t}\int_{\mathbb{R}^{+}} \Big{(} \1_{\left\{\theta \leq \alpha_i^{(n)}Z_{i}^{(n)}(s^{-})\right\}}-\1_{\left\{\alpha_i^{(n)}Z_{i}^{(n)}(s^{-}) \leq \theta \leq \left(\alpha_i^{(n)}+\beta_i\right)Z_{i}^{(n)}(s^{-}) \right\}}\Big{)}Q_{i}(ds,d\theta) \\ 
    &+K_{i-1}^{(n)}(t)+2H_{i-1}^{(n)}(t)-H_{i}^{(n)}(t), \\
\end{align}
where for all $i \in \mathbb{N}_{0}$
\begin{align}
\label{Equation: def mutation processes}
    K_{i}^{(n)}(t):=\int_{0}^{t}\int_{\mathbb{R}^{+}}\1_{\left\{\theta \leq 2\alpha_{i}\mu_{i}^{(n)}\left(1-\mu_{i}^{(n)}\right)Z_{i}^{(n)}(s^{-})\right\}}N_{i}(ds,d\theta)
\end{align}
and 
\begin{align}
    H_{i}^{(n)}(t):=\int_{0}^{t}\int_{\mathbb{R}^{+}} \1_{\left\{ \theta \leq \alpha_{i} \left( \mu_{i}^{(n)}\right)^{2}Z_{i}^{(n)}(s^{-})\right\}} Q_{i}^{m}(ds,d\theta).
\end{align}
The processes $\big{(}K_{i}^{(n)}(t)\big{)}_{t\in\mathbb{R}^{+}}$ and $\big{(}H_{i}^{(n)}(t)\big{)}_{t\in\mathbb{R}^{+}}$ count the number of mutations up to time $t$ in the subpopulation of trait $i$ that result in exactly one and exactly two mutated daughter cells of trait $i+1$. 

Let $(Z_{0}(t))_{t \in \mathbb{R}^{+}}$ be the birth-death branching process with birth and death rates $\alpha_{0}$ and $\beta_{0}$ constructed in the following way 
\begin{align}
\label{def Z0}
    \hspace{1cm}Z_{0}(t):=1+\int_{0}^{t}\int_{\mathbb{R}^{+}} \1_{\left\{\theta \leq \alpha_{0}Z_{0}(s^{-})\right\}}Q_{0}^{b}(ds,d\theta)-\int_{0}^{t}\int_{\mathbb{R}^{+}} \1_{\left\{ \theta \leq \beta_{0}Z_{0}(s^{-})\right\}} Q_{0}^{d}(ds,d\theta).
\end{align}
Notice that with this construction, the following monotone coupling immediately holds:
\begin{align}
\label{eq:monotonecoupling}
    \forall t \geq 0, Z_{0}^{(n)}(t) \leq Z_{0}(t) \  a.s.
\end{align}
Denote by 
\begin{align}
\label{W def}
    W:=\lim_{t \to \infty} e^{-\lambda_{0} t}Z_{0}(t)
\end{align}
the almost sure limit of the positive martingale $\left( e^{-\lambda_{0} t}Z_{0}(t)\right)_{t \in \mathbb{R}^{+}}$, whose law is
\begin{align}
\label{Eq:distribution W}
    W\overset{law}{=}Ber\Big{(}\frac{\lambda_{0}}{\alpha_{0}}\Big{)}\otimes Exp \Big{(} \frac{\lambda_{0}}{\alpha_{0}}\Big{)},
\end{align}
see \cite[Section 1.1]{durrett2010evolution} or  \cite[Theorem 1]{durrett2015branching}.

\subsection{The wild-type subpopulation dynamics} \label{Sec:proof lemma}
Using the same PPMs $Q_{0}^{b}$ and $Q_{0}^{d}$ in the construction of $\big{(}Z_{0}^{(n)}\big{)}_{n \in \mathbb{N}}$ and $Z_0$ (see Equations \eqref{Equation: wild type pop} and \eqref{def Z0}) allows us to control the size dynamics of the previous sequence over time by comparing it with the size of $Z_0$. More precisely, we show that the natural martingale associated with $Z_{0}^{(n)}$ can be compared to the natural one of $Z_0$. This comparison follows from the fact that $\big{(}\alpha_{0}^{(n)}, \beta^{(n)}_{0}\big{)} \to(\alpha_0,\beta_0)$ as $n \to \infty.$ The control is obtained along the entire trajectory and in probability. The rate of convergence is quantified to be at most of order $\mathcal{O}\big{(}\mu^{(n)}_{0}\big{)}$.
\begin{lemma}
\label{Lem:control primary pop}
There exist $C(\alpha_{0},\lambda_{0})>0$ and $N \in \mathbb{N}$ such that for all $\varepsilon>0$ and $n \geq N$, 
\begin{align}
    \mathbb{P}\Big{(} \sup_{t \in \mathbb{R}^{+}} \Big{\vert} e^{-\lambda_{0}t}Z_{0}(t)-e^{-\lambda_{0}^{(n)}t}Z_{0}^{(n)}(t)\Big{\vert} \geq \varepsilon\Big{)} \leq \frac{ C(\alpha_{0},\lambda_{0})}{\varepsilon^{2}} \mu_{0}^{(n)} \underset{n \to \infty}{\longrightarrow}0.
\end{align}
\end{lemma}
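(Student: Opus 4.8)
The plan is to compare the two natural martingales directly by writing their difference as a sum of stochastic integrals against the common Poisson point measures $Q_0^b$ and $Q_0^d$, plus the monotone error term $H_0^{(n)}$, and then to apply Doob's $L^2$-maximal inequality to the resulting martingale. First I would fix the two exponential martingales $M(t):=e^{-\lambda_0 t}Z_0(t)$ and $M_n(t):=e^{-\lambda_0^{(n)}t}Z_0^{(n)}(t)$; each is a nonnegative martingale in its own filtration, and since both are built from the \emph{same} PPMs $Q_0^b,Q_0^d$ (the extra mutation loss $H_0^{(n)}$ in \eqref{Equation: wild type pop} being a separate PPM $Q_0^m$), their difference $D_n(t):=M(t)-M_n(t)$ is a martingale with an explicit Doob–Meyer/Itô decomposition. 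Using $Z_0(0)=Z_0^{(n)}(0)=1$ so $D_n(0)=0$, I would write $D_n(t)$ as an integral against $Q_0^b$, $Q_0^d$ and $Q_0^m$ with integrands that are differences of the indicator thresholds appearing in \eqref{Equation: wild type pop} and \eqref{def Z0}; the key point is that these integrands involve the difference of the rates $\alpha_0$ vs.\ $\alpha_0^{(n)}=\alpha_0(1-\mu_0^{(n)})^2$ (of order $\mu_0^{(n)}$), the difference of the discount factors $e^{-\lambda_0 t}$ vs.\ $e^{-\lambda_0^{(n)}t}$ (also of order $\mu_0^{(n)}$ uniformly in $t$, since $\lambda_0-\lambda_0^{(n)}=2\alpha_0\mu_0^{(n)}$ and $|e^{-at}-e^{-bt}|\le |a-b| t e^{-\min(a,b)t}$ is bounded), and the loss term $H_0^{(n)}$ of rate $\alpha_0(\mu_0^{(n)})^2 Z_0^{(n)}$.

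Next I would compute the predictable quadratic variation $\langle D_n\rangle_\infty$. Each jump of $D_n$ has size controlled by $e^{-\lambda_0 s}$ times a bounded quantity, and the jump \emph{rates} carry a factor $\mu_0^{(n)}$ (from the rate mismatch) or involve $Z_0(s)$ itself; integrating, one gets $\E\langle D_n\rangle_\infty \le C\,\mu_0^{(n)} \int_0^\infty e^{-2\lambda_0 s}\bigl(\E[Z_0(s)] + \text{lower order}\bigr)\,ds$, and since $\E[Z_0(s)]=e^{\lambda_0 s}$ the integral $\int_0^\infty e^{-2\lambda_0 s}e^{\lambda_0 s}ds=\int_0^\infty e^{-\lambda_0 s}ds=1/\lambda_0$ converges. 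This is exactly where $\lambda_0>0$ is used. The cross terms coming from the common-PPM cancellation are what make the rate mismatch $\mu_0^{(n)}$, rather than $O(1)$, the governing scale; this is the heart of the estimate and I expect the bookkeeping of the three integrands (birth, death, mutation-loss) and verifying the cancellation in the shared PPMs to be the main obstacle — one has to be careful that the indicator thresholds $\{\theta\le \alpha_0 Z_0(s^-)\}$ and $\{\theta\le \alpha_0^{(n)}Z_0^{(n)}(s^-)\}$ overlap on the common region and only differ on a set of $\theta$-measure $|\alpha_0 Z_0-\alpha_0^{(n)}Z_0^{(n)}|$, which must itself be split into the rate part ($O(\mu_0^{(n)}Z_0)$) and the already-accumulated difference part (handled by a Grönwall-type closing of the estimate).

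Finally, Doob's maximal inequality gives
\begin{align}
\mathbb{P}\Bigl(\sup_{t\ge 0}|D_n(t)|\ge \varepsilon\Bigr)\le \frac{\E[\sup_t D_n(t)^2]}{\varepsilon^2}\le \frac{4\,\E[D_n(\infty)^2]}{\varepsilon^2}=\frac{4\,\E\langle D_n\rangle_\infty}{\varepsilon^2}\le \frac{C(\alpha_0,\lambda_0)}{\varepsilon^2}\,\mu_0^{(n)},
\end{align}
where the passage to $t=\infty$ is justified because both $M$ and $M_n$ are $L^2$-bounded martingales (the $L^2$-boundedness of $M$ is classical for a supercritical birth–death process, and that of $M_n$ follows from the monotone coupling \eqref{eq:monotonecoupling}, $M_n(t)\le e^{(\lambda_0-\lambda_0^{(n)})t}M(t)$, together with the uniform bound on $e^{(\lambda_0-\lambda_0^{(n)})t}$ when $\lambda_0^{(n)}>0$ for $n$ large, which holds since $\lambda_0^{(n)}\to\lambda_0>0$). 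Since $\mu_0^{(n)}=n^{-\ell(0)}\mu_0(1+o(1))\to 0$ by \eqref{Mutation regime}, the right-hand side tends to $0$, giving the claim with the stated $N$ and constant. I would present the quadratic-variation bound as the one genuinely delicate computation and keep the rest at the level of citing Doob's inequality and the standard martingale-limit facts about $Z_0$.
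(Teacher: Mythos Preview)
Your approach is correct in spirit and lands on the same Doob-$L^2$ inequality for the martingale $D_n(t)=e^{-\lambda_0 t}Z_0(t)-e^{-\lambda_0^{(n)}t}Z_0^{(n)}(t)$, but the execution you propose is more laborious than the paper's. The paper does not catalogue jump types or compute $\langle D_n\rangle$ directly; instead it expands $\E[D_n(t)^2]=\E[M(t)^2]+\E[M_n(t)^2]-2\E[M(t)M_n(t)]$ and computes each term in closed form. The two marginal second moments are standard; the cross moment $\E[Z_0(t)Z_0^{(n)}(t)]$ is obtained by an ODE (It\^o's formula plus the monotone coupling $Z_0^{(n)}\le Z_0$), giving an explicit expression in $\alpha_0,\alpha_0^{(n)},\lambda_0,\lambda_0^{(n)}$. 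The $O(\mu_0^{(n)})$ bound then falls out of a one-line Taylor expansion of $\frac{2\alpha_0}{\lambda_0}+\frac{2\alpha_0^{(n)}}{\lambda_0^{(n)}}-2\frac{\alpha_0+\alpha_0^{(n)}}{\lambda_0}$, with no Gr\"onwall argument and no jump bookkeeping. Your route would work, but the splitting of the indicator thresholds and the ``Gr\"onwall-type closing'' you flag as the main obstacle are entirely avoidable: the rate mismatch $\alpha_0 Z_0-\alpha_0^{(n)}Z_0^{(n)}$ can simply be bounded in expectation since $\E[Z_0(s)-Z_0^{(n)}(s)]=e^{\lambda_0 s}-e^{\lambda_0^{(n)}s}\le 2\alpha_0\mu_0^{(n)}s\,e^{\lambda_0 s}$, which integrates against $e^{-2\lambda_0 s}$ without any feedback.

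One genuine slip: your justification of the $L^2$-boundedness of $M_n$ via $M_n(t)\le e^{(\lambda_0-\lambda_0^{(n)})t}M(t)$ and a ``uniform bound on $e^{(\lambda_0-\lambda_0^{(n)})t}$'' is wrong, since $\lambda_0-\lambda_0^{(n)}=2\alpha_0\mu_0^{(n)}>0$ and the exponential diverges. The fix is immediate --- $M_n$ is itself the natural martingale of a supercritical birth--death process (rates $\alpha_0^{(n)},\beta_0^{(n)}$ with $\lambda_0^{(n)}>0$ for $n$ large), so $\sup_t\E[M_n(t)^2]=2\alpha_0^{(n)}/\lambda_0^{(n)}<\infty$ by the same classical formula you invoke for $M$.
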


\begin{proof}[Proof of Lemma \ref{Lem:control primary pop}]
Let the filtration $(\mathcal{F}_{t})_{t \geq 0}$ defined for all $t \geq 0$ as $$\mathcal{F}_{t}:=\sigma(Q_{0}^{b}((0,s] \times A)), Q_{0}^{d}((0,s]\times A), s \leq t, A \in \mathcal{B}(\mathbb{R}^{+})).$$ Notice that $\big{(} e^{-\lambda_{0}t}Z_{0}(t)-e^{-\lambda_{0}^{(n)}t}Z_{0}^{(n)}(t)\big{)}_{t \in \mathbb{R}^{+}}$ is a martingale, with respect to $(\mathcal{F}_{t})_{t \in \mathbb{R}^{+}}$, as the difference between the two martingales $\left( e^{-\lambda_{0}t}Z_{0}(t)\right)_{t \in \mathbb{R}^{+}}$ and $\big{(} e^{-\lambda_{0}^{(n)}t}Z_{0}^{(n)}(t)\big{)}_{t \in \mathbb{R}^{+}}$. Let $(f(m))_{m \in \mathbb{N}}$ be a non decreasing sequence satisfying $f(m) \underset{m \to \infty}{\to}\infty$. Using Doob's Inequality in $L^{2}$ (see \cite[Proposition 3.15]{le2016brownian}) we derive 
\begin{align} \label{eq:primary pop}
    &\mathbb{P}\Big{(} \sup_{t \in [0,f(m)]} \Big{\vert} e^{-\lambda_{0} t}Z_{0}(t)-e^{-\lambda_{0}^{(n)}t}Z_{0}^{(n)}(t)\Big{\vert} \geq \varepsilon\Big{)}\leq \frac{4}{\varepsilon^{2}}\E\Big{[} e^{-2\lambda_{0}f(m)}Z_{0}(f(m))^{2}\\
    &\hspace{1cm}+e^{-2\lambda_{0}^{(n)}f(m)}Z_{0}^{(n)}(f(m))^{2}-2e^{-(\lambda_{0}+\lambda_{0}^{(n)})f(m)}Z_{0}(f(m))Z_{0}^{(n)}(f(m))\Big{]}.
\end{align}
Ito's formula and Equation \eqref{eq:monotonecoupling} give $$\E \big{[}Z_{0}(t)Z_{0}^{(n)}(t)\big{]}=1+\int_{0}^{t}\big{(} \lambda_{0}+\lambda_{0}^{(n)}\big{)}\E \big{[}Z_{0}(s)Z_{0}^{(n)}(s)\big{]}ds+\int_{0}^{t}\big{(} \alpha_{0}^{(n)}+\beta_{0}\big{)}\E\big{[}Z_{0}^{(n)}(s)\big{]}ds.$$ By solving this equation, we obtain for all $t\geq0$
\begin{align}
\label{second cross moment birth death process}
    &\E \big{[}Z_{0}(t)Z_{0}^{(n)}(t)\big{]}=\frac{\alpha_{0}+\alpha_{0}^{(n)}}{\lambda_{0}}e^{\left(\lambda_{0}+\lambda_{0}^{(n)}\right)t}-\frac{\alpha_{0}^{(n)}+\beta_{0}}{\lambda_{0}}e^{\lambda_{0}^{(n)}t}.
\end{align}
Similarly we have 
\begin{align} \label{second moment birth death process}
        &\mathbb{E}\big{[}\left(Z_{0}(t)\right)^{2}\big{]}=\frac{2\alpha_{0}}{\lambda_{0}}e^{2\lambda_{0}t}-\frac{\alpha_{0}+\beta_{0}}{\lambda_{0}}e^{\lambda_{0}t}\leq \frac{2\alpha_{0}}{\lambda_{0}}e^{2\lambda_{0}t},\\
    &\mathbb{E}\big{[} \big{(}Z_{0}^{(n)}(t)\big{)}^{2}\big{]}=\frac{2\alpha_{0}^{(n)}}{\lambda_{0}^{(n)}}e^{2\lambda_{0}^{(n)}t}-\frac{\alpha_{0}^{(n)}+\beta_{0}^{(n)}}{\lambda_{0}^{(n)}}e^{\lambda_{0}^{(n)}t}\leq \dfrac{2\alpha_{0}^{(n)}}{\lambda_{0}^{(n)}}e^{2\lambda_{0}^{(n)}t}.
\end{align}
Consequently, combining \eqref{eq:primary pop}, \eqref{second cross moment birth death process} and \eqref{second moment birth death process} yields
\begin{align}
    &\mathbb{P}\Big{(} \sup_{t \in [0,f(m)]} \Big{\vert} e^{-\lambda_{0} t}Z_{0}(t)-e^{-\lambda_{0}^{(n)}t}Z_{0}^{(n)}(t)\Big{\vert} \geq \varepsilon\Big{)} \\
    &\hspace{3cm}\leq \frac{4}{\varepsilon^{2}}\Big{(} \frac{2\alpha_{0}}{\lambda_{0}}+\frac{2\alpha_{0}^{(n)}}{\lambda_{0}^{(n)}}-2\frac{\alpha_{0}+\alpha_{0}^{(n)}}{\lambda_{0}}+2\frac{\alpha_{0}^{(n)}+\beta_{0}}{\lambda_{0}}e^{-\lambda_{0}f(m)}\Big{)}.
\end{align}
The event $\big{\{}\sup_{t \in [0,f(m)]} \big{\vert} e^{-\lambda_{0} t}Z_{0}(t)-e^{-\lambda_{0}^{(n)}t}Z_{0}^{(n)}(t)\big{\vert} \geq \varepsilon\big{\}}$ is increasing with respect to the parameter $m$. Therefore, taking the limit as $m \to \infty$ and applying the monotonicity of the measure, it follows that
\begin{align}
    \mathbb{P}\Big{(} \sup_{t \in \mathbb{R}^{+}} \Big{\vert} e^{-\lambda_{0} t}Z_{0}(t)-e^{-\lambda_{0}^{(n)}t}Z_{0}^{(n)}(t)\Big{\vert} \geq \varepsilon\Big{)} \leq \frac{4}{\varepsilon^{2}}\Big{(} \frac{2\alpha_{0}}{\lambda_{0}}+\frac{2\alpha_{0}^{(n)}}{\lambda_{0}^{(n)}}-2\frac{\alpha_0+\alpha_0^{(n)}}{\lambda_0}\Big{)}.
\end{align}
Recalling that $\lambda_{0}^{(n)}=\lambda_{0}-2\alpha_{0}\mu_{0}^{(n)}$ it easily follows that $\frac{2\alpha_{0}^{(n)}}{\lambda_{0}^{(n)}}=\frac{2\alpha_{0}}{\lambda_{0}}+\frac{4\beta_{0}\alpha_{0}}{\lambda_{0}^{2}}\mu_{0}^{(n)}+\mathcal{O}\big{(}\big{(}\mu_{0}^{(n)}\big{)}^{2}\big{)}$ as well as $2\frac{\alpha_{0}+\alpha_{0}^{(n)}}{\lambda_{0}}=\frac{4\alpha_{0}}{\lambda_{0}}-\frac{4\alpha_{0}}{\lambda_{0}}\mu_{0}^{(n)}+\mathcal{O}\big{(}\big{(}\mu_{0}^{(n)}\big{)}^{2}\big{)}.$ Finally we have
\begin{align}
    \mathbb{P}\Big{(} \sup_{t \in \mathbb{R}^{+}}  \Big{\vert} e^{-\lambda_{0} t}Z_{0}(t)-e^{-\lambda_{0}^{(n)}t}Z_{0}^{(n)}(t)\Big{\vert} \geq \varepsilon\Big{)} &\leq \frac{4}{\varepsilon^{2}}\Big{(}\frac{4\beta_{0}\alpha_{0}}{\lambda_{0}^{2}}+\frac{4\alpha_{0}}{\lambda_{0}}\Big{)}\mu_{0}^{(n)}+\mathcal{O}\big{(}\big{(}\mu_{0}^{(n)}\big{)}^{2}\big{)} \\
    &=\frac{16\alpha_{0}^{2}}{\varepsilon^{2}\lambda_{0}^{2}}\mu_{0}^{(n)}+\mathcal{O}\big{(}\big{(}\mu_{0}^{(n)}\big{)}^{2}\big{)},
\end{align}
which concludes the proof.
\end{proof}
The next lemma provides an asymptotic comparison between the stopping times $\eta_{t}^{(n)}$, at which the wild-type subpopulation reaches the size $n^{t}$, and the deterministic times $\mathfrak{t}_{t}^{(n)}$. This asymptotic comparison is given in probability and is conditioned on $\{W>0\}$. It clarifies why these deterministic times are the natural candidates for studying the asymptotic behavior of the mutant subpopulations at the corresponding stopping times. The result is obtained uniformly over time intervals whose lengths tend to infinity, but not too quickly. 
\begin{lemma}
\label{convergence temps arret}
For all $\varepsilon>0$, $(T_1,T_2) \in \mathbb{R}^{+}$ and $\varphi_n$ such that  $\log(n)=o(\varphi_n)$ and $\varphi_n=o\left(n^{\ell(0)}\right)$, we have
\begin{equation}
    \mathbb{P} \Big{(}\sup_{t \in \left[T_{1},T_{2}\frac{\varphi_n}{\log(n)}\right]} \Big{\vert}\eta_{t}^{(n)}-\Big{(}\mathfrak{t}^{(n)}_{t}-\frac{\log(W)}{\lambda_0} \Big{)}\Big{\vert} \geq \varepsilon \Big{\vert} W>0\Big{)}\underset{n \to \infty}{\longrightarrow} 0.
\end{equation}
\end{lemma}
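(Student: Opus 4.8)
The plan is to invert the (approximately) exponential growth of the wild-type lineage. By Lemma~\ref{Lem:control primary pop} the rescaled process $e^{-\lambda_0^{(n)}t}Z_0^{(n)}(t)$ stays uniformly close to $e^{-\lambda_0 t}Z_0(t)$ with error $\mathcal{O}(\mu_0^{(n)})$ in probability, and by the martingale convergence \eqref{W def} the latter is within any prescribed $\varepsilon'$ of $W$ for all $t$ beyond some a.s.\ finite random time $U_0$. Hence on a good event $Z_0^{(n)}(t)$ is sandwiched between $(W\mp2\varepsilon')e^{\lambda_0^{(n)}t}$ for $t\ge U_0$, and since the level $n^\tau$ with $\tau\ge T_1>0$ is crossed only at times of order $\log n\to\infty$, inverting the relation $Z_0^{(n)}(t)\approx W e^{\lambda_0^{(n)}t}=n^\tau$ yields $\eta_\tau^{(n)}\approx(\tau\log n-\log W)/\lambda_0^{(n)}$, which is then compared with $\mathfrak{t}_\tau^{(n)}-\log(W)/\lambda_0=(\tau\log n-\log W)/\lambda_0$.

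First I would fix $\varepsilon>0$ and reduce to a good event. Since $\{W>0\}$ is the survival event with $\mathbb{P}(W>0)=\lambda_0/\alpha_0$ a fixed positive constant, it suffices to make $\mathbb{P}(\{\text{bad event}\}\cap\{W>0\})$ small. Choose $\delta>0$ small and $M,M'$ large so that $\mathbb{P}(0<W\le\delta)$, $\mathbb{P}(W>M)$ and $\mathbb{P}(\sup_{t\ge0}e^{-\lambda_0 t}Z_0(t)>M')$ are all small (the last supremum being a.s.\ finite, as the running maximum of a nonnegative convergent martingale), then pick $\varepsilon'\in(0,\delta/4)$, to be fixed later in terms of $\varepsilon,\delta,M$. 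Let $U_0:=\inf\{t_0\ge0:\sup_{t\ge t_0}|e^{-\lambda_0 t}Z_0(t)-W|\le\varepsilon'\}$, a.s.\ finite by \eqref{W def}, and choose $K$ with $\mathbb{P}(U_0>K)$ small; by Lemma~\ref{Lem:control primary pop}, $\mathbb{P}(\sup_{t\ge0}|e^{-\lambda_0 t}Z_0(t)-e^{-\lambda_0^{(n)}t}Z_0^{(n)}(t)|>\varepsilon')\to0$. Let $G_n$ be the intersection of $\{\delta<W\le M\}$, $\{\sup_t e^{-\lambda_0 t}Z_0(t)\le M'\}$, $\{U_0\le K\}$ and the Lemma~\ref{Lem:control primary pop} event at level $\varepsilon'$; choosing the constants in the order $\delta,M,M',\varepsilon',K$, and then $n$ large, makes $\mathbb{P}(G_n^c\cap\{W>0\})$ as small as desired.

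On $G_n$ one has the sandwich $(W-2\varepsilon')e^{\lambda_0^{(n)}t}\le Z_0^{(n)}(t)\le(W+2\varepsilon')e^{\lambda_0^{(n)}t}$ for every $t\ge U_0$ (in particular $Z_0^{(n)}$ survives, since $W-2\varepsilon'>\delta/2$), together with the a priori bound $Z_0^{(n)}(t)\le(M'+\varepsilon')e^{\lambda_0^{(n)}K}$ on all of $[0,K]$, which is $<n^{T_1}$ for $n$ large. Therefore, for $\tau\in[T_1,T_2\varphi_n/\log n]$ and $n$ large, $\eta_\tau^{(n)}>K\ge U_0$ (the level $n^\tau\ge n^{T_1}$ cannot be reached before time $K$); inserting $t=\eta_\tau^{(n)}$ into the upper bound gives $\eta_\tau^{(n)}\ge(\tau\log n-\log(W+2\varepsilon'))/\lambda_0^{(n)}$, while evaluating the lower bound at $u_+:=(\tau\log n-\log(W-2\varepsilon'))/\lambda_0^{(n)}$ (which exceeds $K$ for $n$ large, as $\tau\ge T_1$) gives $Z_0^{(n)}(u_+)\ge n^\tau$, hence $\eta_\tau^{(n)}\le u_+$. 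Comparing this interval for $\eta_\tau^{(n)}$ with $(\tau\log n-\log W)/\lambda_0$, the difference splits into $\tau\log n\,(1/\lambda_0^{(n)}-1/\lambda_0)$ and terms of the form $\log W/\lambda_0-\log(W\pm2\varepsilon')/\lambda_0^{(n)}$. Since $\lambda_0^{(n)}=\lambda_0-2\alpha_0\mu_0^{(n)}$ with $\mu_0^{(n)}=\mathcal{O}(n^{-\ell(0)})$, we have $1/\lambda_0^{(n)}-1/\lambda_0=\mathcal{O}(n^{-\ell(0)})$, so, using $\tau\log n\le T_2\varphi_n$, the first term is $\mathcal{O}(\varphi_n n^{-\ell(0)})=o(1)$ uniformly in $\tau$ thanks to $\varphi_n=o(n^{\ell(0)})$; and on $\{\delta<W\le M\}$ with $\varepsilon'<\delta/4$ the second is, in absolute value, at most $\log(1+4\varepsilon'/\delta)/\lambda_0+\mathcal{O}(n^{-\ell(0)})$, which is $<\varepsilon/2$ for $\varepsilon'$ small and $n$ large. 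Hence $\sup_{\tau\in[T_1,T_2\varphi_n/\log n]}|\eta_\tau^{(n)}-(\mathfrak{t}_\tau^{(n)}-\log(W)/\lambda_0)|<\varepsilon$ on $G_n$, and since $\mathbb{P}(G_n^c\mid W>0)\to0$ the lemma follows. The hypothesis $\log n=o(\varphi_n)$ is used only to guarantee that the window $[T_1,T_2\varphi_n/\log n]$ is eventually nonempty and diverges.

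The main obstacle is the uniformity over the $n$-dependent window of exponents: one must ensure that $\eta_\tau^{(n)}$ always lands in the ``settled'' regime $t\ge U_0$ where $e^{-\lambda_0 t}Z_0(t)$ is already close to $W$ — this is what the a priori bound $Z_0^{(n)}(t)\le(M'+\varepsilon')e^{\lambda_0^{(n)}t}$, valid on all of $\mathbb{R}^+$ by Lemma~\ref{Lem:control primary pop}, secures, by ruling out a crossing of $n^{T_1}$ before a fixed deterministic time $K$ — and that the error of order $\tau\log n\cdot\mu_0^{(n)}$ produced by replacing $\lambda_0^{(n)}$ with $\lambda_0$ remains negligible over the entire window, which is exactly why the length constraint $\varphi_n=o(n^{\ell(0)})$ is imposed. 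The truncation to $\{\delta<W\le M\}$, handling atypically small or large values of $W$ on $\{W>0\}$, is a minor but necessary bookkeeping step.
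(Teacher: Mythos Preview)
Your proof is correct and follows essentially the same route as the paper's: reduce to $\{\delta<W\le M\}$, combine Lemma~\ref{Lem:control primary pop} with the a.s.\ martingale convergence \eqref{W def} to sandwich $Z_0^{(n)}(t)$ between $(W\pm 2\varepsilon')e^{\lambda_0^{(n)}t}$ beyond a fixed time, invert to bound $\eta_\tau^{(n)}$, and control the discrepancy $\tau\log n\,(1/\lambda_0^{(n)}-1/\lambda_0)$ uniformly using $\varphi_n=o(n^{\ell(0)})$. The only cosmetic difference is that you use a symmetric sandwich in $e^{\lambda_0^{(n)}t}$ and an explicit supremum bound $M'$ on $e^{-\lambda_0 t}Z_0(t)$ to rule out crossings before time $K$, whereas the paper uses an asymmetric sandwich and the deterministic crossing time of the envelope $(\delta_2+2\tilde\varepsilon)e^{\lambda_0 s}$; your treatment of the early-time regime is arguably cleaner.
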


\begin{proof}[Proof of Lemma \ref{convergence temps arret}]
Let $\varepsilon>0$ and for all $n \in \mathbb{N}$ introduce  the event 
\begin{align}
    A^{(n)}:=\Big\{ \sup_{t \in \left[T_{1},T_{2}\frac{\varphi_n}{\log(n)}\right]}\Big\vert\eta_{t}^{(n)}-\Big(\mathfrak{t}^{(n)}_{t}-\frac{\log(W)}{\lambda_0} \Big)\Big\vert \geq \varepsilon\Big\}.
\end{align}

\textbf{Step 1:} We begin by showing that for all $0<\delta_1<\delta_2$ 
\begin{align}
\label{deterministic approx random time with W right}
    \mP \left(A^{(n)} \cap \{\delta_1<W<\delta_2\} \right)\underset{n \to \infty}{\longrightarrow}0.
\end{align}
Let $\nu>0$ and $\Tilde{\varepsilon}<\frac{\delta_1}{2}$. Firstly, since $e^{-\lambda_{0} t}Z_{0}(t) \underset{t \to \infty}{\to}W$ almost surely, it immediately follows that $Y(t):=\sup_{s \in [t,\infty]} \big{|}e^{-\lambda_{0} s}Z_{0}(s) -W\big{|} \underset{t \to \infty}{\longrightarrow}0$ almost surely and as a consequence in probability. Thus, introducing the event $B_{t}:=\{Y(t) \leq \Tilde{\varepsilon} \}$ for all $t>0$, there exists $t_{1}>0$ such that for all $t \geq t_1$,  $\mP \left( B_{t}\right) \geq 1-\frac{\nu}{3}.$ Secondly, using Lemma \ref{Lem:control primary pop}, there exists $n_{1} \in \mathbb{N}$ such that for all $n \geq n_{1}$, $\mP \left( C^{(n)}\right) \geq 1-\frac{\nu}{3}$ where $C^{(n)}:=\big\{ \sup_{t \in \mathbb{R}^{+}} \big{\vert} e^{-\lambda_{0} t}Z_{0}(t)-e^{-\lambda_{0}^{(n)}t}Z_{0}^{(n)}(t)\big{\vert} \leq \widetilde{\varepsilon}\big\}.$ Combining these two facts, we obtain the following inequality for all $n\geq n_{1}$
\begin{equation} \label{lab: E0}
    \mP \left( A^{(n)}\cap \{\delta_1<W<\delta_2\}\right) \leq \mP \left( A^{(n)}\cap \{\delta_1<W<\delta_2\}\cap B_{t_1}\cap C^{(n)} \right) +\frac{2\nu}{3}.
\end{equation}
It remains to show that $\mP \left( A^{(n)}\cap \{\delta_1<W<\delta_2\}\cap B_{t_1}\cap C^{(n)} \right)\leq \frac{\nu}{3}$ for sufficiently large $n$. \\
Under the event $B_{t_1}$ we have 
\begin{equation}
    \forall s \geq t_1, \left(W -\widetilde{\varepsilon}\right)e^{\lambda_{0}s} \leq Z_{0}(s) \leq \left(W +\widetilde{\varepsilon}\right)e^{\lambda_{0}s}.
\end{equation}
Given that $\lambda_{0}^{(n)}\leq \lambda_{0}$, we obtain that under the event $C^{(n)}$, for all $n \geq n_1$
\begin{equation}
    \forall s \in \mathbb{R}^{+}, \Big(e^{-\lambda_{0}s}Z_{0}(s)-\widetilde{\varepsilon}\Big)e^{\lambda_{0}^{(n)}s} \leq Z_{0}^{(n)}(s) \leq \Big(e^{-\lambda_{0}s}Z_{0}(s)+\widetilde{\varepsilon}\Big)e^{\lambda_{0}^{(n)}s} \leq Z_{0}(s)+\widetilde{\varepsilon}e^{\lambda_{0}s}.
\end{equation}
Combining the two previous inequalities, it follows that under $\{\delta_1<W<\delta_2\} \cap B_{t_1}\cap C^{(n)}$, we have  
\begin{equation}
    \forall s \geq t_1, \forall n \geq n_1,  \left( W-2\widetilde{\varepsilon}\right) e^{\lambda_{0}^{(n)}s}\leq Z_{0}^{(n)}(s) \leq \left(W+2\widetilde{\varepsilon}\right)e^{\lambda_{0}s} \leq \left(\delta_2+2\widetilde{\varepsilon}\right)e^{\lambda_{0}s}.
\end{equation}
Notice that, by definition of $\widetilde{\varepsilon}$, we have $W-2\widetilde{\varepsilon}>0$ under the event $\{\delta_1 <W\}$. Now, we introduce the following quantities, which almost surely increase with time:
\begin{align}
        &\underline{T}^{(n)}_{\delta_2,t}:=\inf \{ s>0: (\delta_2+2 \widetilde{\varepsilon})e^{\lambda_{0} s} \geq n^t \}=\frac{1}{\lambda_0} \left(t \log(n)-\log(\delta_2+2\widetilde{\varepsilon})\right), \\
        &\underline{T}^{(n)}_{t}:=\inf \{ s>0: (W+2 \widetilde{\varepsilon})e^{\lambda_{0} s} \geq n^t \}=\frac{1}{\lambda_0} \left(t \log(n)-\log(W+2\widetilde{\varepsilon})\right), \\ 
        &\overline{T}^{(n)}_{t}:=\inf \{ s>0: (W-2 \widetilde{\varepsilon})e^{\lambda_{0}^{(n)} s} \geq n^t \}=\frac{1}{\lambda_0^{(n)}} \left(t \log(n)-\log(W-2\widetilde{\varepsilon})\right).
\end{align}
We have that there exists $n_{2} \in \mathbb{N}$ such that for all $n \geq n_{2}$, $t_1 \leq \underline{T}^{(n)}_{\delta_2,T_1}.$ Moreover, under the event $\{\delta_1<W<\delta_2\}\cap B_{t_1}\cap C^{(n)}$, we have for all $n \geq \max(n_1,n_2)$ and for all $t \in \big[T_1,T_{2}\frac{\varphi_n}{\log(n)}\big]$, 
\begin{align}
\underline{T}^{(n)}_{\delta_2,T_1} \leq \underline{T}^{(n)}_{\delta_2,t} \leq \underline{T}^{(n)}_{t} \leq \eta^{(n)}_{t}\leq \overline{T}^{(n)}_{t}.
\end{align}
Using that $\lambda_0/\lambda_{0}^{(n)}=1/\big(1-2\alpha_{0}\mu_{0}^{(n)}/\lambda_{0}\big)$, and from the previous equation, we derive that for all $ t \in \big[T_1,T_{2}\frac{\varphi_n}{\log(n)}\big]$ and for all $n \geq \max(n_1,n_2)$,
\begin{align}
    \frac{t \log(n)}{\lambda_{0}}&-\frac{\log(W)}{\lambda_{0}}-\frac{\log\left(1+2\tilde{\varepsilon} / W\right)}{\lambda_{0}} \leq \eta_{t}^{(n)} \\
    &\leq \Big( \frac{t\log(n)}{\lambda_{0}}-\frac{\log(W)}{\lambda_{0}}-\frac{\log(1-2 \tilde{\varepsilon}/W)}{\lambda_{0}} \Big)\Big(1-2\alpha_{0}\mu_{0}^{(n)}/\lambda_{0}\Big)^{-1}.
\end{align}
From this we obtain
\begin{align}
    -&\frac{\log(1+2\tilde{\varepsilon}/W)}{\lambda_{0}}\leq \eta_{t}^{(n)}-\Big(\frac{t \log(n)}{\lambda_{0}}-\frac{\log(W)}{\lambda_{0}}\Big)  \\ 
    &\hspace{0.5cm}\leq \Big(1-2\alpha_{0} \mu_{0}^{(n)} / \lambda_{0}\Big)^{-1} \Big(\Big(\frac{t \log(n)}{\lambda_{0}}-\frac{\log(W)}{\lambda_{0}} \Big)\frac{2\alpha_{0} \mu_{0}^{(n)}}{\lambda_{0}}-\frac{\log(1-2\tilde{\varepsilon}/W)}{\lambda_{0}}\Big).
\end{align}
In particular, this implies that for all $n\geq \max (n_1,n_2)$,
\begin{align}
    &\sup_{t\in \left[T_1,T_2\frac{\varphi_n}{\log(n)}\right]}\Big{\vert} \eta_{t}^{(n)}-\Big(\frac{t \log(n)}{\lambda_{0}}-\frac{\log(W)}{\lambda_{0}}\Big) \Big{\vert} \leq \max \Big{\{}\frac{\log(1+2\widetilde{\varepsilon}/W)}{\lambda_{0}}\\
    &\hspace{2cm};\Big(1-2\alpha_{0} \mu_{0}^{(n)} / \lambda_{0}\Big)^{-1} \Big(\Big(\frac{T_2 \varphi_n}{\lambda_{0}}-\frac{\log(W)}{\lambda_{0}} \Big)\frac{2\alpha_{0} \mu_{0}^{(n)}}{\lambda_{0}}-\frac{\log(1-2\tilde{\varepsilon}/W)}{\lambda_{0}}\Big)\Big{\}}.
\end{align}
Denote by $D^{(n)}$ the right-hand side of the last inequality. Then it directly follows that 
\begin{equation} \label{lab: E1}
    \mP \left( A^{(n)}\cap \{\delta_1<W<\delta_2\}\cap B_{t_1}\cap C^{(n)}\right) \leq \mP \left(\{D^{(n)}\geq \varepsilon\} \cap \{ \delta_1 <W<\delta_2\} \right).
\end{equation}
Because $\varphi_n$ was defined such that $\varphi_n \mu_0^{(n)} \underset{n \to \infty}{\to}0$, it is possible to find an adequate $\widetilde{\varepsilon}>0$ and $n_3 \in \mathbb{N}$ such that for all $n\geq n_3$, $\mP \left(\{D^{(n)}\geq \varepsilon\}\cap\{ \delta_1<W<\delta_2\} \right) \leq \frac{\nu}{3}$. Together with \eqref{lab: E0} and \eqref{lab: E1}, we deduce \eqref{deterministic approx random time with W right}. 

\textbf{Step 2:} We are going to prove that $\mP \left(A^{(n)} \cap  \{W>0\}\right) \underset{n \to \infty}{\longrightarrow}0.$ We have 
\begin{align}
    \mP \left(A^{(n)} \cap \{W>0\} \right) \leq \mP\left(A^{(n)} \cap \{\delta_1<W<\delta_2\} \right) +\mP\left(0<W<\delta_1\right)+\mP \left(W>\delta_2\right).
\end{align}
Using Equation \eqref{deterministic approx random time with W right}, we obtain 
\begin{equation}
    \limsup_{n \to \infty}\mP \left(A^{(n)} \cap \{W>0\} \right)  \leq \mP\left(0<W<\delta_1\right)+\mP \left(\delta_2<W\right).
\end{equation}
Taking the limit as $(\delta_1, \delta_2) \underset{n \to \infty}{\longrightarrow}(0,\infty)$, and noting that $W$ is finite almost surely (see \eqref{Eq:distribution W}), we conclude the proof.
\end{proof}

\begin{remark}
\label{Rq: order of stopping time wtp reaches size power of n}
From Lemma \ref{convergence temps arret}, the useful results
\begin{align}
\label{uniform conv stopping time}
    \mathbb{P} \Big(\sup_{t \in \left[T_{1},T_{2}\frac{\varphi_n}{\log(n)}\right]} \Big{\vert} \frac{\eta_{t}^{(n)}}{\log(n)}\lambda_{0}-t \Big{\vert} \geq \varepsilon \Big{\vert} W>0 \Big) \underset{n \to \infty}{\longrightarrow} 0
\end{align}
and 
\begin{align}
\label{Rq: approx W by exponential term with tau}
   \mathbb{P} \Big( \sup_{t \in \left[T_{1},T_{2}\frac{\varphi_n}{\log(n)}\right]} \Big{\vert} e^{-\lambda_{0}\left(\eta_{t}^{(n)}-\mathfrak{t}^{(n)}_{t}\right)}-W \Big{\vert} \geq \varepsilon \Big{\vert} W>0\Big)\underset{n \to \infty}{\longrightarrow}0
\end{align}
follow.
\end{remark}

\subsection{The mutant subpopulations dynamics on the deterministic time scale (Theorem \ref{Theorem: non increasin growth rate graph} (i))}
\label{Proof:Number of Mutants, neutral}
In this subsection, Equations \eqref{Equation: conv neutral case} and \eqref{Equation: convergence deleterious case} are proven for the mono-directional graph. The proof will be carried out in two steps. First, we will show the result for a fixed $s \in \mathbb{R}$, uniformly in the parameter $t$. Then, in the second step, we will establish the result uniformly in the parameter $s$ by adapting a method developed in \cite[Lemma 3]{foo2013dynamics}.

\subsubsection{Uniform control on the time parameter t}
\label{First step of the proof}
In this subsection, we will prove the following proposition, which is a less refined result than \eqref{Equation: conv neutral case} and \eqref{Equation: convergence deleterious case}, as it is not uniform in the parameter $s$.
\begin{proposition}
\label{Proposition: mutant pop deterministic time scale not uniform in s}
Let $i \in \mathbb{N}$, $(\psi_n(i),h_{n}(i)) \underset{n \to \infty}{\rightarrow} \infty$ such that there exist $\varphi_n(i) \underset{n \to \infty}{\to} \infty$ such that $h_n(i)=\frac{\log(n)}{\log(\log(n))\theta(i-1)+\varphi_n(i)}$ and $\sqrt{\log(n)}=o(\psi_n(i))$. For all $(t,s) \in \mathbb{R}^{+}\times \mathbb{R}$ define 
\begin{align}
    d_{i}^{(n)}(t,s):=&\1_{\left\{t \in [0,t(i)-h_{n}^{-1}(i))\right\}}+\1_{\left\{t \in [t(i)-h_{n}^{-1}(i),t(i))\right\}}\psi_{n}\log^{\theta(i)-1}(n)\\
    &\hspace{3cm}+\1_{\left\{t \in [ t(i),\infty)\right\}}n^{t-t(i)}\log^{\theta(i)}(n)e^{\lambda(0) s}.
\end{align}
Let $T>0$, $0<T_1<T_2$, and $s \in \mathbb{R}$. We have
\begin{itemize}
    \item If $\lambda_i=\lambda_0$ then $t \mapsto Z_{i}^{(n)}\big(\mathfrak{t}^{(n)}_{t}+s\big)/d_{i}^{(n)}(t,s)$ converges in probability in $L^{\infty}([0,T])$ to $W w_{i}(t)$.
    \item If $\lambda_i<\lambda_0$ then $t \mapsto Z_i^{(n)}\big(\mathfrak{t}^{(n)}_{t(i)+t}+s\big)/n^t \log^{\theta(i)}(n)e^{\lambda_{0}s}$ converges in probability in $L^{\infty}([T_1,T_2])$ to $Ww_{i}(t(i)+t)$.
\end{itemize}
\end{proposition}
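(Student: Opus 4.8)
The strategy is an induction on the trait index $i$, using the recursive structure of the model in \eqref{Equation: def mutant pop}. The base case $i=0$ is essentially Lemma \ref{Lem:control primary pop}, which gives that $e^{-\lambda_0^{(n)}t}Z_0^{(n)}(t)$ is uniformly close in probability to $W$ along the whole trajectory; rescaling to the time $\mathfrak{t}^{(n)}_t+s$ and invoking $\lambda_0^{(n)}\to\lambda_0$ converts this into $Z_0^{(n)}(\mathfrak{t}^{(n)}_t+s)\approx W n^t e^{\lambda(0)s}$, which is the claimed statement with $\theta(0)=0$ and $t(0)=0$ and $w_0\equiv 1$. For the induction step, fix $i\geq 1$ and write $Z_i^{(n)}$ through \eqref{Equation: def mutant pop} as a sum of a martingale part driven by $Q_i$, the immigration terms $K_{i-1}^{(n)}+2H_{i-1}^{(n)}$ counting mutations from trait $i-1$, and the emigration term $-H_i^{(n)}$. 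Since $H_i^{(n)}$ has intensity of order $(\mu_i^{(n)})^2 Z_i^{(n)}$, which is negligible compared with $\mu_i^{(n)}Z_i^{(n)}$, it can be absorbed into error terms; the genuine driver is $K_{i-1}^{(n)}$, whose compensator is $\int_0^t 2\alpha_{i-1}\mu_{i-1}^{(n)}(1-\mu_{i-1}^{(n)})Z_{i-1}^{(n)}(s)\,ds$.

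The core of the argument is a first-moment / second-moment estimate conditional on the filtration generated by the lower-index populations. The plan is: (1) Work conditionally on the $\sigma$-algebra $\mathcal{G}_{i-1}$ generated by all the PPMs driving $Z_0^{(n)},\dots,Z_{i-1}^{(n)}$. Given $\mathcal{G}_{i-1}$, the process $Z_i^{(n)}$ is an inhomogeneous branching process with immigration at rate $\asymp \mu_{i-1}^{(n)}Z_{i-1}^{(n)}(s)$, each immigrant founding an independent birth-death clan with rates $\alpha_i^{(n)},\beta_i^{(n)}$. (2) Compute the conditional mean: an immigrant arriving at time $u$ has expected progeny $e^{\lambda_i^{(n)}(t-u)}$ at time $t$, so
\begin{align}
\mathbb{E}\big[Z_i^{(n)}(t)\mid \mathcal{G}_{i-1}\big]=\int_0^t 2\alpha_{i-1}\mu_{i-1}^{(n)}(1-\mu_{i-1}^{(n)})Z_{i-1}^{(n)}(u)\,e^{\lambda_i^{(n)}(t-u)}\,du + (\text{corrections from }H).
\end{align}
Substituting the induction hypothesis $Z_{i-1}^{(n)}(\mathfrak{t}^{(n)}_u+s)\approx W w_{i-1}(u)\,d_{i-1}^{(n)}(u,s)$ into this integral, changing variables to the $\log(n)/\lambda(0)$ time scale, and carefully tracking the two cases $\lambda_i=\lambda_0$ (where the exponent $e^{\lambda_i^{(n)}(t-u)}$ exactly balances the $n^u$ growth, producing the extra time integral $\int_{t(i-1)}^{t}\!\cdots$ and hence one more factor $\log(n)$ and the recursive weight of Remark \ref{Remark:recursive structure weight path}) versus $\lambda_i<\lambda_0$ (where the integral is dominated by $u$ near $t$ and yields the factor $1/(\lambda(0)-\lambda(i))$) gives exactly $W w_i(t)\,d_i^{(n)}(t,s)$ for the conditional mean, up to lower-order terms. (3) Bound the conditional variance: by the independence of the clans, the conditional variance is $\int_0^t 2\alpha_{i-1}\mu_{i-1}^{(n)}Z_{i-1}^{(n)}(u)\,\mathrm{Var}(\text{clan size at }t-u)\,du$ plus the variance of the immigration itself; since a birth-death clan has second moment of the same exponential order as the square of its mean, this is of order $(\text{mean})^2/(\text{number of immigrants})$, and the number of immigrants is a growing power of $n$ (once $t>t(i)$) or at least $\asymp\log^{\theta(i)-1}(n)\to\infty$ (for $t$ slightly below $t(i)$), so the conditional coefficient of variation vanishes. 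This yields concentration of $Z_i^{(n)}/d_i^{(n)}$ around $W w_i$ pointwise in $(t,s)$.

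To upgrade pointwise-in-$t$ convergence to uniform-in-$t$ convergence on the compact interval (as the $L^\infty$ statement requires), the plan is to use monotonicity/continuity: apply Doob's maximal inequality to the martingale part of $Z_i^{(n)}$ to control fluctuations between grid points, then cover $[0,T]$ (resp. $[T_1,T_2]$) by $O(\varepsilon^{-1})$ points, use the pointwise estimate at each, and control the oscillation of both $Z_i^{(n)}$ and $w_i$ on each subinterval — here the exponential-in-$\log(n)$ growth means one must work on the rescaled (dividing by $d_i^{(n)}$) process, which grows only polynomially in the macroscopic time, so standard tightness/modulus-of-continuity arguments apply. The three regimes of $d_i^{(n)}(t,s)$ are handled separately: below $t(i)-h_n^{-1}(i)$ one shows (as in Lemma \ref{Lem: no mutant cell}) that with high probability $Z_i^{(n)}=0$; on $[t(i)-h_n^{-1}(i),t(i))$ one shows the crude upper bound $Z_i^{(n)}\leq \sqrt{\log(n)}\log^{\theta(i)-1}(n)$ from the first-moment bound (so dividing by $\psi_n$ kills it); and on $[t(i),\infty)$ the concentration argument above applies. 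The main obstacle I anticipate is precisely the second-moment control in the transitional regime just below $t(i)$ and at $t(i)$ itself: there the mutant population is not yet a power of $n$, only a power of $\log(n)$, so the law-of-large-numbers averaging is weak and one must argue carefully that the number of immigrant clans is nonetheless large enough (of order $\log^{\theta(i)-1}(n)$, coming from the $\theta(i)-1$ nested neutral-mutation integrals upstream) for the fluctuations to be negligible after the $\psi_n$ normalization; getting the bookkeeping of these nested $\log(n)$ factors right, and making the error terms uniform in $s$ over $[-M,M]$ only after the Foo–Leder–Ryser-type argument of the next subsection, is the delicate part.
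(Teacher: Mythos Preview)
Your plan is essentially the paper's approach: induction on $i$, three time regimes governed by $d_i^{(n)}$, and a second-moment/martingale control of the fluctuations around the compensator built from $Z_{i-1}^{(n)}$. The paper packages this via the explicit martingale
\[
M_i^{(n)}(t)=Z_i^{(n)}(t)e^{-\lambda_i^{(n)}t}-\int_0^t 2\alpha_{i-1}\mu_{i-1}^{(n)}e^{-\lambda_i^{(n)}u}Z_{i-1}^{(n)}(u)\,du
\]
(Lemma~\ref{Lem martingale type 1 pop}) and applies Doob's $L^2$ inequality (neutral case) or the Maximal inequality for supermartingales (deleterious case, after dividing by $e^{(\lambda_0-\lambda_i^{(n)})t}$) directly, which gives uniform control in $t$ in one shot and spares you the grid-plus-oscillation argument. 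Your conditional-mean/variance formulation is the same object in different clothing, but going through Doob on $M_i^{(n)}$ is cleaner.

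One genuine inaccuracy to flag: your heuristic ``conditional variance $\sim (\text{mean})^2/(\text{number of immigrants})$'' is wrong in this setting and would mislead you if taken literally. In the neutral case the clan sizes are wildly heterogeneous (an immigrant at time $u$ has offspring $\sim e^{\lambda_0(t-u)}$), so the variance $\int r(u)\,\mathbb{E}[C(t-u)^2]\,du\asymp\int r(u)e^{2\lambda_0(t-u)}\,du$ is dominated by the \emph{earliest} clans, and by Cauchy--Schwarz it is actually \emph{larger} than $(\text{mean})^2/N$, not of that order. What saves you is that the correct ratio $\mathrm{Var}/\mathrm{Mean}^2$ still vanishes, as $1/\log^{\theta(i)}(n)$; this is precisely the content of Lemma~\ref{Lem: control quadratic variation} (the quadratic-variation bound) combined with Lemma~\ref{Lem:upper bound expectation subpop}, and it is exactly here that the extra $\log(n)$ factor from neutrality is essential---without $\theta(i)\geq 1$ the Doob bound in \eqref{Eq 16} does not go to zero. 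So when you carry out the computation you will land in the right place, but the ``law of large numbers over many immigrants'' picture is not the operative mechanism.
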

The proof is carried out by induction on $i \in \mathbb{N}$. For $i \geq 2$, we assume that Proposition \ref{Proposition: mutant pop deterministic time scale not uniform in s} holds for $i-1$. In the base case, $i=1$, Proposition \ref{Proposition: mutant pop deterministic time scale not uniform in s} is proved without any assumptions. As long as the proof is similar for the initialization and the inductive step, the specific step under consideration will not be indicated. To make the proof as clear as possible, it is divided into several lemmas. All results are derived using a martingale approach. In the next lemma, we introduce the martingales considered for all mutant subpopulations and compute their quadratic variations. 
\begin{lemma}
\label{Lem martingale type 1 pop}
For all $i \in \mathbb{N}$ define
\begin{align}
\label{Martingale definition}
    M_{i}^{(n)}(t):=Z_{i}^{(n)}(t)e^{-\lambda_{i}^{(n)} t}-\int_{0}^{t}2\alpha_{i-1}\mu_{i-1}^{(n)}e^{-\lambda_{i}^{(n)} s}Z_{i-1}^{(n)}(s) ds.
\end{align}
$\big(M_{i}^{(n)}(t)\big)_{t\geq 0}$ is a martingale, with quadratic variation
\begin{align}
\label{quadratic variation}
    &\big\langle M_{i}^{(n)}\big\rangle_t=\int_0^t 2\alpha_{i-1}\mu_{i-1}^{(n)}e^{-2\lambda_{i}^{(n)} s}Z_{i-1}^{(n)}(s) ds\\
    &\hspace{4cm}+\big(\alpha_{i}^{(n)}+\beta_{i}^{(n)}\big)\int_0^t e^{-2 \lambda_{i}^{(n)} s}Z_{i}^{(n)}(s) ds.
\end{align}
\end{lemma}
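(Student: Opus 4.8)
The plan is a direct stochastic-calculus computation starting from the Poisson-point-measure construction \eqref{Equation: def mutant pop}--\eqref{Equation: def mutation processes} of $Z_i^{(n)}$. First I would write $Z_i^{(n)}$ as a semimartingale by compensating each of the PPM integrals in \eqref{Equation: def mutant pop}. Reading off the indicators, the jumps of $Z_i^{(n)}$ and their rates are: $+1$ at rate $\alpha_i^{(n)}Z_i^{(n)}(s^-)$ and $-1$ at rate $\beta_i Z_i^{(n)}(s^-)$ (from $Q_i$); $+1$ at rate $2\alpha_{i-1}\mu_{i-1}^{(n)}(1-\mu_{i-1}^{(n)})Z_{i-1}^{(n)}(s^-)$ (from $N_{i-1}$, i.e.\ $dK_{i-1}^{(n)}$); $+2$ at rate $\alpha_{i-1}(\mu_{i-1}^{(n)})^2Z_{i-1}^{(n)}(s^-)$ (from $Q_{i-1}^{m}$, i.e.\ the term $2H_{i-1}^{(n)}$); and $-1$ at rate $\alpha_i(\mu_i^{(n)})^2Z_i^{(n)}(s^-)$ (from $Q_i^{m}$, i.e.\ $-H_i^{(n)}$). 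Summing jump size times rate, the contributions proportional to $Z_{i-1}^{(n)}$ combine as $2\alpha_{i-1}\mu_{i-1}^{(n)}\big[(1-\mu_{i-1}^{(n)})+\mu_{i-1}^{(n)}\big]=2\alpha_{i-1}\mu_{i-1}^{(n)}$, while those proportional to $Z_i^{(n)}$ give $\alpha_i^{(n)}-\beta_i-\alpha_i(\mu_i^{(n)})^2=\alpha_i^{(n)}-\beta_i^{(n)}=\lambda_i^{(n)}$. Hence the compensator of $Z_i^{(n)}$ is $\int_0^t\big(\lambda_i^{(n)}Z_i^{(n)}(s)+2\alpha_{i-1}\mu_{i-1}^{(n)}Z_{i-1}^{(n)}(s)\big)ds$, so $\widetilde M_i^{(n)}(t):=Z_i^{(n)}(t)-\int_0^t\big(\lambda_i^{(n)}Z_i^{(n)}(s)+2\alpha_{i-1}\mu_{i-1}^{(n)}Z_{i-1}^{(n)}(s)\big)ds$ is a local martingale as a sum of compensated PPM integrals.

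Second, I would apply integration by parts to the product $e^{-\lambda_i^{(n)}t}Z_i^{(n)}(t)$. Because $s\mapsto e^{-\lambda_i^{(n)}s}$ is continuous and of finite variation the covariation term vanishes, and, using $Z_i^{(n)}(0)=0$ together with the compensator identity above, the $\lambda_i^{(n)}Z_i^{(n)}$-drift exactly cancels the $-\lambda_i^{(n)}e^{-\lambda_i^{(n)}s}Z_i^{(n)}(s)\,ds$ term produced by differentiating the exponential, leaving
\[
e^{-\lambda_i^{(n)}t}Z_i^{(n)}(t)=\int_0^t e^{-\lambda_i^{(n)}s}\,d\widetilde M_i^{(n)}(s)+\int_0^t 2\alpha_{i-1}\mu_{i-1}^{(n)}e^{-\lambda_i^{(n)}s}Z_{i-1}^{(n)}(s)\,ds .
\]
Rearranging, $M_i^{(n)}(t)=\int_0^t e^{-\lambda_i^{(n)}s}\,d\widetilde M_i^{(n)}(s)$, a stochastic integral of a deterministic integrand bounded on compacts against a local martingale, hence itself a local martingale; since the absolutely continuous term subtracted in \eqref{Martingale definition} contributes nothing to the bracket, $\langle M_i^{(n)}\rangle_t=\int_0^t e^{-2\lambda_i^{(n)}s}\,d\langle\widetilde M_i^{(n)}\rangle_s$.

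Third, I would compute $\langle\widetilde M_i^{(n)}\rangle$ as the predictable quadratic variation of a compensated PPM integral, namely $\int_0^t\int g(s,\theta)^2\,ds\,d\theta$ where $g$ collects the signed jump sizes read off the four PPMs $Q_i$, $N_{i-1}$, $Q_{i-1}^{m}$, $Q_i^{m}$; concretely this is the sum over the jump types listed above of $(\text{rate})\times(\text{jump size})^2$, giving the coefficient $\alpha_i^{(n)}+\beta_i^{(n)}$ in front of $Z_i^{(n)}(s)$ and $2\alpha_{i-1}\mu_{i-1}^{(n)}$ in front of $Z_{i-1}^{(n)}(s)$ (up to a negligible $\mathcal{O}\big((\mu_{i-1}^{(n)})^2\big)$ term coming from the $+2$ jumps of $H_{i-1}^{(n)}$), which is \eqref{quadratic variation}. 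Finally, to promote the local martingale to a genuine martingale, in fact square-integrable on each compact interval, I would show that $\E[\langle M_i^{(n)}\rangle_t]<\infty$ for all $t$; this follows from $\E[Z_j^{(n)}(s)]<\infty$ with at most exponential growth, obtained inductively from $\E[Z_0^{(n)}(s)]=e^{\lambda_0^{(n)}s}$ by taking expectations in \eqref{Equation: def mutant pop} (a standard localization plus Gronwall argument justifies the interchange). The only step requiring real care is the bookkeeping in the first paragraph: correctly reading off every jump size and rate from the PPMs and verifying that it is precisely the exponent $\lambda_i^{(n)}$, and not $\lambda_i$ or $\lambda_{i-1}^{(n)}$, that annihilates the drift; everything else is routine.
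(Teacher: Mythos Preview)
Your proof is correct and arrives at the same result, but the organization differs from the paper's. For the martingale property, the paper does not compensate the PPM integrals directly; instead it writes down the forward Chapman--Kolmogorov ODE for $\E[Z_i^{(n)}(t)]$, uses the Markov property to get $\E[Z_i^{(n)}(t+h)\mid\mathcal F_{i,t}^{(n)}]$ in closed form, and checks by hand that $\E[M_i^{(n)}(t+h)-M_i^{(n)}(t)\mid\mathcal F_{i,t}^{(n)}]=0$. For the bracket, the paper applies It\^o's formula to $F(t,x,y)=(e^{-\lambda_i^{(n)}t}x-y)^2$ with $x=Z_i^{(n)}$, $y=\int_0^t 2\alpha_{i-1}\mu_{i-1}^{(n)}e^{-\lambda_i^{(n)}s}Z_{i-1}^{(n)}(s)\,ds$, expands all four PPM jump terms, and reads off the finite-variation part of $(M_i^{(n)})^2$. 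Your route---compensate first to get $\widetilde M_i^{(n)}$, integrate by parts so that $M_i^{(n)}=\int e^{-\lambda_i^{(n)}s}\,d\widetilde M_i^{(n)}$, then use $\langle M_i^{(n)}\rangle_t=\int e^{-2\lambda_i^{(n)}s}\,d\langle\widetilde M_i^{(n)}\rangle_s$ with the standard $(\text{rate})\times(\text{jump})^2$ formula---is more streamlined and makes the cancellation of the $\lambda_i^{(n)}$-drift transparent. You are also more careful on two points the paper leaves implicit: the local-to-true martingale upgrade via $\E[\langle M_i^{(n)}\rangle_t]<\infty$, and the fact that the exact coefficient in front of $Z_{i-1}^{(n)}$ is $2\alpha_{i-1}\mu_{i-1}^{(n)}(1+\mu_{i-1}^{(n)})$ rather than $2\alpha_{i-1}\mu_{i-1}^{(n)}$; the paper's own It\^o computation should produce the same extra $\mathcal O\big((\mu_{i-1}^{(n)})^2\big)$ term, and it is indeed harmless for every subsequent use of the lemma.
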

Since the proof of this lemma is fairly standard, it can be found in the \hyperref[appn]{Appendix}. We can now proceed to prove Proposition \ref{Proposition: mutant pop deterministic time scale not uniform in s}. This proof is structures as follows:
\begin{itemize}
    \item[1.] \textbf{Neutral case ($\lambda_{i}=\lambda_0$):} The proof begins by addressing the neutral case. This part is divided into three major steps, each corresponding to a different time regime for the normalizing term $d_{i}^{(n)}(t,s)$:
    \begin{itemize}
        \item \textbf{First time regime ($t \in [0,t(i)-h_{n}^{-1}(i)]$):} Lemma \ref{Lem: no mutant cell} establishes the asymptotic result for this interval.
        \item \textbf{Second time regime ($t \in [t(i)-h_n^{-1}(i)]$):} Lemma \ref{Lem: control before first mut} covers the convergence within this interval. The proof begins with a first step where the result is established under a more restrictive condition on $\psi_n(i)$. This step is further divided using: Lemma \ref{Lem: negligeable finite variation}, which handles the finite variation process associated with the mutant subpopulation, Lemma \ref{Lem:upper bound expectation subpop}, which controls the expected value of the size of the mutant subpopulation and Lemma \ref{Lem: control quadratic variation}, which controls the expected value of the quadratic variation of the martingale associated with the mutant subpopulation. The second step of the proof proceeds by relaxing the aforementioned restrictive condition on $\psi(n)$ from step 1.
        \item \textbf{Third time regime ($t \in [t(i),T]$):} Lemma \ref{convergence finite variation process K} controls the finite variation process associated with the mutant subpopulation in this regime.
    \end{itemize}
    \item[2.] \textbf{Deleterious case ($\lambda_i<\lambda_0$):} After completing the neutral case, the proof proceeds to the deleterious case, using some of the previously proven lemmas.
\end{itemize} 
\begin{proof} [Proof of Proposition \ref{Proposition: mutant pop deterministic time scale not uniform in s}] 
Let $i \in \mathbb{N}$. For $i \geq 2$ assume that Proposition \ref{Proposition: mutant pop deterministic time scale not uniform in s} holds for $i-1$. We begin by proving the result when $i$ is a neutral trait; specifically, we aim to establish Proposition \ref{Proposition: mutant pop deterministic time scale not uniform in s} (i). All the lemmas mentioned in the proof are not restricted to this neutral assumption and also apply to deleterious mutant traits. 

\textbf{1. Neutral case:} Assume that $\lambda_{i}=\lambda_0$. Let $(\psi_n(i),h_n(i))$ as in Proposition \ref{Proposition: mutant pop deterministic time scale not uniform in s} and $\varepsilon>0$. Notice that
\begin{align}
    &\mP \Big{(} \sup_{t\in \left[0,T\right]} \Big{\lvert}  \frac{Z_{i}^{(n)}\left(\mathfrak{t}^{(n)}_{t}+s\right)}{d_{i}^{(n)}(t,s)}-W w_{i}(t) \Big{\lvert} \geq 3\varepsilon \Big{)} \\ 
    \label{Eq:4}&\hspace{2cm}\leq \mP \Big{(} \sup_{t\in \left[0,t(i)-h_n^{-1}(i)\right)}   Z_{i}^{(n)}\left(\mathfrak{t}^{(n)}_{t}+s\right) \geq \varepsilon \Big{)}\\
    \label{Eq:5}&\hspace{2cm}+\mP \Big{(} \sup_{t \in \left[t(i)-h_n^{-1}(i),t(i)\right)} \frac{Z_{i}^{(n)}\Big(\mathfrak{t}^{(n)}_{t}+s\Big)}{\psi_{n}(i)\log^{\theta(i-1)}(n)} \geq \varepsilon \Big{)} \\ 
    \label{Eq:6}&\hspace{2cm}+\mP \Big{(} \sup_{t \in \left[t(i),T\right]} \Big{\lvert}  \frac{Z_{i}^{(n)}\Big(\mathfrak{t}^{(n)}_{t}+s\Big)}{n^{t-t(i)}\log^{\theta(i)}(n)e^{\lambda_{0} s}}-Ww_{i}(t) \Big{\lvert} \geq \varepsilon \Big{)},
\end{align}
where we used that $\omega_i(t)=0$ for all $t\leq t(i)$. We will show that \eqref{Eq:4}, \eqref{Eq:5} and \eqref{Eq:6} converge to $0$ as $n$ goes to infinity. 

\textbf{(i) First time regime $(t \in [0,t(i)-h_{n}^{-1}(i)])$, convergence to 0 of \eqref{Eq:4}:} The characterization of $t(i)$ as the first time mutant cells of trait $i$ appear on the time scale $t \mapsto \mathfrak{t}^{(n)}_{t}$ is made explicit in the next lemma. More precisely, we exactly show that up until time $t(i)-h_{n}^{-1}(i)$, asymptotically no mutant cells of trait $i$ are generated. In particular, the convergence to $0$ of \eqref{Eq:4} is deduced from the next lemma.  
\begin{lemma}
\label{Lem: no mutant cell}
Let $i \in \mathbb{N}$, and $h_{n}(i)=\frac{\log(n)}{\log(\log(n))\theta(i-1)+\varphi_n(i)}$, where $\varphi_n(i) \underset{n \to \infty}{\to}\infty$ such that $h_n(i) \underset{n \to \infty}{\to} \infty$, and $s \in \mathbb{R}$. For $i \geq 2$, we prove that if Proposition \ref{Proposition: mutant pop deterministic time scale not uniform in s} holds for $i-1$ then
\begin{equation}
\label{Eq:13}
    \mP \Big{(} \sup_{t\in \left[0, t(i)-h_n^{-1}(i)\right]} Z_{i}^{(n)}\Big(\mathfrak{t}^{(n)}_{t}+s\Big)=0 \Big{)} \underset{n \to \infty}{\longrightarrow}1.
\end{equation}
For $i=1$, we prove \eqref{Eq:13} without any conditions.
\end{lemma}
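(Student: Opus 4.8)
The plan is to reduce the statement to a first‑moment estimate on the number of mutational events that feed trait $i$, and then to bound that first moment through the moment equations of the branching system; the precise form of $h_n(i)$ is calibrated exactly so that this bound vanishes.

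\textbf{Step 1 (reduction to a mutation count).} First I would note that $\big(Z_i^{(n)}(u)\big)_{u\ge 0}$ is $\mathbb{N}_0$‑valued, equals $0$ at time $0$, and — by the construction \eqref{Equation: def mutant pop}, in which the birth, death and self‑double‑mutation terms are governed by indicators that vanish when $Z_i^{(n)}(u^-)=0$ — can leave $0$ only through a jump of $K_{i-1}^{(n)}+2H_{i-1}^{(n)}$. Hence, writing $\tau_n:=\mathfrak{t}^{(n)}_{t(i)-h_n^{-1}(i)}+s=(t(i)-h_n^{-1}(i))\tfrac{\log n}{\lambda_0}+s$, the event $\{\sup_{t\in[0,\,t(i)-h_n^{-1}(i)]}Z_i^{(n)}(\mathfrak{t}^{(n)}_t+s)=0\}$ contains $\{K_{i-1}^{(n)}(\tau_n)+H_{i-1}^{(n)}(\tau_n)=0\}$ for $n$ large enough that $\tau_n\ge 0$ (which holds since $h_n^{-1}(i)\to0$ and $t(i)>0$). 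By Markov's inequality it therefore suffices to prove $\mathbb{E}[K_{i-1}^{(n)}(\tau_n)]+\mathbb{E}[H_{i-1}^{(n)}(\tau_n)]\to 0$.

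\textbf{Step 2 (first‑moment estimates).} I would then compensate the Poisson integrals \eqref{Equation: def mutation processes} (and its analogue for $H_{i-1}^{(n)}$) to obtain
\begin{align}
\mathbb{E}[K_{i-1}^{(n)}(\tau_n)]\le 2\alpha_{i-1}\mu_{i-1}^{(n)}\!\int_0^{\tau_n}\!\mathbb{E}[Z_{i-1}^{(n)}(u)]\,du,\qquad \mathbb{E}[H_{i-1}^{(n)}(\tau_n)]\le \alpha_{i-1}(\mu_{i-1}^{(n)})^2\!\int_0^{\tau_n}\!\mathbb{E}[Z_{i-1}^{(n)}(u)]\,du,
\end{align}
so the $H$‑term is negligible (smaller by a factor $\mu_{i-1}^{(n)}\to0$) and it is enough to control the $K$‑term. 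For this I would establish the moment bound: there is $C_j>0$ such that, for $n$ large and all $u\ge0$,
\begin{align}\label{momentestimateplan}
\mathbb{E}[Z_j^{(n)}(u)]\le C_j\Big(\prod_{m=0}^{j-1}\mu_m^{(n)}\Big)(1+u)^{\theta(j)}e^{\lambda_0 u}.
\end{align}
For $j=0$ this is immediate from the monotone coupling \eqref{eq:monotonecoupling}, since $\mathbb{E}[Z_0(u)]=e^{\lambda_0 u}$ and $\theta(0)=0$. For $j\ge1$, the transition rates \eqref{processus markov tableau des taux} give the linear equation $\tfrac{d}{du}\mathbb{E}[Z_j^{(n)}(u)]=(\lambda_j-2\alpha_j\mu_j^{(n)})\mathbb{E}[Z_j^{(n)}(u)]+2\alpha_{j-1}\mu_{j-1}^{(n)}\mathbb{E}[Z_{j-1}^{(n)}(u)]$ with zero initial value; applying Duhamel's formula and the bound for $j-1$, I would use that the exponential rate of trait $j$ never exceeds $\lambda_0$ and split according to the nature of vertex $j$: if $j$ is neutral, $\int_0^u e^{(\lambda_0-2\alpha_j\mu_j^{(n)})(u-r)}(1+r)^{\theta(j-1)}e^{\lambda_0 r}\,dr\le (1+u)^{\theta(j-1)+1}e^{\lambda_0 u}$ with $\theta(j)=\theta(j-1)+1$, while if $j$ is deleterious, $\int_0^u e^{\lambda_j(u-r)}(1+r)^{\theta(j-1)}e^{\lambda_0 r}\,dr\le (\lambda_0-\lambda_j)^{-1}(1+u)^{\theta(j-1)}e^{\lambda_0 u}$ with $\theta(j)=\theta(j-1)$. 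This yields \eqref{momentestimateplan} with the sharp exponent $\theta(j)$. (For $i\ge2$ this is where Proposition \ref{Proposition: mutant pop deterministic time scale not uniform in s} for $i-1$ could alternatively be invoked, together with an $L^1$ bound; the moment‑equation route keeps the lemma self‑contained.)

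\textbf{Step 3 (conclusion) and the main obstacle.} Finally I would substitute: by \eqref{Mutation regime} there are constants with $\mu_m^{(n)}\le C_m' n^{-\ell(m)}$ for $n$ large, and $\sum_{m=0}^{i-1}\ell(m)=t(i)$ by \eqref{t(i) and theta(i)}; moreover $e^{\lambda_0\tau_n}=e^{\lambda_0 s}n^{t(i)-h_n^{-1}(i)}$, $(1+\tau_n)^{\theta(i-1)}\le C''(\log n)^{\theta(i-1)}$, and $\int_0^{\tau_n}(1+u)^{\theta(i-1)}e^{\lambda_0 u}\,du\le \tfrac{2}{\lambda_0}(1+\tau_n)^{\theta(i-1)}e^{\lambda_0\tau_n}$ for $n$ large. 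Plugging \eqref{momentestimateplan} with $j=i-1$ into the $K$‑bound gives $\mathbb{E}[K_{i-1}^{(n)}(\tau_n)]\le \widetilde C\, e^{\lambda_0 s}(\log n)^{\theta(i-1)}n^{-h_n^{-1}(i)}$, and since $h_n^{-1}(i)\log n=\theta(i-1)\log\log n+\varphi_n(i)$ we have $n^{-h_n^{-1}(i)}=(\log n)^{-\theta(i-1)}e^{-\varphi_n(i)}$, so the bound is $\widetilde C\,e^{\lambda_0 s}e^{-\varphi_n(i)}\to0$; together with Step 1 this proves \eqref{Eq:13}. For $i=1$ one has $\theta(0)=0$ and $h_n(1)=\log n/\varphi_n(1)$, and the same chain of inequalities using only $\mathbb{E}[Z_0^{(n)}(u)]\le e^{\lambda_0 u}$ gives $\mathbb{E}[K_0^{(n)}(\tau_n)]\le \tfrac{2\alpha_0}{\lambda_0}\mu_0^{(n)}e^{\lambda_0\tau_n}\le C e^{\lambda_0 s}e^{-\varphi_n(1)}\to0$, with no induction needed. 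The hard part is Step 2: getting the polylogarithmic exponent in \eqref{momentestimateplan} to be exactly $\theta(j)$ and not the crude $j$. This forces a separation, along the path $(0,\dots,j)$, of the near‑critical neutral vertices — each of which genuinely contributes an extra factor of order $u$, hence of order $\log n$ after the time change — from the deleterious vertices, which contribute only a bounded factor $(\lambda_0-\lambda_j)^{-1}$. It is precisely the resulting exponent $\theta(i-1)$ that is annihilated by the $\theta(i-1)\log\log n$ term built into $h_n(i)$, leaving the clean decay rate $e^{-\varphi_n(i)}$; a coarser moment bound would not match the threshold in the statement.
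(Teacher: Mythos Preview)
Your proof is correct and takes a genuinely different route from the paper's. Both proofs start with the same reduction (Step 1): the event $\{Z_i^{(n)}\equiv 0 \text{ on } [0,\tau_n]\}$ coincides with $\{K_{i-1}^{(n)}(\tau_n)=0\}\cap\{H_{i-1}^{(n)}(\tau_n)=0\}$. From there they diverge. You apply Markov's inequality and bound $\mathbb{E}[K_{i-1}^{(n)}(\tau_n)]$ via the moment estimate \eqref{momentestimateplan}, which is exactly the content of Lemma~\ref{Lem:upper bound expectation subpop} in the paper; the calibrated form of $h_n(i)$ then makes the $(\log n)^{\theta(i-1)}$ factor cancel and leaves $e^{-\varphi_n(i)}\to0$. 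The paper instead works pathwise: for $i=1$ it uses the almost sure bound $Z_0^{(n)}\le Z_0$ together with the martingale control $e^{-\lambda_0 u}Z_0(u)\approx W$ on $[\widetilde t,\infty)$, and for $i\ge2$ it invokes the induction hypothesis (Proposition~\ref{Proposition: mutant pop deterministic time scale not uniform in s} for $i-1$) to dominate $Z_{i-1}^{(n)}$ on a high-probability event; in both cases it then computes the Poisson void probability explicitly, integrating over the law of $W$. Your approach is more elementary and, as you note, self-contained: it does not actually require the induction hypothesis for $i\ge2$, only the unconditional first-moment bound. The paper's approach is heavier but ties the argument more directly to the random variable $W$ that governs the rest of the analysis, and its explicit void-probability computation makes transparent exactly where the condition $h_n^{-1}(i)\log n - \theta(i-1)\log\log n\to\infty$ is consumed.
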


\begin{proof}[Proof of Lemma \ref{Lem: no mutant cell}]
Notice first that 
\begin{align}
\label{event no mutation}
    \Big{\{}\sup_{t \in \left[0,t(i)-h_n^{-1}(i)\right]}  Z_{i}^{(n)}\Big(
\mathfrak{t}^{(n)}_{t}+s\Big)=0\Big{\}}=A^{(n)}\cap B^{(n)},
\end{align}
where $A^{(n)}:=\big{\{} K_{i-1}^{(n)}\big(\mathfrak{t}^{(n)}_{t(i)-h_{n}^{-1}(i)}+s\big)=0\big{\}}$ and $B^{(n)}:=\big{\{} H_{i-1}^{(n)}\big(\mathfrak{t}^{(n)}_{t(i)-h_{n}^{-1}(i)}+s\big)=0\big{\}}$. Indeed, the event on the left-hand side of \eqref{event no mutation} is satisfied if and only if no mutant cell of the subpopulation $Z_{i}^{(n)}$ is generated from the subpopulation $Z_{i-1}^{(n)}$ up until time $\mathfrak{t}^{(n)}_{t(i)-h_{n}^{-1}(i)}+s$. This corresponds almost surely to $A^{(n)} \cap B^{(n)}$. In what follows, we will provide the proof that $\mP\left(A^{(n)}\right)\underset{n \to \infty}{\longrightarrow}1$. The proof that $\mP\left(B^{(n)}\right)\underset{n \to \infty}{\longrightarrow}1$ can be established using a similar method, so it will not be detailed here. This will conclude the proof of Lemma \ref{Lem: no mutant cell}. Therefore, we now address the proof that $\mP\left(A^{(n)}\right)\underset{n \to \infty}{\longrightarrow}1$, which will vary slightly depending on whether $i=1$ or $i\geq 2$. 

\textbf{Case $i=1$:} For all $\widetilde{t} \in \mathbb{R}^{+}$ and $\varepsilon \in \mathbb{R}^{+}$, let $C_{\varepsilon,\widetilde{t}}:=\big{\{} \sup_{s \in [\widetilde{t},\infty]} \big{\vert}e^{-\lambda_0 s}Z_{0}(s)-W \big{\vert}\leq \varepsilon\big{\}}.$ Using the almost sure inequality \eqref{eq:monotonecoupling}, under the event $C_{\varepsilon,\widetilde{t}}$, we have 
\begin{align}
\label{Eq:7}
    &K_{0}^{(n)}\Big( \mathfrak{t}^{(n)}_{t(1)-h_{n}^{-1}(1)}+s\Big) \leq \int_{0}^{\widetilde{t}} \int_{\mathbb{R}^{+}}\1_{\left\{\theta \leq 2 \alpha_{0}\mu_{0}^{(n)}\sup_{v \in [0,\widetilde{t}]}Z_{0}(v)\right\}}N_{0}(du,d\theta) \\
    &\hspace{3cm}+\int_{\widetilde{t}}^{\mathfrak{t}^{(n)}_{t(1)-h_{n}^{-1}(1)}+s} \int_{\mathbb{R}^{+}}\1_{\left\{\theta \leq 2 \alpha_{0}\mu_{0}^{(n)}e^{\lambda_0 u}(\varepsilon+W)\right\}}N_{0}(du,d\theta).
\end{align}
Let us set the following notations
\begin{align}
    &D_{\widetilde{t}}^{(n)}:=\Big{\{} \int_{0}^{\widetilde{t}} \int_{\mathbb{R}^{+}}\1_{\left\{ \theta \leq 2\alpha_{0}\mu_{0}^{(n)}\sup_{v\in [0,\widetilde{t}]}Z_{0}(v)\right\}}N_{0}(du,d\theta)=0\Big{\}}, \\
    &E_{\varepsilon,\widetilde{t}}^{(n)}:=\Big{\{} \int_{\widetilde{t}}^{\mathfrak{t}^{(n)}_{t(1)-h_{n}^{-1}(1)}+s}\int_{\mathbb{R}^{+}}\1_{\left\{\theta \leq 2\alpha_{0}\mu_{0}^{(n)}e^{\lambda_0 u}(\varepsilon+W) \right\}}N_{0}(du,d\theta)=0 \Big{\}}.
\end{align}
Using Equation \eqref{Eq:7} we have that
\begin{align}
    \mP\Big(A^{(n)}\Big)\geq \mP\Big(A^{(n)} \cap C_{\varepsilon,\widetilde{t}}\Big)\geq \mP\Big(C_{\varepsilon,\widetilde{t}}\cap D_{\widetilde{t}}^{(n)}\cap E_{\varepsilon,\widetilde{t}}^{(n)}\Big).
\end{align}
It remains to show that the right-hand side converges to $1$. By the definition of $W$ as the almost sure time limit of the positive martingale $e^{-\lambda_0 t}Z_{0}(t)$, it follows that $\mP\big(C_{\varepsilon,\widetilde{t}}\big) \underset{\widetilde{t} \to \infty}{\longrightarrow}1.$ We also have that $\sup_{v \in [0,\widetilde{t}]}Z_{0}(v)$ is finite almost surely. Combined with the fact that $Z_{0}$ and $N_{0}$ are independent, we deduce that  $\mP\big(D_{\widetilde{t}}^{(n)}\big) \underset{n \to \infty}{\longrightarrow}1,$ because $\mu_0^{(n)} \underset{n \to \infty}{\longrightarrow} 0$. Recall the distribution of $W$, given in \eqref{Eq:distribution W}. Since $W$ and $N_{0}$ are independent, we have
\begin{align}
\label{Eq: no mutant cell computation}
    \hspace{1cm}&\mP\Big(E_{\varepsilon, \widetilde{t}}^{(n)}\Big)=\frac{\beta_{0}}{\alpha_{0}}\mP\Big( \int_{\widetilde{t}}^{\mathfrak{t}^{(n)}_{t(1)-h_{n}^{-1}(1)}+s}\int_{\mathbb{R}^{+}}\1_{\left\{\theta \leq 2\alpha_{0}\mu_{0}^{(n)}e^{\lambda_0 u}\varepsilon \right\}}N_{0}(du,d\theta)=0\Big)+\frac{\lambda_{0}}{\alpha_{0}}\\
    &\cdot\int_{0}^{\infty} \frac{\lambda_{0}}{\alpha_{0}}e^{-\frac{\lambda_{0}}{\alpha_{0}}w}\mP \Big( \int_{\widetilde{t}}^{\mathfrak{t}^{(n)}_{t(1)-h_{n}^{-1}(1)}+s}\int_{\mathbb{R}^{+}}\1_{\left\{\theta \leq 2\alpha_{0}\mu_{0}^{(n)}e^{\lambda_{0} u}(\varepsilon+w) \right\}}N_{0}(du,d\theta)=0\Big)dw\\
    &=\frac{\beta_{0}}{\alpha_{0}}\exp\Big(-\int_{\widetilde{t}}^{\mathfrak{t}^{(n)}_{t(1)-h_{n}^{-1}(1)}+s}2\alpha_{0}\mu_{0}^{(n)}e^{\lambda_{0} u}\varepsilon du\Big)\\
    &\hspace{1cm}+\frac{\lambda_{0}}{\alpha_{0}}\int_{0}^{\infty} \frac{\lambda_{0}}{\alpha_{0}}e^{-\frac{\lambda_{0}}{\alpha_{0}}w} \exp\Big(-\int_{\widetilde{t}}^{\mathfrak{t}^{(n)}_{t(1)-h_{n}^{-1}(1)}+s}2\alpha_{0}\mu_{0}^{(n)}e^{\lambda_{0} u}(\varepsilon+w) du\Big)dw \\
    & \geq \frac{\beta_{0}}{\alpha_{0}}\exp\Big(-\frac{2\alpha_{0}\Big(n^{t(1)}\mu_{0}^{(n)}\Big)}{\lambda_{0}}\varepsilon e^{\lambda_{0} s}e^{-h_{n}^{-1}(1)\log(n)}\Big)\\
    &\hspace{1cm}+\frac{\lambda_{0}}{\alpha_{0}}\int_{0}^{\infty} \frac{\lambda_{0}}{\alpha_{0}}e^{-\frac{\lambda_{0}}{\alpha_{0}}w}\exp\Big(-\frac{2\alpha_{0}\Big(n^{t(1)}\mu_{0}^{(n)}\Big)}{\lambda_{0}}(\varepsilon+w)e^{\lambda_{0} s}e^{-h_{n}^{-1}(1)\log(n)} \Big)dw \\
    &\underset{n \to \infty}{\longrightarrow}1,
\end{align}
where we first use that for all $w\geq 0$, $$\frac{2\alpha_{0}\big(n^{t(1)}\mu_{0}^{(n)}\big)}{\lambda_{0}}(\varepsilon+w) e^{\lambda_{0} s}e^{-h_{n}^{-1}(1)\log(n)} \underset{n \to \infty}{\longrightarrow}0.$$ This follows from the choice of $h_n(1)$, which ensures that $h_{n}^{-1}(1)\log(n)\underset{n \to \infty}{\longrightarrow}\infty$. Then, we apply the dominated convergence theorem to obtain
\begin{align}
   \int_{0}^{\infty} \frac{\lambda_{0}}{\alpha_{0}}e^{-\frac{\lambda_{0}}{\alpha_{0}}w}&e^{\Big(-\frac{2\alpha_{0}\big(n^{t(1)}\mu_{0}^{(n)}\big)}{\lambda_{0}}(\varepsilon+w)e^{\lambda_{0} s}e^{-h_{n}^{-1}(1)\log(n)} \Big)}dw\underset{n \to \infty}{\to}\int_{0}^{\infty} \frac{\lambda_{0}}{\alpha_{0}}e^{-\frac{\lambda_{0}}{\alpha_{0}}w}dw=1.
\end{align}
Finally, we have shown that $\mP\left(A^{(n)}\right)\underset{n \to \infty}{\longrightarrow}1,$ which concludes the proof for the case $i=1$.

\textbf{Case $i\geq 2$:} 
Let $\widetilde{t}(i):=\frac{t(i)+t(i-1)}{2}$ and $\Psi_{n} \underset{n \to \infty}{\longrightarrow}\infty$. For $\varepsilon>0$, define
\begin{align}
    C_{\varepsilon}^{(n)}:= \Big\{\sup_{t \in [0,t(i)]}\Big{\vert} \frac{Z_{i-1}^{(n)}\Big(\mathfrak{t}_{t}^{(n)}\Big)}{d^{(n)}(t)}-W \1_{\{t \geq \tilde{t}(i)\}}w_{i-1}(t) \Big{\vert} \leq \varepsilon\Big\},
\end{align}
where $$d^{(n)}(t)=\1_{\left\{t \in [0,\widetilde{t}(i))\right\}}n^{\widetilde{t}(i)-t(i-1)}\log^{\theta(i-1)}(n)\Psi_{n}+\1_{\left\{t \in \left[\widetilde{t}(i),t(i)\right]\right\}}n^{t-t(i-1)}\log^{\theta(i-1)}(n).$$ Under the event $C_{\varepsilon}^{(n)}$, we have 
\begin{align}
\label{Eq:16}
    \hspace{0.5cm}&K_{i-1}^{(n)} \Big(\mathfrak{t}^{(n)}_{t(i)-h_{n}^{-1}(i)}+s\Big) \leq \int_{0}^{\mathfrak{t}^{(n)}_{\widetilde{t}(i)}} \int_{\mathbb{R}^{+}}\1_{\left\{\theta \leq 2 \alpha_{i-1}\mu_{i-1}^{(n)}\varepsilon n^{\tilde{t}(i)-t(i-1)}\log^{\theta(i-1)}(n)\Psi_{n}\right\}}N_{i-1}(du,d\theta)\\
    &+\int_{\mathfrak{t}^{(n)}_{\widetilde{t}(i)}}^{\mathfrak{t}^{(n)}_{t(i)-h_{n}^{-1}(i)}+s} \int_{\mathbb{R}^{+}}\1_{\left\{\theta \leq 2 \alpha_{i-1}\mu_{i-1}^{(n)}\left(\varepsilon+W w_{i-1}(t(i))\right)e^{\lambda_0 u}n^{-t(i-1)}\log^{\theta(i-1)}(n)\right\}}N_{i-1}(du,d\theta).
\end{align}
Let us introduce the events 
\begin{align}
    & D_{\varepsilon}^{(n)} :=\Big\{\int_{0}^{\mathfrak{t}_{\widetilde{t}(i)}^{(n)}} \int_{\mathbb{R}^{+}}\1_{\left\{\theta \leq 2 \alpha_{i-1}\mu_{i-1}^{(n)}\varepsilon n^{\tilde{t}(i)-t(i-1)}\log^{\theta(i-1)}(n)\Psi_{n}\right\}}N_{i-1}(du,d\theta)=0\Big\}, \\
    & E_{\varepsilon}^{(n)} := \Big\{ \int_{\mathfrak{t}_{\widetilde{t}(i)}^{(n)}}^{\mathfrak{t}_{t(i)-h_{n}^{-1}(i)}^{(n)}+s} \int_{\mathbb{R}^{+}}\1_{\left\{\theta \leq 2 \alpha_{i-1}\mu_{i-1}^{(n)}\left(\varepsilon+W w_{i-1}(t(i))\right)e^{\lambda_0 u}n^{-t(i-1)}\log^{\theta(i-1)}(n)\right\}}\\
    &\hspace{9cm}\cdot N_{i-1}(du,d\theta)=0\Big\}.
\end{align}
From \eqref{Eq:16} we obtain $\mP \big(A^{(n)}\big) \geq \mP \big(A^{(n)}\cap C_{\varepsilon}^{(n)}\big) \geq \mP \big(C_{\varepsilon}^{(n)} \cap D_{\varepsilon}^{(n)} \cap E_{\varepsilon}^{(n)}\big).$ It remains to show that the right-hand side converges to $1$. By assuming Proposition \ref{Proposition: mutant pop deterministic time scale not uniform in s} holds for trait $i-1$, it follows that $\mathbb{P} \big( C_{\varepsilon}^{(n)}\big) \underset{n \to \infty}{\longrightarrow}1$. Secondly, we have 
\begin{align}
    \mP \Big( D_{\varepsilon}^{(n)}\Big)=\exp \Big(-\widetilde{t}(i)\frac{\log^{\theta(i-1)+1}(n)}{\lambda_0}2\alpha_{i-1}\mu_{i-1}^{(n)}\varepsilon \sqrt{n^{\ell(i-1)}}\Psi_{n}\Big) \underset{n \to \infty}{\longrightarrow}1,
\end{align}
because $\widetilde{t}(i)-t(i-1)=\ell(i-1)/2$, and $\Psi_{n}$ can be chosen to satisfy both $\Psi_{n} \underset{n \to \infty}{\longrightarrow}\infty$ and \\
$\log^{\theta(i-1)+1}(n)\Psi_{n} \sqrt{n^{\ell(i-1)}}\mu_{i-1}^{(n)}\underset{n \to \infty}{\longrightarrow}0$. Using a similar approach as in the computation of \eqref{Eq: no mutant cell computation}, we get 
\begin{align}
    &\mP\Big(E_{\varepsilon}^{(n)}\Big)\geq \frac{\beta_{0}}{\alpha_{0}}\exp\Big[-\frac{2\alpha_{i-1}\Big(n^{\ell(i-1)}\mu_{i-1}^{(n)}\Big)}{\lambda_0}\varepsilon \log^{\theta(i-1)}(n)e^{\lambda_0 s}e^{-h_{n}^{-1}(i)\log(n)}\Big]\\
    &+\frac{\lambda_0}{\alpha_{0}}\int_{0}^{\infty} \frac{\lambda_0}{\alpha_{0}} e^{-\frac{\lambda_0}{\alpha_{0}}w}\\
    &\cdot\exp\Big(-\frac{2\alpha_{i-1}\Big(n^{\ell(i-1)}\mu_{i-1}^{(n)}\Big)}{\lambda_0}(\varepsilon+ww_{i-1}(t(i)))\log^{\theta(i-1)}(n) e^{\lambda_0 s}e^{-h_{n}^{-1}(i)\log(n)} \Big)dw\\
    &\underset{n \to \infty}{\longrightarrow}1,
\end{align}
where, $\forall w\geq 0$, $$\frac{2\alpha_{i-1}\left(n^{\ell(i-1)}\mu_{i-1}^{(n)}\right)}{\lambda_0}(\varepsilon+w w_{i-1}(t(i)))\log^{\theta(i-1)}(n) e^{\lambda_0 s}e^{-h_{n}^{-1}(i)\log(n)} \underset{n \to \infty}{\longrightarrow}0,$$ because $$\log^{\theta(i-1)}(n)e^{-h_{n}^{-1}(i)\log(n)}=\exp \left(\theta(i-1)\log(\log(n))-\log(n)h_n^{-1}(i)\right)\underset{n \to \infty}{\longrightarrow}0$$ by hypothesis on $h_n(i)$. Then, we apply the dominated convergence theorem to get
\begin{align}
    &\int_{0}^{\infty} \frac{\lambda_0}{\alpha_{0}}e^{-\frac{\lambda_0}{\alpha_{0}}w}\exp\Big(-\frac{2\alpha_{i-1}\Big(n^{\ell(i-1)}\mu_{i-1}^{(n)}\Big)}{\lambda_0}\\
    &\hspace{5cm}\cdot(\varepsilon+ww_{i-1}(t(i)))\log^{\theta(i-1)}(n) e^{\lambda_0 s}e^{-h_{n}^{-1}(i)\log(n)} \Big)dw \\
    &\hspace{2cm}\underset{n \to \infty}{\longrightarrow}\int_{0}^{\infty} \frac{\lambda_0}{\alpha_{0}}e^{-\frac{\lambda_0}{\alpha_{0}}w}dw=1.
\end{align}
Finally, we have shown that $\mP\left(A^{(n)}\right)\underset{n \to \infty}{\rightarrow}1,$ which concludes the proof.
\end{proof}

\textbf{(ii) Second time regime $t \in [t(i)-h_n^{-1}(i),t(i)]$, convergence to 0 of \eqref{Eq:5}:} In the next lemma we show that in the time interval $[t(i)-h_n^{-1}(i), t(i)]$, the size of the mutant subpopulation of trait $i$ does not achieve any power of $n$. We control its growth by the factor $\psi_n(i) \log^{\theta(i-1)}(n)$, with a well-chosen function $\psi_n(i)$. Heuristically, the total number of mutant cells of trait $i$ generated from mutational events up to time $t(i)$ is of order $\mathcal{O}\big(\log^{\theta(i-1)}(n)\big)$. Moreover, with the remaining time for the lineages of these mutant cells to grow, we are able to control the size of the mutant subpopulation of trait $i$ by at most $\sqrt{\log(n)}\log^{\theta(i-1)}(n)$. Consequently, by dividing by any function $\psi_n(i)$ that satisfies $\sqrt{\log(n)}=o(\psi_n(i))$, the asymptotic limit is $0$.
\begin{lemma}
\label{Lem: control before first mut}
    Let $i \in \mathbb{N}$, $h_{n}(i)=\frac{\log(n)}{\log(\log(n))\theta(i-1)+\varphi_n(i)}$, where $\varphi_n(i) \underset{n \to \infty}{\to} \infty$ such that $h_n(i) \underset{n \to \infty}{\to}\infty$, $\psi_n(i) \to \infty$ such that $\sqrt{\log(n)}=o(\psi_n(i))$, $s \in \mathbb{R}$ and $\varepsilon>0$. For $i \geq 2$, we prove that if Proposition \ref{Proposition: mutant pop deterministic time scale not uniform in s} holds for $i-1$ then
    \begin{align} \label{E9}
        \mP \Big( \sup_{t \in \left[t(i)-h_n^{-1}(i),t(i)\right]} \frac{Z_{i}^{(n)}\Big(\mathfrak{t}^{(n)}_{t}+s\Big)}{\psi_n(i) \log^{\theta(i-1)}(n)} \geq \varepsilon\Big)\underset{n \to \infty}{\longrightarrow}0.
    \end{align}
For $i=1$, we prove \eqref{E9} without any conditions.
\end{lemma}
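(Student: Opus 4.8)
The plan is to run a martingale argument on $Z_i^{(n)}$ based on the martingale $M_i^{(n)}$ of Lemma~\ref{Lem martingale type 1 pop}, exploiting that $e^{-\lambda_i^{(n)}t}Z_i^{(n)}(t)=M_i^{(n)}(t)+G_i^{(n)}(t)$ with $G_i^{(n)}(t):=\int_0^t 2\alpha_{i-1}\mu_{i-1}^{(n)}e^{-\lambda_i^{(n)}s}Z_{i-1}^{(n)}(s)\,ds$ nondecreasing, so that $e^{-\lambda_i^{(n)}t}Z_i^{(n)}(t)$ is a nonnegative submartingale. Since $t\mapsto e^{\lambda_i^{(n)}(\mathfrak{t}_t^{(n)}+s)}$ is increasing and $\lambda_i^{(n)}\le\lambda_0$, on $[t(i)-h_n^{-1}(i),t(i)]$ one has $Z_i^{(n)}(\mathfrak{t}_t^{(n)}+s)\le e^{\lambda_i^{(n)}(\mathfrak{t}_{t(i)}^{(n)}+s)}\sup_{u\le\mathfrak{t}_{t(i)}^{(n)}+s}e^{-\lambda_i^{(n)}u}Z_i^{(n)}(u)$, with $e^{\lambda_i^{(n)}(\mathfrak{t}_{t(i)}^{(n)}+s)}\sim n^{t(i)}e^{\lambda_0 s}$ because $\mu_i^{(n)}\log n\to0$. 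So it suffices to prove $\sup_{u\le\mathfrak{t}_{t(i)}^{(n)}+s}e^{-\lambda_i^{(n)}u}Z_i^{(n)}(u)=o_{\mathbb{P}}\big(n^{-t(i)}\psi_n(i)\log^{\theta(i-1)}(n)\big)$, for which I would apply Doob's $L^1$ (and $L^2$) maximal inequalities after controlling $\E[G_i^{(n)}(\mathfrak{t}_{t(i)}^{(n)}+s)]$, $\E[G_i^{(n)}(\mathfrak{t}_{t(i)}^{(n)}+s)^2]$ and $\E[\langle M_i^{(n)}\rangle_{\mathfrak{t}_{t(i)}^{(n)}+s}]$.

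For the first step I would establish the statement under the stronger hypothesis $\log n=o(\psi_n(i))$. Feeding the induction hypothesis for $i-1$, sharpened by the moment bound of Lemma~\ref{Lem:upper bound expectation subpop}, into $\E[Z_{i-1}^{(n)}(s)]\le C\log^{\theta(i-1)}(n)\,n^{(\lambda_0 s/\log n-t(i-1))_+}$ for $s\le\mathfrak{t}_{t(i)}^{(n)}$ (this is where Lemma~\ref{Lem: negligeable finite variation}, handling the finite-variation process, and a Gronwall closure of the first-moment ODE for $Z_i^{(n)}$ enter), a change of variables gives $n^{t(i)}\E[G_i^{(n)}(\mathfrak{t}_{t(i)}^{(n)}+s)]\le C(s)\log^{\theta(i-1)+1}(n)$; the extra $\log n$ over the $\Theta(\log^{\theta(i-1)}(n))$ expected mutation events is produced by the rare early mutations whose lineages enjoy a long growth phase. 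Doob's $L^1$ inequality then yields $\mP\big(\sup_t Z_i^{(n)}(\mathfrak{t}_t^{(n)}+s)\ge\varepsilon\psi_n(i)\log^{\theta(i-1)}(n)\big)\le C(s)\log n/(\varepsilon\psi_n(i))\to0$ — alternatively this is run with Doob's $L^2$ inequality and the quadratic-variation bound of Lemma~\ref{Lem: control quadratic variation}, which is needed anyway. The base case $i=1$ ($\theta(i-1)=0$) goes through identically, and if $i$ is deleterious the lineages grow only at rate $\lambda_i<\lambda_0$ so the bounds are only easier; the normalisation is consistent since $\theta(i)-1=\theta(i-1)$ for $i$ neutral and $\theta(i)=\theta(i-1)$ for $i$ deleterious.

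The second step relaxes the hypothesis to $\sqrt{\log n}=o(\psi_n(i))$. Although Lemma~\ref{Lem: no mutant cell} is stated with $h_n^{-1}(i)$, the same argument applies to any cutoff $g_n^{-1}(i):=\big(\theta(i-1)\log\log n+\chi_n\big)/\log n$ with $\chi_n\to\infty$ and $\chi_n=o(\log\log n)$, giving that with probability tending to $1$ no trait-$i$ cell is created before $\mathfrak{t}_{t(i)-g_n^{-1}(i)}^{(n)}+s$; the corresponding window $\tau_n:=\mathfrak{t}_{g_n^{-1}(i)}^{(n)}=\big(\theta(i-1)\log\log n+\chi_n\big)/\lambda_0=\Theta(\log\log n)$ is much shorter than the one of length $\asymp\varphi_n(i)$ implied by $h_n$. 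On this event one restarts $M_i^{(n)}$ at $\mathfrak{t}_{t(i)-g_n^{-1}(i)}^{(n)}+s$ from $0$ and redoes the change-of-variables estimates over the short window of length $\tau_n+s$ (using $e^{\lambda_i^{(n)}\tau_n}\asymp\log^{\theta(i-1)}(n)e^{\chi_n}$): one gets $n^{t(i)}\E[G_i^{(n)}(\mathfrak{t}_{t(i)}^{(n)}+s)]\le C(s)\log^{\theta(i-1)}(n)\log\log n$ for the drift and $n^{2t(i)}\E[\langle M_i^{(n)}\rangle_{\mathfrak{t}_{t(i)}^{(n)}+s}]\le C(s)\log^{2\theta(i-1)}(n)e^{\chi_n}$ for the quadratic variation, so the martingale contribution is of order $\log^{\theta(i-1)}(n)e^{\chi_n/2}$. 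Both are $o\big(\sqrt{\log n}\,\log^{\theta(i-1)}(n)\big)=o\big(\psi_n(i)\log^{\theta(i-1)}(n)\big)$, so Doob's maximal inequalities close the argument on $[t(i)-g_n^{-1}(i),t(i)]$; the remaining part $[t(i)-h_n^{-1}(i),t(i)]\setminus[t(i)-g_n^{-1}(i),t(i)]$ is contained in $\{t\le t(i)-g_n^{-1}(i)\}$, on which $Z_i^{(n)}$ vanishes with high probability by Lemma~\ref{Lem: no mutant cell} (and is empty when $\varphi_n(i)\le\chi_n$).

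I expect the main obstacle to be exactly the gap between the first moment of $Z_i^{(n)}$ (of order $\log^{\theta(i)}(n)$) and its typical size (at most $\sqrt{\log n}\,\log^{\theta(i-1)}(n)$): a plain Markov bound on the first moment is too lossy, and the crux is to excise the rare early-mutation events by localising through Lemma~\ref{Lem: no mutant cell} with a cutoff of the optimal order $\log\log n$, after which the quadratic variation of the martingale — not its drift — governs the fluctuations, comfortably below the $\psi_n(i)\log^{\theta(i-1)}(n)$ threshold. The accompanying bookkeeping of the three scales $h_n(i)$, $\psi_n(i)$ and the auxiliary $\chi_n$ is the secondary difficulty.
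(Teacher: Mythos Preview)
Your plan mirrors the paper's proof closely: the same martingale/drift decomposition via Lemma~\ref{Lem martingale type 1 pop}, the same reliance on Lemmas~\ref{Lem: no mutant cell}, \ref{Lem: negligeable finite variation} and \ref{Lem: control quadratic variation}, and the same two-step structure in which one first proves the result under a stronger hypothesis on $\psi_n$ and then introduces an auxiliary cutoff to relax it. The paper's Step~1 restarts the martingale at $t(i)-h_n^{-1}(i)$ under the hypothesis $\log(n)e^{\varphi_n(i)}=o(\psi_n^2(i))$; your Step~1 instead uses an $L^1$ Doob bound on the submartingale from time~$0$ under $\log n=o(\psi_n(i))$ --- a different but equally valid warm-up.

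There is, however, a genuine gap in your Step~2. Your quadratic-variation estimate $n^{2t(i)}\E[\langle M_i^{(n)}\rangle_{\mathrm{incr}}]\le C\log^{2\theta(i-1)}(n)e^{\chi_n}$ accounts only for the $Z_{i-1}^{(n)}$ term in \eqref{quadratic variation}. In the neutral case the $Z_i^{(n)}$ term dominates, since by Lemma~\ref{Lem:upper bound expectation subpop} one has $\E[Z_i^{(n)}(u)]\lesssim u^{\theta(i)}e^{\lambda_0 u}$ with $\theta(i)=\theta(i-1)+1$; plugging this in (or applying Lemma~\ref{Lem: control quadratic variation} directly) gives
\[
n^{2t(i)}\,\E\big[\langle M_i^{(n)}\rangle_{\mathfrak t^{(n)}_{t(i)}+s}-\langle M_i^{(n)}\rangle_{\mathfrak t^{(n)}_{t(i)-g_n^{-1}(i)}+s}\big]\le C\log^{2\theta(i-1)+1}(n)\,e^{\chi_n},
\]
so the martingale contribution is of order $\sqrt{\log n}\,\log^{\theta(i-1)}(n)e^{\chi_n/2}$, not $\log^{\theta(i-1)}(n)e^{\chi_n/2}$. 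Consequently the condition you actually need is $\log(n)e^{\chi_n}=o(\psi_n^2(i))$, and your constraint $\chi_n=o(\log\log n)$ is neither sufficient (take $\psi_n=\sqrt{\log n}\,\log\log n$ and $\chi_n=\sqrt{\log\log n}$: then $\log(n)e^{\chi_n}/\psi_n^2=e^{\sqrt{\log\log n}}/(\log\log n)^2\to\infty$) nor the relevant one. The correct move --- and precisely what the paper does in its Step~2 --- is to \emph{choose} $\chi_n$ (the paper's $\widetilde\varphi_n(i)$) adaptively so that $\log(n)e^{\chi_n}=o(\psi_n^2)$, which is always possible because $\psi_n^2/\log n\to\infty$. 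With that single correction your argument closes and coincides with the paper's.
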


\begin{proof}
We begin by proving the same result under the more restrictive condition $\log(n)e^{\varphi_{n}(i)}=o(\psi_{n}^{2}(i)).$

\textbf{Step 1:} Let $\psi_{n}(i)$ satisfying the previous equation. For all $t \in \left[t(i)-h_n^{-1}(i),t(i)\right]$, we have 
 \begin{align}
 \label{Eq: calcul intercale}
     &\frac{Z_{i}^{(n)}\big( \mathfrak{t}^{(n)}_{t}+s\big)}{\psi_n(i) \log^{\theta(i-1)}(n)}\\
     &=\frac{Z_{i}^{(n)}\big( \mathfrak{t}^{(n)}_{t}+s\big)e^{-\lambda_i^{(n)} \big(\mathfrak{t}^{(n)}_{t}+s\big)}-Z_{i}^{(n)}\big(\mathfrak{t}^{(n)}_{t(i)-h_n^{-1}(i)}+s\big)e^{-\lambda_{i}^{(n)} \big(\mathfrak{t}^{(n)}_{t(i)-h_n^{-1}(i)}+s\big)}}{\psi_n(i) \log^{\theta(i-1)}(n) e^{-\lambda_i^{(n)} \big( \mathfrak{t}^{(n)}_{t}+s\big)}}\\
     &\hspace{2cm}+\frac{Z_{i}^{(n)}\big(\mathfrak{t}^{(n)}_{t(i)-h_n^{-1}(i)}+s\big)e^{-\lambda_{i}^{(n)} \big(\mathfrak{t}^{(n)}_{t(i)-h_n^{-1}(i)}+s\big)}}{\psi_n(i)  \log^{\theta(i-1)}(n)e^{-\lambda_i^{(n)}\big( \mathfrak{t}^{(n)}_{t}+s\big) }}\\
     &=\frac{M_{i}^{(n)}\big( \mathfrak{t}^{(n)}_{t}+s\big)-M_{i}^{(n)}\big(\mathfrak{t}^{(n)}_{t(i)-h_n^{-1}(i)}+s\big)}{\psi_n(i) \log^{\theta(i-1)}(n) e^{-\lambda_i^{(n)} \big( \mathfrak{t}^{(n)}_{t}+s\big)}}+\frac{\int_{\mathfrak{t}^{(n)}_{t(i)-h_n^{-1}(i)}+s}^{\mathfrak{t}^{(n)}_{t}+s}2\alpha_{i-1}\mu_{i-1}^{(n)}e^{-\lambda_{i}^{(n)} u}Z_{i-1}^{(n)}(u) du}{\psi_n(i) \log^{\theta(i-1)}(n) e^{-\lambda_i^{(n)} \big( \mathfrak{t}^{(n)}_{t}+s\big)}}\\ 
     &\hspace{2cm}+\frac{Z_{i}^{(n)}\big(\mathfrak{t}^{(n)}_{t(i)-h_n^{-1}(i)}+s\big)}{\psi_n(i) \log^{\theta(i-1)}(n) e^{-\lambda_i^{(n)} \mathfrak{t}^{(n)}_{t-t(i)+h_n^{-1}(i)}}}.
 \end{align}
Then, we have 
 \begin{align}
    & \mP \Big( \sup_{t \in \left[t(i)-h_n^{-1}(i),t(i)\right]} \frac{Z_{i}^{(n)}\big(\mathfrak{t}^{(n)}_{t}+s\big)}{\psi_n(i) \log^{\theta(i-1)}(n)} \geq 3\varepsilon\Big) \\
    &\label{Eq: eq1}\hspace{1cm}\leq \mP \Big( \sup_{t \in \left[t(i)-h_n^{-1}(i),t(i)\right]} \Big{\vert} \frac{M_{i}^{(n)}\big( \mathfrak{t}^{(n)}_{t}+s\big)-M_{i}^{(n)}\big(\mathfrak{t}^{(n)}_{t(i)-h_n^{-1}(i)}+s\big)}{\psi_n(i) \log^{\theta(i-1)}(n) e^{-\lambda_i^{(n)} \left( \mathfrak{t}^{(n)}_{t}+s\right)}}\Big{\vert} \geq \varepsilon\Big) \\
     &\label{Eq: eq2}\hspace{1cm}+\mP \Big( \sup_{t \in \left[t(i)-h_n^{-1}(i),t(i)\right]}\frac{\int_{\mathfrak{t}^{(n)}_{t(i)-h_n^{-1}(i)}+s}^{\mathfrak{t}^{(n)}_{t}+s}2\alpha_{i-1}\mu_{i-1}^{(n)}e^{-\lambda_{i}^{(n)} u}Z_{i-1}^{(n)}(u) du}{\psi_n(i) \log^{\theta(i-1)}(n) e^{-\lambda_i^{(n)} \left( \mathfrak{t}^{(n)}_{t}+s\right)}}\geq \varepsilon\Big) \\
     &\label{Eq: eq3}\hspace{1cm}+\mP \Big( \sup_{t \in \left[t(i)-h_n^{-1}(i),t(i)\right]}\frac{Z_{i}^{(n)}\big(\mathfrak{t}^{(n)}_{t(i)-h_n^{-1}(i)}+s\big)}{\psi_n(i) \log^{\theta(i-1)}(n) e^{-\lambda_i^{(n)} \mathfrak{t}^{(n)}_{t-t(i)+h_n^{-1}(i)}}}\geq \varepsilon\Big).
 \end{align}
We have $\eqref{Eq: eq3}\leq \mP \Big( Z_{i}^{(n)}\big(\mathfrak{t}^{(n)}_{t(i)-h_n^{-1}(i)}+s\big) \geq 1\Big),$ because a necessary condition for fulfilling the condition of interest is that there is at least one mutant cell of trait $i$ at time $\mathfrak{t}^{(n)}_{t(i)-h_n^{-1}(i)}+s$. Then, applying Lemma \ref{Lem: no mutant cell} shows that \eqref{Eq: eq3} converges to $0$. The convergence to 0 of the term \eqref{Eq: eq2} follows by applying the subsequent lemma. Note, in particular, that $(\psi_n(i), h_n(i))$ satisfies the condition of this lemma.

\begin{lemma}
\label{Lem: negligeable finite variation}
Let $i \in \mathbb{N}$, $h_n(i)=\frac{\log(n)}{\log(\log(n))\theta(i-1)+\varphi_n(i)}$, where $\varphi_n(i) \underset{n \to \infty}{\to} \infty$ such that $h_n(i) \underset{n \to \infty}{\to} \infty$, $\psi_n(i) \underset{n \to \infty}{\to} \infty$ such that $\log(n)=o(\psi_n(i) h_n(i))$, $s \in \mathbb{R}$ and $\varepsilon>0$. For $i \geq 2$, we prove that if Proposition \ref{Proposition: mutant pop deterministic time scale not uniform in s} holds for $i-1$ then
    \begin{align}
    \label{E7}
        \mP \Big( \sup_{t \in [t(i)-h_n^{-1}(i),t(i)]} \frac{\int_{\mathfrak{t}^{(n)}_{t(i)-h_n^{-1}(i)}+s}^{\mathfrak{t}^{(n)}_{t}+s}2\alpha_{i-1}\mu_{i-1}^{(n)}e^{-\lambda_{i}^{(n)} u}Z_{i-1}^{(n)}(u) du}{\psi_n(i)  \log^{\theta(i-1)}(n) e^{-\lambda_i^{(n)} \left( \mathfrak{t}^{(n)}_{t}+s\right)}}\geq \varepsilon\Big) \underset{n \to \infty}{\longrightarrow}0.
    \end{align}
For $i=1$, we prove \eqref{E7} without any conditions.
\end{lemma}

\begin{proof}[Proof of Lemma \ref{Lem: negligeable finite variation}]
Let 
\begin{align}
    a_{t}^{(n)}:=\frac{\int_{\mathfrak{t}^{(n)}_{t(i)-h_n^{-1}(i)}+s}^{\mathfrak{t}^{(n)}_{t}+s} 2\alpha_{i-1}\mu_{i-1}^{(n)} e^{-\lambda_{i}^{(n)} u}Z_{i-1}^{(n)}(u)du}{\psi_n(i)\log^{\theta(i-1)}(n) e^{-\lambda_i^{(n)} \left( \mathfrak{t}^{(n)}_{t}+s\right)}}.
\end{align}
Our aim is to prove that for all $\varepsilon>0$,
\begin{align}\label{E8}
    \mP \Big( \sup_{t \in \left[t(i)-h_n^{-1}(i),t(i)\right]} a_{t}^{(n)} \leq \varepsilon\Big) \underset{n \to \infty}{\to}1.
\end{align}

\textbf{Case $i=1$:} 
We have 
\begin{align}
\label{Eq: eq 51}
    a_{t}^{(n)}&=\frac{e^{\lambda_1^{(n)}\left(\mathfrak{t}_{t}^{(n)}+s\right)}}{\psi_n(1)} \int_{\mathfrak{t}^{(n)}_{t(1)-h_n^{-1}(1)}+s}^{\mathfrak{t}^{(n)}_{t}+s} 2\alpha_{0}\mu_{0}^{(n)} \Big[W+\left(e^{-\lambda_0 u}Z_{0}(u)-W\right)\\
    &\hspace{5cm}+\left(e^{-\lambda_{0}^{(n)}u}Z_{0}^{(n)}(u)-e^{-\lambda_0 u}Z_{0}(u)\right)\Big]e^{\left(\lambda_{0}^{(n)}-\lambda_{1}^{(n)}\right)u}du.
\end{align}
Let us define
\begin{align}
    &B_{\widetilde{\varepsilon}}^{(n)}:= \Big\{ \sup_{u \in \mathbb{R}^{+}} \Big{\vert} e^{-\lambda_0 u}Z_{0}(u) -e^{-\lambda_{0}^{(n)}u}Z_{0}^{(n)}(u)\Big{\vert} \leq \widetilde{\varepsilon}\Big\}, \\
    &C_{x,\widetilde{\varepsilon}}:= \Big\{ \sup_{u \in [x,\infty]} \vert e^{-\lambda_0 u}Z_{0}(u)-W\vert \leq \widetilde
    \varepsilon\Big\}.
\end{align}
According to Lemma \ref{Lem:control primary pop} and the definition of $W$ (see \eqref{W def}) we have both that $\mP \big( B_{\widetilde{\varepsilon}}^{(n)}\big) \underset{n \to \infty}{\to}1$ and $\mP \big( C_{\sqrt{\log(n)},\widetilde{\varepsilon}}\big) \underset{n \to \infty}{\to}1$. Then, for sufficiently large $n$, under the event $B_{\widetilde{\varepsilon}}^{(n)} \cap C_{\sqrt{\log(n)},\widetilde{\varepsilon}}$, we have 
\begin{align}
    a_{t}^{(n)}&\leq 2 \alpha_0\Big(n^{t(1)}\mu_{0}^{(n)}\Big)(W+2\widetilde{\varepsilon}) I_n,
\end{align}
where $I_n:=\frac{e^{\lambda_{1}^{(n)}\left(\mathfrak{t}^{(n)}_{t}+s\right)}}{\psi_n(1) n^{t(1)}}\int_{\mathfrak{t}^{(n)}_{t(i)-h_n^{-1}(1)}+s}^{\mathfrak{t}^{(n)}_{t}+s} e^{\left(\lambda_0-\lambda_{1}^{(n)}\right)u}du.$ In the case where $\lambda_1<\lambda_0$, we have that 
\begin{align}
\label{Eq: eq 55}
    I_n\leq \frac{e^{\lambda_{1}^{(n)}\left(\mathfrak{t}^{(n)}_{t}+s\right)}}{\psi_n(1) n^{t(1)}} \frac{e^{\left(\lambda_0-\lambda_1^{(n)}\right)\left(\mathfrak{t}^{(n)}_{t}+s\right)}}{\lambda_0-\lambda_1}= \frac{e^{-\lambda_0 \mathfrak{t}^{(n)}_{t(1)-t}}e^{\lambda_0 s}}{\psi_n(1) (\lambda_0-\lambda_1)} \leq \frac{e^{\lambda_0 s}}{\psi_n(1) (\lambda_0-\lambda_1)}.
\end{align}
In the case where $\lambda_1=\lambda_0$, recalling that $\lambda_1^{(n)}=\lambda_0-2\alpha_1 \mu_1^{(n)}$, we obtain 
\begin{align}
\label{Eq: eq 56}
   I_n& \leq \frac{e^{\lambda_0 s} e^{-2\alpha_1\mu_1^{(n)}\left(\mathfrak{t}^{(n)}_{t}+s\right)}}{\psi_n(1)}\frac{e^{2\alpha_1\mu_1^{(n)}\left(\mathfrak{t}^{(n)}_{t}+s\right)}-e^{2\alpha_1\mu_1^{(n)}\left(\mathfrak{t}^{(n)}_{t(1)-h_n^{-1}(1)}+s\right)}}{2\alpha_1 \mu_1^{(n)}} \\
   &=\frac{e^{\lambda_0 s}}{\psi_n(1)} \frac{1-e^{-2\alpha_1 \mu_1^{(n)} \mathfrak{t}^{(n)}_{t-t(1)+h_n^{-1}(1)}}}{2\alpha_1 \mu_1^{(n)}} \\
    &\leq \frac{e^{\lambda_0 s}\log(n)}{\psi_n(1) h_n(1)\lambda_0} ,
\end{align}
where for the last inequality, we use the fact that $\mathfrak{t}^{(n)}_{t-t(1)+h_n^{-1}(1)} \leq \log(n)/(h_n(1)\lambda_0)$ and apply the following equation with $a=2\alpha_1 \mu_1^{(n)}>0$ and $x=\mathfrak{t}^{(n)}_{t-t(1)+h_n^{-1}(1)}$ 
\begin{align}
\label{Eq: eq 54}
    \forall x \geq 0, \forall a>0, \frac{1-e^{-ax}}{a} \leq x.
\end{align}
In any case, since $W$ is a finite random variable (see \eqref{Eq:distribution W}), we find \eqref{E8}. 

\textbf{Case $i \geq 2$:}
Assume Proposition \ref{Proposition: mutant pop deterministic time scale not uniform in s} holds for $i-1$. We have $\mP \left(B_{\widetilde{\varepsilon}}^{(n)}\right) \underset{n \to \infty}{\to}1$ with 
\begin{align}
    &B_{\widetilde{\varepsilon}}^{(n)}:= \Big\{ \sup_{v \in \left[t(i)-h_n^{-1}(i),t(i)\right]} \Big{\vert} \frac{Z_{i-1}^{(n)}\big(\mathfrak{t}^{(n)}_{v}+s\big)}{n^{v-t(i-1)}e^{\lambda_0 s}\log^{\theta(i-1)}(n)}-W w_{i-1}(v)\Big{\vert} \leq \widetilde{\varepsilon}\Big\}.
\end{align}
Using the change of variable $u=\mathfrak{t}_{v}^{(n)}+s$ and the fact that $t(i-1)=t(i)-\ell(i-1)$, notice that 
\begin{align}
    &a_{t}^{(n)}=\frac{e^{\lambda_i^{(n)}\left(\mathfrak{t}^{(n)}_{t}+s\right)}}{\psi_n(i) n^{t(i)}}\int_{t(i)-h_n^{-1}(i)}^{t}2\alpha_{i-1}\big(n^{\ell(i-1)}\mu_{i-1}^{(n)}\big)\\
    &\hspace{4cm}\cdot \frac{Z_{i-1}^{(n)}\big(\mathfrak{t}^{(n)}_{v}+s\big)}{n^{v-t(i-1)}e^{\lambda_0 s}\log^{\theta(i-1)}(n)} e^{\left(\lambda_0-\lambda_{i}^{(n)}\right)\left(\mathfrak{t}^{(n)}_{v}+s\right)}\frac{\log(n)}{\lambda_0}dv.
\end{align}
Since $w_{i-1}$ is a non-decreasing function, it follows that, under the event $B_{\widetilde{\varepsilon}}^{(n)}$,
\begin{align}
    a_{t}^{(n)} &\leq 2 \alpha_{i-1} \big( n^{\ell(i-1)}\mu_{i-1}^{(n)}\big)  \left(W w_{i-1}(t(i)) +\widetilde{\varepsilon}\right)\frac{e^{\lambda_{i}^{(n)}\left(\mathfrak{t}^{(n)}_{t}+s\right)}}{\psi_n(i) n^{t(i)}}\int_{\mathfrak{t}^{(n)}_{t(i)-h_n^{-1}(i)}+s}^{\mathfrak{t}^{(n)}_{t}+s} e^{\left(\lambda_0-\lambda_{i}^{(n)}\right)u}du.
\end{align}
By similar computations as in \eqref{Eq: eq 55} and \eqref{Eq: eq 56}, \eqref{E8} follows.
\end{proof}

Now, we will prove that \eqref{Eq: eq1} converges to 0. We begin by introducing two lemmas, whose proofs are provided in the \hyperref[appn]{Appendix}, which allow us to control both the expected size of any mutant subpopulation and the quadratic variation associated to the martingale $M^{(n)}_{i}$. First, a natural upper bound on the expected growth of each mutant subpopulation can be easily obtained, as stated in the next lemma. 
\begin{lemma}
\label{Lem:upper bound expectation subpop}
For all $i \in \mathbb{N}_0$ and $u \geq 0$,
\begin{align}
\label{Equation101}
    \E\Big[Z_{i}^{(n)}(u)\Big]&\leq C_{i}\mu_{\otimes,i}^{(n)}  u^{\theta(i)} e^{\lambda_{0}u},
\end{align}
where $\mu_{\otimes,i}^{(n)}:=\overset{i}{\underset{j=1}{\prod}}\mu_{j-1}^{(n)}$ and $C_{i}:=\prod_{j=1}^{i}2\alpha_{j-1}\big(\1_{\{\lambda_{j}=\lambda_0\}}+\1_{\{\lambda_j<\lambda_0\}}\frac{1}{\lambda_0-\lambda_j}\big).$
\end{lemma}
Notice that there are three key components. The first is the mutational cost to produce such mutant cells, represented by the term $\mu_{\otimes,i}^{(n)}$. The second component is the contribution over time of all neutral mutations along the path leading to the mutant subpopulation in question. The third component is the exponential growth at rate $\lambda_0$ exhibited by the wild-type subpopulation. Additionally, using the expression for the quadratic variation of the martingale associated to a mutant subpopulation, given in Equation \eqref{quadratic variation}, and the previous Lemma \ref{Lem:upper bound expectation subpop}, a natural upper bound on its mean is derived and summarized in the next lemma. 

\begin{lemma}
\label{Lem: control quadratic variation}
    Let $0<t_1^{(n)}<t_2$ and $s \in \mathbb{R}$. There exist $N \in \mathbb{N}$ and $C(i)>0$ such that, for all $n \geq N$, we have 
    \begin{align}
        &\E \Big[\big\langle M_{i}^{(n)}\big\rangle_{\mathfrak{t}^{(n)}_{t_2}+s}-\big\langle M_{i}^{(n)}\big\rangle_{\mathfrak{t}^{(n)}_{t_1^{(n)}}+s}\Big] \leq C(i)  \mu_{\otimes,i}^{(n)}\Big[ \1_{\{\lambda_i=\lambda_0\}}\frac{ e^{-\lambda_0 s}\big(\mathfrak{t}^{(n)}_{t_1^{(n)}}+s\big)^{\theta(i)} }{ n^{t_1^{(n)}}}  \big( \mathfrak{t}^{(n)}_{t_2}+s\big)^{\theta(i)}\\ &\cdot\Big(\1_{\{\lambda_0>2\lambda_i\}}e^{\left(\lambda_0-2\lambda_i\right)\big( \mathfrak{t}^{(n)}_{t_2}+s\big)}+\1_{\{\lambda_0=2\lambda_i\}}\Big(\mathfrak{t}^{(n)}_{t_2}+s\Big)+\1_{\{\lambda_i<\lambda_0<2\lambda_i\}}e^{-\left(2\lambda_i-\lambda_0\right)\big(\mathfrak{t}^{(n)}_{t_1^{(n)}}+s\big)}\Big) \Big].
    \end{align}
\end{lemma}

Now we can prove that \eqref{Eq: eq1} converges to 0. Using the fact that $\log^{\theta(i-1)}(n)=e^{\lambda_0 \mathfrak{t}^{(n)}_{h_{n}^{-1}(i)}}e^{-\varphi_{n}(i)}$, we can rewrite for all $t \in \left[t(i)-h_n^{-1}(i),t(i)\right] $
\begin{align}
    &\frac{ \left\vert M_{i}^{(n)}\big( \mathfrak{t}^{(n)}_{t}+s\big)-M_{i}^{(n)}\big(\mathfrak{t}^{(n)}_{t(i)-h_n^{-1}(i)}+s\big) \right\vert}{\psi_n(i) \log^{\theta(i-1)}(n) e^{-\lambda_i^{(n)} \left( \mathfrak{t}^{(n)}_{t}+s\right)}} \\
    &\hspace{2cm}=\frac{ \left\vert M_{i}^{(n)}\big( \mathfrak{t}^{(n)}_{t}+s\big)-M_{i}^{(n)}\big(\mathfrak{t}^{(n)}_{t(i)-h_n^{-1}(i)}+s\big)\right\vert}{ \psi_{n}(i)e^{-\varphi_{n}(i)} e^{\left(\lambda_0-\lambda_i^{(n)}\right)\mathfrak{t}^{(n)}_{t-t(i)+h_{n}^{-1}(i)}}e^{-\lambda_{i}^{(n)}\big(\mathfrak{t}^{(n)}_{t(i)-h_{n}^{-1}(i)}+s\big)} e^{\lambda_0 \mathfrak{t}^{(n)}_{t(i)-t}}} \\
    &\hspace{2cm} \leq \frac{ \left\vert M_{i}^{(n)}\big( \mathfrak{t}^{(n)}_{t}+s\big)-M_{i}^{(n)}\big(\mathfrak{t}^{(n)}_{t(i)-h_n^{-1}(i)}+s\big)\right\vert}{ \psi_{n}(i)e^{-\varphi_{n}(i)} e^{\left(\lambda_0-\lambda_i^{(n)}\right)\mathfrak{t}^{(n)}_{t-t(i)+h_{n}^{-1}(i)}}e^{-\lambda_{i}^{(n)}\big(\mathfrak{t}^{(n)}_{t(i)-h_{n}^{-1}(i)}+s\big)} }.
\end{align}
In the case where $\lambda_i=\lambda_0$, we simplify the denominator using that $e^{\left(\lambda_0-\lambda_i^{(n)}\right)\mathfrak{t}^{(n)}_{t-t(i)+h_{n}^{-1}(i)}} \geq 1$. Then, we apply Doob's inequality to the martingale $\big(M_{i}^{(n)}\big(\mathfrak{t}_{t}^{(n)}+s\big)-M_{i}^{(n)}\big(\mathfrak{t}_{t(i)-h_n^{-1}(i)}^{(n)}+s\big)\big)_{t \geq t(i)}$, and use the property that if $M$ is a square integrable martingale, then $\E[(M(t)-M(s))^2]=\E[M^2(t)-M^2(s)]= \E[\langle M\rangle_t-\langle M \rangle_s]$. It follows that
\begin{align}
    \eqref{Eq: eq1} &\leq \mP \Big(\sup_{t \in \left[t(i)-h_n^{-1}(i),t(i)\right]} \Big{\vert} \frac{M_{i}^{(n)}\big( \mathfrak{t}^{(n)}_{t}+s\big)-M_{i}^{(n)}\big(\mathfrak{t}^{(n)}_{t(i)-h_n^{-1}(i)}+s\big)}{\psi_n(i) e^{-\varphi_{n}(i)} e^{-\lambda_i^{(n)}\big(\mathfrak{t}^{(n)}_{t(i)-h_{n}^{-1}(i)}+s\big)}}\Big{\vert} \geq \varepsilon \Big) \\
    & \leq \frac{4 e^{2\lambda_i \mathfrak{t}^{(n)}_{t(i)-h_{n}^{-1}(i)}}e^{2\lambda_i^{(n)} s}}{\varepsilon^2 \psi^{2}_n(i) e^{-2\varphi_{n}(i)}}  \E \Big[\big\langle M_{i}^{(n)}\big\rangle_{\mathfrak{t}^{(n)}_{t(i)}+s}- \big\langle M_{i}^{(n)}\big\rangle_{\mathfrak{t}^{(n)}_{t(i)-h_{n}^{-1}(i)}+s}\Big].
\end{align}
Applying Lemma \ref{Lem: control quadratic variation} at times $t_{1}^{(n)}=t(i)-h_n^{-1}(i)$ and $t_2=t(i)$, there exists a constant $C=C(s,i,\varepsilon)$ (which may change from line to line) such that  
\begin{align}
    \eqref{Eq: eq1} \leq \frac{C e^{2\varphi_{n}(i)}}{ \psi_n^{2}(i)} \big( n^{t(i)}\mu_{\otimes, i}^{(n)}\big) \big(\mathfrak{t}^{(n)}_{t(i)-h_{n}^{-1}(i)}+s\big)^{\theta(i)}n^{-h_{n}^{-1}(i)}.\end{align}
Note that 
\begin{align} \label{Equ: mut proba scaling}
    n^{t(i)}\mu_{\otimes,i}^{(n)}=\prod_{j=1}^{i} n^{\ell(j-1)}\mu_{j-1}^{(n)}\underset{n \to \infty}{\longrightarrow}\prod_{j=1}^{i}\mu_{j-1}<\infty.
\end{align}
Then, for $n$ large enough, and recalling that $\theta(i)=\theta(i-1)+1$, we have 
\begin{align}
    \eqref{Eq: eq1} \leq C\frac{\log(n) e^{\varphi_{n}(i)}}{\psi_{n}^{2}(i)} \underset{n \to \infty}{\longrightarrow}0,
\end{align}
according to the scaling of $\psi_n(i)$. In the case where $\lambda_{i}< \lambda_0$, using the Maximal inequality (see \cite[Chapter VI, page 72]{delacherie}) applied to the supermartingale $$\Big[\frac{M_{i}^{(n)}\big(\mathfrak{t}^{(n)}_{t}+s\big)-M_{i}^{(n)}\big(\mathfrak{t}^{(n)}_{t(i)-h_n^{-1}(i)}+s\big)}{\psi_n(i)e^{-\varphi_{n}(i)}  e^{\left(\lambda_0-\lambda_i^{(n)}\right)\mathfrak{t}^{(n)}_{t-t(i)+h_{n}^{-1}(i)}}e^{-\lambda_i^{(n)}\big(\mathfrak{t}^{(n)}_{t(i)-h_{n}^{-1}(i)}+s\big)} }\Big]_{t\geq t(i)-h_n^{-1}(i)},$$ it follows that

\begin{align}\label{Equ: max inequality}
    \eqref{Eq: eq1} \leq \frac{3 e^{\lambda_i^{(n)}\big(\mathfrak{t}^{(n)}_{t(i)-h_{n}^{-1}(i)}+s\big)}}{\varepsilon \psi_n(i) e^{-\varphi_{n}(i)}}\sup_{t \in \left[t(i)-h_n^{-1}(i),t(i)\right]}f^{(n)}(t),
\end{align}
where $f^{(n)}(t):=e^{-\left(\lambda_0-\lambda_i^{(n)}\right)\mathfrak{t}^{(n)}_{t-t(i)+h_{n}^{-1}(i)}}\E\big[\big\langle M_{i}^{(n)}\big\rangle_{\mathfrak{t}^{(n)}_{t}+s}-\big\langle M_{i}^{(n)}\big\rangle_{\mathfrak{t}^{(n)}_{t(i)-h_n^{-1}(i)}+s}\big]^{\frac{1}{2}}.$ According to Lemma \ref{Lem: control quadratic variation} applied with $t_{1}^{(n)}=t(i)-h_n^{-1}(i)$ and $t_2=t(i)$, we have that 

\begin{align}\label{Equ: fn(t)}
    &\sup_{t \in \left[t(i)-h_n^{-1}(i),t(i)\right]}f^{(n)}(t) \leq C  \big(\mu_{\otimes,i}^{(n)}  \big)^{\frac{1}{2}} \big(\mathfrak{t}^{(n)}_{t(i)}+s\big)^{\frac{\theta(i)}{2}} e^{\left(\lambda_0-\lambda_i^{(n)}\right) \mathfrak{t}^{(n)}_{t(i)-h_{n}^{-1}(i)}} \\
    &\hspace{1cm}\cdot\Big{(}\1_{\{\lambda_0>2\lambda_i\}} e^{-\frac{\lambda_0}{2}\mathfrak{t}^{(n)}_{t(i)}}e^{\frac{\lambda_0-2\lambda_i}{2}s}+\1_{\{\lambda_0=2\lambda_i\}}\big(\mathfrak{t}^{(n)}_{t(i)}+s\big)^{\frac{1}{2}}e^{-\left(\lambda_0-\lambda_i^{(n)}\right) \mathfrak{t}^{(n)}_{t(i)-h_{n}^{-1}(i)}}\\
    &\hspace{1cm}+\1_{\{\lambda_i<\lambda_0<2\lambda_i\}}e^{-\frac{\lambda_0}{2}\mathfrak{t}^{(n)}_{t(i)-h_{n}^{-1}(i)}}e^{-\frac{2\lambda_i-\lambda_0}{2}s}\Big{)}.
\end{align}
Combining \eqref{Equ: max inequality}, \eqref{Equ: fn(t)}, \eqref{Equ: mut proba scaling}, and using the facts that $e^{\lambda_0 \mathfrak{t}^{(n)}_{t(i)}}=n^{t(i)}$, $e^{\lambda_0 \mathfrak{t}^{(n)}_{h_{n}^{-1}(i)}}= \log^{\theta(i-1)}(n)e^{\varphi_{n}(i)}$ and $\theta(i-1)=\theta(i)$, it follows that 
\begin{align}
    \eqref{Eq: eq1} &\leq \frac{C e^{\varphi_{n}(i)}}{\varepsilon \psi_{n}(i)} e^{\lambda_i^{(n)}s} \big(n^{t(i)}\mu_{\otimes,i}^{(n)}\big)^{\frac{1}{2}} \big( \mathfrak{t}^{(n)}_{t(i)}+s\big)^{\frac{\theta_{i}}{2}}  e^{\frac{\lambda_0}{2}\mathfrak{t}^{(n)}_{t(i)}}e^{-\lambda_0 \mathfrak{t}^{(n)}_{h_{n}^{-1}(i)}} \\
    &\cdot \Big{(}\1_{\{\lambda_0>2\lambda_i\}} e^{-\frac{\lambda_0}{2}\mathfrak{t}^{(n)}_{t(i)}}e^{\frac{\lambda_0-2\lambda_i}{2}s}+\1_{\{\lambda_0=2\lambda_i\}}\big(\mathfrak{t}^{(n)}_{t(i)}+s\big)^{\frac{1}{2}}e^{-\left(\lambda_0-\lambda_i\right) \mathfrak{t}^{(n)}_{t(i)-h_{n}^{-1}(i)}}\\
    &\hspace{4cm}+\1_{\{\lambda_i<\lambda_0<2\lambda_i\}}e^{-\frac{\lambda_0}{2}\mathfrak{t}^{(n)}_{t(i)-h_{n}^{-1}(i)}}e^{-\frac{2\lambda_i-\lambda_0}{2}s}\Big{)} \\
    &\leq \frac{C e^{\varphi_{n}(i)}}{\varepsilon \psi_{n}(i)}\log^{\frac{\theta(i)}{2}}(n) \Big{(} \1_{\{\lambda_0>2\lambda_i\}} \frac{e^{-\varphi_{n}(i)} }{\log^{\theta(i-1)}(n)} +\1_{\{\lambda_0=2\lambda_i\}} \log^{\frac{1}{2}}(n) e^{-\frac{\lambda_0}{2}\mathfrak{t}^{(n)}_{h_{n}^{-1}(i)}} \\
    &\hspace{4cm}+\1_{\{\lambda_i<\lambda_0<2\lambda_i\}} e^{-\frac{\lambda_0}{2}\mathfrak{t}^{(n)}_{h_{n}^{-1}(i)}} \Big{)} \\
    &\leq \frac{C}{\varepsilon \psi_{n}(i)} \Big( \1_{\{\lambda_0>2\lambda_i\}} \frac{1}{\log\frac{\theta(i)}{2}(n)}+\1_{\{\lambda_0=2\lambda_i\}} \sqrt{\log(n)} e^{\frac{\varphi_{n}(i)}{2}}+\1_{\{\lambda_1<\lambda_0<2\lambda_i\}}e^{\frac{\varphi_{n}(i)}{2}} \Big) \\
    &\underset{n \to \infty}{\longrightarrow}0,
\end{align}
according to the scaling of $\psi_{n}(i)$. 

\textbf{Step 2:} Let $\psi_{n}(i)$ satisfy $\sqrt{\log(n)}=o(\psi_{n}(i))$, but such that $\log(n) e^{\varphi_{n}(i)} \neq o(\psi_{n}^{2}(i))$. Let $\widetilde{\varphi}_n(i)$ be such that $\log(n)e^{\widetilde{\varphi}_n(i)}=o(\psi_{n}^2(i))$, and define $\widetilde{h}_{n}(i):=\frac{\log(n)}{\log(\log(n))\theta(i-1)+\widetilde{\varphi}_n(i)}$. Notice, in particular, that $\widetilde{h}_{n}(i) \geq h_{n}(i)$. We have 
\begin{align}
    &\mP \Big( \sup_{t \in \left[t(i)-h_{n}^{-1}(i),t(i)\right]} \frac{Z_{i}^{(n)}\big(\mathfrak{t}^{(n)}_{t}+s\big)}{\psi_{n}(i)\log^{\theta(i-1)}(n)} \geq \varepsilon\Big) \\
    &\hspace{2cm}\leq \mP \Big( \sup_{t \in \left[t(i)-h_n^{-1}(i),t(i)-\widetilde{h}_{n}^{-1}(i)\right]} Z_{i}^{(n)}\big(\mathfrak{t}^{(n)}_{t}+s\big) \geq \varepsilon\Big)\\
    &\hspace{2cm}+\mP \Big( \sup_{t \in \left[t(i)-\widetilde{h}_{n}^{-1}(i),t(i)\right]} \frac{Z_{i}^{(n)}\big(\mathfrak{t}^{(n)}_{t}+s\big)}{\psi_{n}(i)\log^{\theta(i-1)}(n)} \geq \varepsilon\Big),
\end{align}
where the first term on the right-hand side converges to 0 according to Lemma \ref{Lem: no mutant cell}, and the second term converges from Step 1 of this proof. 
\end{proof}
\textbf{(iii) Third time regime $(t \in [t(i),T])$, convergence to 0 of \eqref{Eq:6}:} Applying similar computations as in \eqref{Eq: calcul intercale}, notice that for all $t \geq t(i)$
\begin{align}
    &\frac{Z_{i}^{(n)}\big( \mathfrak{t}^{(n)}_{t}+s\big)}{n^{t-t(i)}\log^{\theta(i)}(n)e^{\lambda_0 s}}
    =\frac{M_{i}^{(n)}\big( \mathfrak{t}^{(n)}_{t}+s\big)-M_{i}^{(n)}\big(\mathfrak{t}^{(n)}_{t(i)}+s\big)}{n^{-t(i)}\log^{\theta(i)}(n)e^{\left(\lambda_0-\lambda_{i}^{(n)}\right)\left(\mathfrak{t}^{(n)}_{t}+s\right)}}\\
    &\hspace{2cm}+\frac{\int_{\mathfrak{t}^{(n)}_{t(i)}+s}^{\mathfrak{t}^{(n)}_{t}+s}2\alpha_{i-1}\mu_{i-1}^{(n)}Z_{i-1}^{(n)}(u)e^{-\lambda_{i}^{(n)}u}du}{n^{-t(i)}\log^{\theta(i)}(n)e^{\left(\lambda_0-\lambda_{i}^{(n)}\right)\left(\mathfrak{t}^{(n)}_{t}+s\right)}}+\frac{Z_{i}^{(n)}\big(\mathfrak{t}^{(n)}_{t(i)}+s\big)}{\log^{\theta(i)}(n)e^{\left(\lambda_0-\lambda_{i}^{(n)}\right)\mathfrak{t}^{(n)}_{t-t(i)}}e^{\lambda_0 s}}.
\end{align}
Then this allows to write 
\begin{align}
\label{Equation102}
&\mP \Big( \sup_{t \in \left[t(i),T\right]} \Big{\vert}\frac{Z_{i}^{(n)}\big( \mathfrak{t}^{(n)}_{t}+s\big)}{n^{t-t(i)}\log^{\theta(i)}(n)e^{\lambda_0 s}}-W w_{i}(t) \Big{\vert} \geq 3\varepsilon\Big)\\
\label{Eq:8}&\hspace{2cm}\leq \mP \Big( \sup_{t \in \left[t(i),T\right]} \Big{\vert}\frac{M_{i}^{(n)}\big( \mathfrak{t}^{(n)}_{t}+s\big)-M_{i}^{(n)}\big(\mathfrak{t}^{(n)}_{t(i)}+s\big)}{n^{-t(i)}\log^{\theta(i)}(n)e^{\left(\lambda_0-\lambda_{i}^{(n)}\right)\left(\mathfrak{t}^{(n)}_{t}+s\right)}} \Big{\vert} \geq \varepsilon\Big)\\
\label{Eq:9}&\hspace{2cm}+ \mP\Big(\sup_{t \in \left[t(i),T\right]}\Big{\vert}\frac{\int_{\mathfrak{t}^{(n)}_{t(i)}+s}^{\mathfrak{t}^{(n)}_{t}+s}2\alpha_{i-1}\mu_{i-1}^{(n)}Z_{i-1}^{(n)}(u)e^{-\lambda_{i}^{(n)}u}du}{n^{-t(i)}\log^{\theta(i)}(n)e^{\left(\lambda_0-\lambda_{i}^{(n)}\right)\left(\mathfrak{t}^{(n)}_{t}+s\right)}} -Ww_{i}(t)\Big{\vert}\geq \varepsilon \Big)\\
\label{Eq:10}&\hspace{2cm}+\mP\Big( \sup_{t \in \left[t(i),T\right]}\frac{Z_{i}^{(n)}\big(\mathfrak{t}^{(n)}_{t(i)}+s\big)}{\log^{\theta(i)}(n)e^{\left(\lambda_0-\lambda_{i}^{(n)}\right)\mathfrak{t}^{(n)}_{t-t(i)}}e^{\lambda_0 s}} \geq \varepsilon\Big).
\end{align}
We will show that \eqref{Eq:8}, \eqref{Eq:9} and \eqref{Eq:10} converge to $0$ as $n$ goes to infinity. For the term \eqref{Eq:8}, we start by using the fact that $\lambda_0 \geq \lambda_{i}^{(n)}$ to simplify the denominator. Then, we apply Doob's inequality to the martingale $\big(M_{i}^{(n)}\big(\mathfrak{t}_{t}^{(n)}+s\big)-M_{i}^{(n)}\big(\mathfrak{t}_{t(i)}^{(n)}+s)\big)\big)_{t \geq t(i)}$ to obtain 
\begin{align}
\label{Eq 15}
        \eqref{Eq:8}&\leq \mP \Big(\sup_{t \in \left[t(i),T\right]}\Big{\vert} \frac{M_{i}^{(n)}\big(\mathfrak{t}^{(n)}_{t}+s \big)-M_{i}^{(n)}\big(\mathfrak{t}^{(n)}_{t(i)}+s\big)}{n^{-t(i)}\log^{\theta(i)}(n)}\Big{\vert} \geq \varepsilon\Big) \\
        &\leq\frac{4n^{2t(i)}}{\varepsilon^{2}\log^{2\theta(i)}(n)} \E\Big[\big\langle M_{i}^{(n)}  \big\rangle_{\mathfrak{t}^{(n)}_{T}+s}-\big\langle M_{i}^{(n)}\big\rangle_{\mathfrak{t}^{(n)}_{t(i)}+s}\Big]. 
\end{align}
By applying Lemma \ref{Lem: control quadratic variation} at times $t_1^{(n)}=t(i)$ and $t_2=T$, we obtain 
\begin{align}
\label{Eq: eq 60}
    &\E\Big[\big\langle M_{i}^{(n)}  \big\rangle_{\mathfrak{t}^{(n)}_{T}+s}-\big\langle M_{i}^{(n)}\big\rangle_{\mathfrak{t}^{(n)}_{t(i)}+s}\Big]\leq C\frac{e^{-\lambda_0 s}\big(\mathfrak{t}^{(n)}_{t(i)}+s\big)^{\theta(i)} \mu_{\otimes,i}^{(n)}}{ n^{t(i)}}.
\end{align}
Then, combining \eqref{Eq 15} and \eqref{Eq: eq 60}, we get
\begin{align}
\label{Eq 16}
        &\mP \Big(\sup_{t \in \left[t(i),T\right]}\Big{\vert} \frac{M_{(i}^{(n)}\big(\mathfrak{t}^{(n)}_{t}+s\big)-M_{i}^{(n)}\big(\mathfrak{t}^{(n)}_{t(i)}+s\big)}{n^{-t(i)}\log^{\theta(i)}(n)e^{\left(\lambda_0-\lambda_{i}^{(n)}\right)\left(\mathfrak{t}^{(n)}_{t}+s\right)}}\Big{\vert} \geq \varepsilon \Big) \\
        &\hspace{2cm}\leq \frac{4Ce^{-\lambda_0 s}}{\varepsilon^{2}\log^{\theta(i)}(n)} \Big(\frac{\mathfrak{t}^{(n)}_{t(i)}+s}{\log(n)}\Big)^{\theta(i)} n^{t(i)}\mu_{\otimes,i}^{(n)}\\
        &\underset{n \to \infty}{\longrightarrow}0,
\end{align}
as $\theta(i) \geq 1$, since the vertex $i$ is assumed to be neutral. This concludes the proof of the convergence to $0$ of \eqref{Eq:8}. The term \eqref{Eq:9} also converges to $0$, as shown in the following lemma. 
\begin{lemma}
\label{convergence finite variation process K}
Let $i\in \mathbb{N}$, $T \geq t(i)$, $s \in \mathbb{R}$ and $\varepsilon>0$. For $i \geq 2$, we prove that if Proposition \ref{Proposition: mutant pop deterministic time scale not uniform in s} holds for $i-1$, then 
\begin{align}
\label{Eq:17}
    \mP \Big(\sup_{t \in \left[t(i),T\right]} \Big\vert\frac{\int_{\mathfrak{t}^{(n)}_{t(i)}+s}^{\mathfrak{t}^{(n)}_{t}+s} 2\alpha_{i-1}\mu_{i-1}^{(n)} e^{-\lambda_{i}^{(n)} u}Z_{i-1}^{(n)}(u)du}{n^{-t(i)}\log^{\theta(i)}(n) e^{\left(\lambda_0-\lambda_{i}^{(n)}\right) \left(\mathfrak{t}^{(n)}_{t}+s\right)}} -Ww_{i}(t)\Big\vert\geq \varepsilon\Big)\underset{n \to \infty}{\longrightarrow}0.
\end{align}
For $i=1$, we prove \eqref{Eq:17} without any conditions.
\end{lemma}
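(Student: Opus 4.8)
The plan is to reduce the quantity under the supremum to a single integral against an exponential kernel and then feed in the induction hypothesis for trait $i-1$. Performing the change of variables $u=\mathfrak{t}^{(n)}_{v}+s$ (hence $du=\tfrac{\log n}{\lambda_0}dv$, with $v$ ranging over $[t(i),t]$), and using $n^{v}=e^{\lambda_0\mathfrak{t}^{(n)}_{v}}$, $t(i)=t(i-1)+\ell(i-1)$ and $\theta(i)=\theta(i-1)+\1_{\{\lambda_i=\lambda_0\}}$ to cancel the common exponential and logarithmic factors, the quotient becomes $\int_{t(i)}^{t}F_n(v,t)\,dv$ where
\begin{align}
F_n(v,t)=\frac{2\alpha_{i-1}}{\lambda_0}\,\big(n^{\ell(i-1)}\mu_{i-1}^{(n)}\big)\,\log^{\1_{\{\lambda_i<\lambda_0\}}}(n)\;G_n(v)\;e^{-(\lambda_0-\lambda_i^{(n)})(t-v)\log(n)/\lambda_0},\qquad G_n(v):=\frac{Z_{i-1}^{(n)}(\mathfrak{t}^{(n)}_{v}+s)}{n^{v-t(i-1)}\log^{\theta(i-1)}(n)e^{\lambda_0 s}}.
\end{align}
Three ingredients then control $F_n$: $n^{\ell(i-1)}\mu_{i-1}^{(n)}\to\mu_{i-1}$ by \eqref{Mutation regime}; the uniform control of $G_n$; and the behaviour of the exponential kernel, which differs according to whether $i$ is neutral or deleterious. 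The control of $G_n$ is exactly what the induction provides: for $i\ge2$, Proposition \ref{Proposition: mutant pop deterministic time scale not uniform in s} for trait $i-1$ yields, in both sub-cases, $\sup_{v\in[t(i),T]}|G_n(v)-W w_{i-1}(v)|\to0$ in probability, where in the deleterious sub-case one applies \eqref{Equation: convergence deleterious case} with $T_1\in(0,\ell(i-1))$ and $T_2>T-t(i-1)$ so that $[t(i),T]\subset t(i-1)+[T_1,T_2]$. For $i=1$ no hypothesis is needed: there $t(0)=0$, $\theta(0)=0$, $w_0\equiv1$, $G_n(v)=Z_0^{(n)}(\mathfrak{t}^{(n)}_{v}+s)e^{-\lambda_0(\mathfrak{t}^{(n)}_{v}+s)}$, and $\sup_{v\in[t(1),T]}|G_n(v)-W|\to0$ in probability follows from Lemma \ref{Lem:control primary pop}, the almost sure convergence $e^{-\lambda_0 u}Z_0(u)\to W$ (valid since $\mathfrak{t}^{(n)}_{v}+s\to\infty$ uniformly for $v\ge t(1)>0$), and $(\lambda_0-\lambda_0^{(n)})(\mathfrak{t}^{(n)}_{v}+s)=2\alpha_0\mu_0^{(n)}(\tfrac{v\log n}{\lambda_0}+s)\to0$ uniformly on the bounded $v$-range.

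If $\lambda_i=\lambda_0$ the prefactor $\log^{\1_{\{\lambda_i<\lambda_0\}}}(n)$ equals $1$ and $\lambda_0-\lambda_i^{(n)}=2\alpha_i\mu_i^{(n)}$, so the kernel $\exp(-2\alpha_i\mu_i^{(n)}(t-v)\tfrac{\log n}{\lambda_0})\to1$ uniformly over $t(i)\le v\le t\le T$, using $\mu_i^{(n)}\log n\to0$ — this is where $\ell^{*}>0$ enters. Hence, on the high-probability event $\{\sup_{v\in[t(i),T]}|G_n(v)-W w_{i-1}(v)|\le\delta\}$, on which $G_n=W w_{i-1}+r_n$ with $\|r_n\|_\infty\le\delta$, the contribution of $r_n$ to $\int_{t(i)}^{t}F_n(v,t)\,dv$ is $O(\delta)$ uniformly in $t$ (the kernel is $\le1$), while the remaining, essentially deterministic, part converges by bounded convergence on the fixed compact uniformly in $t$ to $W\tfrac{2\alpha_{i-1}\mu_{i-1}}{\lambda_0}\int_{t(i)}^{t}w_{i-1}(v)\,dv$; letting $\delta\downarrow0$ gives the convergence in probability. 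Finally $\tfrac{2\alpha_{i-1}\mu_{i-1}}{\lambda_0}\int_{t(i)}^{t}w_{i-1}(v)\,dv=w_i(t)$ by the recursive structure of the weights (Remark \ref{Remark:recursive structure weight path}; equivalently, $I_{(0,\dots,i)}(t)=\int_{t(i)}^{t\vee t(i)}I_{(0,\dots,i-1)}(u)\,du$), and continuity of $w_{i-1}$ makes the estimate uniform up to $t=t(i)$, where both sides vanish. The base case $i=1$ is identical with $w_0\equiv1$.

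If $\lambda_i<\lambda_0$ the extra factor $\log n$ in $F_n$ must be absorbed by the now genuinely contracting kernel. Substituting $w=t-v$ one has $\log n\int_{0}^{t-t(i)}e^{-c_n w\log n}\,dw=\tfrac1{c_n}\big(1-n^{-c_n(t-t(i))}\big)\to\tfrac{\lambda_0}{\lambda_0-\lambda_i}$ for $t>t(i)$, with $c_n:=(\lambda_0-\lambda_i^{(n)})/\lambda_0\to(\lambda_0-\lambda_i)/\lambda_0>0$, so the kernel behaves as an approximate identity concentrated at $v=t$ of total mass $\sim\lambda_0/((\lambda_0-\lambda_i^{(n)})\log n)$. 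Splitting $G_n=W w_{i-1}+(G_n-W w_{i-1})$ and $w_{i-1}(t-w)=w_{i-1}(t)+(w_{i-1}(t-w)-w_{i-1}(t))$, one bounds the fluctuation term by $\|G_n-W w_{i-1}\|_\infty/c_n\to0$ and the modulus-of-continuity term by concentration of the kernel at $w=0$, obtaining $\int_{t(i)}^{t}F_n(v,t)\,dv\to\tfrac{2\alpha_{i-1}\mu_{i-1}}{\lambda_0-\lambda_i}W w_{i-1}(t)=W w_i(t)$, uniformly on $[t(i)+a,T]$ for every $a>0$ — which suffices, since the deleterious case of Proposition \ref{Proposition: mutant pop deterministic time scale not uniform in s} concerns only times bounded away from $t(i)$. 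I expect this last estimate to be the main obstacle: it requires making the heuristic ``$\log n$ against an $n^{-c(t-v)}$-sharp kernel is $O(1)$'' rigorous and uniform in $t$, while simultaneously carrying the random, only uniformly controlled error $G_n-W w_{i-1}$ through the integral; the facts that unlock it are $\ell^{*}>0$ (hence $\mu_j^{(n)}\log n\to0$ for every $j$) together with the continuity and local boundedness of the deterministic weight $w_{i-1}$.
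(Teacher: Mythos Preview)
Your proposal is correct and follows essentially the same route as the paper: the same change of variable $u=\mathfrak{t}^{(n)}_{v}+s$, the same use of the induction hypothesis for $G_n$ (respectively Lemma~\ref{Lem:control primary pop} and the a.s.\ convergence of $e^{-\lambda_0 t}Z_0(t)$ for $i=1$), and the same neutral/deleterious split according to whether the kernel $e^{-(\lambda_0-\lambda_i^{(n)})(t-v)\log(n)/\lambda_0}$ tends to $1$ or acts as an approximate identity. Your invocation of Remark~\ref{Remark:recursive structure weight path} to identify the limit with $w_i(t)$ is exactly what the paper does.

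You are in fact slightly more careful than the paper on one point. In the deleterious case you restrict the uniformity to $[t(i)+a,T]$ and note this suffices for how the lemma is used in the proof of Proposition~\ref{Proposition: mutant pop deterministic time scale not uniform in s}. This is the right move: the lemma as literally stated actually fails at the single point $t=t(i)$ when $\lambda_i<\lambda_0$, since the integral vanishes there while $w_i(t(i))=\tfrac{2\alpha_{i-1}\mu_{i-1}}{\lambda_0-\lambda_i}\,w_{i-1}(t(i))>0$; the paper's ``similar computations'' for the lower bound glosses over this, but it is harmless because in the deleterious branch the lemma is only invoked on $t(i)+[T_1,T_2]$ with $T_1>0$.
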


\begin{proof}[Proof of Lemma \ref{convergence finite variation process K}]
Let $c_n(t,s):=e^{\left(\lambda_0-\lambda_{i}^{(n)}\right)\left(\mathfrak{t}^{(n)}_{t}+s\right)}$ and $$a_{t}^{(n)}:=\frac{\int_{\mathfrak{t}^{(n)}_{t(i)}+s}^{\mathfrak{t}^{(n)}_{t}+s} 2\alpha_{i-1}\mu_{i-1}^{(n)} e^{-\lambda_{i}^{(n)} u}Z_{i-1}^{(n)}(u)du}{n^{-t(i)}\log^{\theta(i)}(n) c_n(t,s)}.$$
Our aim is to prove that, for all $\varepsilon>0$, $\mP \big( \sup_{t \in \left[t(i),T\right]} \Big{\vert}a_{t}^{(n)}-W w_{i}(t)\Big{\vert} \leq \varepsilon\big) \underset{n \to \infty}{\to}1.$

\textbf{Case $i=1$:} We have 
\begin{align}
\label{Eq: eq 510}
    a_{t}^{(n)}&=\frac{n^{t(1)}}{\log^{\theta(1)}(n) c_n(t,s)} \int_{\mathfrak{t}^{(n)}_{t(1)}+s}^{\mathfrak{t}^{(n)}_{t}+s} 2\alpha_{0}\mu_{0}^{(n)} \Big[W+\left(e^{-\lambda_0 u}Z_{0}(u)-W\right)\\
    &\hspace{5cm}+\left(e^{-\lambda_{0}^{(n)}u}Z_{0}^{(n)}(u)-e^{-\lambda_0 u}Z_{0}(u)\right)\Big]e^{\left(\lambda_{0}^{(n)}-\lambda_{1}^{(n)}\right)u}du.
\end{align}
For all $\varepsilon>0$, introduce the events
\begin{align}
    &B_{\widetilde{\varepsilon}}^{(n)}:= \Big\{ \sup_{u \in \mathbb{R}^{+}} \Big{\vert} e^{-\lambda_0 u}Z_{0}(u) -e^{-\lambda_{0}^{(n)}u}Z_{0}^{(n)}(u)\Big{\vert} \leq \widetilde{\varepsilon}\Big\}, \\
    &C_{x,\widetilde{\varepsilon}}:= \Big\{ \sup_{u \in [x,\infty]} \vert e^{-\lambda_0 u}Z_{0}(u)-W\vert \leq \widetilde
    \varepsilon\Big\}.
\end{align}
According to Lemma \ref{Lem:control primary pop} and the definition of $W$ (see \eqref{W def}), we have both that $\mP \big( B_{\widetilde{\varepsilon}}^{(n)}\big) \underset{n \to \infty}{\to}1$ and $\mP \big( C_{\sqrt{\log(n)},\widetilde{\varepsilon}}\big) \underset{n \to \infty}{\to}1$. Notice that when $\lambda_1<\lambda_0$, we have the following bound
\begin{align}
\label{Eq: eq 50}
    \frac{1}{c_n(t,s)}\int_{\mathfrak{t}^{(n)}_{t(1)}+s}^{\mathfrak{t}^{(n)}_{t}+s}e^{\left(\lambda_0-\lambda_1^{(n)}\right)u}du =\frac{1}{\lambda_0-\lambda_1^{(n)}}\frac{c_n(t,s)-c_n(t(1),s)}{c_n(t,s)} \leq \frac{1}{\lambda_0-\lambda_1},
\end{align}
and when $\lambda_1=\lambda_0$, we have
\begin{align}
\label{Eq: cn neutral case}
    \frac{1}{c_n(t,s)}\int_{\mathfrak{t}^{(n)}_{t(1)}+s}^{\mathfrak{t}^{(n)}_{t}+s}e^{\left(\lambda_0-\lambda_1^{(n)}\right)u}du =\frac{1-e^{-\left(\lambda_0-\lambda_1^{(n)}\right)\mathfrak{t}^{(n)}_{t-t(1)}}}{\lambda_0-\lambda_1^{(n)}} \leq \mathfrak{t}^{(n)}_{t-t(1)},
\end{align}
where for the last inequality, we use \eqref{Eq: eq 54} applied with $a=\lambda_0-\lambda_1^{(n)}=2\alpha_1 \mu_1^{(n)}>0$ and $x=\mathfrak{t}^{(n)}_{t-t(1)}$. It follows that, for sufficiently large $n$ (such that $\mathfrak{t}^{(n)}_{t(1)}+s \geq \sqrt{\log(n)}$), under the event $B_{\widetilde{\varepsilon}}^{(n)} \cap C_{\sqrt{\log(n)},\widetilde{\varepsilon}}$, we have that
\begin{align}
\label{Eq: eq 52}
    a_{t}^{(n)}&\leq \frac{n^{t(1)}}{\log^{\theta(1)}(n)c_n(t)} \int_{\mathfrak{t}^{(n)}_{t(1)}+s}^{\mathfrak{t}^{(n)}_{t}+s} 2\alpha_{0}\mu_{0}^{(n)}  \big( W+2\widetilde{\varepsilon}\big)e^{\left(\lambda_{0}-\lambda_{1}^{(n)}\right)u}du \\
    &\leq 2 \alpha_0\big(n^{t(1)}\mu_{0}^{(n)}\big)(W+2\widetilde{\varepsilon}) \Big(\1_{\{\lambda_1<\lambda_0\}}\frac{1}{\lambda_0-\lambda_1}+\1_{\{\lambda_1=\lambda_0\}}\frac{1}{\lambda_0} (t-t(1))\Big),
\end{align}
since $\theta(1)=\1_{\{\lambda_1=\lambda_0\}}.$ By definition, we have $$w_{1}(t)=2\alpha_{0}\mu_{0} \big(\1_{\{\lambda_1<\lambda_0\}}\frac{1}{\lambda_0-\lambda_1}+\1_{\{\lambda_1=\lambda_0\}}\frac{1}{\lambda_0}(t-t(1))\big).$$ This implies that 
\begin{align}
    a_{t}^{(n)}-Ww_{1}(t) \leq \frac{w_1(t)}{\mu_0}W \big(n^{t(1)}\mu_{0}^{(n)}-\mu_0\big)+C\widetilde{\varepsilon},
\end{align}
where $C>0$ is a sufficiently large constant.

Introduce the event $$D_{\widetilde{\varepsilon}}^{(n)}:= \big\{ \underset{t \in [t(1),T]}{\sup} \big{\vert}\frac{w_1(t)}{\mu_0}W \big(n^{t(1)}\mu_{0}^{(n)}-\mu_0\big)\big{\vert}\leq \widetilde{\varepsilon}\big\}.$$ This event satisfies $\mP \big(D_{\widetilde{\varepsilon}}^{(n)}\big)\underset{n \to \infty}{\to}1$ because $W$ is finite almost surely, $n^{t(1)}\mu_{0}^{(n)} \underset{n \to \infty}{\to} \mu_0$ and $w_{1}(t)$ is bounded from above on $[t(1),T]$. Under $B_{\widetilde{\varepsilon}}^{(n)}\cap C_{\sqrt{\log(n)},\widetilde{\varepsilon}}\cap D_{\widetilde{\varepsilon}}^{(n)}$, we have for all $t \in [t(1),T]$,
\begin{align}
    a_{t}^{(n)}-Ww_{1}(t) \leq (C+1)\widetilde{\varepsilon}.
\end{align}
Similarly, it follows that under $B_{\widetilde{\varepsilon}}^{(n)}\cap C_{\sqrt{\log(n)},\widetilde{\varepsilon}}\cap D_{\widetilde{\varepsilon}}^{(n)}$
\begin{align}
    \sup_{t \in [t(1),T]}\vert a_{t}^{(n)}-Ww_{1}(t) \vert \leq (C+1)\widetilde{\varepsilon}.
\end{align}
By choosing $\widetilde{\varepsilon}>0$ such that $(C+1)\widetilde{\varepsilon}\leq \varepsilon$, we deduce that under the event $B_{\widetilde{\varepsilon}}^{(n)}\cap C_{\sqrt{\log(n)},\widetilde{\varepsilon}}\cap D_{\widetilde{\varepsilon}}^{(n)}$,
\begin{align}
    \sup_{t \in [t(1),T]}\vert a_{t}^{(n)}-Ww_1(t) \vert \leq \varepsilon.
\end{align}
This concludes the proof for the case $i=1$, since $\mP \big(B_{\widetilde{\varepsilon}}^{(n)}\cap C_{\sqrt{\log(n)},\widetilde{\varepsilon}}\cap D_{\widetilde{\varepsilon}}^{(n)}\big) \underset{n \to \infty}{\longrightarrow}1$.

\textbf{Case $i \geq 2$:}
Assume Proposition \ref{Proposition: mutant pop deterministic time scale not uniform in s} holds for $i-1$. In particular, we have $\mP \big(B_{\widetilde{\varepsilon}}^{(n)}\big) \underset{n \to \infty}{\longrightarrow}1$, where
$$B_{\widetilde{\varepsilon}}^{(n)}:= \Big\{ \sup_{v \in [t(i),T]} \Big{\vert} \frac{Z_{i-1}^{(n)}\big(\mathfrak{t}^{(n)}_{v}+s\big)}{n^{v-t(i-1)}e^{\lambda_0 s}\log^{\theta(i-1)}(n)}-W w_{i-1}(v)\Big{\vert} \leq \widetilde{\varepsilon}\Big\}.$$
Using the change of variable $u=\mathfrak{t}^{(n)}_{v}+s$ and the fact that $t(i-1)=t(i)-\ell(i-1)$ we obtain
\begin{align}
    a_{t}^{(n)}=\int_{t(i)}^{t}2\alpha_{i-1}\big(n^{\ell(i-1)}\mu_{i-1}^{(n)}\big)\frac{Z_{i-1}^{(n)}\big(\mathfrak{t}^{(n)}_{v}+s\big)}{n^{v-t(i-1)}e^{\lambda_0 s}\log^{\theta(i)}(n)} \frac{c_n(v,s) }{c_n(t,s) }\frac{\log(n)}{\lambda_0}dv.
\end{align}
Notice that when $\lambda_i<\lambda_0$, we have $\theta(i-1)=\theta(i)$, and  when $\lambda_i=\lambda_0$, we have $\theta(i-1)=\theta(i)-1$. Additionally, we use that $v \mapsto c_n(v,s)$ and $w_{i-1}$ are non-decreasing functions, and we apply similar computations as in \eqref{Eq: eq 50} and \eqref{Eq: cn neutral case}, replacing the index $1$ with $i$ to find, under $B_{\widetilde{\varepsilon}}^{(n)}$, that
\begin{align}
    a_t^{(n)} &\leq 2 \alpha_{i-1} \big(n^{\ell(i-1)}\mu_{i-1}^{(n)}\big)\\
    &\hspace{1cm}\cdot\Big[\1_{\{\lambda_i<\lambda_0\}}\frac{Ww_{i-1}(t)+\widetilde{\varepsilon}}{\lambda_0-\lambda_i}+\1_{\{\lambda_i=\lambda_0\}}W\frac{1}{\lambda_0}\int_{t(i)}^{t}\left(w_{i-1}(v)+\widetilde{\varepsilon}\right)dv \Big].
\end{align} 
By definition (see \eqref{weight} and Remark \ref{Remark:recursive structure weight path}), we have
\begin{align}
    w_{i}(t)=2 \alpha_{i-1}\mu_{i-1}\Big(\1_{\{\lambda_i<\lambda_0\}}\frac{w_{i-1}(t)}{\lambda_0-\lambda_i}+\1_{\{\lambda_i=\lambda_0\}}\frac{1}{\lambda_0}\int_{t(i)}^{t}w_{i-1}(u)du\Big).
\end{align}
Thus, under the event $C_{\widetilde{\varepsilon}}^{(n)}:= \left\{  W \vert n^{\ell(i-1)}\mu_{i-1}^{(n)}-\mu_{i-1}\vert \leq \widetilde{\varepsilon}\right\}$, we find that for all $t \leq T$
\begin{align}
    a_{t}^{(n)} -Ww_{i}(t)&\leq 2\alpha_{i-1}\Big[\1_{\{\lambda_i<\lambda_0\}}\frac{1}{\lambda_0-\lambda_1}\big(w_{i-1}(T)+\big(n^{\ell(i-1)}\mu_{i-1}^{(n)}\big)\big)\\
    &\hspace{2cm}+\1_{\{\lambda_i=\lambda_0\}}\frac{1}{\lambda_0}\Big(\int_{t(i)}^{T}w_{i-1}(u)du+T \big(n^{\ell(i-1)}\mu_{i-1}^{(n)}\big)\Big) \Big]\widetilde{\varepsilon}\\
    &\leq C\widetilde{\varepsilon},
\end{align}
where $C$ is a positive constant that depends only on the parameters and on $T$, but is independent of $n$. Recalling that $n^{\ell(i-1)}\mu^{(n)}_{i-1}$ converges and that $W$ is finite almost surely (see \eqref{Eq:distribution W}) we obtain that the event $C_{\widetilde{\varepsilon}}^{(n)}$ satisfies $\mP \big(C_{\widetilde{\varepsilon}}^{(n)} \big) \underset{n \to \infty}{\longrightarrow}1$.

Then, by choosing $\widetilde{\varepsilon}>0$ such that $C\widetilde{\varepsilon}\leq \varepsilon$, we have shown that under $B_{\widetilde{\varepsilon}}^{(n)}\cap C_{\widetilde{\varepsilon}}^{(n)}$,
\begin{align}
    \sup_{t \in [t(i),T]} a_{t}^{(n)}-Ww_{i}(t) \leq \varepsilon.
\end{align}
With similar computations, it can also be shown that under $B_{\widetilde{\varepsilon}}^{(n)}\cap C_{\widetilde{\varepsilon}}^{(n)}$,
\begin{align}
    \sup_{t \in [t(i),T]} \vert a_{t}^{(n)}-Ww_{i}(t) \vert \leq \varepsilon.
\end{align}
We conclude the proof by noting that $\mP \big(C_{\widetilde{\varepsilon}}^{(n)} \big) \underset{n \to \infty}{\to}1$ and $\mP \big(B_{\widetilde{\varepsilon}}^{(n)}\big)\underset{n \to \infty}{\to}1$, as established by the induction assumption.
\end{proof}
 
Given that $\lambda_{0} \geq \lambda_{i}^{(n)}$, the term \eqref{Eq:10} satisfies 
\begin{align}
    \mP \Big( \sup_{t \in \left[t(i),T\right]}\frac{Z_{i}^{(n)} \big( \mathfrak{t}^{(n)}_{t(i)}+s\big)}{\log^{\theta(i)}(n)e^{\left(\lambda_0-\lambda_{i}^{(n)}\right)\mathfrak{t}^{(n)}_{t-t(i)}}e^{\lambda_0 s}} \geq \varepsilon\Big) &=\mP \Big(  \frac{Z_{i}^{(n)} \big( \mathfrak{t}^{(n)}_{t(i)}+s\big)}{\log^{\theta(i)}(n)e^{\lambda_0 s}}\geq \varepsilon \Big)\underset{n \to \infty}{\longrightarrow}0,
\end{align}
where the convergence is obtained by applying Lemma \ref{Lem: control before first mut} with $\psi_n(i)=\log(n) e^{\lambda_0 s}$. This is valid because assuming the vertex $i$ is neutral implies that $\theta(i)=\theta(i-1)+1$. 

This completes the proof of Proposition \ref{Proposition: mutant pop deterministic time scale not uniform in s} (i).We now turn to Proposition \ref{Proposition: mutant pop deterministic time scale not uniform in s} (ii).

\textbf{2. Deleterious case:} Assume that $\lambda_{i}<\lambda_{0}$. Let $0<T_1<T_2$. Using similar computations as in \eqref{Eq: calcul intercale}, for all $t \in \left[T_1,T_2\right]$, we have 
\begin{align}
    \frac{Z_i^{(n)}\big(\mathfrak{t}^{(n)}_{t(i)+t}+s\big)}{n^t \log^{\theta(i)}(n)e^{\lambda_{0}s}}
    &=\frac{M^{(n)}_{i}\big(\mathfrak{t}^{(n)}_{t(i)+t}+s\big)-M^{(n)}_{i}\big(\mathfrak{t}^{(n)}_{t(i)}+s\big)}{n^{-\frac{\lambda_i^{(n)}}{\lambda_0}t(i)}e^{\left(\lambda_0-\lambda_i^{(n)}\right)\left(\mathfrak{t}^{(n)}_{t}+s\right)}\log^{\theta(i)}(n)} \\
    &+\frac{\int_{\mathfrak{t}^{(n)}_{t(i)}+s}^{\mathfrak{t}^{(n)}_{t(i)+t}+s} 2\alpha_{i-1} \mu_{i-1}^{(n)} e^{-\lambda_i^{(n)} s}Z^{(n)}_{i-1}(s)ds}{n^{-\frac{\lambda_i^{(n)}}{\lambda_0}t(i)}e^{\left(\lambda_0-\lambda_i^{(n)}\right)\left(\mathfrak{t}^{(n)}_{t}+s\right)}\log^{\theta(i)}(n)}+\frac{Z_i^{(n)}\big(\mathfrak{t}^{(n)}_{t(i)}+s\big)}{n^{t \frac{\lambda_0-\lambda_i^{(n)}}{\lambda_0}}e^{\lambda_0 s}\log^{\theta(i)}(n)}.\\
\end{align}
Then, this allows to write
\begin{align}
    &\mP \Big{(}\sup_{t \in \left[T_1,T_2\right]} \Big{\vert} \frac{Z_i^{(n)}\big{(}\mathfrak{t}^{(n)}_{t(i)+t}+s\big{)}}{n^t \log^{\theta(i)}(n)e^{\lambda_{0}s}}-Ww_{i}(t(i)+t) \Big{\vert} \geq 3\varepsilon \Big{)}\\ 
    &\hspace{1cm}\label{Eq: martingale part deleterious} \leq \mP \Big{(}\sup_{t \in \left[T_1,T_2\right]} \Big{\vert} \frac{M^{(n)}_{i}\big{(}\mathfrak{t}^{(n)}_{t(i)+t}+s\big{)}-M^{(n)}_{i}\big{(}\mathfrak{t}^{(n)}_{t(i)}+s\big{)}}{n^{-\frac{\lambda_i^{(n)}}{\lambda_0}t(i)}e^{\left(\lambda_0-\lambda_i^{(n)}\right)\left( \mathfrak{t}^{(n)}_{t}+s\right)}\log^{\theta(i)}(n)} \Big{\vert} \geq \varepsilon \Big{)} \\ 
    &\hspace{1cm} \label{Eq: finite variation deleterious}+ \mP \Big{(}\sup_{t \in \left[T_1,T_2\right]} \Big{\vert} \frac{\int_{\mathfrak{t}^{(n)}_{t(i)}+s}^{\mathfrak{t}^{(n)}_{t(i)+t}+s} 2\alpha_{i-1} \mu_{i-1}^{(n)} e^{-\lambda_i^{(n)} s}Z^{(n)}_{i-1}(s)ds}{n^{-\frac{\lambda_i^{(n)}}{\lambda_0}t(i)}n^{t\frac{\lambda_0-\lambda_i^{(n)}}{\lambda_0}}e^{\left(\lambda_0-\lambda_i^{(n)}\right)s}\log^{\theta(i)}(n)} -Ww_{i}(t(i)+t)\Big{\vert} \geq \varepsilon \Big{)} \\ 
    & \hspace{1cm}\label{Eq: finite number mutant deleterious}+ \mP \Big{(}  \frac{Z_i^{(n)}\big{(}\mathfrak{t}^{(n)}_{t(i)}+s\big{)}}{n^{T_1 \frac{\lambda_0-\lambda_i^{(n)}}{\lambda_0}}e^{\lambda_0 s}\log^{\theta(i)}(n)} \geq \varepsilon \Big{)}.
\end{align}
For the convergence to $0$ of the term \eqref{Eq: martingale part deleterious}, we first use the fact that $\lambda_i^{(n)} \leq \lambda_{i}<\lambda_0$, to simplify the denominator. Then, we apply the Maximal inequality to the supermartingale $$\Big(\frac{M_{i}^{(n)}\big(\mathfrak{t}^{(n)}_{t(i)+t}+s\big)-M_{i}^{(n)}\big(\mathfrak{t}^{(n)}_{t(i)}+s\big)}{n^{-\frac{\lambda_i}{\lambda_0}t(i)}e^{\left(\lambda_0-\lambda_i\right)\left(\mathfrak{t}^{(n)}_{t}+s\right)}\log^{\theta(i)}(n)}\Big)_{t \geq 0}$$ to obtain 
\begin{align}
\label{Eq: eq 9}
    &\hspace{1cm}\mP \Big(\sup_{t \in \left[T_1,T_2\right]} \Big{\vert} \frac{M_{i}^{(n)}\big(\mathfrak{t}^{(n)}_{t(i)+t}+s\big)-M_{i}^{(n)}\big(\mathfrak{t}^{(n)}_{t(i)}+s\big)}{n^{-\frac{\lambda_i^{(n)}}{\lambda_0}t(i)}e^{\left(\lambda_0-\lambda_i^{(n)}\right)\left(\mathfrak{t}^{(n)}_{t}+s\right)}\log^{\theta(i)}(n)} \Big{\vert} \geq \varepsilon \Big) \\
    &\hspace{1.5cm}\leq \mP \Big(\sup_{t \in \left[T_1,T_2\right]} \Big{\vert} \frac{M_{i}^{(n)}\big(\mathfrak{t}^{(n)}_{t(i)+t}+s\big)-M_{i}^{(n)}\big(\mathfrak{t}^{(n)}_{t(i)}+s\big)}{n^{-\frac{\lambda_i}{\lambda_0}t(i)}e^{\left(\lambda_0-\lambda_i\right)\left(\mathfrak{t}^{(n)}_{t}+s\right)}\log^{\theta(i)}(n)} \Big{\vert} \geq \varepsilon \Big) \\
    &\hspace{1.5cm}\leq \frac{3 e^{-(\lambda_0-\lambda_i)s}}{\varepsilon \log^{\theta(i)}(n)}n^{\frac{\lambda_i}{\lambda_0}t(i)}\sup_{t \in \left[T_1,T_2\right]} n^{-t \frac{\lambda_0-\lambda_i}{\lambda_0}}\sqrt{\E\Big[\big\langle M_{i}^{(n)}\big\rangle_{\mathfrak{t}^{(n)}_{t(i)+t}+s}-\big\langle M_{i}^{(n)}\big\rangle_{\mathfrak{t}^{(n)}_{t(i)}+s}\Big]}.
\end{align}
Applying Lemma \ref{Lem: control quadratic variation} at the times $t_1^{(n)}=t(i)$ and $t_2=t(i)+t$, we obtain 
\begin{align}
\label{Eq: eq 62}
    &\sqrt{\E\Big[\big\langle M_{i}^{(n)}\big\rangle_{\mathfrak{t}^{(n)}_{t(i)+t}+s}-\big\langle M_{i}^{(n)}\big\rangle_{\mathfrak{t}^{(n)}_{t(i)}+s}\Big]} \\
    &\hspace{1cm} \leq C\big(\mathfrak{t}^{(n)}_{t(i)+t}+s\big)^{\frac{\theta(i)}{2}} \sqrt{\mu_{\otimes,i}^{(n)}}\cdot \Big{[}\1_{\{\lambda_0>2\lambda_i\}}n^{\frac{\lambda_0-2\lambda_i}{2\lambda_0}(t(i)+t)} e^{\frac{\lambda_0-2\lambda_i}{2}s}\\
    &\hspace{2cm}+\1_{\{\lambda_0=2\lambda_i\}}\sqrt{\mathfrak{t}^{(n)}_{t(i)+t}+s}+\1_{\{\lambda_i<\lambda_0<2\lambda_i\}}n^{-\frac{2\lambda_i-\lambda_0}{2\lambda_0}t(i)}e^{-\frac{2\lambda_i-\lambda_0}{2}s}\Big{]}.
\end{align}
For all $t \in [T_1,T_2]$, we perform the following auxiliary computations, which will be used to obtain the result
\begin{align}
\label{Eq: eq 63}
    &n^{\frac{\lambda_i}{\lambda_0}t(i)} \sqrt{\mu_{\otimes,i}^{(n)}}n^{-t\frac{\lambda_0-\lambda_i}{\lambda_0}}n^{\frac{\lambda_0-2\lambda_i}{2\lambda_0}(t(i)+t)}\leq\frac{\sqrt{n^{t(i)}\mu_{\otimes,i}^{(n)}}}{n^{\frac{T_1}{2}}}, \\
    & \1_{\{\lambda_0=2\lambda_i\}}n^{\frac{\lambda_i}{\lambda_0}t(i)} \sqrt{\mu_{\otimes,i}^{(n)}}n^{-t\frac{\lambda_0-\lambda_i}{\lambda_0}} \leq \frac{\sqrt{n^{t(i)}\mu_{\otimes,i}^{(n)}}}{n^{\frac{T_1}{2}}},\\
    &n^{\frac{\lambda_i}{\lambda_0}t(i)} \sqrt{\mu_{\otimes,i}^{(n)}}n^{-t\frac{\lambda_0-\lambda_i}{\lambda_0}}n^{-\frac{2\lambda_i-\lambda_0}{2\lambda_0}t(i)}\leq \frac{\sqrt{n^{t(i)}\mu_{\otimes,i}^{(n)}}}{n^{T_1\frac{\lambda_0-\lambda_i}{\lambda_0}}}.
\end{align}
Then, combining \eqref{Eq: eq 9}, \eqref{Eq: eq 62} and \eqref{Eq: eq 63}, we obtain  
\begin{align}
    &\mP \Big(\sup_{t \in [T_1,T_2]} \Big{\vert} \frac{M_{i}^{(n)}\big(\mathfrak{t}^{(n)}_{t(i)+t}+s\big)-M_{i}^{(n)}\big(\mathfrak{t}^{(n)}_{t(i)}+s\big)}{n^{-\frac{\lambda_i^{(n)}}{\lambda_0}t(i)}e^{\left(\lambda_0-\lambda_i^{(n)}\right)\left(\mathfrak{t}^{(n)}_{t}+s\right)}\log^{\theta(i)}(n)} \Big{\vert} \geq \varepsilon \Big) \\
    & \leq \frac{3C e^{-(\lambda_0-\lambda_i)s}}{\varepsilon \log^{\theta(i)/2}(n)} \Big(\frac{\mathfrak{t}^{(n)}_{t(i)+T_2}+s}{\log(n)}\Big)^{\frac{\theta(i)}{2}} \sup_{t \in [T_1,T_2]} n^{\frac{\lambda_i}{\lambda_0}t(i)} n^{-t \frac{\lambda_0-\lambda_i}{\lambda_0}} \sqrt{\mu_{\otimes,i}^{(n)}}\Big{[}\1_{\{\lambda_0>2\lambda_i\}} n^{\frac{\lambda_0-2\lambda_i}{2\lambda_0}(t(i)+t)}\\
    &\hspace{2cm}\cdot e^{\frac{\lambda_0-2\lambda_i}{2}s}+\1_{\{\lambda_0=2\lambda_i\}}\sqrt{\mathfrak{t}^{(n)}_{t(i)+t}+s}+\1_{\{\lambda_i<\lambda_0<2\lambda_i\}}n^{-\frac{2\lambda_i-\lambda_0}{2\lambda_0}}e^{-\frac{2\lambda_i-\lambda_0}{2}s}\Big{]}\\
    & = \frac{3C e^{-(\lambda_0-\lambda_i)s}}{\varepsilon \log^{\theta(i)/2}(n)} \Big(\frac{\mathfrak{t}^{(n)}_{t(i)+T_2}+s}{\log(n)}\Big)^{\frac{\theta(i)}{2}} \sqrt{n^{t(i)}\mu_{\otimes,i}^{(n)}}\\
    &\hspace{2cm} \cdot \Big{(} \1_{\{\lambda_0>2\lambda_i\}}\frac{e^{\frac{\lambda_0-2\lambda_i}{2}s}}{n^{\frac{T_1}{2}}}+\1_{\{\lambda_0=2\lambda_i\}} \frac{\sqrt{\mathfrak{t}^{(n)}_{t(i)+T_2}+s}}{n^{\frac{T_1}{2}}} +\1_{\{\lambda_i<\lambda_0<2\lambda_i\}}\frac{e^{-\frac{2\lambda_i-\lambda_0}{2}s}}{n^{T_1 \frac{\lambda_0-\lambda_i}{\lambda_0}}}\Big{)}\\
    &\underset{n \to \infty}{\longrightarrow}0.
\end{align}
The term \eqref{Eq: finite variation deleterious} converges to $0$ by Lemma \ref{convergence finite variation process K}. The convergence to $0$ for the term \eqref{Eq: finite number mutant deleterious} is obtained by applying Lemma \ref{Lem: control before first mut} with $\psi_n(i)=n^{T_1 \frac{\lambda_0-\lambda_i}{\lambda_0}}e^{\lambda_0 s}$. This completes the proof of Proposition \ref{Proposition: mutant pop deterministic time scale not uniform in s}. 
\end{proof}

\subsubsection{Uniform control on the parameter s}
\label{Proof deterministic time neutral mutation}
In this subsection, we will prove \eqref{Equation: conv neutral case} and \eqref{Equation: convergence deleterious case} for the mono-directional graph, as stated in Proposition \ref{Proposition: mutant pop deterministic time scale not uniform in s}, using an approach inspired by \cite[Lemma 3]{foo2013dynamics}.
Define $u_{s}^{(n)}:=t+\frac{s-M}{\log(n)}\lambda_0$ such that $\mathfrak{t}^{(n)}_{t}+s= \mathfrak{t}^{(n)}_{u_{s}^{(n)}}+M$. Notice that 
\begin{align}
\label{Eq: eq 6}
    0 \leq t-u_{s}^{(n)} \leq \frac{2M}{\log(n)}\lambda_0.
\end{align}

\textbf{1. Deleterious case:} We begin by showing \eqref{Equation: convergence deleterious case}. We will use that 
\begin{align}
    n^{t}\log^{\theta(i)}(n)e^{\lambda_0 s}=n^{u_{s}^{(n)}}\log^{\theta(i)}(n)e^{\lambda_0 M}.
\end{align}
This gives that 
\begin{align}
    \Big{\lvert}  \frac{Z_{i}^{(n)}\big(\mathfrak{t}^{(n)}_{t(i)+t}+s\big)}{n^{t}\log^{\theta(i)}(n)e^{\lambda_0 s}}-W w_i(t(i)+t)\Big{\lvert} &\leq \Big{\vert} \frac{Z_{i}^{(n)}\big(\mathfrak{t}^{(n)}_{t(i)+u^{(n)}_{s}}+M\big)}{n^{u_{s}^{(n)}}\log^{\theta(i)}(n)e^{\lambda_0 M}}-W w_i\big(t(i)+u_s^{(n)}\big)\Big{\vert}\\
    &\hspace{1cm}+ W\Big{\vert}w_{i}(t(i)+t)-w_i\big(t(i)+u_s^{(n)}\big)\Big{\vert}.
\end{align}
Since $w_{i}(t(i)+\cdot)$ is a polynomial function, there exists a constant $C_i>0$ such that, for all $t \leq T_2$ and $s \in [-M,M]$, we have
\begin{align}
\label{Eq: eq 15}
    \Big{\vert} w_i(t(i)+t)-w_i\big(t(i)+u_{s}^{(n)}\big) \Big{\vert} \leq \frac{C_i}{\log(n)},
\end{align}
due to \eqref{Eq: eq 6}. Let $0<\widetilde{T}_1 <T_1$. For $n$ sufficiently large such that $u_{s}^{(n)} \geq \widetilde{T}_1$ for all $(t,s) \in \left[T_1,T_2\right]\times[-M,M]$, we have  
\begin{align}
     \Big{\lvert}  \frac{Z_{i}^{(n)}\big(\mathfrak{t}^{(n)}_{t(i)+t}+s\big)}{n^{t}\log^{\theta(i)}(n)e^{\lambda_0 s}}-W w_i(t(i)+t)\Big{\lvert} & \leq \sup_{x \in [\widetilde{T}_1,T_2]}  \Big{\lvert}  \frac{Z_{i}^{(n)}\big(\mathfrak{t}^{(n)}_{t(i)+x}+M\big)}{n^{x}\log^{\theta(i)}(n)e^{\lambda_0 M}}-W w_i(t(i)+x)\Big{\lvert}\\ 
     &\hspace{2cm}+W\frac{C_i}{\log(n)} .
\end{align}
Thus, for sufficiently large $n$, we have
\begin{align}
    &\mP \Big{(}\sup_{s\in [-M,M]}\sup_{t\in \left[T_1,T_2\right]} \Big{\lvert}  \frac{Z_{i}^{(n)}\big(\mathfrak{t}^{(n)}_{t(i)+t}+s\big)}{n^{t}\log^{\theta(i)}(n)e^{\lambda_0 s}}-W w_i(t(i)+t)\Big{\lvert} \geq 2\varepsilon \Big{)} \\
    &\hspace{1cm}\leq \mP \Big{(}\sup_{x\in \left[\widetilde{T}_1,T_2\right]} \Big{\lvert}  \frac{Z_{i}^{(n)}\big(\mathfrak{t}^{(n)}_{t(i)+x}+M\big)}{n^{x}\log^{\theta(i)}(n)e^{\lambda_0 M}}-W w_i(t(i)+x)\Big{\lvert} \geq \varepsilon \Big{)}+ \mP \Big( W \geq \frac{\varepsilon \log(n)}{C_i}\Big),
\end{align}
from which \eqref{Equation: convergence deleterious case} is obtained. Indeed, the first term on the right-hand side converges to $0$ according to Proposition \ref{Proposition: mutant pop deterministic time scale not uniform in s} (ii) and the second term converges to $0$ since $W$ is finite almost surely (see \eqref{Eq:distribution W}). 

\textbf{2. Neutral case:} Now, we show \eqref{Equation: conv neutral case}. We have  
\begin{align}
\label{Eq: eq 17}
    &\Big{\lvert}  \frac{Z_{i}^{(n)}\big(\mathfrak{t}^{(n)}_{t}+s\big)}{d_{i}^{(n)}(t,s)}-W w_i(t) \Big{\lvert}\leq \1_{\left\{t\in \left[0,t(i)-h_{n}^{-1}(i)\right)\right\}}Z_{i}^{(n)}\big( \mathfrak{t}^{(n)}_{u_{s}^{(n)}}+M\big)+\1_{\left\{t \in \left[t(i)-h_{n}^{-1}(i),t(i)\right)\right\}}\\
    &\cdot\frac{Z_{i}^{(n)}\big( \mathfrak{t}^{(n)}_{u_{s}^{(n)}}+M\big)}{\psi_{n}(i)\log^{\theta(i-1)}(n)}+\1_{\left\{t \geq t(i)\right\}}\1_{\left\{u_{s}^{(n)}<t(i)\right\}}\Big{\vert}\frac{Z_{i}^{(n)}\big( \mathfrak{t}^{(n)}_{u_{s}^{(n)}}+M\big)}{\log^{\theta(i)}(n)e^{\lambda_0 M}}e^{\left(t(i)-u_{s}^{(n)}\right)\log(n)}-W w_i(t)\Big{\vert} \\
    &+\1_{\left\{u_{s}^{(n)} \geq t(i)\right\}} \Big{\vert}\frac{Z_{i}^{(n)}\big(\mathfrak{t}^{(n)}_{u_{s}^{(n)}}+M\big)}{n^{u_{s}^{(n)}-t(i)}\log^{\theta(i)}(n)e^{\lambda_0 M}}-W w_i\big(u_s^{(n)}\big)\Big{\vert}\\
    &+\1_{\left\{u_{s}^{(n)} \geq t(i)\right\}} W \Big{\vert} w_i(t)-w_i\big(u_{s}^{(n)}\big) \Big{\vert}.
\end{align}
As in \eqref{Eq: eq 15}, there exists a constant $C_i$ such that, for all $(t,s) \in \left[0,T\right]\times [-M,M]$, we have
\begin{align}
    \Big{\vert} w_i(t)-w_i\big(u_{s}^{(n)}\big) \Big{\vert} \leq \frac{C_i}{\log(n)}.
\end{align}
In the case where $t \geq t(i)$ and $u_{s}^{(n)}<t(i)$, we have that $t(i)-u_{s}^{(n)} \leq \frac{2M}{\log(n)}\lambda_0$, which, in particular, implies that $e^{\left(t(i)-u_{s}^{(n)}\right)\log(n)} \leq e^{2M \lambda_0}$. Moreover, since $w_i\big(u_s^{(n)}\big)=0$ (because $w_i(s)=0$ for all $s \in [0,t(i)]$), it follows from the previous inequality that $w_i(t) \leq \frac{C_i}{\log(n)}$. Combining these arguments, we obtain
\begin{align}
\label{Eq: eq 16}
    \Big{\vert}\frac{Z_{i}^{(n)}\big(\mathfrak{t}^{(n)}_{u_{s}^{(n)}}+M\big)}{\log^{\theta(i)}(n)e^{\lambda_0 M}}e^{(t(i)-u_{s}^{(n)})\log(n)}-W w_i(t)\Big{\vert} &\leq \frac{Z_{i}^{(n)}\big(\mathfrak{t}^{(n)}_{u_{s}^{(n)}}+M\big)}{\log^{\theta(i)}(n)}e^{\lambda_0 M}\\
    &+ W\frac{C_i}{\log(n)}.
\end{align}
Finally, using \eqref{Eq: eq 17} and \eqref{Eq: eq 16}, we obtain for all $(t,s) \in [0,T]\times [-M,M]$ 
\begin{align}
    \Big{\lvert}  \frac{Z_{i}^{(n)}\big(\mathfrak{t}^{(n)}_{t}+s\big)}{d_{i}^{(n)}(t,s)}-W w_i(t)\Big{\lvert}&\leq \sup_{x \in \left[0,t(i)-h_{n}^{-1}(i)\right)} Z_{i}^{(n)}\big( \mathfrak{t}^{(n)}_{x}+M\big)\\
    &+\sup_{x \in [0,t(i)]}\frac{Z_{i}^{(n)}\big( \mathfrak{t}^{(n)}_{x}+M\big)}{\psi_{n}(i) \log^{\theta(i-1)}(n)}\\
    &+\sup_{x\in [0,t(i)]} \frac{Z_{i}^{(n)}\big( \mathfrak{t}^{(n)}_{x}+M\big)}{\log^{\theta(i)}(n)} e^{\lambda_0 M}+W\frac{2C_i}{\log(n)}\\
    &+\sup_{x \in [t(i),T]} \Big{\vert}\frac{Z_{i}^{(n)}\big( \mathfrak{t}^{(n)}_{x}+M\big)}{n^{x-t(i)}\log^{\theta(i)}(n)e^{\lambda_0 M}}-W w_i(x) \Big{\vert}.
\end{align}
Then we have
\begin{align}
    &\mathbb{P} \Big( \sup_{s \in [-M,M]} \sup_{t \in \left[0,T\right]} \Big{\vert} \frac{Z_{i}^{(n)}\big(\mathfrak{t}^{(n)}_{t}+s\big)}{d_{i}^{(n)}(t,s)}-Ww_i(t) \Big{\vert} \geq 5\varepsilon\Big) \\ 
    \label{Equation2}& \hspace{1cm} \leq \mP \Big( \sup_{x \in \left[0,t(i)-\gamma_{n}^{-1}(i)\right)} Z_{i}^{(n)}\big(\mathfrak{t}^{(n)}_{x}+M\big) \geq \varepsilon\Big)+\mP \Big( \sup_{x \in [0,t(i)]} \frac{Z_{i}^{(n)}\big(\mathfrak{t}^{(n)}_{x}+M\big)}{\psi_{n}(i)\log^{\theta(i-1)}(n)} \geq \varepsilon\Big)\\
   \label{Equation3}&\hspace{1cm}+\mathbb{P} \Big( \sup_{x \in [0,t(i)]} \Big{\vert} \frac{Z_{i}^{(n)}\big( \mathfrak{t}^{(n)}_{x}+M\big)}{e^{-\lambda_0 M}\log^{\theta(i)}(n)}\Big{\vert} \geq \varepsilon \Big)+\mathbb{P}\Big(W\geq \frac{\varepsilon \log(n)}{2C_i}\Big)\\
    \label{Equation1}&\hspace{1cm}+ \mathbb{P}\Big(\sup_{x \in \left[t(i),T\right]} \Big{\vert}\frac{Z_{i}^{(n)}\big(\mathfrak{t}^{(n)}_{x}+M\big)}{n^{x-t(i)}\log^{\theta(i)}(n)e^{\lambda_0 M}}-Ww_i(x)\Big{\vert} \geq \varepsilon\Big)\\
    &\underset{n \to \infty}{\longrightarrow}0,
\end{align}
where the different convergences to $0$ are obtained as follows: 
\begin{itemize}
    \item Lemma \ref{Lem: no mutant cell} gives the convergence of the first term in \eqref{Equation2},
    \item Lemma \ref{Lem: control before first mut} gives the convergence of the second term in \eqref{Equation2} and the first term in \eqref{Equation3}; for the latter, we apply Lemma \ref{Lem: control before first mut} with $\psi_n(i)=e^{-\lambda_0 M}\log(n)$, which is valid because $\theta(i)=\theta(i-1)+1$,
    \item for the second term in \eqref{Equation3}, we use the fact that $W$ is finite almost surely, see \eqref{Eq:distribution W},
    \item Step 3 of the neutral case in the proof of Proposition \ref{Proposition: mutant pop deterministic time scale not uniform in s} directly establishes the convergence of \eqref{Equation1}.
\end{itemize}
Finally, we have proven Equations \eqref{Equation: conv neutral case} and \eqref{Equation: convergence deleterious case} in the specific case of the infinite mono-directional graph. 

\subsection{First-order asymptotics of the size of the mutant subpopulations on the random time scale (Theorem \ref{Theorem: non increasin growth rate graph} (ii))} \label{Subsection: monodirectional graph random time}

In this subsection, we will first show that the random time at which the total population reaches the size $n^t$ behaves asymptotically as the random time at which the wild-type subpopulation reaches the size $n^t$. This result is obtained uniformly on the time parameter $t$, conditioned on $\{W>0\}$, and in probability. Intuitively, for any mutant trait $i \in \mathbb{N}$, the corresponding subpopulation grows exponentially at rate $\lambda_0$ after time $t(i)$, see Proposition \ref{Proposition: mutant pop deterministic time scale not uniform in s}. Due to these time delays (on the $\log(n)$-accelerated time scale), the total mutant subpopulation remains consistently negligible compared to the wild-type subpopulation. Consequently the difference between $\eta_t^{(n)}$ and $\sigma_{t}^{(n)}$ converges to $0$.
\begin{proposition}
\label{Prop: conv diff random time}
Assume Equation \eqref{Mutation regime} holds. Then, for all $\varepsilon>0$ and $0<T_{1}<T_{2}$, we have 
\begin{equation}
    \mP \Big(\sup_{t \in [T_1,T_2]} \big(\eta_{t}^{(n)}-\sigma_{t}^{(n)} \big) \leq \varepsilon \Big{\vert} W>0\Big) \underset{n \to \infty}{\longrightarrow}1.
\end{equation}
\end{proposition}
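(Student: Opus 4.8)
The plan is as follows. Since $Z^{(n)}_{tot}\geq Z^{(n)}_{0}$ pointwise, we have $\sigma^{(n)}_{t}\leq\eta^{(n)}_{t}$ for every $t$, so $\eta^{(n)}_{t}-\sigma^{(n)}_{t}\geq0$ and only an upper bound is needed. Mimicking Step~2 of the proof of Lemma~\ref{convergence temps arret}, I would fix $0<\delta_{1}<\delta_{2}$, establish the statement conditionally on the event $\{\delta_{1}<W<\delta_{2}\}$, and then let $\delta_{1}\downarrow0$, $\delta_{2}\uparrow\infty$, using that $W$ is a.s.\ finite with $\mP(W>0)=\lambda_{0}/\alpha_{0}>0$ (see \eqref{Eq:distribution W}). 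Conditionally on $\{\delta_{1}<W<\delta_{2}\}$, it suffices to prove that, with probability tending to $1$, $Z^{(n)}_{tot}(u)<n^{t}$ for every $t\in[T_{1},T_{2}]$ and every $u\leq\eta^{(n)}_{t}-\varepsilon$, since this forces $\sigma^{(n)}_{t}>\eta^{(n)}_{t}-\varepsilon$ for all such $t$. I would then bound $Z^{(n)}_{tot}(u)=Z^{(n)}_{0}(u)+\sum_{i\geq1}Z^{(n)}_{i}(u)$ by treating the two terms separately.

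For the wild-type term, I would combine the coupling $Z^{(n)}_{0}\leq Z_{0}$ from \eqref{eq:monotonecoupling}, the almost sure convergence $e^{-\lambda_{0}u}Z_{0}(u)\to W$ (see \eqref{W def}), and Lemma~\ref{convergence temps arret} applied with $\varepsilon/4$ in place of $\varepsilon$, which yields, conditionally on $\{W>0\}$ and with probability $\to1$, $\sup_{t\in[T_{1},T_{2}]}\lvert\eta^{(n)}_{t}-\mathfrak{t}^{(n)}_{t}+\log(W)/\lambda_{0}\rvert\leq\varepsilon/4$. These facts give, for $\widetilde{\varepsilon}$ small enough (depending on $\delta_{1},\varepsilon$) and $n$ large, on a high-probability event and for every $t\in[T_{1},T_{2}]$ and $u\leq\eta^{(n)}_{t}-\varepsilon$, the uniform multiplicative gap $Z^{(n)}_{0}(u)\leq\tfrac{W+\widetilde{\varepsilon}}{W}e^{-3\lambda_{0}\varepsilon/4}n^{t}\leq e^{-\lambda_{0}\varepsilon/2}n^{t}=:(1-c)n^{t}$ with $c:=1-e^{-\lambda_{0}\varepsilon/2}>0$; the tiny initial window below the (deterministic, $n$-independent) threshold past which $e^{-\lambda_{0}u}Z_{0}(u)$ is $\widetilde{\varepsilon}$-close to $W$ is absorbed because $n^{t}\to\infty$ uniformly in $t\geq T_{1}$. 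The same estimates also give $u\leq\eta^{(n)}_{t}-\varepsilon\leq\mathfrak{t}^{(n)}_{T_{2}}+M_{0}$ for a constant $M_{0}=M_{0}(\delta_{1},\varepsilon)$.

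For the mutant term, put $T':=T_{2}+1$. Since $\ell^{*}:=\inf_{i}\ell(i)>0$, the set $\mathcal{I}:=\{i\geq1:t(i)\leq T'+1\}$ is finite, and for the fixed integer $i^{*}:=\min\{i:t(i)>T'+1\}$ Lemma~\ref{Lem: no mutant cell} gives that, with probability $\to1$, $Z^{(n)}_{i^{*}}$ vanishes on $[0,\mathfrak{t}^{(n)}_{T'}]$ (valid because $t(i^{*})>T'+1>T'+h_{n}^{-1}(i^{*})$ for $n$ large); by the mono-directional structure this forces $Z^{(n)}_{i}\equiv0$ on $[0,\mathfrak{t}^{(n)}_{T'}]$ for every $i\geq i^{*}$. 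For each $i\in\mathcal{I}$, the first-order asymptotics \eqref{Equation: conv neutral case} (if $i$ is neutral) and \eqref{Equation: convergence deleterious case}, together with Lemmas~\ref{Lem: no mutant cell}--\ref{Lem: control before first mut} for the early regimes when $i$ is deleterious, all already established for the mono-directional graph, evaluated at $s=0$ over $[0,T'+1]$ with admissible slowly diverging choices $\varphi_{n}=\log\log n$ and $\psi_{n}=\log n$, give $\sup_{x\leq T'}n^{-x}Z^{(n)}_{i}(\mathfrak{t}^{(n)}_{x})\to0$ in probability: indeed $d^{(n)}_{i}(x,0)\leq n^{(x-t(i))_{+}}\log^{\theta(i)}(n)$, the weight $w_{i}$ vanishes on $[0,t(i)]$, and every $\log(n)$ factor is dominated by the strictly negative power $n^{-t(i)}$ because $t(i)\geq\ell^{*}>0$. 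Since $e^{-\lambda_{0}u}Z^{(n)}_{i}(u)=n^{-x}Z^{(n)}_{i}(\mathfrak{t}^{(n)}_{x})$ when $u=\mathfrak{t}^{(n)}_{x}$, summing over the finitely many $i\in\mathcal{I}$ and using that the remaining traits contribute nothing shows $R_{n}:=\sum_{i\geq1}\sup_{u\leq\mathfrak{t}^{(n)}_{T'}}e^{-\lambda_{0}u}Z^{(n)}_{i}(u)\to0$ in probability (the sum being a.s.\ finite, as at any time only finitely many traits are populated).

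Finally, on the intersection of the above high-probability events and $\{R_{n}\leq\tfrac{c}{2}e^{-\lambda_{0}M_{0}}\}$ (each of probability $\to1$ conditionally on $\{\delta_{1}<W<\delta_{2}\}$), for every $t\in[T_{1},T_{2}]$ and $u\leq\eta^{(n)}_{t}-\varepsilon\leq\mathfrak{t}^{(n)}_{T_{2}}+M_{0}\leq\mathfrak{t}^{(n)}_{T'}$ one obtains $\sum_{i\geq1}Z^{(n)}_{i}(u)\leq e^{\lambda_{0}u}R_{n}\leq n^{t}e^{\lambda_{0}M_{0}}R_{n}\leq\tfrac{c}{2}n^{t}$, whence $Z^{(n)}_{tot}(u)\leq(1-c)n^{t}+\tfrac{c}{2}n^{t}<n^{t}$ and therefore $\sup_{t\in[T_{1},T_{2}]}(\eta^{(n)}_{t}-\sigma^{(n)}_{t})\leq\varepsilon$ on that event; letting $\delta_{1}\downarrow0$ and $\delta_{2}\uparrow\infty$ concludes. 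The main obstacle is the mutant term: one must control the aggregate mutant subpopulation uniformly over a time window of length of order $\log(n)$ and over a priori infinitely many traits, and the mechanism that makes this tractable is that $\ell^{*}>0$ both reduces the relevant traits to a finite set and forces each $Z^{(n)}_{i}$ to be $o(e^{\lambda_{0}u})$, after which Equations \eqref{Equation: conv neutral case}--\eqref{Equation: convergence deleterious case} supply the needed uniform estimates.
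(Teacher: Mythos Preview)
Your argument is correct and rests on the same ingredients as the paper's (Lemma~\ref{convergence temps arret}, Proposition~\ref{Proposition: mutant pop deterministic time scale not uniform in s}, Lemmas~\ref{Lem: no mutant cell}--\ref{Lem: control before first mut}), but the organisation differs. The paper proceeds by contradiction: assuming that along a subsequence some $t_n\in[T_1,T_2]$ satisfies $\eta^{(n)}_{t_n}-\sigma^{(n)}_{t_n}\geq\varepsilon$, it uses the two-sided control of $Z_0^{(n)}$ behind Lemma~\ref{convergence temps arret} to infer $Z_{0}^{(n)}(\sigma^{(n)}_{t_n})\leq n^{t_n}e^{-\lambda_{0}\varepsilon/2}$, hence the aggregate mutant mass at time $\sigma^{(n)}_{t_n}$ is $\Omega(n^{t_n})$, and then contradicts this via Proposition~\ref{Proposition: mutant pop deterministic time scale not uniform in s}. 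You instead bound $Z^{(n)}_{tot}(u)$ directly and uniformly over $u\leq\eta^{(n)}_{t}-\varepsilon$, handling the wild type through the coupling~\eqref{eq:monotonecoupling} and the martingale limit, and the mutants through Proposition~\ref{Proposition: mutant pop deterministic time scale not uniform in s} after an explicit truncation of $\sum_{i\geq1}$ to finitely many traits using $\ell^{*}>0$ and Lemma~\ref{Lem: no mutant cell}. Your route avoids the contradiction device and makes the handling of the infinite sum explicit (the paper compresses this into the terse ``$=o(n^{t_n})$''); the paper's route is more compact. One minor technicality: for deleterious $i$, Equation~\eqref{Equation: convergence deleterious case} literally covers only $x\in[t(i)+T_{1}',T']$ with $T_{1}'>0$, while Lemmas~\ref{Lem: no mutant cell}--\ref{Lem: control before first mut} cover $x\leq t(i)$, so the sliver $(t(i),t(i)+T_{1}')$ is not formally addressed by the cited statements. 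This is easily closed---pick $T_{1}'$ small enough and use that $u\mapsto e^{-\lambda_{i}^{(n)}u}Z^{(n)}_{i}(u)$ is a nonnegative submartingale (by Lemma~\ref{Lem martingale type 1 pop}), so Doob's inequality together with Lemma~\ref{Lem:upper bound expectation subpop} supplies the missing uniform bound---and the paper's own $o(n^{t_n})$ step glosses over the same point.
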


\begin{proof}
The proof will be carried out in two steps. We begin by establishing the result under a stronger condition. 

\textbf{Step 1:} In this step, we will show that for all $0<\delta_1<\delta_2$ and $\varepsilon>0$ we have
\begin{align}
\label{equation step i convergence zero random time scales}
    \mP \Big(\sup_{t \in [T_1,T_2]} \big( \eta^{(n)}_{t}-\sigma^{(n)}_{t}\big) \geq \varepsilon \Big{\vert} \delta_1<W<\delta_2 \Big) \underset{n \to \infty}{\longrightarrow} 0.
\end{align}
Let $0<\delta_1<\delta_2$. Then there exists $M \in \mathbb{R}^{+}$ such that
\begin{align}
\label{eq: control log(W)}
    \mP \Big(\Big\vert \frac{\log(W)}{\lambda_0}\Big\vert  \leq M \Big{\vert} \delta_1<W<\delta_2\Big)=1.
\end{align}
For all $\varepsilon>0$ introduce the event $A_\varepsilon^{(n)}:=\big\{\sup_{t \in [T_1,T_2]} \big(\eta_{t}^{(n)}-\sigma_{t}^{(n)}\big) \geq \varepsilon\big\}$. Assume that there exists $\varepsilon>0$ such that the sequence $\big(\mP \big(
A_{\varepsilon}^{(n)}\big{\vert}\delta_1< W<\delta_2\big)\big)_{n \in \mathbb{N}} $ does not converge to 0. This means that there exists $\eta>0$ for which there is an infinite subset $N \in \mathbb{N}$ such that for all $n \in N$, $\mP \big(A_{\varepsilon}^{(n)} \big{\vert} \delta_1<W<\delta_2\big) \geq \eta.$ For all $\widetilde{\varepsilon}>0$ introduce the event
\begin{align}
    B^{(n)}_{\widetilde{\varepsilon}}:= \Big\{\sup_{t \in [T_1,T_2]} \Big\vert\eta_t^{(n)}-\Big( \mathfrak{t}^{(n)}_{t}-\frac{\log(W)}{\lambda_0}\Big) \Big\vert \leq \widetilde{\varepsilon}\Big\},
\end{align}
which satisfies $\mP \big( B^{(n)}_{\widetilde{\varepsilon}} \big{\vert} \delta_1<W<\delta_2\big)\underset{n \to \infty}{\to}1,$ according to Lemma \ref{convergence temps arret}. From this fact, and since $\sigma^{(n)}_{t} \leq \eta^{(n)}_{t}$ for all $t>0$ almost surely, it follows that under $ B^{(n)}_{\widetilde{\varepsilon}}$, we have $ \sigma_{t}^{(n)}<\infty$ for all $t \in [T_1,T_2].$ Moreover, it also follows that under $ B^{(n)}_{\widetilde{\varepsilon}}$, we have $Z_{0}^{(n)}\big(\eta^{(n)}_{t}\big)=n^{t}$ for all $t \in [T_1,T_2]$. In particular, under $A^{(n)}_{\varepsilon}$, there exists $t_n \in [T_1,T_2]$ such that $\eta^{(n)}_{t_n}-\sigma^{(n)}_{t_n} \geq \varepsilon$, which implies that $Z_{0}^{(n)}\big(\sigma^{(n)}_{t_n}\big) \leq n^{t_n}e^{-\lambda_0\frac{\varepsilon}{2}}$. Otherwise, by applying the strong Markov property, it would lead to a contradiction with $A^{(n)}_{\varepsilon}$. Combining these reasonings, it follows that under $A_{\varepsilon}^{(n)} \cap B^{(n)}_{\widetilde{\varepsilon}}$,  we have that 
\begin{align}
\label{Eq: mutant pop at least certain size}
    \sum_{i \geq 1} Z_{i}^{(n)}\big(\sigma^{(n)}_{t_n}\big)= Z_{tot}^{(n)}\big(\sigma_{t_n}^{(n)}\big)-Z_0^{(n)}\big(\sigma_{t_n}^{(n)}\big) \geq n^{t_n}\big(1-e^{-\lambda_0 \frac{\varepsilon}{2}}\big)=\Omega\left(n^{t_n}\right),
\end{align}
where we use the standard Landau notation for $\Omega$. However, the result regarding the mutant subpopulations indicates that, due to the power law mutation rates regime, the mutant subpopulations have a negligible size compared to the wild-type subpopulation. More precisely, under the event $A_{\varepsilon}^{(n)} \cap B^{(n)}_{\widetilde{\varepsilon}}$, using \eqref{eq: control log(W)} and Proposition \ref{Proposition: mutant pop deterministic time scale not uniform in s}, we have 
\begin{align}
\label{Eq: control mutant pop proof convergence stopping times}
    \sum_{i\geq 1}Z_{i}^{(n)}\big(\sigma_{t_n}^{(n)}\big) &\leq \sup_{u \in [T_1,t_n]}\sum_{i\geq 1}Z_{i}^{(n)}\big(\eta_u^{(n)}\big)  \\
    &\leq \sup_{u \in [T_1,t_n]}\sup_{s \in [-(M+\widetilde{\varepsilon}),M+\widetilde{\varepsilon}]}\sum_{i\geq 1}Z_{i}^{(n)}\big(\mathfrak{t}^{(n)}_{u}+s\big) \\
    &= o(n^{t_n}).
\end{align}
There is a contradiction between \eqref{Eq: mutant pop at least certain size} and \eqref{Eq: control mutant pop proof convergence stopping times}, so we have proven \eqref{equation step i convergence zero random time scales} for all $\varepsilon>0$ and $0<\delta_1<\delta_2$. 

\textbf{Step 2:} Using a similar method as in Step 2 of the proof of Lemma \ref{convergence temps arret}, one can show that for all $\varepsilon>0$
\begin{align}
    \mP \Big( A^{(n)}_{\varepsilon}\Big{\vert} W>0\Big) \underset{n \to \infty}{\longrightarrow}0,
\end{align}
which concludes the proof.
\end{proof}

In the remainder of this subsection, we will prove the following proposition.
\begin{proposition}
\label{Prop: random time convergence 1 site}
Assume Equation \eqref{Mutation regime} holds. Let $0<T_{1}<T_{2}$, $M>0$ and $\varepsilon>0$. Consider $\big(\rho_{t}^{(n)}\big)_{t \in \mathbb{R}^{+}}$ as defined in \eqref{rho}. Then, we have
\begin{itemize}
    \item If $\lambda_i=\lambda_0$ 
\begin{align}
    &\mP \Big(\sup_{s \in [-M,M]}\sup_{t \in [T_1,T_2]} \Big{\vert} \frac{Z_{i}^{(n)}\big(\rho_{t}^{(n)}+s\big)}{d_i^{(n)}(t,s)}-\1_{ \{W>0\}}w_{i}(t)\Big{\vert} \geq \varepsilon\Big)\underset{n \to \infty}{\longrightarrow} 0.
\end{align}
\item If $\lambda_i<\lambda_0$
\begin{align}
    & \mP\Big(\sup_{s \in [-M,M]}\sup_{t \in [T_1,T_2]}  \Big{\vert}\frac{Z_{i}^{(n)}\big( \rho_{t(i)+t}^{(n)}+s\big)}{n^{t}\log^{\theta(i)}(n)e^{\lambda_0 s}} - \1_{\{W>0\}}w_{i}(t(i)+t) \Big{\vert} \geq \varepsilon\Big) \underset{n \to \infty}{\longrightarrow} 0.
\end{align}
\end{itemize}
\end{proposition}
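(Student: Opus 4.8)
The plan is to derive this statement from the deterministic-time-scale asymptotics already established for the mono-directional graph in Section~\ref{Proof deterministic time neutral mutation} — that is, \eqref{Equation: conv neutral case} and \eqref{Equation: convergence deleterious case}, which hold \emph{uniformly} in the pair $(t,s)$ — together with the fact that $\mathfrak{t}^{(n)}_t$ is the correct deterministic proxy for $\rho_t^{(n)}$ (Lemma~\ref{convergence temps arret} and the estimate \eqref{Rq: approx W by exponential term with tau} of Remark~\ref{Rq: order of stopping time wtp reaches size power of n}, supplemented by Proposition~\ref{Prop: conv diff random time} to pass from $\eta_t^{(n)}$ to $\sigma_t^{(n)}$). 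This is the mechanism alluded to in Remark~\ref{Rq Thm 1}~7., in the spirit of \cite[Lemma~3]{foo2013dynamics}. The only genuine difficulty is that $Z_i^{(n)}$ is not monotone in time, so one cannot bound $Z_i^{(n)}(\rho_t^{(n)}+s)$ by evaluating $Z_i^{(n)}$ at two deterministic times bracketing $\rho_t^{(n)}+s$. Instead the random shift $r_t^{(n)}:=\rho_t^{(n)}-\mathfrak{t}^{(n)}_t$ will be kept exact and absorbed into the spatial variable: writing $\rho_t^{(n)}+s=\mathfrak{t}^{(n)}_t+\tilde s_t^{(n)}$ with $\tilde s_t^{(n)}:=s+r_t^{(n)}$, one can then invoke the uniformity in $s$ of \eqref{Equation: conv neutral case}--\eqref{Equation: convergence deleterious case}, which is precisely why that refinement was proved beforehand.

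First I would treat the event $\{W=0\}$. There $Z_0$ is eventually extinct, hence $\sup_u Z_0^{(n)}(u)\le\sup_u Z_0(u)<\infty$ and $\int_0^\infty Z_0(u)\,du<\infty$ almost surely; since the intensities of the mutation measures $N_0,Q_0^m$ scale like $\mu_0^{(n)}\to 0$, conditionally on $Z_0^{(n)}$ the number of mutations from trait $0$ to trait $1$ is Poisson with a mean that tends to $0$ almost surely, so with conditional probability tending to $1$ no mutant cell of any trait is ever produced. On that event $Z_i^{(n)}\equiv 0$ for all $i\ge 1$ and the quantity to control vanishes identically, matching the limit $\1_{\{W>0\}}w_i$, which also vanishes there. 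It therefore remains to argue conditionally on $\{W>0\}$, and — exactly as in Step~2 of the proofs of Lemma~\ref{convergence temps arret} and Proposition~\ref{Prop: conv diff random time} — it is enough to argue conditionally on $\{\delta_1<W<\delta_2\}$ for arbitrary fixed $0<\delta_1<\delta_2$, letting $(\delta_1,\delta_2)\to(0,\infty)$ at the end and using that $W<\infty$ almost surely. The purpose of this localisation is that on $\{\delta_1<W<\delta_2\}$ both $|\log W|/\lambda_0$ and $W^{-1}$ are bounded by a \emph{deterministic} constant.

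On $\{\delta_1<W<\delta_2\}$, Lemma~\ref{convergence temps arret} and \eqref{Rq: approx W by exponential term with tau} (combined with Proposition~\ref{Prop: conv diff random time} when $\rho=\sigma$) give, in probability, $\sup_{t\in[T_1,T_2]}\lvert r_t^{(n)}+\log(W)/\lambda_0\rvert\to 0$ and $\sup_{t\in[T_1,T_2]}\lvert e^{-\lambda_0 r_t^{(n)}}-W\rvert\to 0$. Hence on an event of conditional probability tending to $1$ one has $\sup_{t\in[T_1,T_2]}|r_t^{(n)}|\le R$ for a deterministic $R=R(\delta_1,\delta_2,\lambda_0)$, so that $\tilde s_t^{(n)}\in[-M',M']$ with $M':=M+R$, uniformly over $t\in[T_1,T_2]$ and $s\in[-M,M]$. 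I would then write, on that event,
\[
\frac{Z_i^{(n)}\bigl(\rho_t^{(n)}+s\bigr)}{d_i^{(n)}(t,s)}
=\frac{Z_i^{(n)}\bigl(\mathfrak{t}^{(n)}_t+\tilde s_t^{(n)}\bigr)}{d_i^{(n)}\bigl(t,\tilde s_t^{(n)}\bigr)}\;\cdot\;\frac{d_i^{(n)}\bigl(t,\tilde s_t^{(n)}\bigr)}{d_i^{(n)}(t,s)},
\]
with $t$ replaced by $t(i)+t$ in the deleterious case. By \eqref{Equation: conv neutral case}--\eqref{Equation: convergence deleterious case} applied on the enlarged window $[-M',M']$ (and with $\sum_{\gamma\in A(i)}w_\gamma=w_i$ since the graph is mono-directional), the first factor tends to $Ww_i(t)$ uniformly over $t\in[T_1,T_2]$ and $\tilde s_t^{(n)}\in[-M',M']$. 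For the second factor: when $t\ge t(i)$ (always the case in the deleterious statement, where $t(i)+t>t(i)$) both arguments lie in the third, $s$-dependent regime of \eqref{Equation: denominateur}, so the ratio equals $e^{\lambda_0(\tilde s_t^{(n)}-s)}=e^{\lambda_0 r_t^{(n)}}\to W^{-1}$ uniformly; when $t<t(i)$ (neutral case only) the first two regimes of $d_i^{(n)}$ do not depend on $s$, so the ratio equals $1$ while $w_i(t)=0$. As both factors converge uniformly and are uniformly bounded ($w_i$ bounded on the compact time interval, $W^{-1}\le\delta_1^{-1}$), the product converges uniformly to $w_i(t)$, resp.\ $w_i(t(i)+t)$, which equals $\1_{\{W>0\}}w_i(\cdot)$ on $\{W>0\}$. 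Letting $(\delta_1,\delta_2)\to(0,\infty)$ and combining with the $\{W=0\}$ case yields the claim. The step I expect to require the most care is not a computation but this last transfer: making the exact-shift substitution legitimate — which hinges on the prior uniform-in-$s$ control — and then matching the three regimes of $d_i^{(n)}$ against the regimes of the shifted denominator.
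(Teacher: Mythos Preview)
Your proposal is correct and follows essentially the same route as the paper: absorb the random shift $r_t^{(n)}=\rho_t^{(n)}-\mathfrak{t}_t^{(n)}$ into the $s$-variable so as to invoke the uniform-in-$s$ deterministic result on an enlarged window, and then use $e^{-\lambda_0 r_t^{(n)}}\to W$ together with the $\{W=0\}$ argument to turn the limit $Ww_i$ into $\1_{\{W>0\}}w_i$. The paper conditions on $\{W>\eta\}$ rather than on $\{\delta_1<W<\delta_2\}$ and splits off the factor $e^{-\lambda_0 r_t^{(n)}}$ as a separate sub-step before dividing it out, but these are cosmetic differences.
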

These results correspond to \eqref{Equation: conv neutral case random scale} and \eqref{Equation: convergence deleterious case in random scale} for the mono-directional graph. The proof will be carried out under the assumption that $\lambda_i=\lambda_0$. The case where $\lambda_i<\lambda_0$ can be addressed using similar reasoning and is left to the reader.
\begin{proof}[Proof of Proposition \ref{Prop: random time convergence 1 site}]
Estimate the quantity of interest from above as
\begin{align}
    &\mP \Big(\sup_{s\in [-M,M]}\sup_{t \in [T_{1},T_{2}]} \Big{\vert} \frac{Z_{i}^{(n)}\big(\rho^{(n)}_{t}+s\big)}{d_{i}^{(n)}(t,s)}-w_{i}(t)\1_{\{W>0\}}  \Big{\vert} \geq \varepsilon\Big)\\
    \label{Equa1}&\hspace{1cm}\leq \mP \Big( \{W>0\} \cap \Big\{\sup_{s \in [-M,M]}\sup_{t \in [T_{1},T_{2}]} \Big{\vert} \frac{Z_{i}^{(n)}\big(\rho^{(n)}_{t}+s\big)}{d_{i}^{(n)}(t,s) }-w_{i}(t)\Big{\vert} \geq \varepsilon \Big\} \Big)\\
    \label{Equa2}&\hspace{1cm}+ \mP \Big(\{W=0\} \cap \Big\{K^{(n)}_{0}\big(\rho_{T_2}^{(n)}+M\big)\geq 1\Big\} \cup  \Big\{H^{(n)}_{0}\big(\rho_{T_2}^{(n)}+M\big)\geq 1\Big\} \Big),
\end{align}
where, for the term in \eqref{Equa2}, we use the fact that a necessary condition for the mutant subpopulation of trait $i$ to be strictly positive is that at least one mutational event from the wild-type subpopulation must have occurred before.  

\textbf{Step 1: }
The convergence to 0 of \eqref{Equa2} follows from proving that 
\begin{align}
        \mP \Big( \Big\{\sup_{t \in \mathbb{R}^{+}} K^{(n)}_0(t)=0 \Big\} \cap  \Big\{\sup_{t \in \mathbb{R}^{+}} H^{(n)}_0(t)=0 \Big\} \Big{\vert} W=0 \Big) \underset{n \to \infty}{\longrightarrow} 1. 
    \end{align}
Let us first show that $\mP \big( \sup_{t \in \mathbb{R}^{+}}K^{(n)}_{0}(t) \geq 1 \big{\vert} W=0\big) \underset{n \to \infty}{\to}0.$ Notice that, almost surely, for all $t \in \mathbb{R}^{+}$
$$K^{(n)}_0(t) \leq \widetilde{K}^{(n)}(t):=\int_{0}^{t} \int_{\mathbb{R}^{+}} \1_{\left\{\theta \leq 2 \alpha_0 \mu_0^{(n)} Z_0(s^{-})\right\}}N_0(ds,d\theta),$$
because, almost surely, for all $t \in \mathbb{R}^{+}$, we have $Z_{0}^{(n)}(t) \leq Z_{0}(t)$. 
Then it follows that 
\begin{align}
    \mP \Big( \sup_{t \in \mathbb{R}^{+}} K^{(n)}_0(t) \geq 1 \Big{\vert} W=0\Big) &\leq \E \Big[\sup_{t \in \mathbb{R}^{+} } \widetilde{K}^{(n)}(t) \wedge 1 \Big{\vert} W=0\Big]\underset{n \to \infty}{\longrightarrow}0
\end{align}
by dominated convergence. Indeed, for all $\omega \in \{W=0\}$, there exists $T(\omega) \in \mathbb{R}^{+}$ such that for all $t \geq T(\omega), Z_{0}(t)=0$. Combined with $\mu_0^{(n)} \underset{n \to \infty}{\to} 0$, it follows that there exists $N(\omega) \in \mathbb{N}$ such that for all $n \geq N(\omega)$, we have $\sup_{t \in \mathbb{R}^{+}}\widetilde{K}^{(n)}(t)=0$. We conclude the proof of Step 1 by showing that $\mP \big( \sup_{t \in \mathbb{R}^{+}}H^{(n)}_{0}(t) \geq 1 \big{\vert} W=0\big) \underset{n \to \infty}{\longrightarrow}0$ using similar reasoning.  

\textbf{Step 2:} We will show that \eqref{Equa1} converges to $0$ in three steps.

\textbf{Step 2) (i)}: We begin by showing that for all $\varepsilon>0$ and $\eta>0$ we have 
\begin{equation}
\label{Eq: eq 18}
    \mP \Big( \sup_{s \in [-M,M]}\sup_{t \in [T_{1},T_{2}]} \Big{\vert} \frac{Z_{i}^{(n)}\big(\rho^{(n)}_{t}+s\big)}{d_i^{(n)}(t,s)}e^{-\lambda_0 \left[\rho^{(n)}_{t}-\mathfrak{t}^{(n)}_{t}\right]}- Ww_{i}(t)\Big{\vert} \geq \varepsilon \Big{\vert} W>\eta\Big) \underset{n \to \infty}{\longrightarrow}0.
\end{equation}
We have 
\begin{align}
      &\mP \Big( \sup_{t \in [T_{1},T_{2}]} \Big{\vert} \rho^{(n)}_{t}-\mathfrak{t}^{(n)}_{t} \Big{\vert}  \geq M \Bigg{\vert} W> \eta\Big)\\
      &\hspace{1cm}\leq  \mP \Big( \sup_{t \in [T_{1},T_{2}]} \Big{\vert} \eta^{(n)}_{t}- \Big(\mathfrak{t}^{(n)}_{t}-\frac{\log(W)}{\lambda_0} \Big)\Big{\vert} \geq \frac{M}{3} \Big{\vert} W> \eta \Big) \\
      &\hspace{1cm}+\mP \Big( \sup_{t \in [T_1,T_2]} \Big\vert \rho_{t}^{(n)}-\eta_{t}^{(n)} \Big\vert \geq \frac{M}{3}\Big)+ \mP \Big(\frac{\vert \log(W)\vert}{\lambda_0} \geq \frac{M}{3} \Big{\vert} W> \eta\Big).
\end{align}
Let $\delta>0$. Using Lemma \ref{convergence temps arret}, Proposition \ref{Prop: conv diff random time} and the distribution of $W$ given in \eqref{Eq:distribution W}, there exist $M>0$ and $N_1 \in \mathbb{N}$ such that for all $n \geq N_1$,
\begin{equation}
\label{Eq 54}
    \mP \Big( \sup_{t \in [T_{1},T_{2}]} \Big{\vert} \rho^{(n)}_{t}-\mathfrak{t}^{(n)}_{t} \Big{\vert}  \geq M \Big{\vert} W> \eta\Big) \leq \frac{\delta}{2}.
\end{equation}
Now, we can apply Theorem \ref{Theorem: non increasin growth rate graph} (i) Eq. \eqref{Equation: conv neutral case} to get that there exists $N_2 \in \mathbb{N}$ such that for all $n \geq N_2$,
\begin{equation}
\label{Eq 55}
     \mP \Big{(}  \sup_{s \in [-M,M]}\sup_{s_1 \in [-M,M]}\sup_{t \in [T_{1},T_{2}]}  \Big{\lvert}  \frac{Z_{i}^{(n)}\big(\mathfrak{t}^{(n)}_{t}+s+s_1\big)}{d_i^{(n)}(t,s+s_1)}-Ww_{i}(t) \Big{\lvert} \geq  \varepsilon \Big{)} \leq \frac{\delta}{2}.
\end{equation}
Consequently, using Equations \eqref{Eq 54} and \eqref{Eq 55}, we have shown that for all $\delta>0$, there exists $N:= \max (N_1,N_2) \in \mathbb{N}$ such that for all $n \geq N$, 
\begin{equation}
    \mathbb{P} \Big( \sup_{s\in [-M,M]}\sup_{t \in [T_{1},T_{2}]} \Big{\vert} \frac{Z_{i}^{(n)}\big(\rho^{(n)}_{t}+s\big)}{d_{i}^{(n)}(t,s)}e^{-\lambda_0 \left[\rho^{(n)}_{t}-\mathfrak{t}^{(n)}_{t}\right]}- Ww_{i}(t)\Big{\vert} \geq \varepsilon \Big{\vert} W>\eta\Big) \leq \delta,
\end{equation}
which concludes Step 2) (i). 

\textbf{Step 2) (ii):} Now, we are going to prove that  
\begin{align}
\label{Eq: eq 20}
    \mP \Big( \sup_{s\in [-M,M]}\sup_{t \in [T_{1},T_{2}]} \Big{\vert} \frac{Z_{i}^{(n)}\big(\rho^{(n)}_{t}+s\big)}{d_{i}^{(n)}(t,s)}-w_{i}(t)\Big{\vert} \geq \varepsilon \Big{\vert} W >\eta\Big) \underset{n \to \infty}{\longrightarrow}0.
\end{align}
Let $\delta>0$ and $0<\widetilde{\varepsilon}<\eta $. According to Remark \ref{Rq: order of stopping time wtp reaches size power of n} Equation \eqref{Rq: approx W by exponential term with tau} and Proposition \ref{Prop: conv diff random time}, we have 
\begin{align}
    \mP \left(A_{\widetilde{\varepsilon}}^{(n)} \big\vert W>\eta\right) \geq 1-\frac{\delta}{2}, \text{ where } A_{\widetilde{\varepsilon}}^{(n)}:=\Big{\{} \sup_{t \in [T_{1},T_{2}]} \Big{\vert} e^{-\lambda_0\left(\rho_{t}^{(n)}-\mathfrak{t}^{(n)}_{t}\right)}-W \Big{\vert} \leq \widetilde{\varepsilon}\Big{\}}.
\end{align}
Combined with Step 2) (i), there exists $N \in \mathbb{N}$ such that for all $n \geq N$, we have $\mP \big( A_{\widetilde{\varepsilon}}^{(n)}\cap B_{\widetilde{\varepsilon}}^{(n)}|W >\eta\big)\geq 1-\delta$, where
\begin{align}
    &B_{\widetilde{\varepsilon}}^{(n)}:= \Big{\{}\sup_{s\in [-M,M]}\sup_{t \in [T_{1},T_{2}]} \Big{\vert} \frac{Z_{i}^{(n)}\big(\rho^{(n)}_{t}+s\big)}{d_{i}^{(n)}(t,s)}e^{-\lambda_0 \left[\rho^{(n)}_{t}-\mathfrak{t}^{(n)}_{t}\right]}-Ww_{i}(t)\Big{\vert} \leq \widetilde{\varepsilon}\Big{\}}.
\end{align}
In particular, conditioned on $\{W >\eta \}$, under the event $A_{\widetilde{\varepsilon}}^{(n)}\cap B_{\widetilde{\varepsilon}}^{(n)}$, we have that for all $t \in [T_1,T_2]$ and for all $s\in [-M,M]$,
\begin{align}
    \frac{Z_{i}^{(n)}\big(\rho_{t}^{(n)}+s\big)}{d_{i}^{(n)}(t,s)}-w_{i}(t)&\leq \left(\widetilde{\varepsilon}+w_{i}(t)W\right)e^{\lambda_0 \left(\rho_{t}^{(n)}-\mathfrak{t}^{(n)}_{t}\right)}-w_{i}(t) \\
    &\leq \frac{\widetilde{\varepsilon}}{W-\widetilde{\varepsilon}}+w_{i}(t)\Big( \frac{W}{W-\widetilde{\varepsilon}}-1\Big) \\ 
    & \leq \left(1+ w_{i}(T_2)\right)\frac{\widetilde{\varepsilon}}{\eta-\widetilde{\varepsilon}}\\
    &\underset{\widetilde{\varepsilon} \to 0}{\longrightarrow}0,
\end{align}
so that we can choose $\widetilde{\varepsilon}$ arbitrarily small such that this upper bound is smaller than $\varepsilon$. By applying a similar approach for the lower bound, we find that, conditioned on $\{W >\eta\}$, under the event $A_{\widetilde{\varepsilon}}^{(n)}\cap B_{\widetilde{\varepsilon}}^{(n)}$, we have that for all $t \in [T_1,T_2]$ and for all $s\in [-M,M]$,
\begin{align}
    \frac{Z_{i}^{(n)}\big(\rho_{t}^{(n)}+s\big)}{d_{i}^{(n)}(t,s)}-w_{i}(t)&\geq -\left(1+w_{i}(T_{2}) \right)\frac{\tilde{\varepsilon}}{\eta-\tilde{\varepsilon}}\underset{\tilde{\varepsilon}\to 0}{\longrightarrow}0.
\end{align}
Consequently, by choosing an appropriate $\widetilde{\varepsilon}>0$, we have shown that there exists $N \in \mathbb{N}$ such that for all $n \geq N$,
\begin{align}
    \mP \Big(\sup_{s\in [-M,M]}\sup_{t \in [T_{1},T_{2}]} \Big{\vert} \frac{Z_{i}^{(n)}\big(\rho^{(n)}_{t}+s\big)}{d_{i}^{(n)}(t,s)}-w_{i}(t) \Big{\vert} \leq \varepsilon \Big{\vert} W>\eta \Big) \geq 1-\delta.
\end{align}

\textbf{Step 2) (iii):} Introduce the notation $C_{\varepsilon}^{(n)}:=\big{\{}\underset{s\in [-M,M]}{\sup}\underset{t \in [T_{1},T_{2}]}{\sup} \big{\vert} \frac{Z_{i}^{(n)}\big(\rho^{(n)}_{t}+s\big)}{d_{i}^{(n)}(t,s)}-w_{i}(t)\big{\vert} \geq \varepsilon \big{\}}$. To complete the proof of Step 2, we will show that $\mP \big(C_{\varepsilon}^{(n)} \cap \{W>0\}\big) \underset{n \to \infty}{\to}0.$ We have 
\begin{align}
    \mP \left(C_{\varepsilon}^{(n)} \cap \{W>0\} \right) \leq \mP\left(C_{\varepsilon}^{(n)} \cap \{W>\eta\} \right) +\mP\left(0<W<\eta\right).
\end{align}
Using Step 2) (ii), we obtain 
\begin{equation}
    \limsup_{n \to \infty}\mP \left(C_{\varepsilon}^{(n)}  \cap \{W>0\} \right)  \leq \mP\left(0<W<\eta\right).
\end{equation}
Taking the limit as $\eta\underset{n \to \infty}{\to}0$ completes the proof.
\end{proof}

\section{First-order asymptotics of the size of the mutant subpopulations for a general finite trait space (Theorem \ref{Theorem: non increasin growth rate graph})}
\label{Section: generalisation of the proof}
As in Section \ref{Section: proof mono directional graph} the sequence $\big(Z^{(n)}_{v}, v \in V\big)_{n \in \mathbb{N}}$ is mathematically constructed using independent PPMs. In this construction, each population of trait $v$ is decomposed as the sum of subpopulations indexed by the walks in the graph that start from trait $0$ and lead to trait $v$. An exact definition will be given below. The idea is to apply the reasoning of Section \ref{Section: proof mono directional graph} to each walk $\gamma$ from trait $0$ to trait $v$, which will provide the first-order asymptotics for the subpopulation of cells of trait $v$ indexed by $\gamma$. By comparing the order of distinct walks, we can then conclude the first-order asymptotics of the size of the mutant subpopulation of trait $v$. However, this reasoning holds only if there are finitely many walks from trait $0$ to trait $v$. In particular, notice that due to cycles, there may be countably infinitely many walks from trait $0$ to trait $v$. Consequently, the proof requires additional steps, introducing an equivalence relation on the walks. We argue that there are only finitely many equivalent classes, and for each equivalent class, the result follows by adapting the reasoning from Section \ref{Section: proof mono directional graph}. For the equivalent class with infinitely many walks, we show that, with high probability, most of these walks do not asymptotically contribute.

Among wild-type individuals, we define the \textit{primary cell population}, denoted by \\
$\big(Z_{(0)}^{(n)}(t)\big)_{t \geq 0}$, as the set of all cells that have no mutants in their ancestry, tracing back to the initial cell. This corresponds to $Z_{0}^{(n)}$ in the case of the mono-directional graph. 
\begin{definition}[Walks and neighbors]
Define the set of all walks in the graph $V$ starting from trait $0$ as $\Gamma(V)$.  For a trait $v \in V$, the set of traits to which a cell of trait $v$ may mutate is defined as $N(v):=\{u \in V: (v,u) \in E\}.$ For a walk $\gamma=(0,\cdots,\gamma(k))\in \Gamma(V)$, denote the last trait $\gamma(k)$ visited by $\gamma$ as $\gamma_{end}:=\gamma(k)$, and the sub-walk that does not include this last trait as $\overset{\leftarrow}{\gamma}:=\left(0,\cdots,\gamma(k-1)\right).$ Introduce the sets of tuples of the walks in $V$ starting from trait $0$, associated with one or two neighbors of the last trait of $\gamma$, as 
\begin{align}
    &N_{\Gamma}:=\{(\gamma,v): \gamma \in \Gamma(V), v \in N(\gamma_{end})\},
\end{align}
and
\begin{align}
    M_{\Gamma}:=\{(\gamma,(v,u)): \gamma \in \Gamma(V), (v,u) \in N(\gamma_{end})\times N(\gamma_{end})\}. 
\end{align}
\end{definition}
We then introduce the birth, death and growth rates of any lineage of a cell of trait $v$ as 
\begin{align}
    &\alpha^{(n)}(v)=\alpha(v)\Big(1-\overline{\mu}^{(n)}(v)\Big)^{2} \text{ with } \overline{\mu}^{(n)}(v):=\sum_{u \in V: (v,u) \in E} \mu^{(n)}(v,u), \\
    &\beta^{(n)}(v)=\beta(v)+\alpha(v)\sum_{(u,w)\in N(v)\times N(v)}\mu^{(n)}(v,u) \mu^{(n)}(v,w),\\
    &\lambda^{(n)}(v)=\alpha^{(n)}(v)-\beta^{(n)}(v)=\lambda(v)-2\alpha(v)\overline{\mu}^{(n)}(v).
\end{align}
Let $Q_{(0)}^{b}(ds,d\theta)$, $Q_{(0)}^{d}(ds,d\theta)$, $\left(Q_{\gamma}(ds,d\theta) \right)_{\gamma\in \Gamma(V)}$, $\left(Q_{\gamma,v}(ds,d\theta) \right)_{(\gamma,v) \in N_{\Gamma}}$
and \\
$\left(Q_{\gamma, (v,u)}(ds,d\theta) \right)_{(\gamma,(v,u)) \in M_{\Gamma}}$
be independent PPMs with intensity $dsd\theta$. The subpopulation of primary cells is
\begin{align}
\label{Equ6}
    Z_{(0)}^{(n)}(t)&:=1+\int_{0}^{t}\int_{\mathbb{R}^{+}} \1_{\left\{\theta \leq \alpha^{(n)}(0)Z_{(0)}^{(n)}(s^{-})\right\}}Q_{(0)}^{b}(ds,d\theta)\\
    &-\int_{0}^{t}\int_{\mathbb{R}^{+}}\1_{\left\{\theta \leq \beta(0)Z_{(0)}^{(n)}(s^{-}) \right\}}Q_{(0)}^{d}(ds,d\theta)-\sum_{(v,u) \in N(0)\times N(0)} H_{(0),(v,u)}^{(n)}(t),
\end{align}
and for all $\gamma \in \Gamma(V)$, the subpopulation among the cells of trait $\gamma_{end}$ whose ancestry traces back to trait $0$ with mutations occurring exactly along the edges of $\gamma$ is 
\begin{align}
\label{Equ7}
    \hspace{1cm}&Z_{\gamma}^{(n)}(t):=\int_{0}^{t}\int_{\mathbb{R}^{+}} \Bigg{(} \1_{\left\{\theta \leq \alpha^{(n)}(\gamma_{end})Z_{\gamma}^{(n)}(s^{-})\right\}}\\
    &\hspace{2.5cm}-\1_{\left\{\alpha^{(n)}(\gamma_{end})Z_{\gamma}^{(n)}(s^{-}) \leq \theta \leq \left(\alpha^{(n)}(\gamma_{end})+\beta(\gamma_{end})\right)Z_{\gamma}^{(n)}(s^{-}) \right\}}\Bigg{)}Q_{\gamma}(ds,d\theta) \\ 
    &+K_{\overset{\leftarrow}{\gamma},\gamma_{end}}^{(n)}(t)+2 H^{(n)}_{\overset{\leftarrow}{\gamma},(\gamma_{end},\gamma_{end})}+\sum_{v \in N(\overline{\gamma}_{end}), v \neq \gamma_{end}}\left(H_{\overset{\leftarrow}{\gamma},(\gamma_{end},v)}^{(n)}+H_{\overset{\leftarrow}{\gamma},(v,\gamma_{end})}^{(n)}\right)(t) \\
    &-\sum_{(v,u)\in N(\gamma_{end})\times N(\gamma_{end})  }H_{\gamma, (v,u)}^{(n)}(t),
\end{align}
where for all $(\gamma,v) \in N_{\Gamma}$,
\begin{align}
\label{ModelConstructionK}
    & K_{\gamma,v}^{(n)}(t):=\int_{0}^{t}\int_{\mathbb{R}^{+}}\1_{\{\theta \leq 2\alpha(\gamma_{end})\mu^{(n)}(\gamma_{end},v)\left(1-\overline{\mu}^{(n)}(\gamma_{end})\right)Z_{\gamma}^{(n)}(s^{-})\}}Q_{\gamma,v}(ds,d\theta),
\end{align}
and for all $(\gamma,(v,u)) \in M_{\Gamma}$,
\begin{align}
\label{ModelConstuctionH}
    H_{\gamma,(v,u)}^{(n)}(t):=\int_{0}^{t}\int_{\mathbb{R}^{+}} \1_{\left\{ \theta \leq \alpha(\gamma_{end}) \mu^{(n)}(\gamma_{end},v) \mu^{(n)}(\gamma_{end},u)Z_{\gamma}^{(n)}(s^{-})\right\}} Q_{\gamma, (v,u)}(ds,d\theta).
\end{align}
The process $\big(K_{\gamma,v}^{(n)}(t)\big)_{t\in\mathbb{R}^{+}}$, resp. $\big(H_{\gamma, \{v,u\}}^{(n)}(t):=H_{\gamma, (v,u)}^{(n)}(t)+H_{\gamma, (u,v)}^{(n)}(t)\big)_{t\in\mathbb{R}^{+}}$, counts the number of mutations up to time $t$ from the subpopulation indexed by $\gamma$ that result in exactly one mutant daughter cell of trait $v$, resp. two mutant daughter cells of traits $\{v,u\}$. Hence the subpopulation of trait $v \in V$ is 
\begin{align}
\label{ModelConstructionZ}
    Z_{v}^{(n)}(t):=Z_{(0)}^{(n)}(t)\1_{\{v=0\}}+\sum_{\gamma\in P(v)} Z_{\gamma}^{(n)}(t),
\end{align} 
where $P(v)$, defined in Definition \ref{Def: Admissible path}, is the set of all walks from trait $0$ to trait $v$. 
\begin{definition}[Limiting birth-death branching process for the primary cell population]
Let $(Z_{(0)}(t))_{t \in \mathbb{R}^{+}}$ be the birth-death branching process with rates $\alpha(0)$ and $\beta(0)$ respectively, constructed as follows
\begin{align}
\label{def Z(0)}
    Z_{(0)}(t)&=1+\int_{0}^{t}\int_{\mathbb{R}^{+}} \1_{\left\{\theta \leq \alpha(0)Z_{(0)}(s^{-})\right\}}Q_{(0)}^{b}(ds,d\theta)\\
    &-\int_{0}^{t}\int_{\mathbb{R}^{+}} \1_{\left\{ \theta \leq \beta(0)Z_{(0)}(s^{-})\right\}} Q_{(0)}^{d}(ds,d\theta).
\end{align}
Notice that with this construction, the monotone coupling $$\forall t \geq 0, Z_{(0)}^{(n)}(t)\leq Z_{(0)}(t), a.s.$$ immediately follows. \\
Introduce the almost sure limit of the positive martingale  $\left( e^{-\lambda(0) t}Z_{(0)}(t)\right)_{t \in \mathbb{R}^{+}}$ as 
\begin{align}
\label{W def general finite graph}
    W:=\lim_{t \to \infty} e^{-\lambda(0) t}Z_{(0)}(t),
\end{align}
whose law is $W\overset{law}{=}Ber\left(\frac{\lambda(0)}{\alpha(0)}\right)\otimes Exp \left( \frac{\lambda(0)}{\alpha(0)}\right),$ see \cite[Section 1.1]{durrett2010evolution}, or \cite[Theorem 1]{durrett2015branching}. 
\end{definition}

\begin{lemma}
\label{Lem:control primary pop general finite graph}
There exist $C(\alpha(0),\lambda(0))>0$ and $N \in \mathbb{N}$ such that for all $\varepsilon>0$ and $n \geq N$, 
\begin{align}
    \mathbb{P}\Big( \sup_{t \in \mathbb{R}^{+}} \Big{\vert} e^{-\lambda(0)t}Z_{(0)}(t)-e^{-\lambda^{(n)}(0)t}Z_{(0)}^{(n)}(t)\Big{\vert} \geq \varepsilon\Big) \leq \frac{ C(\alpha(0),\lambda(0))}{\varepsilon^{2}} \overline{\mu}^{(n)}(0) \underset{n \to \infty}{\longrightarrow}0.
\end{align}
\end{lemma}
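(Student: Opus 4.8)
The plan is to mimic almost verbatim the proof of Lemma~\ref{Lem:control primary pop}, replacing the mono-directional notation ($\alpha_0,\beta_0,\lambda_0,\mu_0^{(n)}$) with the general-graph notation ($\alpha(0),\beta(0),\lambda(0),\overline{\mu}^{(n)}(0)$) and using the monotone coupling $Z_{(0)}^{(n)}(t)\le Z_{(0)}(t)$ a.s.\ that follows from the shared Poisson point measures $Q_{(0)}^{b},Q_{(0)}^{d}$ in \eqref{Equ6} and \eqref{def Z(0)}. First I would introduce the filtration $(\mathcal F_t)_{t\ge 0}$ generated by $Q_{(0)}^{b}$ and $Q_{(0)}^{d}$, and observe that both $\bigl(e^{-\lambda(0)t}Z_{(0)}(t)\bigr)_{t\ge0}$ and $\bigl(e^{-\lambda^{(n)}(0)t}Z_{(0)}^{(n)}(t)\bigr)_{t\ge0}$ are $(\mathcal F_t)$-martingales, since the primary cell population $Z_{(0)}^{(n)}$ in \eqref{Equ6} is itself a birth--death process with birth rate $\alpha^{(n)}(0)$ and death rate $\beta(0)+\alpha(0)\sum_{(u,w)\in N(0)^2}\mu^{(n)}(0,u)\mu^{(n)}(0,w)=\beta^{(n)}(0)$, so that $\lambda^{(n)}(0)=\alpha^{(n)}(0)-\beta^{(n)}(0)=\lambda(0)-2\alpha(0)\overline{\mu}^{(n)}(0)$. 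Hence their difference is an $(\mathcal F_t)$-martingale, and for a non-decreasing sequence $f(m)\uparrow\infty$ one applies Doob's $L^2$-inequality on $[0,f(m)]$ exactly as in \eqref{eq:primary pop}.

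The core of the argument is then the moment computations. Using It\^o's formula and the coupling one gets $\E[Z_{(0)}(t)Z_{(0)}^{(n)}(t)]$ as the solution of a linear ODE driven by $\E[Z_{(0)}^{(n)}(s)]=e^{\lambda^{(n)}(0)s}$, giving the analogue of \eqref{second cross moment birth death process}, namely $\E[Z_{(0)}(t)Z_{(0)}^{(n)}(t)]=\frac{\alpha(0)+\alpha^{(n)}(0)}{\lambda(0)}e^{(\lambda(0)+\lambda^{(n)}(0))t}-\frac{\alpha^{(n)}(0)+\beta(0)}{\lambda(0)}e^{\lambda^{(n)}(0)t}$, together with the two second-moment formulas of \eqref{second moment birth death process} with the obvious substitutions. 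Plugging these into the Doob bound, the $e^{(\lambda(0)+\lambda^{(n)}(0))f(m)}$-type terms cancel and one is left with a bound $\frac{4}{\varepsilon^2}\bigl(\frac{2\alpha(0)}{\lambda(0)}+\frac{2\alpha^{(n)}(0)}{\lambda^{(n)}(0)}-2\frac{\alpha(0)+\alpha^{(n)}(0)}{\lambda(0)}+o(1)\bigr)$ as $m\to\infty$; since the event inside the probability is increasing in $m$, monotone convergence of measures lets one pass to $\sup_{t\in\mathbb R^+}$. Finally, a Taylor expansion in $\overline{\mu}^{(n)}(0)$ around $0$, using $\alpha^{(n)}(0)=\alpha(0)(1-\overline{\mu}^{(n)}(0))^2$ and $\lambda^{(n)}(0)=\lambda(0)-2\alpha(0)\overline{\mu}^{(n)}(0)$, gives $\frac{2\alpha^{(n)}(0)}{\lambda^{(n)}(0)}-2\frac{\alpha(0)+\alpha^{(n)}(0)}{\lambda(0)}=\Bigl(\tfrac{4\beta(0)\alpha(0)}{\lambda(0)^2}+\tfrac{4\alpha(0)}{\lambda(0)}\Bigr)\overline{\mu}^{(n)}(0)+\mathcal O\bigl((\overline{\mu}^{(n)}(0))^2\bigr)-\tfrac{2\alpha(0)}{\lambda(0)}$, so that the bracket reduces to a constant multiple of $\overline{\mu}^{(n)}(0)$, yielding the stated $C(\alpha(0),\lambda(0))\overline{\mu}^{(n)}(0)/\varepsilon^2$ with $C(\alpha(0),\lambda(0))=16\alpha(0)^2/\lambda(0)^2$, which tends to $0$ since $\overline{\mu}^{(n)}(0)\to 0$ by \eqref{total mutation proba} and \eqref{mutation label regime}.

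There is essentially no genuine obstacle here; the statement is the verbatim generalization of Lemma~\ref{Lem:control primary pop} from the mono-directional setting to the general finite graph, and the only point requiring a line of justification is that in \eqref{Equ6} the primary cell population $Z_{(0)}^{(n)}$ is again a pure birth--death process — the subtracted terms $\sum_{(v,u)\in N(0)^2}H_{(0),(v,u)}^{(n)}$ encode precisely the loss of both daughter cells to mutant traits during a division, so that the effective birth and death rates are $\alpha^{(n)}(0)$ and $\beta^{(n)}(0)$ as above, independent of the rest of the graph structure. Consequently the whole computation is one-dimensional and identical to the mono-directional case. I would simply write ``The proof is identical to that of Lemma~\ref{Lem:control primary pop}, replacing $(\alpha_0,\beta_0,\lambda_0,\mu_0^{(n)})$ by $(\alpha(0),\beta(0),\lambda(0),\overline{\mu}^{(n)}(0))$ and using the monotone coupling from \eqref{def Z(0)}'' and, if desired, reproduce the three display equations \eqref{second cross moment birth death process}--\eqref{second moment birth death process} in the new notation for completeness.
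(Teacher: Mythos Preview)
Your proposal is correct and follows exactly the paper's approach: the paper's proof is a one-line reference stating that the result follows by adapting the proof of Lemma~\ref{Lem:control primary pop} with $\mu_0^{(n)}$ replaced by $\overline{\mu}^{(n)}(0)$, which is precisely what you carry out in detail.
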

\begin{proof}
    The result is derived by adapting the proof of Lemma \ref{Lem:control primary pop} with $\mu^{(n)}_{0}$ replaced by $\overline{\mu}^{(n)}(0)$. 
\end{proof}
Introduce the stopping time of the first time that the primary cell population reaches the size $n^{t}$ as
\begin{align}
    \tau_{t}^{(n)}:=\inf \big{\{}u \in \mathbb{R}^{+}: Z_{(0)}^{(n)}(u) \geq n^{t}\big{\}}.
\end{align}
\begin{lemma}
\label{Lem: deterministic approx tau general finite graph}
For all $\varepsilon>0$, $(T_1,T_2) \in \mathbb{R}^{+}$ and $\varphi_n$ such that  $\log(n)=o(\varphi_n)$ and $\varphi_n=o\big{(}n^{\underset{v \in N(0)}{\min}\ell(0,v)}\big{)}$, we have 
\begin{equation}
    \mathbb{P} \Big{(}\sup_{t \in \left[T_{1},T_{2}\frac{\varphi_n}{\log(n)}\right]} \Big{\vert}\tau_{t}^{(n)}-\Big{(}\mathfrak{t}^{(n)}_{t}-\frac{\log(W)}{\lambda(0)}\Big{)}\Big{\vert} \geq \varepsilon \Big{\vert} W>0\Big{)}\underset{n \to \infty}{\longrightarrow} 0.
\end{equation}
\end{lemma}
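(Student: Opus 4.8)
The plan is to transcribe the proof of Lemma~\ref{convergence temps arret}, replacing the pair $(Z_0,Z_0^{(n)})$ by the pair $(Z_{(0)},Z_{(0)}^{(n)})$, the quantity $\mu_0^{(n)}$ by $\overline{\mu}^{(n)}(0)$, the exponent $\ell(0)$ by $\min_{v\in N(0)}\ell(0,v)$, and invoking Lemma~\ref{Lem:control primary pop general finite graph} in place of Lemma~\ref{Lem:control primary pop}. This is legitimate because, by the construction \eqref{Equ6}, the primary cell population $Z_{(0)}^{(n)}$ is itself a birth–death branching process with rates $\alpha^{(n)}(0)$ and $\beta^{(n)}(0)$: a primary cell leaves $Z_{(0)}^{(n)}$ either by dying or by producing two mutant daughters (total rate $\beta(0)+\alpha(0)(\overline{\mu}^{(n)}(0))^2=\beta^{(n)}(0)$), and is duplicated only when neither daughter mutates (rate $\alpha(0)(1-\overline{\mu}^{(n)}(0))^2=\alpha^{(n)}(0)$). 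Hence $\bigl(e^{-\lambda^{(n)}(0)t}Z_{(0)}^{(n)}(t)\bigr)_{t\ge0}$ is its natural martingale, and Lemma~\ref{Lem:control primary pop general finite graph} controls its distance to $\bigl(e^{-\lambda(0)t}Z_{(0)}(t)\bigr)_{t\ge0}$ along the whole trajectory in probability, with rate $\mathcal{O}(\overline{\mu}^{(n)}(0))$.

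First I would fix $0<\delta_1<\delta_2$ and prove $\mathbb{P}\bigl(A^{(n)}\cap\{\delta_1<W<\delta_2\}\bigr)\to0$, where $A^{(n)}$ denotes the event that the supremum in the statement exceeds $\varepsilon$. Using that $e^{-\lambda(0)s}Z_{(0)}(s)\to W$ almost surely, choose $t_1$ so that on an event $B_{t_1}$ of probability close to $1$ one has $(W-\widetilde\varepsilon)e^{\lambda(0)s}\le Z_{(0)}(s)\le(W+\widetilde\varepsilon)e^{\lambda(0)s}$ for all $s\ge t_1$; by Lemma~\ref{Lem:control primary pop general finite graph}, on an event $C^{(n)}$ of probability close to $1$ one has $\sup_{s}\bigl|e^{-\lambda(0)s}Z_{(0)}(s)-e^{-\lambda^{(n)}(0)s}Z_{(0)}^{(n)}(s)\bigr|\le\widetilde\varepsilon$. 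On $B_{t_1}\cap C^{(n)}\cap\{\delta_1<W<\delta_2\}$, and since $\lambda^{(n)}(0)\le\lambda(0)$, this sandwiches $Z_{(0)}^{(n)}(s)$ between $(W-2\widetilde\varepsilon)e^{\lambda^{(n)}(0)s}$ and $(\delta_2+2\widetilde\varepsilon)e^{\lambda(0)s}$ for $s\ge t_1$, which in turn sandwiches $\tau_t^{(n)}$ between the explicit times $\underline T^{(n)}_t=\tfrac1{\lambda(0)}\bigl(t\log n-\log(W+2\widetilde\varepsilon)\bigr)$ and $\overline T^{(n)}_t=\tfrac1{\lambda^{(n)}(0)}\bigl(t\log n-\log(W-2\widetilde\varepsilon)\bigr)$, once $n$ is large enough that $t_1$ is below the analogue of $\underline T^{(n)}_{\delta_2,T_1}$. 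Subtracting $\mathfrak{t}_t^{(n)}-\log(W)/\lambda(0)$, the error splits into a term bounded by $\tfrac1{\lambda(0)}\log(1+2\widetilde\varepsilon/W)$ (small with $\widetilde\varepsilon$, uniformly on $\{\delta_1<W\}$) and a term coming from $\tfrac1{\lambda^{(n)}(0)}-\tfrac1{\lambda(0)}=\tfrac{2\alpha(0)}{\lambda(0)\lambda^{(n)}(0)}\overline{\mu}^{(n)}(0)$ multiplied by $t\log n$, which over the window $[T_1,T_2\varphi_n/\log n]$ is at most of order $\varphi_n\,\overline{\mu}^{(n)}(0)$.

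This last point is the only place where the hypothesis on $\varphi_n$ enters, and it is the step requiring a little care: since each $\mu^{(n)}(0,v)$ scales like $n^{-\ell(0,v)}$ by \eqref{mutation label regime}, the sum $\overline{\mu}^{(n)}(0)=\sum_{v\in N(0)}\mu^{(n)}(0,v)$ is of order $\mathcal{O}\bigl(n^{-\min_{v\in N(0)}\ell(0,v)}\bigr)$, so the assumption $\varphi_n=o\bigl(n^{\min_{v\in N(0)}\ell(0,v)}\bigr)$ forces $\varphi_n\,\overline{\mu}^{(n)}(0)\to0$; choosing $\widetilde\varepsilon$ small enough, the whole error is $<\varepsilon$ for $n$ large, which yields the claim conditionally on $\{\delta_1<W<\delta_2\}$. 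Finally I would remove the conditioning exactly as in Step~2 of Lemma~\ref{convergence temps arret}: write $\mathbb{P}(A^{(n)}\cap\{W>0\})\le\mathbb{P}(A^{(n)}\cap\{\delta_1<W<\delta_2\})+\mathbb{P}(0<W<\delta_1)+\mathbb{P}(W>\delta_2)$, let $n\to\infty$, then let $\delta_1\downarrow0$ and $\delta_2\uparrow\infty$, using that $W$ is finite almost surely (see \eqref{Eq:distribution W}), and divide by $\mathbb{P}(W>0)>0$. I do not anticipate any genuinely new obstacle beyond the bookkeeping of the scaling of $\overline{\mu}^{(n)}(0)$ just described; all the martingale estimates are already packaged inside Lemma~\ref{Lem:control primary pop general finite graph}.
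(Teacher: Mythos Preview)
Your proposal is correct and follows exactly the paper's own approach: the paper's proof of this lemma consists of the single sentence ``By following the proof of Lemma~\ref{convergence temps arret}, with $Z_{0}^{(n)}$ and $\eta_{t}^{(n)}$ replaced by $Z_{(0)}^{(n)}$ and $\tau_{t}^{(n)}$, respectively, we obtain the result.'' You have simply spelled out this adaptation in detail, including the one nontrivial bookkeeping point---that $\overline{\mu}^{(n)}(0)=\mathcal{O}\bigl(n^{-\min_{v\in N(0)}\ell(0,v)}\bigr)$ is what forces the hypothesis $\varphi_n=o\bigl(n^{\min_{v\in N(0)}\ell(0,v)}\bigr)$ to ensure $\varphi_n\,\overline{\mu}^{(n)}(0)\to 0$.
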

\begin{proof}
    By following the proof of Lemma \ref{convergence temps arret}, with $Z_{0}^{(n)}$ and $\eta_{t}^{(n)}$ replaced by $Z_{(0)}^{(n)}$ and $\tau_{t}^{(n)}$, respectively, we obtain the result. 
\end{proof}
In the next definition, we introduce an equivalence relation on $\Gamma(V)$. Two walks are considered equivalent if they are identical up to cycles (including cycles formed by backward mutations). More precisely, two walks are equivalent if there exists a minimal walk such that both walks use all the edges of this minimal walk, possible along with additional edges that form cycles. The purpose of this equivalence relation is to establish that, within a class of equivalence, only the walk with the minimal length contributes to the asymptotic size of the mutant subpopulation. In particular, this minimal walk is actually a path, since only distinct vertices are visited. 

\begin{definition}[Equivalence relation on $\Gamma(V)$]\label{Def: equivalent relation}
We say that two walks $\gamma_1$ and  $\gamma_2$ in $ \Gamma(V)\times \Gamma(V)$ are equivalent, denoted by $\gamma_1 \sim \gamma_2$, if and only if there exists $\gamma \in \Gamma(V)$, and for all $j \in \{1,2\}$ there exists 
\begin{align}
    \sigma_j: \{0, \cdots \vert \gamma &\vert-1\} \to \{ 0, \cdots,\vert \gamma_j \vert-1 \}^2 \\
    & i \mapsto (\underline{\sigma}_{j}(i), \overline{\sigma}_{j}(i))
\end{align}
satisfying :
\begin{itemize}
    \item $\forall j \in \{1,2\}, \underline{\sigma}_{j}(0)=0$, and $\overline{\sigma}_{j}(\vert \gamma \vert -1)=\vert \gamma_j \vert -1$,
    \item $\forall i \in \{ 0, \cdots \vert \gamma \vert -1 \}, \forall j \in \{1,2\}, \underline{\sigma}_{j}(i) \leq \overline{\sigma}_{j}(i)$ and $\overline{\sigma}_{j}(i)+1=\underline{\sigma}_{j}(i+1)$,
    \item $\forall i \in \{ 0, \cdots, \vert \gamma \vert -1 \}, \forall j \in \{1,2\}, \gamma(i)=\gamma_{j}(\underline{\sigma}_{j}(i))=\gamma_{j}(\overline{\sigma}_{j}(i))$.
\end{itemize} 
Since the graph is finite, there are only a finite number of equivalence classes. For each walk $\gamma \in \Gamma(V)$, denote by $[\gamma]$ its equivalence class. In each class of equivalence, there is a natural representative candidate which is the walk with the minimum length; we will denote this walk by $\widetilde{\gamma}$. For each $v \in V$, denote by $C(v)$ the set of representative candidates for the walks in $P(v)$. Note that $\vert C(v) \vert<\infty$. An illustration of this definition can be found in Figure~\ref{Figure: Equivalent paths}.
\end{definition}
\begin{figure}
\centering
\includegraphics[width=0.95 \linewidth]{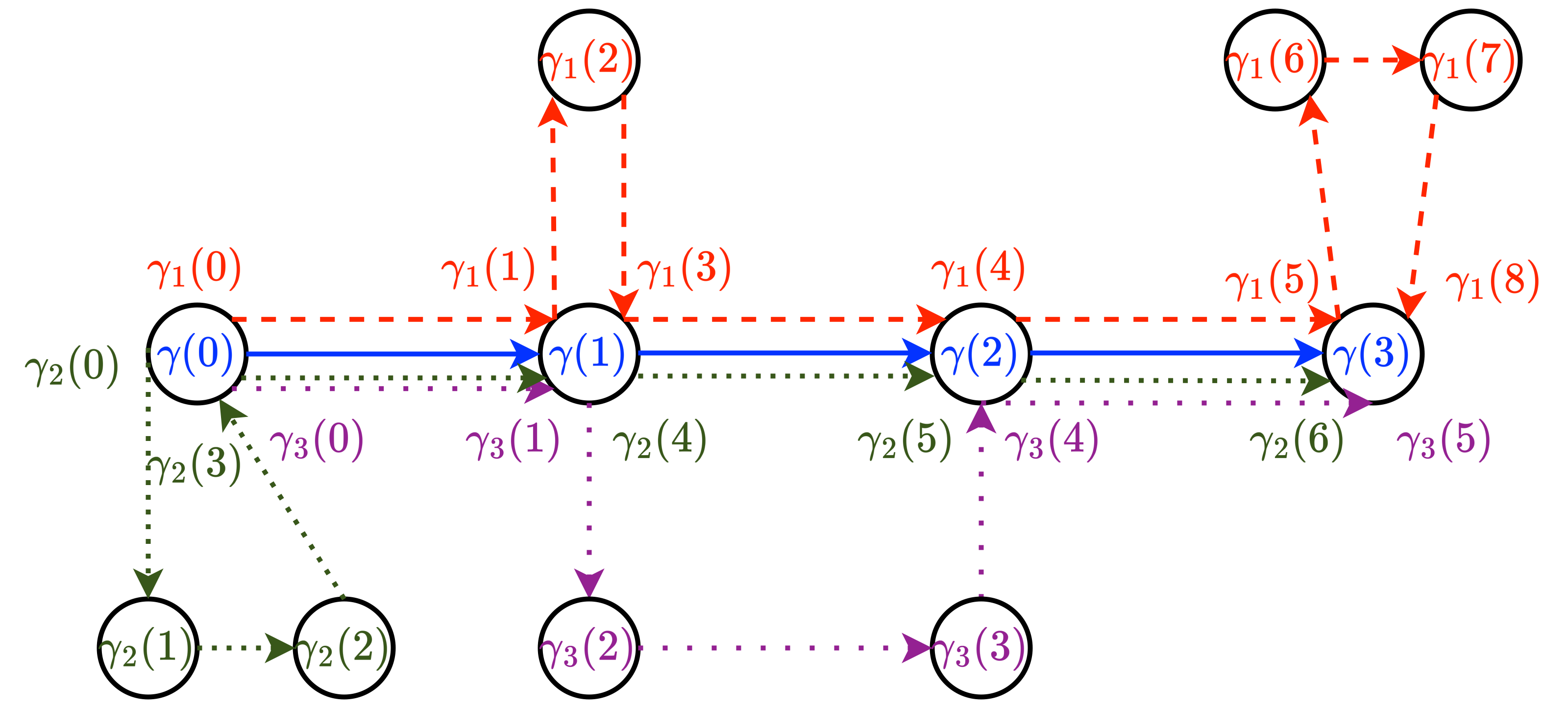}\caption{Example of Definition \ref{Def: equivalent relation}: here the walks $\gamma, \gamma_1$ and $\gamma_2$, represented respectively in plain blue, dashed red and dense dotted green, respectively, are equivalent. However, the walk $\gamma_3$, represented in sparse dotted purple, is not equivalent to any of the other walks. In particular, it is not possible to construct a function $\sigma$ satisfying condition (ii) of Definition \ref{Def: equivalent relation} for the walk $\gamma_3$. We have $\vert \gamma \vert=4$, $\vert \gamma_{1} \vert=9$, $ \vert \gamma_2 \vert=7 $, and $\sigma_1(0)=(0,0)$, $\sigma_1(1)=(1,3)$, $\sigma_1(2)=(4,4)$, $\sigma_{1}(3)=(5,8)$, $\sigma_{2}(0)=(0,3)$, $\sigma_{2}(1)=(4,4)$, $\sigma_{2}(2)=(5,5)$ and $\sigma_{2}(3)=(6,6)$.} 
\label{Figure: Equivalent paths}
\end{figure}
We introduce the notion of the mono-directional graph associated to a walk $\gamma$ in the following definition. 
\begin{definition}
    The mono-directional graph associated to a walk \\
    $\gamma=(0,\gamma(1),\cdots,\gamma(k))$ is the graph $(V_{\gamma},E_{\gamma})$ where
    \begin{align*}
        &V_{\gamma}:=\{0, \gamma(1),\cdots,\gamma(k)\}, \\
        &E_{\gamma}:=\{(0,\gamma(1)), (\gamma(1),\gamma(2)),\cdots,(\gamma(k-1),\gamma(k))\}.
    \end{align*}
    In other words, it is the graph composed of the successive subpopulations $$\big{(} Z_{(0)}^{(n)}, Z_{(0,\gamma(1))}^{(n)}, \cdots, Z_{\gamma}^{(n)}\big{)}.$$
\end{definition}
Now we have all the preliminary results and definitions necessary to prove Theorem \ref{Theorem: non increasin growth rate graph}. 
\begin{proof}[Proof of Theorem \ref{Theorem: non increasin growth rate graph}]
We prove Equations \eqref{Equation: conv neutral case} and \eqref{Equation: conv neutral case random scale}. The proofs of Equations \eqref{Equation: convergence deleterious case} and \eqref{Equation: convergence deleterious case in random scale} are similar and are left to the reader. 

\textbf{Step 1:} Let $\widetilde{\gamma}$ be a representative candidate of an equivalence class. Our first step is to prove, using the results of Section \ref{Section: proof mono directional graph}, that for all $\varepsilon>0$
\begin{align}
\label{Equ4}
    \mP \Big{(}\sup_{s \in [-M,M]} \sup_{t \in [0,T]} \Big{\vert} \sum_{\gamma \in [\widetilde{\gamma}]}\frac{Z_{\gamma}^{(n)}\big{(}\mathfrak{t}_{t}^{(n)}+s\big{)}}{d_{\widetilde{\gamma}}^{(n)}(t,s)}-Ww_{\widetilde{\gamma}}(t)\Big{\vert} \geq \varepsilon\Big{)}\underset{n \to \infty}{\longrightarrow}0,
\end{align}
where for all $\gamma \in \Gamma(V)$,
\begin{align}
    d_{\gamma}^{(n)}(t,s):=&\1_{\left\{t \in [0,t(\gamma)-h_n^{-1}\right\}}+\1_{\left\{ t \in [t(\gamma)-h_{n}^{-1},t(\gamma))\right\}}\psi_n \log^{\theta(\gamma)-1}(n)\\
    &\hspace{2cm}+\1_{\left\{t \in [t(\gamma),\infty)\right\}}n^{t-t(\gamma)}\log^{\theta(\gamma)}(n)e^{\lambda(0) s},
\end{align}
and $w_{\gamma}$ is defined in \eqref{weight}. Notice that 
\begin{align}
    &\mP \Big{(} \sup_{s \in [-M,M]} \sup_{t \in [0,T]} \Big{\vert}\sum_{\gamma \in [\widetilde{\gamma}]}\frac{Z_{\gamma}^{(n)}\big{(}\mathfrak{t}_{t}^{(n)}+s\big{)}}{d_{\widetilde{\gamma}}^{(n)}(t,s)}-Ww_{\widetilde{\gamma}}(t)\Big{\vert} \geq \varepsilon\Big{)} \\
    &\hspace{1cm}\label{Eq: 41} \leq \mP \Big{(} \sup_{s \in [-M,M]} \sup_{t \in [0,T]} \Big{\vert}\frac{Z_{\widetilde{\gamma}}^{(n)}\big{(}\mathfrak{t}_{t}^{(n)}+s\big{)}}{d_{\widetilde{\gamma}}^{(n)}(t,s)}-Ww_{\widetilde{\gamma}}(t)\Big{\vert} \geq \varepsilon\Big{)} \\
    &\hspace{1cm}\label{Eq: 42}+\sum_{\gamma \in [\widetilde{\gamma}]\backslash \{\widetilde{\gamma}\}: t(\gamma)\leq T} \mP \Big{(} \sup_{s \in [-M,M]} \sup_{t \in [0,T]} \Big{\vert}\frac{Z_{\gamma}^{(n)}\big{(}\mathfrak{t}_{t}^{(n)}+s\big{)}}{d_{\widetilde{\gamma}}^{(n)}(t,s)}\Big{\vert} \geq \varepsilon\Big{)}\\ 
    &\hspace{1cm}\label{Eq: 43}+\mP \Big{(} \sup_{s \in [-M,M]} \sup_{t \in [0,T]} \sum_{\gamma \in [\widetilde{\gamma}]\backslash \{\widetilde{\gamma}\}: t(\gamma)> T}\Big{\vert} \frac{Z_{\gamma}^{(n)}\big{(}\mathfrak{t}_{t}^{(n)}+s\big{)}}{d_{\widetilde{\gamma}}^{(n)}(t,s)}\Big{\vert} \geq \varepsilon\Big{)}. 
\end{align}
The term in \eqref{Eq: 41} converges to 0 by applying Equation \eqref{Equation: conv neutral case} to the mono-directional graph defined by the walk $\widetilde{\gamma}$, as proven in Section \ref{Section: proof mono directional graph}. The term in \eqref{Eq: 42} also converges to $0$ since:
\begin{itemize}
    \item the sum is over a finite set, as we are considering a finite graph with positive labels on the edges ,
    \item for each $\gamma \in [\widetilde{\gamma}] \backslash \{\widetilde{\gamma}\}$, we have $t(\gamma)>t(\widetilde{\gamma})$ by definition of the representative (see Definition \ref{Def: equivalent relation}). This implies, by applying Equation \eqref{Equation: conv neutral case} to the mono-directional graph defined by $\gamma$, that 
    \begin{align}
        \mP \Big{(} \sup_{s \in [-M,M]} \sup_{t \in [0,T]} \Big{\vert} \frac{Z_{\gamma}^{(n)}\big{(}\mathfrak{t}_{t}^{(n)}+s\big{)}}{d^{(n)}_{\widetilde{\gamma}}(t,s)}\Big{\vert} \geq \varepsilon \Big{)}\underset{n \to \infty}{\longrightarrow}0.
    \end{align}
\end{itemize}
The term in \eqref{Eq: 43} converges to $0$ because
\begin{align} \label{Eq: 101}
    \mP  \Big{(} \sup_{s \in [-M,M]} \sup_{t \in [0,T]} \sum_{\gamma \in [\widetilde{\gamma}]\backslash \{\widetilde{\gamma}\}: t(\gamma)> T} Z_{\gamma}^{(n)}\big{(}\mathfrak{t}_{t}^{(n)}+s \big{)} =0\Big{)} \underset{n \to \infty}{\to}1.
\end{align}
Indeed, for each $\gamma \in [\widetilde{\gamma}] \backslash \{\widetilde{\gamma}\}$ satisfying $t(\gamma)>T$, we have
\begin{align} \label{Eq: 100}
    \mP \Big{(}\sup_{s \in [-M,M]}\sup_{t \in [0,T]} Z_{\gamma}^{(n)}\big{(} \mathfrak{t}_{t}^{(n)}+s\big{)}=0 \Big{)} \underset{n \to \infty}{\longrightarrow}1,
\end{align}
by applying Lemma \ref{Lem: no mutant cell} to the mono-directional graph given by $\gamma$. It remains to handle the sum over the set $A_{\widetilde{\gamma}}(T):= \left\{ \gamma \in [\widetilde{\gamma}] \backslash \{\widetilde{\gamma}\}: t(\gamma)>T \right\}$. The easiest situation occurs when $\vert A_{\widetilde{\gamma}}(T) \vert < \infty$, as the result follows directly in this case. This situation corresponds exactly to the case where there is no cycle in the graph structure $(V,E)$ for the vertices of $\widetilde{\gamma}$. Now, consider the case $\vert A_{\widetilde{\gamma}}(T) \vert =\infty$. In this case, even though Equation \eqref{Eq: 100} holds for all $\gamma \in A_{\widetilde{\gamma}}(T)$, it does not necessary imply that Equation \eqref{Eq: 101} is automatically satisfied. The result follows if one can show that there exists a finite subset $B_{\widetilde{\gamma}}(T) \subset A_{\widetilde{\gamma}}(T)$ such that 
\begin{align} \label{Eq: 102}
    &\mP \Big{(}\sup_{s \in [-M,M]} \sup_{t \in [0,T]} \sum_{\gamma \in A_{\widetilde{\gamma}}(T) \backslash B_{\widetilde{\gamma}}(T)} Z_{\gamma}^{(n)}\big{(} \mathfrak{t}_{t}^{(n)}+s\big{)}=0 \Big{\vert} E^{(n)}_{\widetilde{\gamma}} \Big{)}=1,
\end{align}
where $E^{(n)}_{\widetilde{\gamma}}:= \big{\{}\sup_{s \in [-M,M]} \sup_{t \in [0,T]}\sum_{\gamma \in B_{\widetilde{\gamma}}(T)} Z_{\gamma}^{(n)}\big{(} \mathfrak{t}_{t}^{(n)}+s\big{)}=0 \big{\}}.$
We will now show that $B_{\widetilde{\gamma}}(T)$ exists. The set $[\widetilde{\gamma}]$ consists of walks where, for each vertex $v$ visited by $\widetilde{\gamma}$, there may be a cycle going back to $v$. Since there are only a finite number of vertices visited by $\widetilde{\gamma}$, and the labels on the vertices are positive, it follows that the number of walks $\gamma \in A_{\widetilde{\gamma}}(T)$ for which we need to control the event that they do not have any cells up to time $\mathfrak{t}_{T}^{(n)}+M$ is actually finite, and we denote this set by $B_{\widetilde{\gamma}}(T)$. Indeed, for all walks $\gamma \in A_{\widetilde{\gamma}}(T) \backslash B_{\widetilde{\gamma}}(T)$, there exists a walk $\gamma_1 \in B_{\widetilde{\gamma}}(T)$ such that cells in the subpopulation $Z_{\gamma}^{(n)}$ result from (potentially many) mutations of cells in the subpopulation $Z_{\gamma_{1}}^{(n)}$. Therefore, if one controls with high probability that no cells are generated up to time $\mathfrak{t}_{T}^{(n)}+M$ for the subpopulations indexed by $\gamma \in B_{\widetilde{\gamma}}^{(n)}$-which is feasible since $B_{\widetilde{\gamma}}^{(n)}$ is finite-it automatically implies by the mechanistic construction of the process that, under such an event, there are almost surely no cells in the subpopulations indexed by $\gamma \in A_{\widetilde{\gamma}}^{(n)} \backslash B_{\widetilde{\gamma}}^{(n)}$. This is precisely the statement of Equation \eqref{Eq: 102}. 

\textbf{Step 2:} In this step, Equation \eqref{Equation: conv neutral case} is proven. Notice that for $\gamma \in A(v)$, where $A(v)$ is defined in Definition \ref{Def: Admissible path}, we have $d_{\gamma}^{(n)}(t,s)=d_{v}^{(n)}(t,s)$, and also that $\gamma$ is the representative candidate $\widetilde{\gamma}$ of its equivalence class. In particular, this means that $\sum_{\gamma \in A(v)}w_{\gamma}(t)=\sum_{\widetilde{\gamma} \in C(v): \widetilde{\gamma} \in A(v)}w_{\widetilde{\gamma}}(t)$, where $C(v)$ is defined in Definition \ref{Def: equivalent relation}. The proof is obtained by noting that 
\begin{align}
    &\mP \Big{(} \sup_{s \in [-M,M]} \sup_{t \in [0,T]} \Big{\vert} \frac{Z_{v}^{(n)}\big{(}\mathfrak{t}_{t}^{(n)}+s\big{)}}{d_{v}^{(n)}(t,s)}-W \sum_{\gamma \in A(v)}w_{\gamma}(t)\Big{\vert} \geq \varepsilon\Big{)} \\
    & \hspace{1cm}\label{Equati: 3}\leq \sum_{\widetilde{\gamma} \in C(v): \widetilde{\gamma} \in A(v)}\mP \Big{(} \sup_{s \in [-M,M]} \sup_{t \in [0,T]} \Big{\vert}  \sum_{\gamma \in [\widetilde{\gamma}]}\frac{Z_{\gamma}^{(n)}\big{(}\mathfrak{t}_{t}^{(n)}+s\big{)}}{d_{\widetilde{\gamma}}^{(n)}(t,s)}-Ww_{\widetilde{\gamma}}(t)\Big{\vert} \geq \varepsilon\Big{)} \\
    & \hspace{1cm}\label{Equati: 4}+\sum_{\widetilde{\gamma} \in C(v): \widetilde{\gamma} \notin A(v)}\mP \Big{(} \sup_{s \in [-M,M]} \sup_{t \in [0,T]} \Big{\vert} \sum_{\gamma \in [\widetilde{\gamma}]}\frac{Z_{\gamma}^{(n)}\big{(}\mathfrak{t}_{t}^{(n)}+s\big{)}}{d_{v}^{(n)}(t,s)}\Big{\vert} \geq \varepsilon\Big{)}.
\end{align}
Indeed, \eqref{Equati: 3} converges to 0 by applying Equation \eqref{Equ4} and because the sum is finite. Similarly, \eqref{Equati: 4} converges to 0 because the sum is finite and, for all $\widetilde{\gamma} \in C(v), \widetilde{\gamma} \notin A(v)$, we have either $t(\widetilde{\gamma})>t(v)$ or $\theta(\widetilde{\gamma})<\theta(v)$. 

\textbf{Step 3:} In this step, we are going to prove Equation \eqref{Equation: conv neutral case random scale}. By following the proof of Proposition \ref{Prop: conv diff random time}, replacing $\eta_{t}^{(n)}$ with $\tau_{t}^{(n)}$, and defining $W$ as in \eqref{W def general finite graph} instead of \eqref{W def}, we obtain that for all $0<T_{1}<T_{2}$ and for all $\varepsilon>0$,
\begin{equation}
    \mP \Big{(}\sup_{t \in [T_1,T_2]} \big{(}\tau_{t}^{(n)}-\sigma_{t}^{(n)} \big{)} \leq \varepsilon \Big{\vert} W>0\Big{)} \underset{n \to \infty}{\longrightarrow}1.
\end{equation}
Indeed, because the number of vertices in the graph is finite, and due to Step 2, we have shown that the total number of mutant cells $\sum_{v \in V \backslash \{0\}}Z_{v}^{(n)} \big{(}\mathfrak{t}^{(n)}_{t}+s\big{)}$ is negligible compared to the number of wild-type cells $Z_{(0)}^{(n)} \big{(}\mathfrak{t}^{(n)}_{t}+s\big{)}$ for any time interval $[T_1,T_2]$. This allows us to apply the reasoning from \eqref{Eq: mutant pop at least certain size} and \eqref{Eq: control mutant pop proof convergence stopping times}, leading to a straightforward adaptation of the proof of Proposition \ref{Prop: conv diff random time}. By adapting the different proofs from Subsection \ref{Subsection: monodirectional graph random time}, we obtain that for all $0<T_{1}<T_{2}$, $M>0$ and $\varepsilon>0$, 
\begin{align}
    &\mP \Big{(}\sup_{s \in [-M,M]}\sup_{t \in [T_1,T_2]} \Big{\vert} \frac{Z_{v}^{(n)}\big{(}\rho_{t}^{(n)}+s\big{)}}{d_v^{(n)}(t,s)}-\1_{ \{W>0\}}w_{v}(t)\Big{\vert} \geq \varepsilon\Big{)}\underset{n \to \infty}{\longrightarrow} 0.
\end{align}
\end{proof}

\begin{appendix} \label{appn}

\section*{}

\begin{proof}[Proof of Lemma \ref{Lem martingale type 1 pop}]
For all $t \geq 0$ let $\mathcal{F}_{i,t}^{(n)}$ the $\sigma$-field generated by $Z^{(n)}_{j}(s)$ for all $0 \leq j\leq i$ and $0 \leq s \leq t$. For all $h \geq 0$, we have
\begin{align} \label{E15}
    &\E\Big[M_{i}^{(n)}(t+h)-M_{i}^{(n)}(t) \vert\mathcal{F}_{i,t}^{(n)}\Big]=\E\Big[Z^{(n)}_{i}(t+h)\Big{\lvert} \mathcal{F}_{i,t}^{(n)} \Big] e^{-\lambda_{i}^{(n)}(t+h)}\\
    &\hspace{2cm}-Z^{(n)}_{i}(t)e^{-\lambda_{i}^{(n)} t}-\int_{t}^{t+h}2\alpha_{i-1} \mu_{i-1}^{(n)} e^{-\lambda_{i}^{(n)} s}\E\Big[Z^{(n)}_{i-1}(s) \Big{\lvert} \mathcal{F}_{i,t}^{(n)} \Big]ds.
\end{align}

The forward Chapman-Kolmogorov equation gives the time-differential equation
\begin{equation}
\label{time-differential equation of E(B)}
    \begin{split}
        \dfrac{d\E\Big[Z^{(n)}_{i}(t)\Big]}{dt}=\lambda_{i}^{(n)} \E\Big[Z^{(n)}_{i}(t)\Big]+2\alpha_{i-1}\mu_{i-1}^{(n)} \E\Big[Z^{(n)}_{i-1}(t)\Big],
    \end{split}
\end{equation}
which leads to
\begin{equation}
\label{esperance B}
    \E_{Z_{i}^{(n)}(0)}\Big[Z^{(n)}_{i}(t)\Big]=Z^{(n)}_{i}(0)e^{\lambda_{i}^{(n)} t}+\int_{0}^{t}2\alpha_{i-1}\mu_{i-1}^{(n)} \E_{Z_{i}^{(n)}(0)}\Big[Z^{(n)}_{i-1}(s)\Big]e^{\lambda_{i}^{(n)}(t-s)}ds.
\end{equation}
In particular, by using the Markov property we obtain that
\begin{align} \label{E16}
    \E\Big[Z^{(n)}_{i}(t+h)\Big{\lvert} \mathcal{F}_{i,t}^{(n)} \Big] =Z^{(n)}_{i}(t)e^{\lambda_{i}^{(n)} h}+\int_{t}^{t+h}2\alpha_{i-1}\mu_{i-1}^{(n)} \E\Big[ Z^{(n)}_{i-1}(s)\lvert \mathcal{F}_{i,t}^{(n)}\Big] e^{\lambda_{i}^{(n)} (t+h-s)}ds.
\end{align}
Combining \eqref{E15} and \eqref{E16}, it follows that $\big(M_{i}^{(n)}(t)\big)_{t \in \mathbb{R}^{+}}$ is a martingale. Let $$F^{(n)}(t,x,y):=(e^{-\lambda_{i}^{(n)} t}x-y)^2.$$ Then, we have $$\frac{\partial F^{(n)}}{\partial t}(t,x,y)=-2\lambda_{i}^{(n)}x e^{-\lambda_{i}^{(n)} t}\sqrt{F^{(n)}(t,x,y)} \text{ and } \frac{\partial F^{(n)}}{\partial y}(t,x,y)=-2\sqrt{F^{(n)}}.$$ Applying Itô's formula with $x=Z_{i}^{(n)}(t)$ and $y=\int_0^t2\alpha_{i-1} \mu_{i-1}^{(n)} e^{-\lambda_{i}^{(n)} s} Z^{(n)}_{i-1}(s) ds$, we obtain 
\begin{align}
        &\Big(M^{(n)}_{i}(t)\Big)^2=F^{(n)}\Big(t,Z_{i}^{(n)}(t),\int_0^t2\alpha_{i-1} \mu_{i-1}^{(n)} e^{-\lambda_{i}^{(n)}s} Z^{(n)}_{i-1}(s) ds\Big) \\ 
        &=F^{(n)}(0,0,0)-2\int_0^t 2\alpha_{i-1} \mu_{i-1}^{(n)} e^{-\lambda_{i}^{(n)} s}Z^{(n)}_{i-1}(s) M^{(n)}_{i}(s)  ds\\
        &-2 \lambda_{i}^{(n)} \int_0^t e^{-\lambda_{i}^{(n)} s} Z^{(n)}_{i}(s) M^{(n)}_{i}(s)ds \\ 
        &+\int_0^t \int_{\mathbb{R}^{+}} \Big{[} \Big{(}M_{i}^{(n)}(s^{-})\\
        &\hspace{2cm}+e^{-\lambda_{i}^{(n)} s}\Big{\{}\1_{\left\{ \theta \leq \alpha_{i}^{(n)}Z^{(n)}_{i}(s^{-})\right\}}-\1_{\left\{\alpha_{i}^{(n)}Z_{i}^{(n)}(s^{-}) \leq \theta \leq \left(\alpha_{i}^{(n)}+\beta_{i}\right)Z_{i}^{(n)}(s^{-}) \right\}}\Big{\}}\Big{)}^2 \\ 
        & \hspace{8cm}-\Big(M_{i}^{(n)}(s^{-}) \Big)^2 \Big{]} Q_{i}(ds,d\theta) \\ 
        &+\int_0^t \int_{\mathbb{R}^{+}} \Big{[} \Big{(}M_{i}^{(n)}(s^{-})+e^{-\lambda_{i}^{(n)} s}\1_{\left\{\theta \leq 2\alpha_{i-1} \mu_{i-1}^{(n)}\left(1-\mu_{i-1}^{(n)}\right)Z_{i-1}^{(n)}(s^{-})\right\}} \Big{)}^{2}\\
        &\hspace{8cm}-\Big(M_{i}^{(n)}(s^{-}) \Big)^2 \Big{]}N_{i-1}(ds,d\theta)\\
        &+\int_0^t \int_{\mathbb{R}^{+}} \Big{[} \Big{(}M_{i}^{(n)}(s^{-})+e^{-\lambda_{i}^{(n)} s}2\1_{\left\{ \theta \leq \alpha_{i-1} \left( \mu_{i-1}^{(n)}\right)^{2}Z_{i-1}^{(n)}(s^{-})\right\}} \Big{)}^{2}\\
        &\hspace{8cm}-\Big(M_{i}^{(n)}(s^{-}) \Big)^2 \Big{]}Q_{i-1}^{m}(ds,d\theta)\\
        &+\int_0^t \int_{\mathbb{R}^{+}} \Big{[} \Big{(}M_{i}^{(n)}(s^{-})-e^{-\lambda_{i}^{(n)} s}\1_{\left\{ \theta \leq \alpha_{i} \left( \mu_{i}^{(n)}\right)^{2}Z_{i}^{(n)}(s^{-})\right\}} \Big{)}^{2}-\Big(M_{i}^{(n)} \Big)^2 \Big{]}Q_{i}^{m}(ds,d\theta)\\
        &=-2 \int_0^t  \left(2\alpha_{i-1} \mu_{i-1}^{(n)}Z^{(n)}_{i-1}(s) +\lambda_{i}^{(n)} Z^{(n)}_{i}(s) \right)e^{-\lambda_{i}^{(n)} s}M^{(n)}_{i}(s) ds\\
        &+ 2  \int_0^t \left(2\alpha_{i-1} \mu_{i-1}^{(n)}Z^{(n)}_{i-1}(s)+\lambda_{i}^{(n)}Z^{(n)}_{i}(s) \right)e^{-\lambda_{i}^{(n)} s} M^{(n)}_{i}ds \\
        &+\int_0^t \left[ 2\alpha_{i-1} \mu_{i-1}^{(n)} Z^{(n)}_{i-1}(s)+\left(\alpha_{i}^{(n)}+\beta_{i}^{(n)}\right) Z^{(n)}_{i}(s)\right]e^{-2 \lambda_{i}^{(n)} s}ds +\widetilde{M}^{(n)}_{i}(t) \\ 
        &=\widetilde{M}^{(n)}_{i}(t)+\int_0^t 2\alpha_{i-1}\mu_{i-1}^{(n)}e^{-2\lambda_{i}^{(n)} s}Z_{i-1}^{(n)}(s) ds+\left(\alpha_{i}^{(n)}+\beta_{i}^{(n)}\right)\int_0^t e^{-2 \lambda_{i}^{(n)} s}Z_{i}^{(n)}(s) ds,
\end{align}
where $(\widetilde{M}^{(n)}_{i}(t))_{t \geq 0}$ is a martingale. Finally, we obtain
\begin{equation}
    \Big\langle M^{(n)}_{i}\Big\rangle_t=\int_0^t 2\alpha_{i-1}\mu_{i-1}^{(n)}e^{-2\lambda_{i}^{(n)} s}Z_{i}^{(n)}(s) ds+\Big(\alpha_{i}^{(n)}+\beta_{i}^{(n)}\Big)\int_0^t e^{-2 \lambda_{i}^{(n)} s}Z_{i}^{(n)}(s) ds.
\end{equation}
\end{proof}

\begin{proof}[Proof of Lemma \ref{Lem:upper bound expectation subpop}]
First, we have that $\E\big[Z_{0}^{(n)}(u)\big]=e^{\lambda_{0}^{(n)}u} \leq e^{\lambda_{0}u}$, which is exactly the result for $i=0$. Then, for $i \in \mathbb{N}$ assume that the result is true for $i-1$. Taking the expected value of the martingale $M_{i}^{(n)}$ defined in \eqref{Martingale definition} at time $u$ and using the induction assumption we obtain the following
\begin{align}
    \E\Big[Z_{i}^{(n)}(u)\Big]&\leq e^{\lambda_i u} \int_0^u 2\alpha_{i-1}\mu_{i-1}^{(n)} e^{-\lambda_i s}\E \Big[Z_{i-1}^{(n)}(s)\Big]ds\\ 
    &\leq C_{i-1}\mu_{\otimes,i}^{(n)} 2\alpha_{i-1}\int_{0}^{u}e^{(\lambda_{0}-\lambda_{i})s}ds u^{\theta(i-1)} e^{\lambda_i u}\\
    & \leq C_{i-1} \mu_{\otimes,i}^{(n)}2\alpha_{i-1} \Big(\1_{\{\lambda_i=\lambda_0\}}u+\1_{\{\lambda_i<\lambda_0\}}\frac{1}{\lambda_0-\lambda_i}e^{\left(\lambda_0-\lambda_i\right)u}\Big) u^{\theta(i-1)} e^{\lambda_i u} \\
    &=C_{i-1}\mu_{\otimes,i}^{(n)} 2\alpha_{i-1} \Big(\1_{\{\lambda_i=\lambda_0\}}+\1_{\{\lambda_i<\lambda_0\}}\frac{1}{\lambda_0-\lambda_i}\Big) u^{\theta(i)} e^{\lambda_0 u},
\end{align}
which concludes the proof by induction. 
\end{proof}

\begin{proof}[Proof of Lemma \ref{Lem: control quadratic variation}]
In the proof, $C$ represents a positive constant that may change from line to line. 

\textbf{Neutral case:} Assume that $\lambda_i=\lambda_0$. Applying Lemma \ref{Lem:upper bound expectation subpop}, recalling that $\lambda_{i}^{(n)}=\lambda_0-2\alpha_{i}\mu_{i}^{(n)}$, and noting that there exists $N_1\in \mathbb{N}$ such that for all $n \geq N_1$, we have that $e^{4\alpha_i\mu_i^{(n)}\left(\mathfrak{t}^{(n)}_{t_2}+s\right)}\leq 2$, we obtain 
\begin{align}
\label{Eq:12}           \int_{\mathfrak{t}^{(n)}_{t_1^{(n)}}+s}^{\mathfrak{t}^{(n)}_{t_2}+s}e^{-2\lambda_{i}^{(n)}u}\E\Big[Z_{i}^{(n)}(u)\Big]du &\leq  C\mu_{\otimes,i}^{(n)}\int_{\mathfrak{t}^{(n)}_{t_1^{(n)}}+s}^{\mathfrak{t}^{(n)}_{t_2}+s} u^{\theta(i)}e^{-\lambda_0u}du.
\end{align}
Using integration by parts, we obtain 
\begin{align}
\int_{\mathfrak{t}^{(n)}_{t_1^{(n)}}+s}^{\mathfrak{t}^{(n)}_{t_2}+s} u^{\theta(i)}e^{-\lambda_0u}du\leq \frac{1}{\lambda(0)}\big(\mathfrak{t}^{(n)}_{t^{(n)}_{1}}+s\big)^{\theta(i)} e^{-\lambda_0 \big(\mathfrak{t}^{(n)}_{t^{(n)}_{1}}+s\big)}+\frac{\theta(i)}{\lambda_0}\int_{\mathfrak{t}^{(n)}_{t_1^{(n)}}+s}^{\mathfrak{t}^{(n)}_{t_2}+s} u^{\theta(i)-1}e^{-\lambda_0u}du.
\end{align}
Then, using $\theta(i)$ integrations by parts, there exists $N_2 \in \mathbb{N}$ such that for $n \geq N_2$, we have
\begin{align}
\label{Eq: 31}
    \int_{\mathfrak{t}^{(n)}_{t_1^{(n)}}+s}^{\mathfrak{t}^{(n)}_{t_2}+s} u^{\theta(i)}e^{-\lambda_0u}du\leq C\frac{e^{-\lambda_0 s}}{ n^{t_1^{(n)}}}\big(\mathfrak{t}^{(n)}_{t_1^{(n)}}+s\big)^{\theta(i)}.
\end{align}
It follows that for $n \geq \max(N_1,N_2)$,
\begin{align}
\int_{\mathfrak{t}^{(n)}_{t_1^{(n)}}+s}^{\mathfrak{t}^{(n)}_{t_2}+s}e^{-2\lambda_{i}^{(n)}u}\E\Big[Z_{i}^{(n)}(u)\Big]du &\leq C\frac{e^{-\lambda_0 s}}{n^{t_1^{(n)}}}\mu_{\otimes,i}^{(n)}\big(\mathfrak{t}^{(n)}_{t_1^{(n)}}+s\big)^{\theta(i)}.
\end{align}
Since vertex $i$ is assumed to be neutral, we have $\theta(i-1)=\theta(i)-1$. Using similar computation as above, there exists $N_3 \in \mathbb{N}$ such that for $n \geq N_3$, we have 
\begin{align}
\label{Eq:15}
\int_{\mathfrak{t}^{(n)}_{t_1^{(n)}}+s}^{\mathfrak{t}^{(n)}_{t_2}+s}\mu_{i-1}^{(n)}e^{-2\lambda_{i}^{(n)} u}\E\Big[Z_{i-1}^{(n)}(u)\Big]du \leq C\frac{e^{-\lambda_0 s}}{n^{t_1^{(n)}}}\mu_{\otimes,i}^{(n)}\big(\mathfrak{t}^{(n)}_{t_1^{(n)}}+s\big)^{\theta(i)-1}.
\end{align}
It follows that for all $n \geq \max(N_1,N_2,N_3)$, we have 
\begin{align}
    \E \Big[\big\langle M_{i}^{(n)}\big\rangle_{\mathfrak{t}^{(n)}_{t_2}+s}-\big\langle M_{i}^{(n)}\big\rangle_{\mathfrak{t}^{(n)}_{t_1^{(n)}}+s}\Big] &\leq C\frac{e^{-\lambda_0 s}}{n^{t_1^{(n)}}}\mu_{\otimes,i}^{(n)}\big(\mathfrak{t}^{(n)}_{t_1^{(n)}}+s\big)^{\theta(i)} . 
\end{align}

\textbf{Deleterious case:} We now address the case $\lambda_i<\lambda_0$ by applying the same strategy. We obtain 
\begin{align}
\label{Eq: eq 10}
&\int_{\mathfrak{t}^{(n)}_{t_1^{(n)}}+s}^{\mathfrak{t}^{(n)}_{t_2}+s}e^{-2\lambda_{i}^{(n)} u}\E\Big[Z_{i}^{(n)}(u)\Big]du \leq C\mu_{\otimes,i}^{(n)} \int_{\mathfrak{t}^{(n)}_{t_1^{(n)}}+s}^{\mathfrak{t}^{(n)}_{t_2}+s}u^{\theta(i)}e^{\left(\lambda_0-2\lambda_i\right)u}du \\
    &\hspace{2cm}\leq C\mu_{\otimes,i}^{(n)} \left(\mathfrak{t}^{(n)}_{t_2}+s\right)^{\theta(i)} \Big{[}\1_{\{\lambda_0>2\lambda_i\}} e^{\left(\lambda_0-2\lambda_i\right)\left( \mathfrak{t}^{(n)}_{t_2}+s\right)}\\
    &\hspace{4cm}+\1_{\{\lambda_0=2\lambda_i\}}\big(\mathfrak{t}^{(n)}_{t_2}+s\big)+\1_{\{\lambda_i<\lambda_0<2\lambda_i\}}e^{-\left(2\lambda_i-\lambda_0\right)\big(\mathfrak{t}^{(n)}_{t_1^{(n)}}+s\big)}\Big{]}.
\end{align}
Recalling that $\theta(i-1)=\theta(i)$, we find 
\begin{align}
\label{Eq: eq 11}
    &\int_{\mathfrak{t}^{(n)}_{t_1^{(n)}}+s}^{\mathfrak{t}^{(n)}_{t_2}+s}\mu_{i-1}^{(n)}e^{-2\lambda_{i}^{(n)} u}\E\Big[Z_{i-1}^{(n)}(u)\Big]du \leq C\mu_{\otimes,i}^{(n)} \big(\mathfrak{t}^{(n)}_{t_2}+s\big)^{\theta(i)}\Big{[}\1_{\{\lambda_0>2\lambda_i\}}e^{\left(\lambda_0-2\lambda_i\right)(\mathfrak{t}^{(n)}_{t_2}+s)}\\
    &\hspace{4cm}+\1_{\{\lambda_0=2\lambda_i\}}\big(\mathfrak{t}^{(n)}_{t_2}+s\big)+\1_{\{\lambda_i<\lambda_0<2\lambda_i\}}e^{-\left(2\lambda_i-\lambda_0\right)\big( \mathfrak{t}^{(n)}_{t_1^{(n)}}+s\big)}\Big{]}.
\end{align}
Finally, we have 
\begin{align}
    &\E \Big[\big\langle M_{i}^{(n)}\big\rangle_{\mathfrak{t}^{(n)}_{t_2}+s}-\big\langle M_{i}^{(n)}\big\rangle_{\mathfrak{t}^{(n)}_{t_1^{(n)}}+s}\Big] \leq C \mu_{\otimes,i}^{(n)} \big(\mathfrak{t}^{(n)}_{t_2}+s\big)^{\theta(i)}\cdot\Big[\1_{\{\lambda_0>2\lambda_i\}}e^{\left(\lambda_0-2\lambda_i\right)(\mathfrak{t}^{(n)}_{t_2}+s)}\\
    &\hspace{2cm}+\1_{\{\lambda_0=2\lambda_i\}}\big(\mathfrak{t}^{(n)}_{t_2}+s\big)+\1_{\{\lambda_i<\lambda_0<2\lambda_i\}}e^{-\left(2\lambda_i-\lambda_0\right)\big(\mathfrak{t}^{(n)}_{t_1^{(n)}}+s\big)}\Big].
\end{align}
\end{proof}

\end{appendix}

\paragraph*{Acknowledgements}
The author would like to thank Hélène Leman for inspiring and helpful discussions as well as valuable feedback. The author would like to thank the Chair "Modélisation Mathématique et Biodiversité" of VEOLIA-Ecole Polytechnique-MNHN-F.X.

\paragraph*{Funding}
This research was led with financial support from ITMO Cancer of AVIESAN (Alliance Nationale pour les Sciences de la Vie et de la Santé, National Alliance for Life Sciences Health) within the framework of the Cancer Plan.

\end{document}